\def\dsp{\def\baselinestretch{2.0}\large\normalsize}
    \newtheorem{thm}{Theorem}[section]
    \newtheorem{cor}[thm]{Corollary}
    \newtheorem{lem}[thm]{Lemma}
    \newtheorem{pro}[thm]{Proposition}
    \newtheorem{nota}[thm]{Notation}
    \theoremstyle{definition}
    \newtheorem{df}[thm]{Definition}
    \theoremstyle{remark}
    \newtheorem{rem}[thm]{Remark}
    \numberwithin{equation}{chapter}
    \newcommand{\la}{\langle}\newcommand{\ra}{\rangle}
    \newcommand{\Iff}{\Leftrightarrow}\renewcommand{\iff}{\leftrightarrow}
    \newcommand{\Implies}{\Rightarrow}
    \newcommand{\mc}{\mathcal}\newcommand{\mb}{\mathbf}
    \newcommand{\mf}{\mathfrak}
    \newcommand{\union}{\cup}
    \newcommand{\intersect}{\cap}
    \newcommand{\inter}{\cap}
    \newcommand{\Union}{\bigcup}
    \renewcommand{\to}{\rightarrow}
    \newcommand{\from}{\leftarrow}
    \newcommand{\restrict}{\upharpoonright}
    \newcommand{\bu}{\begin{bundle}}\newcommand{\ub}{\end{bundle}}
    \renewcommand{\and}{\,\&\,}
    \renewcommand{\|}{\,|\,}
\newsavebox{\savepar}
\begin{document}

    \title{Lattice Initial Segments of the Turing Degrees}
    \author{Bjorn Kjos-Hanssen}
    \degreeyear{Fall 2002} \degree{Doctor of Philosophy} \chair{Theodore A. Slaman} \othermembers{Leo A. Harrington\\
    Robin C. Hartshorne}
    \numberofmembers{3} \prevdegrees{Candidatus scientiarum (University
    of Oslo) 1997} \field{Logic and the Methodology of Science} \campus{Berkeley}
    
    
    
    \copyrightpage
    \begin{abstract}
    We characterize the isomorphism types of principal ideals of the Turing degrees below $\mb 0'$ that
    are lattices as the lattices with a $\Sigma^0_3$ presentation, by showing that each
    $\Sigma^0_3$-presentable bounded upper semilattice is isomorphic to such a principal ideal.
    We get a similar result for the Turing degrees below any degree above $\mb 0''$.
    \abstractsignature
    \end{abstract}

\begin{frontmatter}

\tableofcontents 
\begin{acknowledgements}
First of all I would like to thank my adviser Ted Slaman, who was supportive and enthusiastic
throughout my period of dissertation research, provided advice and ideas and presented me with good
problems to work on.

On the mathematical side, Pavel Pudl\'ak and Ralph McKenzie were of great help in establishing the
lattice-theoretic meaning of the notion of homogeneity used in initial segment constructions.
Manuel Lerman explained his construction to me in person, helping me understand the
recursion-theoretical part of this thesis. Leo Harrington, Richard Shore and Barry Cooper aided me
in mathematical and career-related ways. Shamil Ishmukhametov raised an interesting question
concerning initial segment constructions.

Among the staff of the Department of Mathematics I would like especially to thank Catalina Cordoba
and Barbara Peavy for their help and support. Professors Charles Silver, Robin Hartshorne, John
Steel, Paolo Mancosu and Charles Chihara dutifully served on my qualifying exam and dissertation
committees. In Norway, 1994-1997, Andrew J. I. Jones, St\aa l Aanderaa and Dag Normann prepared me
for graduate school.

Last but not least, my friends Bonnie Sakura Huggins, Assaf Peretz, Katia Hayati, Gizem Karaali,
Michael Anshelevich and many others helped make my experience at Berkeley a pleasant one, and my
parents Nina and Odd Kjos-Hanssen provided abundant moral and financial support.

\end{acknowledgements}
\end{frontmatter}

\chapter{Introduction}

\section{Intuitive presentation of results}
The Turing degree of unsolvability of a set of integers $A$, as introduced by Kleene and Post
\cite{Kleene.Post:54}, consists of those sets of integers $B$ that are, intuitively, just as
noncomputable as $A$, i.e. if we had an oracle that could answer questions about which integers are
in $A$ then we would be able to find out which integers are in $B$ as well, and vice versa. These
degrees are ordered by saying that the degree of $A_1$ is less than the degree of $A_2$ if $A_2$ is
more noncomputable than $A_1$. It turns out that the set $A\oplus B=\{2x\|x\in
A\}\union\{2x+1\|x\in B\}$ represents the least upper bound of $A$ and $B$ in this ordering, but it
is not so clear how to get greatest lower bounds, and indeed Spector \cite{Spector:56} showed that
greatest lower bounds in the Turing degrees do not always exist. A result of Jockusch and Posner
\cite{Jockusch.Posner:78} gives a restriction on which initial segments of the degrees can satisfy
the condition that greatest lower bounds always exists. In this thesis we prove that in a sense
there are no other restrictions than the one found by Jockusch and Posner.

\section{Technical presentation of results}
This section may be skipped by readers unfamiliar with initial segment constructions as in
\cite{Lerman:83}.

By Jockusch and Posner \cite{Jockusch.Posner:78}, if a principal ideal in the Turing degrees $[\mb
0,\mb a]$ is a lattice then $\mb a''=(\mb a\vee\mb 0')'$ and hence if $\mb a\le\mb 0'$, we have
$\mb a''=\mb 0''$. Consequently $[\mb 0,\mb a]$ is $\Sigma^0_3$-presentable. On the other hand, we
shall see in Theorem \ref{410} that every $\Sigma^0_3$-presentable bounded upper semilattice is
isomorphic to an ideal $[\mb 0,\mb a]$ with $\mb a\le\mb 0'$. Hence we have the result promised in
the abstract: a bounded lattice $L$ is isomorphic to $[\mb 0,\mb a]$ for some $\mb a\le\mb 0'$ if
and only if $L$ is $\Sigma^0_3$-presentable. The forcing construction of Lachlan and Lebeuf
\cite{Lachlan.Lebeuf:76} is known to produce, given a $\Delta^0_3$-presentable upper semilattice
$L$, a degree $\mb a$ such that $\mb a''=\mb 0''$ and $[\mb 0,\mb a]$ is isomorphic to $L$ (see for
example \cite{Lerman:83}). Their construction combined with our method of Pudl\'ak tables gives the
corresponding result for $\Sigma^0_3$ instead of $\Delta^0_3$. In fact the Lachlan-Lebeuf
construction produces hyperimmune-free degrees, hence degrees $\mb a$ satisfying $\mb a''=(\mb
a\vee\mb 0')'$ (i.e. $\mb a\in\text{GL}_2$). Now using a finite injury argument as in
\cite{Lerman:83} Exercise VIII.1.16 it follows that given any degree $\mb a\ge\mb 0''$, a lattice
$L$ is $\Sigma^0_2(\mb a)$-presentable if and only if $L$ is isomorphic to $[\mb 0,\mb g]$ for some
$\mb g\in\text{GL}_2$ with $\mb g\le\mb a$. Moreover using $e$-total trees one obtains the fact
that a lattice $L$ is $\Sigma^0_1(\mb a)$-presentable if and only if $L$ is isomorphic to $[\mb
0,\mb g]$ for some $\mb g$ satisfying $\mb g''\le\mb a$. From this it follows that any
jump-preserving automorphism of the Turing degrees is equal to the identity on the degrees above
$\mb 0''$ (a result which is well-known) since the isomorphism type of $[\mb 0,\mb a]$ determines
$\mb a$ if $\mb a\ge\mb 0''$.

We now begin the mathematical development that will culminate in the proof of Theorem \ref{410}.

\section{Conventions} We assume that the reader is familiar with the basic results and notation
of mathematical logic in general and classical recursion theory in particular, and with standard
set theoretic notation for sequences, functions and natural numbers. If $A\subseteq\omega$ and
$g:\omega\to\omega$ then their Turing degrees are denoted by lower-case boldface letters $\mb a$,
$\mb g$, respectively. The length of a finite string $\sigma\in\omega^{<\omega}$ is denoted by
$|\sigma|$, and the concatenation of $\sigma$ and $\tau$ by $\sigma*\tau$.

We will sometimes speak of the ``elements'' of a structure; depending on context, these may be the
elements of the underlying set of the structure, or the functions or relations in the structure.
Given a set or structure $X$, $|X|$ may denote the cardinality or underlying set of $X$. All
countable lattices and semilattices considered are assumed to be bounded, i.e. have a least element
0 and a greatest element 1. Composition of maps may be denoted by juxtaposition or, for emphasis,
by the symbol $\circ$. Given a predicate $F$, we use $\mu x(F(x))$ to denote the least $x<\omega$
such that $F(x)$.

As usual, $\{e\}^A$ or $\{e\}(A)$ refers to the function computed by the $e^{\text{th}}$ oracle
Turing machine, in some standard enumeration of oracle Turing machines, when run with oracle $A$.
The value of of this function at $x<\omega$ is denoted $\{e\}^A(x)$ or $\{e\}(A;x)$, and if such a
value (does not) exist, we write ($\{e\}^A(x)\uparrow$) $\{e\}^A(x)\downarrow$. For
$\sigma\in\omega^{<\omega}$, $\{e\}^\sigma(x)\downarrow$ means that the $e^\text{th}$ oracle Turing
machine, when run with any oracle extending $\sigma$, halts after making no oracle queries beyond
$|\sigma|$. Then $\{e\}_s^\sigma(x)\downarrow$ ($\{e\}^A_s(x)\downarrow$) means that the
computation $\{e\}^\sigma(x)$ ($\{e\}^A(x)$) halts by stage $s$.

Sets will be determined by listing their elements as $\{a_0,a_1,\ldots\}$ or by specification as
the set of all $x$ satisfying property $P$, denoted by $\{x\|P(x)\}$. If $A$ and $B$ are sets, then
we write $x\in A$ for \emph{$x$ is an element of $A$} and $A\subseteq B$ for \emph{$A$ is a subset
of $B$}. We use $A\subset B$ to denote $A\subseteq B$ but $A\ne B$ (placing $\not\text{ }$ through
a relation symbol denotes that the relation fails to hold for the specified elements). $A\union B$
($A\inter B$) ($A-B$) ($A\times B$) is the union (intersection) (difference) (cartesian product) of
$A$ and $B$, $\emptyset$ is the empty set, etc. A partial function $\varphi$ from $A$ to $B$
(written $\varphi:A\to B$) is considered as a subset of the set of ordered pairs $\{\la
x,y\ra\|x\in A\and y\in B\}$, and we write $\varphi(x)\downarrow$ ($\varphi(x)$ \emph{converges})
for $(\exists y)(\la x,y\ra\in\varphi)$, and $\varphi(x)\uparrow$ ($\varphi(x)$ \emph{diverges})
otherwise. We will sometimes denote the function $\varphi$ with the notation $x\mapsto\varphi(x)$.
The \emph{domain} of $\varphi$ is denoted by dom$(\varphi)=\{x\|\varphi(x)\downarrow\}$, and the
\emph{range} of $\varphi$ is denoted by ran$(\varphi)=\{y\|(\exists x)(\varphi(x)=y)\}$. If
$C\subseteq A$ then $\varphi\restrict C$ denotes the \emph{restriction} of $\varphi$ to $C$, i.e.
$\varphi\inter (C\times B)$. We sometimes identify sets with their characteristic functions. The
set $\{0,1,2,\ldots\}$ is denoted by $\omega$, and if $f:\omega\to\omega$ is a total function then
we write $\lim_s f(s)=y$ if $\{s\|f(s)\ne y\}$ is finite, and $\lim_s f(s)=\infty$ if for each
$y<\omega$, $\{s\|f(s)=y\}$ is finite. ``Iff'' is an abbreviation of ``if, and only if''.

\chapter{Lattice representations} \label{c:tables}

\subsection{Representations as congruence lattices of algebras}

\begin{df}
	If $\la L,\le\ra$ is a \emph{partial order} (a transitive, reflexive, antisymmetric relation) such that greatest lower bounds $a\wedge b$ and least upper bounds $a\vee b$ of all pairs $a,b\in L$ exist, then $\la L,\le,\vee,\wedge\ra$ is called a \emph{lattice}.

	A \emph{unary algebra} is a set $X$ together with a collection of functions $f:X\rightarrow X$. If $A$ is a unary algebra then $\Con A$ denotes the \emph{congruence lattice} of $A$, i.e., the lattice of all equivalence relations $E$ on $X$ preserved by all $f\in A$, i.e., satisfying $(\forall x,y\in X)(xEy\rightarrow f(x)E f(y))$, ordered by inclusion.

	The \emph{partition lattice} $\Part(X)$ on a set $X$ consists of all equivalence relations (considered as sets of ordered pairs) on $X$. A \emph{lattice table} $\Theta$ is a $\la 0,1,\vee,\wedge\ra$-sublattice of a partition lattice, where $A\le B\iff A\supseteq B$ and $A\vee B=A\inter B$. (Here 0 (1) is the least (greatest) element of the lattice.) (I.e. $\Theta$ and $\Part(X)$ have the same interpretations of the symbols 0,1, $\vee$, $\wedge$.) If we assume $X\subseteq\omega$ and are given a fixed isomorphism between $\Theta\subseteq$ $\Part(X)$ and a lattice $L$, taking $k\in L$ to $\sim_k\in\Part(X)$, then for $a\in X$ and $k\in L$, we define $a^{[k]}=\mu b(a\sim_k b)$. The map $\la a,k\ra\mapsto a^{[k]}$ gives a visualization of the lattice table $\Theta$ as a matrix, or table; and conversely this matrix determines $\Theta$, since $a\sim_k b\iff a^{[k]}=b^{[k]}$. For example, a lattice table for the 2-element lattice $L=\{0,1\}$ with $0<1$ can be obtained using $X=\{0,1\}$ and letting $\Theta=\Part(X)$. (In general, though, $\Theta$ and $L$ do not have the same number of elements and $\Theta$ is a proper sublattice of $\Part(X)$.) In this case there is only one possible isomorphism. In fact our definitions imply that it is always the case that $\sim_1$ is the identity relation on $X$ and $\sim_0$ is the relation $X\times X$. The reader can check that the table in our example is given by 
\[
	0^{[0]}=0,\quad 0^{[1]}=0,\quad 1^{[1]}=1,\quad\text{and}\quad 1^{[0]}=0.
\]

	If $\Theta$ is a lattice table then $\End \Theta$ denotes the set of all \emph{endomorphisms} of $\Theta$, i.e., maps from $X$ to $X$ preserving all equivalence relations in $\Theta$ (that is, $(\forall a,b\in X)(\forall k\in L)(a\sim_k b\to f(a)\sim_k f(b))$), considered as a unary algebra.

	$C_\Theta(a,b)$ denotes the \emph{principal equivalence relation in $\Theta$ generated by} $\la a,b\ra$, i.e., the intersection of all equivalence relations in $\Theta$ containing $\la a,b\ra $. In particular $C(a,b)$ denotes $C_\Theta(a,b)$ for $\Theta=\Part(X)$. So we always have $C(a,b)\subseteq C_\Theta(a,b)$. We define $\End_\Theta(a,b)$ to be the the \emph{principal congruence relation on $\Theta$ generated by} $\la a,b\ra$, i.e., the equivalence relation generated by all pairs $\la f(a),f(b)\ra$ for $f\in\End\Theta$.

	Note that $\End_\Theta(a,b)\subseteq C_\Theta(a,b)$. Indeed if $\la c,d\ra\in\End_\Theta(a,b)$ then $\la c,d\ra$ is in the transitive closure of $\{ \la f(a),f(b)\ra\mid f\in\End\Theta\}$, so it suffices to show each such $\la f(a),f(b)\ra\in C_{\Theta}(a,b)$. For this it suffices to show $\la f(a),f(b)\ra$ is in $\sim_k$ provided that $a\sim_k b$; this holds since $f\in\End\Theta$.
\end{df}

\begin{df}
	Let $\Theta$ be a finite lattice table. We say that $\Theta$ is \emph{Malcev homogeneous} if for all $a,b\in \Theta$, $C_\Theta(a,b)\subseteq \End_\Theta(a,b)$.
\end{df}

In other words, suppose $a,b,c,d\in\Theta$ satisfy $(\forall k)(a\sim_k b\rightarrow c\sim_k d)$. Then Malcev homogeneity says that 
\[
(\exists n<\omega)(\exists z_1,\ldots,z_n\in\Theta)(\exists
f_0,\ldots,f_n\in\End\Theta)
\]
\[
(\forall i\le n)(\{ f_i(a), f_i(b) \} = \{ z_i, z_{i+1}\}))
\]
where $z_0=c$ and $z_{n+1}=d$. We refer to the $z_i$ as \emph{homogeneity interpolants}. This notion of homogeneity is more general (weaker) than those considered by Lerman \cite{Lerman:83}. The following observation can be traced back to Malcev (see \cite{Malcev:54}, \cite{Malcev:63}, \cite{Gratzer:68}).

\begin{pro}\label{p:hoho}
	For any unary algebra $A$, $\Con A$ is a Malcev homogeneous lattice table.
\end{pro}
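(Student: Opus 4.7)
The plan is to verify the two claims of the proposition in turn: first that $\Con A$ is a lattice table, i.e., a sublattice of $\Part(X)$ containing the same $0$, $1$, $\vee$, $\wedge$; and second that it is Malcev homogeneous. For the first part, I would check four points. The identity relation on $X$ (the $1$ of $\Part(X)$) and the universal relation $X \times X$ (the $0$ of $\Part(X)$) are preserved by every $f \in A$ and hence lie in $\Con A$. The intersection of two congruences is clearly a congruence, giving closure under $\vee$. The only non-routine verification is closure under $\wedge$, i.e., that the transitive closure of $E_1 \cup E_2$ remains a congruence when $E_1, E_2 \in \Con A$. Here one observes that each operation $f$ of $A$ maps pairs in $E_1 \cup E_2$ to pairs in $E_1 \cup E_2$ componentwise, so a chain $z_0, z_1, \ldots, z_n$ witnessing membership in the transitive closure is mapped by $f$ to such a chain again.

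The substantive part is Malcev homogeneity, which, in view of the remark following the definition, reduces to showing $C_\Theta(a,b) \subseteq \End_\Theta(a,b)$ where $\Theta = \Con A$. The approach is to describe $C_\Theta(a,b)$ explicitly as the smallest congruence of $A$ containing $\la a, b\ra$, and to recognize this as the symmetric-transitive closure of $\{\la g(a), g(b)\ra : g$ a term function of $A\}$, where a term function is any finite composition of operations of $A$, including the identity (empty composition). This description is standard and relies on the fact that, for unary algebras, closing $\{\la a, b\ra\}$ under the operations of $A$ and then under equivalence produces a congruence.

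The one step that needs care is showing that every term function $g$ of $A$ belongs to $\End \Theta$. I would argue by induction on the length of the composition: the identity trivially preserves all equivalence relations, and if $g$ preserves every $E \in \Con A$ and $f \in A$, then for $\la x, y\ra \in E$ one has $\la g(x), g(y)\ra \in E$ by hypothesis and then $\la f(g(x)), f(g(y))\ra \in E$ since $f$ preserves $E$, so $f \circ g \in \End \Theta$. Once this is established, any chain $z_0 = c, z_1, \ldots, z_{n+1} = d$ with $\{z_i, z_{i+1}\} = \{g_i(a), g_i(b)\}$ and $g_i$ a term function witnesses $\la c, d\ra \in \End_\Theta(a,b)$ directly.

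I do not expect a serious obstacle; the main point to keep straight is the reversed ordering convention of lattice tables versus $\Part(X)$, which affects only the bookkeeping of which operation is called $\vee$ versus $\wedge$. Conceptually the proposition records the classical fact that, for a unary algebra $A$, the clone of term functions sits inside $\End(\Con A)$, and this is precisely what forces endomorphism-generated and principal congruences to coincide.
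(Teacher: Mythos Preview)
Your proof is correct but takes a different route from the paper. You compute $C_\Theta(a,b)$ explicitly as the equivalence closure of $\{\la g(a),g(b)\ra : g\text{ a term function of }A\}$, then observe that every term function of $A$ lies in $\End\Theta$, so each chain witnessing membership in $C_\Theta(a,b)$ is already a chain witnessing membership in $\End_\Theta(a,b)$. The paper instead argues abstractly: it checks that $\End_\Theta(a,b)$ is preserved by every $f\in\End\Theta$ (using that $\End\Theta$ is closed under composition), so $\End_\Theta(a,b)\in\Con\End\Theta$; since $A\subseteq\End\Theta$ one has $\Con\End\Theta\subseteq\Con A=\Theta$, whence $\End_\Theta(a,b)\in\Theta$, and as it contains $\la a,b\ra$ it must contain $C_\Theta(a,b)$ by minimality. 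Your approach is more hands-on and makes the homogeneity interpolants explicit; the paper's approach avoids invoking Mal'cev's description of principal congruences and works purely from the definitions plus the antitonicity $A\subseteq B\Rightarrow\Con A\supseteq\Con B$.
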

\begin{proof}
	$\Theta=\Con A$ is a sublattice of $\Part(A)$, so it is a lattice table.

	$A\subseteq\End\Theta$ since if $f$ is an operation in $A$ and $\sim\in\Theta$ then $\sim$ is a congruence relation on $A$ and hence $\forall x,y(x\sim y\to f(x)\sim f(y))$, which means that
$f\in\End\Theta$. Clearly for any unary algebras $A$, $B$ on the same underlying set, we have $A\subseteq B\Implies \Con A\supseteq\Con B$. Hence $\Con\End\Theta\subseteq\Con A=\Theta$.

	If $f\in\End\Theta$ and $\la x,y\ra\in\End_\Theta(a,b)$ then there exist $z_1,\ldots,z_k$ such that $\la z_i,z_{i+1}\ra=\la g_i(a),g_i(b)\ra$ for $g_i\in\End\Theta$ with $z_1=x$, $z_k=y$, hence letting $w_i=f(z_i)$ and $h_i=f\circ g_i$ we have $\la w_i,w_{i+1}\ra=\la h_i(a),h_i(b)\ra\in\End_\Theta(a,b)$, $w_1=f(x)$, $w_k=f(y)$, and so $\la f(x),f(y)\ra\in\End_\Theta(a,b)$. Hence we have shown $\End_\Theta(a,b)\in \Con\End\Theta\subseteq\Theta$. Since End $\Theta$ contains the identity map, $\End_\Theta(a,b)$ contains $\la a,b\ra$. Hence $\End_\Theta(a,b)$ is in $\Theta$ and contains $\la a,b\ra$, so it contains $C_\Theta(a,b)$.
\end{proof}

\subsection{Pudl\'ak graphs}
We present a modification of the construction of \cite{Pudlak:76}. Pudl\'ak observed that a slight
modification of a construction found by J\'onsson \cite{Jonsson:53} would yield stronger
properties than those noted by J\'onsson.

\begin{df}
	Let $L$ be a finite lattice. $\Theta=(A,r,h)$ is called an \emph{$L-\{1\}$-valued graph} (or, \emph{$L-\{1\}$-colored}) if $A$ is a set, $r$ is a set of size 2 subsets of $A$, i.e., $(A,r)$ is an undirected graph without loops, and $h:r\rightarrow L-\{1\}$ is a mapping of the set $r$ of the edges of the graph into $L-\{1\}$.

	$e:L\rightarrow \Part(A)$ is defined by: for $\alpha\in L$, $e(\alpha)$ is the equivalence relation on $\Theta$ (i.e., on $A$) generated by identifying points $a,b$ if there is a path from $a$ to $b$ in the graph, consisting of edges all of color (value) $\ge
\alpha$. We say that $a,b$ are connected with color $\ge\alpha$.
\end{df}

\begin{df}[$\alpha$-cells]
	An $\alpha$-\emph{cell} $\mc B_\alpha=(B_\alpha,s_\alpha,k_\alpha)$ consists of a \emph{base} edge $\{a,b\}$ colored $\alpha$, and for each inequality $\alpha_1\wedge\alpha_2\le \alpha$ that holds in $L$, where $\alpha_1,\alpha_2\in L$, a chain of edges $\{a,u_1\},\{u_1,u_2\},\{u_2,u_3\},\{u_3,b\}$, colored $\alpha_1,\alpha_2,\alpha_1,\alpha_2$, respectively, so that $a,b,u_1,u_2,u_3$ are distinct. The base edge and chain of edges corresponding to a particular inequality $\alpha_1\wedge\alpha_2\le\alpha$ is referred to as a \emph{pentagon}. So a cell consists of several pentagons, intersecting only in a common base edge.
\end{df}

\begin{df}[Pudl\'ak graphs]
	Pudl\'ak's original graph is defined as follows.
	\begin{enumerate}
		\item $\Theta^P_0$ consists of a single edge valued by $0\in L$. (In fact, how we choose to color this one edge has no impact on later proofs.)
		\item $\Theta^P_{n+1}$ is obtained by attaching to each edge of $\Theta^P_n$ of any color $\alpha$ an $\alpha$-cell.
		\item $\Theta^P=\union_{n\in\omega} \Theta^P_n$.
	\end{enumerate}
	We will use the following modification.
	\begin{enumerate}
		\item $\Theta^{(i)}_0=\Theta^P_0$, for each $i\in\omega$.
		\item $\Theta^{(i)}_j$ is obtained by attaching to each edge of $\Theta^{(i)}_{j-1}$ of any color $\alpha$, $i$ $\alpha$-cells.
		\item $\Theta_j=\Theta^{(j)}_j$.
		\item $\Theta=\union_{n\in\omega} \Theta_n$ we call the Pudl\'ak graph for $L$.
	\end{enumerate}

	Since $e(\alpha)$ for $\alpha\in L$ is determined by the colored graph structure $\Theta^P(L)$, we may consider $\Theta^P$ as the structure $\la\Theta^P,\{e(\alpha)\mid \alpha\in L\}\ra$ and
	$\Theta^P_n=\la\Theta^P_n,\{e(\alpha)\restrict\Theta^P_n\mid \alpha\in L\}\ra$.

	Similarly $\Theta=\la\Theta,\{e(\alpha)\mid \alpha\in L\}\ra=\union_{n\in\omega}\Theta_n$ where $\Theta_n=\la\Theta_n,\{e(\alpha)\restrict\Theta_n\mid \alpha\in L\}\ra$. For emphasis we may write
	$\Theta(L)$ for $\Theta$. By \emph{the Pudl\'ak algebra of $L$} we mean the algebra of unary functions $\End \Theta(L)$.
\end{df}

\begin{thm}\label{t:pud}
	Let $L$ be a finite lattice and let $L^*$ denote the dual of $L$ (that is, $L^*$ has the same underlying set as $L$ and if $a\le b$ in $L$ then $b\le a$ in $L^*$). $L$ is dual isomorphic to the congruence lattice of its Pudl\'ak algebra. In fact, $e:L^*\rightarrow\Theta(L)$ is an isomorphism, and $\Theta(L)=\Con\End\Theta(L)$.
\end{thm}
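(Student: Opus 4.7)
The proof breaks into four parts: (i) $\{e(\alpha):\alpha\in L\}\subseteq\Con\End\Theta(L)$, (ii) $e$ is a lattice homomorphism from $L^*$ into the partition lattice $\Part(\Theta(L))$, (iii) $e$ is injective, and (iv) $\Con\End\Theta(L)\subseteq\{e(\alpha):\alpha\in L\}$. Part (i) is immediate from the definition of $\End\Theta(L)$ as the algebra of unary maps preserving every $e(\alpha)$: each such $e(\alpha)$ is thereby a congruence of this algebra.

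For (ii), I would verify that $e(\alpha\vee_L\beta)=e(\alpha)\cap e(\beta)$ (straightforward from the definition of $e$ via connectivity: a path of color $\ge\alpha\vee\beta$ is one of both colors $\ge\alpha$ and $\ge\beta$) and that $e(\alpha\wedge_L\beta)$ equals the join of $e(\alpha)$ and $e(\beta)$ in $\Part(\Theta)$. The latter is what the pentagons are designed to force: whenever $\alpha_1\wedge\alpha_2\le\alpha$ holds in $L$, every base edge of color $\alpha$ carries a chain $a,u_1,u_2,u_3,b$ colored $\alpha_1,\alpha_2,\alpha_1,\alpha_2$, placing that base edge in the partition-lattice join of $e(\alpha_1)$ and $e(\alpha_2)$; the inclusion extends to arbitrary color-$\ge\alpha$ paths by concatenation.

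For (iii), I would exhibit, for $\alpha\not\le\beta$ in $L$, the base edge of an $\alpha$-cell as a witness in $e(\alpha)\setminus e(\beta)$. Since cells are glued only along their base edges and the graph is built in discrete stages, any alternative path of color $\ge\beta$ between the endpoints of such a base must traverse a pentagon interior to some cell whose meet condition would force $\alpha\le\beta$. An induction on stage, as in \cite{Pudlak:76}, rules this out.

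The main obstacle is (iv). Given $\sim\in\Con\End\Theta(L)$ and $\la c,d\ra\in\sim$, let $\alpha$ be the $L$-largest element with $c\,e(\alpha)\,d$. It suffices to prove $\sim\supseteq e(\alpha)$ for each such pair, since then $\sim$ equals the $\Part$-join of the $e(\alpha)$'s over all $\la c,d\ra\in\sim$, and by (ii) any such join is itself of the form $e(\beta)$. To get $\sim\supseteq e(\alpha)$, I would show $e(\alpha)\subseteq\End_\Theta(c,d)$, i.e., the Malcev homogeneity condition: for every $\la x,y\ra\in e(\alpha)$ produce a chain of endomorphisms of $\Theta(L)$ moving $(c,d)$ through interpolants to $(x,y)$. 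These endomorphisms are built recursively on stage, and the redundancy introduced in the modified construction, which attaches $j$ many $\alpha$-cells at stage $j$ rather than a single one, provides the flexibility to map the neighborhood of $(c,d)$ into that of $(x,y)$ without collisions. This inductive extension argument, and the bookkeeping required to keep the resulting map globally an endomorphism, is the technical heart of the proof.
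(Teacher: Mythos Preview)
Your four-part decomposition is reasonable and would ultimately work, but it differs substantially from the paper's route and has two weak points worth noting.

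The paper does not verify (i)--(iv) directly. Instead it quotes Pudl\'ak's theorem (reproduced in Section~\ref{s:pudlaksproof}) that for an explicit monoid $S=\Stab(\Theta)$ of \emph{stable} maps one has $\Con(\Theta,S)\cong L$ via $e$, and then uses the sandwich
\[
\Theta(L)\subseteq\Con\End\Theta(L)\subseteq\Con S=\Theta(L),
\]
the middle inclusion coming from $S\subseteq\End\Theta$. This bypasses your step (iv) entirely: one never has to show directly that every congruence of $\End\Theta$ is of the form $e(\alpha)$; it suffices that every congruence of the smaller algebra $S$ is. The paper then finishes by applying everything to $L^*$ in place of $L$ and using $L^{**}=L$.

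Two specific issues with your sketch. First, in (ii) the equality $e(\alpha\vee_L\beta)=e(\alpha)\cap e(\beta)$ is \emph{not} straightforward: your parenthetical justifies only $\subseteq$. For $\supseteq$ one is given two possibly different paths, one of colour $\ge\alpha$ and one of colour $\ge\beta$, and must produce a single path of colour $\ge\alpha\vee\beta$; this is exactly the content of Pudl\'ak's cycle condition~(3) and requires the inductive contraction argument, not merely the definition of connectivity. Second, your remark in (iv) that the modified construction (attaching $j$ cells at stage $j$) is what supplies the flexibility for extending endomorphisms is mistaken. Pudl\'ak's extension lemma (Lemma~\ref{PudlakClaim2}) and the domination argument work already for the original single-cell graph $\Theta^P$; the paper explicitly notes this and says the same proof goes through for the modified $\Theta$. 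The extra cells are introduced for the later sequential-table properties, not for the present theorem.
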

\begin{proof}
	Pudl\'ak (\cite{Pudlak:76}; see subsection \ref{s:pudlaksproof}) assumes that $L$ is an algebraic lattice \cite{Gratzer:98}, defines a certain algebra $S\subseteq\End\Theta^P(L^*)$, and
shows that $e:L\rightarrow\Theta^P(L^*)$ is an isomorphism, and $\Theta^P(L^*)=\Con S$. Now trivially $\Theta^P(L^*)\subseteq\Con\End\Theta^P(L^*)$ holds, and $S\subseteq\End\Theta^P(L^*)$ implies $\Con\End\Theta^P(L^*)\subseteq\Con S$. So we have
$\Theta^P(L^*)=\Con\End\Theta^P(L^*)$.

	In fact Pudl\'ak's proof shows that $e:L\rightarrow\Theta(L^*)$ is an isomorphism, and $\Theta(L^*)=\Con\End\Theta(L^*)$. (This is an important step; for details see Section
\ref{s:pudlaksproof}.)

	Now let $L$ be a finite lattice. Then $L^*$ is also a finite lattice, and since every finite lattice is algebraic, $L^*$ is an algebraic lattice. Hence
$e:L^*\rightarrow\Theta(L^{**})=\Theta(L)$ is an isomorphism and $\Theta(L)=\Con\End\Theta(L)$.
\end{proof}

We now prove some basic properties of Pudl\'ak graphs and the associated tables.

\begin{df}
	$\Theta=\la\Theta_n\mid n<\omega\ra$ is a \emph{sequential Malcev homogeneous lattice table} if (1) and (2) hold:
	\begin{enumerate}
		\item[(1)] 
		$\Theta$ is a \emph{sequential lattice table}, i.e., each $\Theta_n$ is a $\la 0,1,\vee\ra$-substructure of $\Part(|\Theta_n|)$ ($\Theta_n$ is an \emph{usl table}), $\Theta$ is a lattice table, and meet interpolants for elements of $\Theta_n$ exist in $\Theta_{n+1}$. (If $i\wedge j=k$ in $L$ then $\sim_i$ and $\sim_j$ generate $\sim_k$
		in $\Theta(L)$. So if $a\sim_k b$ then there exist \emph{meet interpolants} $z_1,\ldots,z_n$ such that $a\sim_i z_1\sim_j z_2\sim_i\cdots\sim_j z_n\sim_i b$.)
		\item[(2)] 
		$\Theta$ is Malcev homogeneous, with homogeneity interpolants for elements of $\Theta_n$ appearing in $\Theta_{n+1}$ (compare \cite{Lerman:83}*{VII.1.1, 1.3}).
	\end{enumerate}
\end{df}

\begin{lem}\label{l:basic}
	$\Theta=\la\Theta_n(L)\mid n<\omega\ra$ has a subsequence which is a recursive Malcev homogeneous sequential lattice table.
\end{lem}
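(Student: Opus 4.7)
The plan is to extract a recursive subsequence $\la\Theta_{n_k}\mid k<\omega\ra$ of $\la\Theta_n(L)\mid n<\omega\ra$ by effective induction, relying on Theorem \ref{t:pud} and Proposition \ref{p:hoho} to guarantee that every required interpolant exists somewhere in $\Theta(L)$. The cell-attachment construction makes $\la\Theta_n(L)\mid n<\omega\ra$ uniformly computable: $L$ is finite, each step adds finitely many cells in a purely mechanical way, and the equivalence relations $e(\alpha)\restrict\Theta_n$ are decidable by graph search on the finite $\Theta_n$. Moreover each $\Theta_n$ is a $\la 0,1,\vee\ra$-substructure of $\Part(|\Theta_n|)$, since $e(0)\restrict\Theta_n=\Theta_n\times\Theta_n$, $e(1)\restrict\Theta_n$ is the identity on $\Theta_n$, and joins $(e(i)\restrict\Theta_n)\cap(e(j)\restrict\Theta_n)=e(i\vee j)\restrict\Theta_n$ are preserved under restriction.

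By Theorem \ref{t:pud} we have $\Theta(L)=\Con\End\Theta(L)$, and by Proposition \ref{p:hoho} this makes $\Theta(L)$ Malcev homogeneous. Hence for every $a,b,c,d\in\Theta(L)$ with $(c,d)\in C_\Theta(a,b)$, the homogeneity chain $z_1,\ldots,z_n$ and witnessing endomorphisms $f_i\in\End\Theta(L)$ exist in $\Theta(L)=\bigcup_m\Theta_m$, so the $z_i$ lie in $\Theta_m$ for some finite $m$. Meet interpolants exist at a bounded finite level similarly: under $e\colon L^*\to\Theta(L)$, the relation $\sim_{i\wedge j}$ is the transitive closure of $\sim_i\cup\sim_j$, so any $a\sim_{i\wedge j} b$ in $\Theta_n$ is witnessed by an alternating $i,j$-chain in some $\Theta_m$. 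Define $n_0=0$, and given $n_k$ let $n_{k+1}$ be the least $m>n_k$ such that every meet interpolant and every Malcev homogeneity interpolant required by pairs and quadruples from $\Theta_{n_k}$ lies in $\Theta_m$. The resulting $\la\Theta_{n_k}\mid k<\omega\ra$ satisfies both clauses by construction, and the construction is recursive because each $n_{k+1}$ is found by a bounded search whose success is guaranteed by the preceding existence results.

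The main obstacle is making this search effective for the homogeneity interpolants: checking that a candidate pair $(z_i,z_{i+1})$ equals $(f(a),f(b))$ for some $f\in\End\Theta(L)$ is a priori an unbounded existential over the infinite algebra $\End\Theta(L)$, since endomorphisms are global objects defined on all of $\Theta$. I would resolve this by a combinatorial extension lemma for the Pudl\'ak graph: the existence of such an $f$ should reduce to a local condition on colored-graph neighborhoods of $a,b,z_i,z_{i+1}$ that is decidable within some finite $\Theta_{m'}\supseteq\Theta_m$, exploiting the self-similar cell-attachment structure to extend any locally consistent two-point map to a full endomorphism. The analogous decidability of the hypothesis $(c,d)\in C_\Theta(a,b)$ is comparatively easy, since by the isomorphism $e\colon L^*\to\Theta(L)$ it reduces to checking, for each $k\in L$ (finite), whether $a\sim_k b$ implies $c\sim_k d$, a finite set of queries each witnessed at a bounded stage. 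With the extension lemma in hand, the entire search terminates effectively and the lemma follows.
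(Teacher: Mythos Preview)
Your overall strategy---invoke Theorem~\ref{t:pud} and Proposition~\ref{p:hoho} for Malcev homogeneity, then pass to a subsequence so that interpolants appear at the next level---matches the paper's. But there is one genuine gap and one point where you correctly identify an obstacle whose resolution is already available.

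The gap is your assertion that ``the equivalence relations $e(\alpha)\restrict\Theta_n$ are decidable by graph search on the finite $\Theta_n$.'' This is exactly what needs proof: $e(\alpha)$ is defined as an equivalence closure on the \emph{infinite} graph $\Theta$, so for $a,b\in\Theta_n$ the witnessing path may a~priori leave $\Theta_n$, and the relation is then only r.e.\ rather than decidable. The paper's key observation here is a path-contraction argument: any path in $\Theta$ between points of $\Theta_n$ can be contracted to one lying entirely in $\Theta_n$ with equal or better color, by collapsing loops and replacing chain-arcs of a cell by the base edge (whose color dominates the meet of the chain colors). Without this, neither the local decidability of $\sim_k$ nor the claim that each $\Theta_n$ is a $\la 0,1,\vee\ra$-substructure of $\Part(|\Theta_n|)$ is justified.

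On your acknowledged obstacle---verifying that a candidate $(z_i,z_{i+1})$ is $(f(a),f(b))$ for some $f\in\End\Theta(L)$---the extension lemma you want is essentially Lemma~\ref{PudlakClaim2}: every stable mapping $\Theta_n\to\Theta_m$ extends to a stable mapping $\Theta\to\Theta$. Combined with the explicit finite maps $f_x$ constructed in Section~\ref{s:pudlaksproof} (which witness the domination relation $D_F$ used to prove condition~(5)), this makes the search effective: one searches through the finitely many stable maps of $\Theta_m$ for increasing $m$, and each such map is automatically the restriction of an element of $\End\Theta$. The paper's proof is admittedly as brief as yours on this point, but the ingredients are present.
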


\begin{proof}
	Since $\Theta$ is a congruence lattice, $\Theta$ is a Malcev homogeneous lattice table. Hence $\Theta$ can be given a ramification as a sequential Malcev homogeneous lattice table by taking a
subsequence of $\la\Theta_n\mid n<\omega\ra$.

	The sequence is recursive since to compute an equivalence relation on elements of $\Theta_n$, it is sufficient to consider paths in $\Theta_n$, since any path wandering outside of $\Theta_n$ can be
contracted to one that does not which has equal or better color. (Here by definition a path is contracted by performing a series of operations of two kinds: either replacing subpaths
$a,c_1,\ldots,c_s,a$ by just $a$, or replacing a path $a,b,c,d,e$ around a pentagon by an edge $a,e$ cutting across.)
\end{proof}

	In the following we use $\la\Theta_n(L)\mid n<\omega\ra$ to denote the subsequence taken in Lemma \ref{l:basic}. (It can actually be shown that it is not necessary to take a subsequence, and even
better, each $\Theta_n$ is already closed under homogeneity interpolants. However we do not need this fact.)

\subsection{Pudl\'ak's proof}\label{s:pudlaksproof}

	This subsection contains the proof due to Pudl\'ak referred to in Theorem \ref{t:pud}. Let $K=L-\{1\}$. A triple $\Theta=\la \Theta,r,h\ra$ is called a $K$-\emph{valued graph} if $\la\Theta,r\ra$
is an undirected graph without loops and $h:r\to K$ is a mapping of the set of its edges into $K$.

	If $\Theta_i=\la\Theta_i,r_i,h_i\ra$, $i=0,1$, are two $K$-valued graphs, a mapping $f:\Theta_0\to\Theta_1$ is called \emph{stable} if for every $\{a,b\}\in r_0$, either $f(a)=f(b)$,
or else $\{f(a),f(b)\}\in r_1$ and $h_1(\{f(a),f(b)\})=h_0(\{a,b\})$.

	Let Stab$(\Theta)$ denote the monoid of all stable mappings of $\Theta$ into itself --- the \emph{stabilizer} of $\Theta$.

\begin{df}
	Let $F$ be a set of mappings of a set $X$ into itself, let $\{a,b\}$, $\{c,d\}$ be subsets of $X$. Then \emph{$\{a,b\}$ dominates $\{c,d\}$ through $F$} if there are $c=u_0,u_1,\ldots,u_n=d$ in $X$
and $f_1,\ldots,f_n\in F$ such that $\{f_i(a),f_i(b)\}=\{u_{i-1},u_i\}$ for $i=1,\ldots,n$; in notation, $\{a,b\}D_F\{c,d\}$.
\end{df}

If $F$ is a monoid, then $D_F$ is a transitive and reflexive relation.

\begin{nota}
	For a given set $X$ and a function $f$ defined over the set $X$, denote by $2^X$ the set of all subsets of $X$, by $\Part(X)$ the set of all equivalences on $X$, by $f^+$ the function defined over
the set $2^X$ by the formula $f^+(Y)=\{f(x)\mid x\in Y\}$ for $Y\subseteq X$.
\end{nota}

\begin{df}
	Let $\Theta=\la\Theta,r,h\ra$ be a $K$-valued graph. From now on we consider $L$ to be the set of all filters of $K$ ordered by reverse inclusion (including $\emptyset$, which corresponds to 1).
This does not change $L$ up to isomorphism since $\{a\mid a\ge b\}\supseteq\{a\mid a\ge c\}\Iff b\le c$. Let $\mc E$ denote the operator of the equivalence closure on $\Theta$. Define two mappings
$\varphi:L\to\Part(\Theta)$, $\psi:\Con(\Theta,\Stab(\Theta))\to 2^K$ by the formulas $\varphi(I)=\mc Eh^{-1}(I)$ for $I\in L$, $\psi(E)=h^+(r\inter E)$, for
$E\in\Con(\Theta,\Stab(\Theta))$.
\end{df}

In fact, $\varphi:L\to\Con(\Theta,\Stab(\Theta))$, since every operation of the latter
algebra is a stable mapping.

\begin{thm}
	Let $\Theta=\la\Theta,r,h\ra$ be a $K$-valued graph, $F\subseteq\Stab(\Theta)$ and let us assume that
	\begin{enumerate}
		\item \label{Pudlak(1)} $h$ maps $r$ onto $K$,
		\item \label{Pudlak(2)} if $x\in r$ and $\alpha_1\wedge\alpha_2\le h(x)$, $\alpha_1,\alpha_2\in K$,
		then there is a cycle $x,x_1,\ldots,x_n$ of $r$ such that $h(x_i)\in\{\alpha_1,\alpha_2\}$, for
		$i=1,\ldots,n$,
		\item \label{Pudlak(3)} if $x,x_1,\ldots,x_n$ is a cycle of $r$, then $\bigwedge_{i=1}^n h(x_i)\le
		h(x)$,
		\item \label{Pudlak(4)} if $x,y\in r$ are such that $h(x)=h(y)$, then there is an $f\in F$ such that
		$f^+(x)=y$,
		\item \label{Pudlak(5)} if $c\ne d$ are in $\Theta$, then there is a path $x_1,\ldots,x_n\in r$
		connecting $c$ to $d$ such that $\{c,d\}D_F x_i$ for all $i=1,\ldots,n$.
	\end{enumerate}

	Then $\psi(E)\in L$ for every congruence $E$ of $\la\Theta,F\ra$ and $\psi=\varphi^{-1}$.
\end{thm}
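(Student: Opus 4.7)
The plan is to prove both assertions at once by showing that $\varphi$ and $\psi$ are mutually inverse bijections between $L$ and $\Con(\Theta,F)$, whence $\psi(E)\in L$ follows automatically. That $\varphi$ takes values in $\Con(\Theta,F)$ is the remark preceding the theorem combined with $\Con(\Theta,\Stab(\Theta))\subseteq\Con(\Theta,F)$ (which holds since $F\subseteq\Stab(\Theta)$), so it remains to verify the two composition identities.

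For $\psi\varphi=\Id_L$, the inclusion $I\subseteq\psi\varphi(I)$ uses condition \ref{Pudlak(1)}: each $\alpha\in I$ is realized as $h(y)$ for some $y\in r$, and $y\in r\inter\varphi(I)$ gives $\alpha\in h^+(r\inter\varphi(I))=\psi\varphi(I)$. For the reverse, suppose $\alpha=h(\{c,d\})$ with $\{c,d\}\in r\inter\varphi(I)$; then $c,d$ are joined by a path of edges in $h^{-1}(I)$. If the path is the single edge $\{c,d\}$ then $\alpha\in I$ immediately; otherwise the path together with $\{c,d\}$ forms a cycle, and condition \ref{Pudlak(3)} gives $\bigwedge_i h(x_i)\le\alpha$ over the path edges $x_i$. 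Since $\alpha<1$ the meet stays in $K$, so the filter properties of $I$ (closure under $K$-meets and upward closure) force $\alpha\in I$.

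For $\varphi\psi=\Id_{\Con(\Theta,F)}$ one direction is direct: if $y\in h^{-1}(\psi(E))$ then $h(y)=h(z)$ for some $z\in r\inter E$, so condition \ref{Pudlak(4)} supplies $f\in F$ with $f^+(z)=y$, and $F$-invariance of $E$ gives $y\in E$; equivalence closure then yields $\varphi\psi(E)\subseteq E$. The harder direction, where I expect the main obstacle to lie, uses condition \ref{Pudlak(5)}: for $(c,d)\in E$ with $c\ne d$ it supplies a path $x_1,\ldots,x_n\in r$ from $c$ to $d$ with $\{c,d\}D_F x_i$ for every $i$. Unpacking $D_F$, each $x_i$ has its endpoints linked by a sequence of pairs $\{f_j(c),f_j(d)\}$ with $f_j\in F$; since $(c,d)\in E$ and each $f_j$ preserves the congruence $E$, every such pair lies in $E$, so transitivity puts the endpoints of $x_i$ into $E$, i.e., $x_i\in r\inter E$ and hence $h(x_i)\in\psi(E)$. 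The path then witnesses $(c,d)\in\varphi\psi(E)$, giving $E\subseteq\varphi\psi(E)$. With both identities in hand, $\varphi$ is a bijection $L\to\Con(\Theta,F)$ with inverse $\psi$, and in particular $\psi(E)\in L$ for every congruence $E$.
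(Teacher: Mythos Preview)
Your argument has a genuine circularity. You propose to deduce $\psi(E)\in L$ from the two identities $\psi\varphi=\Id_L$ and $\varphi\psi=\Id_{\Con(\Theta,F)}$, but the second identity is not well-posed until you already know $\psi(E)\in L$: the map $\varphi$ is defined only on $L$, so $\varphi(\psi(E))$ is meaningless otherwise. If instead you silently extend $\varphi$ to $\tilde\varphi:2^K\to\Part(\Theta)$ by the same formula $\tilde\varphi(I)=\mc E h^{-1}(I)$ (your proof of $\varphi\psi=\Id$ in fact only uses this formula), then what you have established is $\psi\circ\tilde\varphi|_L=\Id_L$ and $\tilde\varphi\circ\psi=\Id_{\Con}$. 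These two facts do \emph{not} force $\psi(\Con)\subseteq L$: abstractly one can have $\tilde\varphi$ surjective from a larger set while $\tilde\varphi|_L$ is merely injective. So the final sentence of your proposal is an unjustified leap.

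The telltale sign is that you never invoke condition~\ref{Pudlak(2)}. In the paper this condition is exactly what shows directly that $\psi(E)$ is a filter of $K$: given $\alpha_1,\alpha_2\in\psi(E)$ and $\alpha_1\wedge\alpha_2\le\alpha$, pick $x\in r$ with $h(x)=\alpha$ by \ref{Pudlak(1)}, use \ref{Pudlak(2)} to get a cycle $x,x_1,\ldots,x_n$ with each $h(x_i)\in\{\alpha_1,\alpha_2\}$, use \ref{Pudlak(4)} to move witnessing edges of colors $\alpha_1,\alpha_2$ in $r\cap E$ onto the $x_i$ (so each $x_i\in E$), and then the cycle forces $x\in E$, i.e.\ $\alpha\in\psi(E)$. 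Once $\psi(E)\in L$ is secured in this way, your verifications of $\psi\varphi=\Id_L$ and $\varphi\psi=\Id_{\Con}$ are correct and match the paper's.
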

\begin{proof}
Let $\alpha_1,\alpha_2\in\psi(E)$, $\alpha_1\wedge\alpha_2\le\alpha$. Since $h$ is onto,
$\alpha=h(x)$ for some $x\in r$. By \ref{Pudlak(2)} there is a cycle $x,x_1\ldots,x_n$ of $r$ such
that $h(x_i)\in\{\alpha_1,\alpha_2\}$, $i=1,\ldots,n$, $\alpha_1=h(y_1)$, $\alpha_2=h(y_2)$ for
some $y_1,y_2\in r\inter E$, so that, by \ref{Pudlak(4)}, every $x_i\in E$. Since
$x,x_1,\ldots,x_n$ is a cycle, $x\in E$. Therefore $h(x)=\alpha\in\psi(E)$. Thus $\psi(E)\in L$.

$\psi\varphi=1$: Let $I$ be a filter (dual ideal) of $K$. Since $h$ is onto,
$\psi\varphi(I)=h^+(r\inter\mc Eh^{-1}(I))\supseteq h^+h^{-1}(I)=I$. Conversely, if $x\in
r\inter\mc Eh^{-1}(I)$, then by \ref{Pudlak(3)}, $\bigwedge_{i=1}^n h(x_i)\le h(x)=\alpha$, where
$x_1,\ldots,x_n\in h^{-1}(I)$ connect the endpoints of $x$. Since $I$ is a filter, $\alpha\in I$,
which proves the opposite inclusion.

$\varphi\psi=1$: Let $E\in\Con(\Theta,F)$, then $\varphi\psi(E)=\mc Eh^{-1}(r\inter
E)\supseteq\mc E(r\inter E)$. Let $\{a,b\}\in\mc Eh^{-1}h^+(r\inter E)$, then there is a path
$x_1,\ldots,x_k\in r$ connecting $a$ to $b$ such that $h(x_i)=h(y_i)$ for some $y_i\in r\inter E$.
\ref{Pudlak(4)} yields $x_i\in E$ so that $\{a,b\}\in\mc E(r\inter E)$. Since $E$ is an
equivalence, $E\supseteq\mc E(r\inter E)$. Now, let $\{c,d\}\in E$. \ref{Pudlak(5)} implies the
existence of a path $z_1,\ldots,z_n$ of elements of $r$ connecting $c$ to $d$ such that $\{c,d\}D_F
z_i$, $i=1,\ldots,n$. Since $E$ is a congruence of $\la\Theta,F\ra$, all $z_i$ belong to $E$, so
that $\{c,d\}\in\mc E(r\inter E)$.
\end{proof}

Since $\varphi$ and $\psi$ are order-preserving mappings, $L$ is isomorphic to the congruence
lattice of $\la\Theta,F\ra$. The remainder of the proof deals with the construction of a $K$-valued
graph $\la\Theta,r,h\ra$ which, together with its stabilizer $F=$Stab$(\Theta)$, satisfies
\ref{Pudlak(1)}-\ref{Pudlak(5)} above.

\begin{df}
Let us construct a $K$-valued graph $B_\alpha=\la B_\alpha,s_\alpha,k_\alpha\ra$ for every
$\alpha\in K$, ($B_\alpha$ will be called the $\alpha$-\emph{cell}) as follows: set $\alpha
C=\{\la\alpha_1,\alpha_2\ra\mid \alpha_1\wedge\alpha_2\le\alpha\}$. For every $\mb
a=\la\alpha_1,\alpha_2\ra\in\alpha C$ let 
\[
	\tag{+} x^{\mb a}_i=\{u_{i-1}^{\mb a},u_i^{\mb a}\},\quad i=1,2,3,4, 
\]
where $u_0^{\mb a}=a$, $u_4^{\mb a}=b$, for all $\mb a\in\alpha C$, and all the other
points are distinct.

Set 
\[
	B_\alpha=\{a,b\}\union\{u^{\mb a}_i\mid i=1,2,3, \,\mb a\in\alpha C\},
\]
\[
	s_\alpha=\{\{a,b\}\}\union\{x_i^{\mb a}\mid i=1,2,3,4,\,\mb a\in\alpha C\},
\]
\[
	k_\alpha(\{a,b\})=\alpha, \quad k_\alpha(x_1^{\mb a})=k_\alpha(x_3^{\mb a})=\alpha_1, \quad
	k_\alpha(x_2^{\mb a})=k_\alpha(x_4^{\mb a})=\alpha_2.
\]
$\{a,b\}$ will be called the \emph{base} of the $\alpha$-cell $B_\alpha$. (+) is called a
\emph{chain} of $B_\alpha$. In the sequel the superscripts are omitted.

The required $K$-valued graph $\Theta=\la\Theta,r,h\ra$ is constructed by induction as follows.
Choose an element $\alpha_0\in K$. $\Theta_0$ will be a single edge valued by $\alpha_0$. If
$\Theta_n=\la\Theta_n,r_n,h_n\ra$ has already been constructed, produce $\Theta_{n+1}$ as follows:
Let $x\in r_n-r_{n-1}$, $x=\{c,d\}$, $h_n(x)=\alpha$. Attach a copy of $B_\alpha$ to $\Theta_n$,
identifying $\{c,d\}$ with the base $\{a,b\}$ of $B_\alpha$, arbitrarily by the definition $a=c$,
$b=d$ or $a=d$, $b=c$. In other words, the extended graph has the set of vertices
$\Theta_n\union(B_\alpha-\{a,b\})$, and $r_n\union(s_\alpha-\{\{a,b\}\})$ are all its edges. Since
$h_n(x)=\alpha=k_\alpha(\{a,b\})$, the common extension of $h_n$ and $k_\alpha$ is well-defined.
Applying this one step extension to every $x\in r_n-r_{n-1}$ we arrive at
$\Theta_{n+1}=\la\Theta_{n+1},r_{n+1},h_{n+1}\ra$.

Note that $\Theta_n\subseteq\Theta_{n+1}$, $r_n\subseteq r_{n+1}$, $h_n\subseteq h_{n+1}$, so that
we may define $\Theta=\la\Theta,r,h\ra$ by $\Theta=\Union_{n=0}^\infty$, $r=\Union_{n=0}^\infty
r_n$, $h=\Union_{n=0}^\infty h_n$, and that $\Theta$ is indeed a $K$-valued graph.
\end{df}

\begin{lem}
$\Theta$ satisfies \ref{Pudlak(1)}, \ref{Pudlak(2)}, \ref{Pudlak(3)}.
\end{lem}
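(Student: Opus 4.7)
The plan is to verify the three properties in turn, with (3) taking the bulk of the work.

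For property \ref{Pudlak(1)}, I would appeal to the initial edge colored $\alpha_0\in K$ together with the fact that, for every $\beta\in K$, the pair $\langle\beta,0\rangle$ satisfies $\beta\wedge 0=0\le\alpha_0$, so $\langle\beta,0\rangle\in\alpha_0 C$; the chain associated to this pair in the $\alpha_0$-cell attached at stage~$1$ then produces an edge colored $\beta$, exhibiting $h$ as onto $K$. For property \ref{Pudlak(2)}, the construction itself does the work: any edge $x\in r$ of color $\alpha$ first appears in some $r_n\setminus r_{n-1}$, and at the next stage an $\alpha$-cell is attached to $x$; given $\alpha_1\wedge\alpha_2\le\alpha$ with $\alpha_1,\alpha_2\in K$, the pair $\langle\alpha_1,\alpha_2\rangle$ lies in $\alpha C$, so the associated chain of four edges colored $\alpha_1,\alpha_2,\alpha_1,\alpha_2$ together with $x$ forms the required pentagon cycle.

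For property \ref{Pudlak(3)}, I would induct on $n$ to show that every simple cycle in $\Theta_n$ satisfies $\bigwedge_i h(x_i)\le h(x)$, with base case $n=0$ vacuous (a single edge, no cycles). The inductive step rests on a local observation about cells attached at stage $n+1$: each of the three interior vertices of any chain has degree $2$ inside its cell and belongs neither to $\Theta_n$ nor to any other cell, so a simple cycle either avoids an entire chain or traverses it completely. A degree count at the base vertices $a,b$ of such a cell then forces a simple cycle $C$ in $\Theta_{n+1}$ to fall into one of three configurations: (i) $C\subseteq\Theta_n$; (ii) $C$ is contained in the cells attached to a single base edge $y_\beta$, in which case it is either a pentagon (one chain plus $y_\beta$) or a two-chain cycle (two chains with common base, possibly from distinct cells); or (iii) $C$ uses exactly one chain above each of several distinct base edges $y_{\beta_1},\ldots,y_{\beta_\ell}$, possibly together with edges of $\Theta_n$, with none of those $y_{\beta_j}$ themselves lying in $C$.

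Case (i) is immediate by induction. Case (ii) is settled by direct color inspection using $\alpha_1\wedge\alpha_2\le\beta$: whether $x$ is the base, a chain edge in the pentagon, or a chain edge in a two-chain cycle, the meet of the remaining edge-colors absorbs some $\alpha_1\wedge\alpha_2$ that is bounded above by $h(x)$. In case (iii), the key move is to contract each used chain (four edges from $a$ to $b$) to its base edge $\{a,b\}$, producing $C'$; since interior vertices of chains are fresh and no $y_{\beta_j}$ appears in $C$, this $C'$ is a simple cycle in $\Theta_n$. If $x$ is a chain edge of color $\alpha_i$, the other three edges of its chain already contribute meet $\alpha_1\wedge\alpha_2\le\alpha_i=h(x)$. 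Otherwise $x\in\Theta_n$ lies in both $C$ and $C'$; applying the induction hypothesis to $C'$ with the same distinguished edge gives $\bigwedge_{z\in C'\setminus\{x\}}h(z)\le h(x)$, and since each base edge $y_{\beta_j}$ in $C'$ has color $\beta_j$ while the corresponding chain in $C$ has color-meet $\alpha_1\wedge\alpha_2\le\beta_j$, the meet over $C\setminus\{x\}$ is bounded by the meet over $C'\setminus\{x\}$, closing the induction.

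The main obstacle is the case analysis in the inductive step — in particular, checking that configuration (iii) is indeed the only remaining possibility once (i) and (ii) are excluded, and that the contraction $C\mapsto C'$ produces an honest simple cycle in $\Theta_n$ (no doubled base edges, no repeated vertices). Once the local geometry of a cell and the degree-two constraint at each vertex of a simple cycle are nailed down, the color-propagation step is immediate from the defining inequality $\alpha_1\wedge\alpha_2\le\beta$ for the chains of a $\beta$-cell.
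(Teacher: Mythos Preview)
Your argument is correct, but it takes a longer path than the paper's. The key simplification you miss is to organize the inductive step around the distinguished edge $x$ rather than around the global structure of the cycle $C$.

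The paper splits only on whether $x\in r_{m+1}\setminus r_m$ or $x\in r_m$. If $x$ is new, it lies on a chain with color pattern $\alpha_1,\alpha_2,\alpha_1,\alpha_2$, so some \emph{other} edge of that same chain (hence of the cycle, by the degree-$2$ observation you already made) has color exactly $h(x)$; the conclusion $\bigwedge_i h(x_i)\le h(x)$ is then immediate, with no need to compute $\alpha_1\wedge\alpha_2$. If $x$ is old, one contracts every maximal new portion of the cycle to the base of its cell, obtaining a cycle $x,y_1,\ldots,y_k$ in $\Theta_m$; since each base has color $\ge$ the meet of the colors on any of its chains, $\bigwedge h(x_i)\le\bigwedge h(y_j)$, and the induction hypothesis finishes.

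This two-way split subsumes your case (ii) entirely: a pentagon with $x$ the base contracts to the degenerate cycle $x,x$, for which the inductive statement is vacuous; a pentagon or two-chain cycle with $x$ a chain edge is handled by the ``$x$ new'' branch. It also eliminates the internal sub-split you perform inside case (iii). And it sidesteps the issue you flag as your main obstacle: the paper does not need the contracted walk to be a \emph{simple} cycle in $\Theta_m$, since the inductive statement holds trivially for degenerate cycles and can be read for arbitrary closed walks.

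So your proof works, but the paper's is shorter because it localizes the case analysis at $x$ rather than at $C$.
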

\begin{proof}
\ref{Pudlak(1)}: Since $\alpha_0\wedge\alpha\le\alpha_0$ for all $\alpha\in K$, $h_1$ is already an
onto mapping.

\ref{Pudlak(2)}: Every edge $x\in r$ is the base of a copy of an $h(x)$-cell. Thus if
$\alpha_1\wedge\alpha_2\le h(x)$, then the cycle $x,x_1,\ldots,x_4$ valued by
$\alpha,\alpha_1,\alpha_2,\alpha_1,\alpha_2$ has been included in $B_{h(x)}$.

\ref{Pudlak(3)} is true for $\Theta_0$, since it contains no cycles. If proved for all cycles in
$\Theta_m$, assume that $x,x_1,\ldots,x_n$ is a cycle in $\Theta_{m+1}$. If $x\in r_{m+1}-r_m$,
then $x$ lies on one of the chains of some $B_\alpha$ added at the $(m+1)$-st step and every cycle
containing $x$ has to contain this whole chain --- which contains another edge $y$ with $h(x)=h(y)$.
Thus $h(x)=h(x_k)$ for some $k$ and $\bigwedge_{i=1}^n h(x_i)\le h(x)$. If $x\in r_m$ then by
replacing every portion of the cycle $x,x_1,\ldots,x_n$ not contained in $r_m$ by the base of its
cell we obtain a cycle $x,y_1,\ldots,y_k$ in $r_m$ satisfying $\bigwedge h(x_i)\le\bigwedge
h(y_i)\le h(x)$, by the induction hypothesis and by an easy observation that the value of the base
of an arbitrary $\alpha$-cell is greater than or equal to the meet of values of the edges of any of its chains.
\end{proof}

The conditions \ref{Pudlak(4)} and \ref{Pudlak(5)} involve the stabilizer of $\Theta$ --- they say
that ``there are enough stable mappings''. To prove these two conditions, we will define certain
stable mappings explicitly.

First of all, note that there is a stable involution of every cell $B_\alpha$ onto itself which
interchanges the endpoints of the base of $B_\alpha$ ($\la\alpha_1,\alpha_2\ra\in\alpha C$ iff
$\la\alpha_2,\alpha_1\ra\in\alpha C$). This gives rise to the following:

\begin{lem}\label{PudlakClaim2}
Every stable mapping $f:\Theta_n\to\Theta_m$ can be extended to a stable mapping
$f':\Theta_{n+1}\to\Theta_{m+1}$ and, consequently, to a stable mapping $f'':\Theta\to\Theta$.
\end{lem}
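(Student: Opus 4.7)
The plan is to define $f'$ on each newly attached cell of $\Theta_{n+1}$ independently. Recall that $\Theta_{n+1}$ is built from $\Theta_n$ by attaching, to every edge $x = \{c,d\} \in r_n - r_{n-1}$ of color $\alpha = h_n(x)$, one fresh copy $B_\alpha^{(x)}$ of the abstract $\alpha$-cell $B_\alpha$, with a chosen orientation identifying the abstract base $\{a,b\}$ with $\{c,d\}$. Since $f$ is already defined on $\Theta_n$, the only work is to specify $f'$ on the new vertices of each $B_\alpha^{(x)}$.

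For each such $x$ I split into two cases according to the action of $f$ on its endpoints. If $f(c) = f(d)$, define $f'$ to be constant with value $f(c)$ on all new vertices of $B_\alpha^{(x)}$; every new edge of the cell then has its two endpoints sent to the same point, so the first clause in the definition of stability is trivially satisfied. If $f(c) \neq f(d)$, then stability of $f$ forces $\{f(c), f(d)\} \in r_m$ with $h(\{f(c), f(d)\}) = \alpha$. Since every edge of $\Theta$ is the base of a unique $\alpha$-cell attached at the stage immediately after its introduction, the cell $B_\alpha^{(f(x))}$ based on $\{f(c), f(d)\}$ lies in $\Theta_{m'+1} \subseteq \Theta_{m+1}$, where $m' \leq m$ is the stage at which $\{f(c), f(d)\}$ entered. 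I then define $f'$ on $B_\alpha^{(x)}$ by the canonical isomorphism $B_\alpha^{(x)} \to B_\alpha^{(f(x))}$ between the two copies of the abstract $B_\alpha$, composed when necessary with the base-swapping involution of $B_\alpha$ recorded just before the lemma, so that $c \mapsto f(c)$ and $d \mapsto f(d)$.

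Stability of $f'$ is then immediate: old edges are handled by the stability of $f$, and each new edge lies inside some $B_\alpha^{(x)}$, where by construction it is either collapsed to a point or sent to an edge of $B_\alpha^{(f(x))}$ of the same color. For the direct-limit statement, I iterate: set $f_0 := f$ and, for each $k \geq 0$, apply the one-step extension to obtain a stable map $f_{k+1} : \Theta_{n+k+1} \to \Theta_{m+k+1}$ that extends $f_k$; then $f'' := \bigcup_{k \geq 0} f_k$ is a stable self-map of $\Theta = \bigcup_n \Theta_n$. The one point requiring care is the orientation bookkeeping in Case 2 — matching the identification of $\{a,b\}$ with $\{c,d\}$ inside $B_\alpha^{(x)}$ against the identification with $\{f(c),f(d)\}$ inside $B_\alpha^{(f(x))}$ — but that is precisely what the base-swapping involution of $B_\alpha$ is designed to fix, so nothing more than this choice of orientation needs to be argued.
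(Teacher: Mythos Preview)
Your proof is correct and follows essentially the same approach as the paper's: the two-case split on whether $f$ collapses the base edge, the constant extension in the collapsing case, and the cell-isomorphism (adjusted by the base-swapping involution) in the non-collapsing case are exactly what the paper does. You are in fact slightly more careful than the paper in noting that the target cell may have been attached at an earlier stage $m' \le m$ and hence already lies in $\Theta_{m+1}$, and in flagging the orientation bookkeeping explicitly.
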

\begin{proof}
If $t\in\Theta_{n+1}-\Theta_n$, denote by $\{a,b\}$ the base of a cell $B$ containing $t$. If
$f(a)=f(b)$, set $f'(t)=f(a)$. If $c=f(a)\ne f(b)=d$, then $h_n(\{a,b\})=h_m(\{c,d\})$ since $f$ is
stable. Therefore $\{c,d\}$ is the base of another copy of the cell $B$. $f'$ is the extension of
$f$ by the bijection of $B$ which is determined by $f\restrict\{a,b\}$. In both cases it is clear
that $f'$ is again stable.
\end{proof}

\begin{df}
Let $x\in r_m-r_{m-1}$, $m>0$. Define $f_x:\Theta_m\to\Theta_m$ as follows: $x=x_k$ in some chain
$x_1,x_2,x_3,x_4$ of a cell based on $\{a,b\}\in r_{m-1}$, $a=u_0$, $b=u_4$, $x_i=\{u_{i-1},u_i\}$,
$h_m(x_j)=h_m(x_{j+2})=\alpha_j$ for $j=1,2$. If $k=1,2$, set $f_{x_k}(u_k)=f_{x_k}(u_{k+1})=u_k$,
$f_{x_k}(y)=u_{k-1}$ for all other $y\in\Theta_m$, if $k=3,4$ set
$f_{x_k}(u_k)=f_{x_k}(u_{k-1})=u_{k-1}$, $f_{x_k}(y)=u_k$ for all other $y\in\Theta_m$.

For $x\in r_0$, let $f_x$ be the identity mapping on $\Theta_0$. In particular,
$f^+_{x_k}(\Theta_m)=x_k$ and all $f_{x_k}$ are stable.
\end{df}

\begin{lem}
$\Theta$ satisfies \ref{Pudlak(4)}.
\end{lem}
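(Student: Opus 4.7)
The plan is to reduce the problem to a finite stage and then extend. Given $x, y \in r$ with $h(x) = h(y) = \alpha$, first choose $m$ large enough that both $x$ and $y$ lie in $r_m$, and aim to build a stable $f: \Theta_m \to \Theta_m$ with $f^+(x) = y$. Once such an $f$ is constructed, iterated application of Lemma \ref{PudlakClaim2} extends it to a stable self-map of all of $\Theta$, which is what condition \ref{Pudlak(4)} requires.

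The construction at stage $m$ proceeds by induction on $m$ with a case analysis on the relative position of $x$ and $y$. The building blocks available are: (i) the stable retractions $f_z: \Theta_n \to \Theta_n$ for $z \in r_n - r_{n-1}$ from the preceding definition, satisfying $f_z^+(\Theta_n) = z$ and fixing the endpoints of $z$ pointwise; (ii) the stable involution of each $\alpha$-cell noted just before Lemma \ref{PudlakClaim2}, which interchanges the two endpoints of the base and, since $\langle \alpha_1, \alpha_2 \rangle \in \alpha C$ iff $\langle \alpha_2, \alpha_1 \rangle \in \alpha C$, accordingly permutes chain edges of equal color; and (iii) the isomorphism, as $K$-colored graphs, between any two $\alpha$-cells. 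When $x$ and $y$ are chain edges of the same cell of matching color, the involution (ii) realises $f$ directly. When they sit in different cells attached to a common parent edge, the isomorphism (iii) does the job. Otherwise, one uses the $f_z$ retractions together with earlier-stage stable maps from the induction hypothesis to shuttle $x$ into a position where one of the two favourable configurations applies.

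The main obstacle will be the injectivity requirement on the endpoints of $x$: we need $f^+(x)$ to be a genuine two-element edge rather than a singleton. The raw $f_z$ retractions are highly collapsing, so the composition must be arranged so that the two endpoints of $x$ end up separated at the two endpoints of $y$. This is where the cell involutions and same-color cell isomorphisms carry the weight: they supply the non-collapsing part of $f$ on neighbourhoods of $x$ and $y$, while the $f_z$'s consistently handle everything else and justify stability of the composite on the remainder of $\Theta_m$.
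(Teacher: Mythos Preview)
Your plan is workable in spirit but vastly overcomplicates the matter, and as written it does not close the gap you yourself flag. You already list as building block (i) the retraction $f_z$ with $f_z^+(\Theta_n)=z$ that fixes the endpoints of $z$ pointwise; that single tool is the whole proof. Choose $m$ with $x\in r_m-r_{m-1}$; then $f_x:\Theta_m\to\Theta_m$ collapses everything onto the two-point set $x$ and is the identity on $x$. Any bijection $g:x\to y$ is stable (same colour), so $g\circ f_x:\Theta_m\to\Theta_n$ is stable with $(g\circ f_x)^+(x)=g^+(x)=y$, and Lemma \ref{PudlakClaim2} extends it to $\Theta$. No induction on $m$, no case analysis on the relative positions of $x$ and $y$, no cell involutions or cell-to-cell isomorphisms are needed.

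The paper's proof is exactly this two-line argument. Your proposal, by contrast, never actually resolves the ``injectivity obstacle'' you raise: you say the composition ``must be arranged so that the two endpoints of $x$ end up separated'' but give no mechanism for doing so in the general case where $x$ and $y$ sit in unrelated parts of the graph. The fix is to recognise that taking $z=x$ in your building block (i) already guarantees the separation, so the elaborate machinery of (ii) and (iii) and the inductive shuttle are unnecessary.
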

\begin{proof}
Let $x\in r_m-r_{m-1}$, $y\in r_n$, $h(x)=h(y)$. Let $f_x:\Theta_m\to\Theta_m$ be the stable
mapping defined above. Since $f^+_x(\Theta_m)=x$, the composite mapping $g\circ f_x$ --- where
$g:x\to y$ is a bijection --- is also a stable mapping of $\Theta_m$ to $\Theta_n$. By Lemma
\ref{PudlakClaim2}, $g\circ f_x$ can be extended to a stable $f:\Theta\to\Theta$.
\end{proof}

\begin{lem}
$\Theta$ satisfies \ref{Pudlak(5)}.
\end{lem}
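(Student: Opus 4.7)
The plan is to prove (5) by induction on the least stage $n$ for which both $c,d\in\Theta_n$, simultaneously producing a graph path from $c$ to $d$ inside $\Theta_n$ whose every edge is $\{c,d\}$-dominated through $F=\Stab(\Theta)$. The base case $n=0$ is immediate: $\Theta_0$ has one edge $\{a,b\}$, so necessarily $\{c,d\}=\{a,b\}$, and the identity map in $F$ realizes the domination on that single edge.

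For the inductive step with $c\ne d$ in $\Theta_{n+1}$, first consider the case $c,d\in\Theta_n$. The inductive hypothesis supplies a path inside $\Theta_n$ whose witnesses are stable self-maps of $\Theta_n$; Lemma \ref{PudlakClaim2} extends each such map to a stable self-map of $\Theta$ agreeing with the original on $\Theta_n$, so the same walks witness domination through $F$. Otherwise $c$ (say) lies in the interior of a cell $B$ attached at stage $n+1$ over a base $\{a,b\}\in r_n$, and I route the desired path as $c\to\cdots\to a\to\cdots\to d$ (with a symmetric treatment when $d$ is also interior to a new cell, possibly via $b$ instead of $a$). The portion inside $B$ is dominated using the stable maps $f_x\in F$ already constructed, which collapse $\Theta_{n+1}$ onto each chain edge $x$; a case analysis on the position of $c$ within its chain shows that for each edge $x$ of the chosen sub-path from $c$ to the base vertex, $f_x$ separates $c$ from $d$ and hence realizes $\{c,d\}D_F x$ in one step, provided the approach side ($a$ or $b$) is chosen so as to avoid collapse.

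For the sub-path from the base vertex (say $a$) to $d$, the inductive hypothesis applied to $\{a,d\}\subseteq\Theta_n$ yields a path with each edge $\{a,d\}$-dominated through $F$. To upgrade this to $\{c,d\}$-domination, I prove the auxiliary relation $\{c,d\}D_F\{a,d\}$ and invoke the transitivity of $D_F$, which holds because $F$ is a monoid so compositions of witnessing stable maps remain stable. The main obstacle is this auxiliary relation: a single stable map sending $c$ to $a$ while fixing $d$ need not exist, since a naive collapse of $B$ onto its base can violate color preservation on the chain edges. The fix is to realize $\{c,d\}D_F\{a,d\}$ as a multi-step walk formed by chaining $f_x$-type maps that act non-trivially only on a restricted portion of the cell, composing with the cell's base-reversing involution when needed, and verifying stability step by step through Lemma \ref{PudlakClaim2}. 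This case analysis on the relative positions of $c$ and $d$ in the cell hierarchy is the technical core of the argument.
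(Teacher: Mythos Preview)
Your outline coincides with the paper's proof: induction on $m$, routing each new interior point to a base vertex of its cell via the collapse maps $f_x$, applying the inductive hypothesis to the resulting pair of base vertices, and invoking transitivity of $D_F$ to assemble the path. Two corrections are in order.

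First, you should dispose of the case $\{c,d\}\in r$ at the outset (the one-edge path dominated by the identity map). Without this reduction your claim that an approach side can always be ``chosen so as to avoid collapse'' is false: when $c$ and $d$ are adjacent on the same chain, both choices of base vertex lead to a collapse under some $f_{x_i}$ along the way. The paper makes this assumption explicit before beginning the cell analysis.

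Second, and more to the point, what you call the ``main obstacle'' --- establishing $\{c,d\}D_F\{a,d\}$ --- is not an obstacle, and no base-reversing involution or further case analysis is needed. Recall that $\{c,d\}D_F\{a,d\}$ asks only for a walk from $a$ to $d$ each of whose steps is an $F$-image of $\{c,d\}$; it does \emph{not} require a single stable map carrying $c$ to $a$ and fixing $d$. You have already shown, in your cell analysis, that each edge $x_i$ on the sub-path from $a$ to $c$ satisfies $x_i=\{f_{x_i}(c),f_{x_i}(d)\}$. These one-step witnesses concatenate into a walk $a=u_0,u_1,\ldots,u_s=c$ realizing $\{c,d\}D_F\{a,c\}$; appending the single identity step $c\to d$ extends this to a walk from $a$ to $d$, yielding $\{c,d\}D_F\{a,d\}$ immediately. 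The paper uses exactly this concatenation (together with the symmetric piece $d\to a'$ when $d$ is also interior, and the identity step $c\to d$ in the middle) to obtain $\{c,c'\}D_F\{a,a'\}$ in one line.
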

\begin{proof}
By induction: Since the identity mapping of $\Theta$ is in $F$, all the edges of $r$ satisfy
\ref{Pudlak(5)}, in particular $\Theta_0$ satisfies \ref{Pudlak(5)}.

The induction step from $\Theta_m$ to $\Theta_{m+1}$: Let $c,c'\in\Theta_{m+1}$, $\{c,c'\}\not\in
r$, $c\ne c'$. We shall construct a path $x_1,\ldots,x_s$ connecting some point $a$ in $\Theta_m$
to $c$. When $c\in\Theta_m$, the path is empty. If $c\in\Theta_{m+1}-\Theta_m$ then $c$ lies on
some chain $a=u_0,\ldots,u_4=b$ of a cell with base $\{a,b\}$, joined in the $m+1$-th step, say
$c=u_s$, $0<s\le 2$. Then the path $x_1,\ldots,x_s$ is $\{u_0,u_1\},\ldots,\{u_{s-1},u_s\}$. Since
$\{c,c'\}\not\in r$, we have $c'\ne u_{s+1}$, $c'\ne u_{s-1}$, therefore $f_{x_i}$ maps $\{c,c'\}$
onto $x_i$ for $i=1,\ldots,s$. Thus $\{c,c'\}$ dominates $\{c,a\}$. Similarly, there is a path
$x_1',\ldots,x'_t$ connecting some point $a'$ in $\Theta_m$ to $c'$ such that $\{c,c'\}D_F
x_1',\ldots,x_t'$, consequently $\{c,c'\}D_F\{c',a'\}$. Since, clearly, $\{c,c'\}D_F\{c,c'\}$
through the identity mapping, we have $\{c,c'\}D_F\{a,a'\}$. Now, by induction hypothesis, there is
a path in $r$ connecting $a$ to $a'$, say $y_1,\ldots,y_n$, such that $\{a,a'\}D_F\{y_i\}$ for all
$i=1,\ldots,n$. Since $F$ is a monoid, $D_F$ is transitive so that $\{c,c'\}D_F\{y_1,\ldots,y_n\}$.
Hence $x_s,\ldots,x_1,y_1,\ldots,y_n,x_1',\ldots,x_t'$ is the desired path in $r$ connecting $c$ to
$c'$.
\end{proof}

\subsection{Representations of homomorphic images}

Throughout this subsection fix finite lattices $L^0$, $L^1$ and a $\la 0,1,\vee\ra$-homomorphism
$\varphi:L^0\rightarrow \varphi(L^0)\subseteq L^1$. Let the $\wedge$-homomorphism
$\varphi^*:L^1\rightarrow L^0$ be defined by $\varphi^* x=\bigvee\{a\in L^0\mid \varphi(a)\le x\}$ (in
for example \cite{Gierz.ea:80}, $\varphi^*$ would be called the \emph{Galois adjoint} of $\varphi$).

The map $\varphi^*$ has many nice properties; we list the ones we need in the following lemma.

\begin{lem}\label{l:ka}
\begin{enumerate}
\item $\varphi^*$ is a $\la\wedge,1\ra$-homomorphism.
\item If $x<1$ then $\varphi^*x<1$.
\item $\varphi^*$ is injective on $\varphi L^0$.
\item $a\le\varphi^*x\iff\varphi^*\varphi a\le\varphi^* x$.
\end{enumerate}
\end{lem}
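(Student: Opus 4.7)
The plan is to set up the Galois adjunction between $\varphi$ and $\varphi^*$ and then let all four items fall out routinely. The first thing to record is that since $\varphi$ is a $\vee$-homomorphism it is monotone ($a\le b\iff a\vee b=b$, so $\varphi(a)\vee\varphi(b)=\varphi(b)$). Define $S_x=\{a\in L^0\mid \varphi(a)\le x\}$. Then $S_x$ is closed under $\vee$: if $\varphi(a),\varphi(b)\le x$, then $\varphi(a\vee b)=\varphi(a)\vee\varphi(b)\le x$. Since $L^0$ is finite, $\varphi^* x=\bigvee S_x$ is itself an element of $S_x$, i.e.\ $\varphi(\varphi^* x)\le x$. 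Dually, for any $a\in L^0$, $a\in S_{\varphi(a)}$, so $a\le\varphi^*\varphi(a)$. These two inequalities together with monotonicity of $\varphi$ immediately give $\varphi(\varphi^*\varphi(a))=\varphi(a)$ (one direction from $\varphi\varphi^*x\le x$ at $x=\varphi(a)$, the other from $a\le \varphi^*\varphi(a)$ and monotonicity of $\varphi$). Also $\varphi^*$ is monotone since $x\le y$ gives $S_x\subseteq S_y$.

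With these in hand I would dispatch the four items in order.

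For (1): $\varphi^*(1)=1$ since $\varphi(1)=1\le 1$ places $1\in S_1$. For $\varphi^*(x\wedge y)=\varphi^*x\wedge\varphi^*y$, the $\le$ direction follows from monotonicity of $\varphi^*$ applied to $x\wedge y\le x,y$. For the $\ge$ direction, apply monotonicity of $\varphi$ to $\varphi^*x\wedge\varphi^*y\le\varphi^*x,\varphi^*y$ and use $\varphi\varphi^*\le\mathrm{id}$ to get $\varphi(\varphi^*x\wedge\varphi^*y)\le x\wedge y$, so $\varphi^*x\wedge\varphi^*y\in S_{x\wedge y}$.

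For (2): if $\varphi^*x=1$ then $1\in S_x$, i.e.\ $\varphi(1)=1\le x$, whence $x=1$; contrapose. For (3): if $x=\varphi(a)$ and $y=\varphi(b)$ with $\varphi^*x=\varphi^*y$, apply $\varphi$ and use $\varphi\varphi^*\varphi=\varphi$ (established above) to get $x=\varphi(\varphi^*x)=\varphi(\varphi^*y)=y$. For (4): the forward direction is $a\le\varphi^*x\Rightarrow\varphi(a)\le\varphi(\varphi^*x)\le x$, hence $\varphi^*\varphi(a)\le\varphi^*x$ by monotonicity of $\varphi^*$; the backward direction is $a\le\varphi^*\varphi(a)\le\varphi^*x$.

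The only real content is the single observation that $S_x$ is join-closed, which is what guarantees $\varphi\varphi^* x\le x$; the rest is pure formal manipulation with a Galois connection, so I do not expect any genuine obstacle.
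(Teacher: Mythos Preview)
Your proof is correct and follows essentially the same approach as the paper. The paper's one-line proof simply states that the four items follow from the fact that $\{a\in L^0\mid \varphi(a)\le x\}$ is the principal ideal generated by $\varphi^*(x)$; this is exactly the Galois adjunction $a\le\varphi^*x\iff\varphi(a)\le x$ that you set up explicitly, and your argument spells out the routine verifications that the paper omits.
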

\begin{proof}
These all follow easily from the definition of $\varphi^*$ and the fact that $\{a\in
L^0\mid \varphi(a)\le x\}$ is the principal ideal generated by $\varphi^*(x)$, i.e., $\{a\in
L^0\mid a\le\varphi^*(x)\}$.
\end{proof}

\begin{df}\label{d:key}
We say that \emph{$\Theta(L^1)$ embeds in $\Theta(L^0)$ with respect to $\varphi$} if there is a function
$\Theta(\varphi):\Theta L^1\rightarrow\Theta L^0$ such that $x\sim_{\varphi\alpha}
y\Iff\Theta(\varphi)(x)\sim_\alpha \Theta(\varphi)(y)$.
\end{df}

\begin{lem}\label{l:kiy}
Let $\mf C(\varphi)\Theta L^1$ be the graph obtained from $\Theta L^1$ by replacing each color
$\alpha$ by $\varphi^*\alpha$. $\mf C(\varphi)\Theta L^1$ is isomorphic to a subgraph of $\Theta
L^0$.
\end{lem}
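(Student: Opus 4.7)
The plan is to construct, by induction on the stage $n$ of the Pudl\'ak construction of $\Theta L^1$, an injective color-preserving graph homomorphism $\iota:\mathfrak{C}(\varphi)\Theta L^1\to\Theta L^0$; its image will be the desired subgraph. For the base case, exploit the noted freedom in choosing the initial edge color of $\Theta L^0$: set $\alpha_0^{(0)}=\varphi^*\alpha_0^{(1)}$, which lies in $L^0-\{1\}$ by Lemma \ref{l:ka}(2), and map $\Theta_0 L^1$ endpoint-to-endpoint onto $\Theta_0 L^0$. (Alternatively, since for any $\beta\in L^0-\{1\}$ the pair $(\beta,0_{L^0})$ satisfies $\beta\wedge 0_{L^0}=0_{L^0}\le\alpha$, a $\beta$-colored edge already appears in any cell attached at the first stage, so any color is available in $\Theta L^0$.)

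For the inductive step, suppose $\iota$ has been defined on $\mathfrak{C}(\varphi)\Theta_n L^1$, and let $e=\{a,b\}$ be an edge of $\Theta_n L^1$ of color $\alpha$. The portion of $\Theta_{n+1} L^1$ attached to $e$ consists of several $\alpha$-cells, each made up of pentagons indexed by pairs $(\alpha_1,\alpha_2)\in L^1\times L^1$ with $\alpha_1\wedge\alpha_2\le\alpha$. After recoloring, such a pentagon has base edge $\iota(e)$ of color $\varphi^*\alpha$ and chain colors $\varphi^*\alpha_1,\varphi^*\alpha_2$. By Lemma \ref{l:ka}(1), $\varphi^*\alpha_1\wedge\varphi^*\alpha_2=\varphi^*(\alpha_1\wedge\alpha_2)\le\varphi^*\alpha$, using that $\varphi^*$ is a meet-homomorphism and hence order-preserving. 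Consequently every $\varphi^*\alpha$-cell of $\Theta L^0$ based at $\iota(e)$ contains a pentagon whose chain has colors exactly $\varphi^*\alpha_1,\varphi^*\alpha_2$, into which the recolored pentagon embeds via the unique color-matching bijection on the three interior vertices.

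To keep $\iota$ globally injective, distinct pentagons of $\mathfrak{C}(\varphi)\Theta_{n+1} L^1$ must be assigned to distinct target pentagons in $\Theta L^0$. The map $(\alpha_1,\alpha_2)\mapsto(\varphi^*\alpha_1,\varphi^*\alpha_2)$ need not be injective, so pentagons of a single $\alpha$-cell in $L^1$ that share a common $\varphi^*$-image pair cannot all be placed inside one $\varphi^*\alpha$-cell of $L^0$; instead they are spread across distinct $\varphi^*\alpha$-cells attached to the common base $\iota(e)$. Different $\alpha$-cells attached to $e$ are likewise routed to different $\varphi^*\alpha$-cells attached to $\iota(e)$, and edges $e\neq e'$ in $\Theta_n L^1$ have $\iota(e)\neq\iota(e')$ by the inductive hypothesis, so their attached cells receive disjoint images. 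The main obstacle is precisely this bookkeeping; it is handled by the modified Pudl\'ak construction, since $\Theta L^0=\bigcup_n\Theta^{(n)}_n(L^0)$ supplies unboundedly many $\varphi^*\alpha$-cells at each edge while only finitely many pentagons need placement at any given edge at any stage, so a straightforward counting argument guarantees that fresh target cells are always available.
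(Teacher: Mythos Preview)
Your argument is correct and follows essentially the same route as the paper's: both rest on Lemma~\ref{l:ka}(1) (so recolored pentagons encode valid $L^0$-inequalities) and Lemma~\ref{l:ka}(2) (so recoloring never collapses a genuine edge). The paper phrases the embedding subtractively---run through the construction of $\Theta L^0$ and omit those pentagons whose colors fall outside $\varphi^*L^1$ or whose defining inequality has no witness in $L^1$---while you build the map $\iota$ forwards; but the content is identical. You are more careful than the paper about one point it leaves implicit: the possible non-injectivity of $(\alpha_1,\alpha_2)\mapsto(\varphi^*\alpha_1,\varphi^*\alpha_2)$ forces pentagons of a single source cell to be distributed across several target cells, and the modified construction $\Theta_j=\Theta^{(j)}_j$ (supplying unboundedly many cells at each edge) is exactly what makes this possible---indeed, this is the reason the modification was introduced.
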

\begin{proof}
Each pentagon of $\mf C(\varphi)\Theta L^1$ represents an inequality of the form
``$\varphi^*\alpha_1\wedge\varphi^*\alpha_2\le\varphi^*\alpha$'', for $\alpha_1,\alpha_2,\alpha\in
L^1$ satisfying $\alpha_1\wedge\alpha_2\le\alpha$. Then
$\varphi^*\alpha_1\wedge\varphi^*\alpha_2=\varphi^*(\alpha_1\wedge\alpha_2)\le\varphi^*\alpha$, so
the represented inequality holds in $L^0$.

Hence we can obtain an isomorphic copy of $\mf C(\varphi)\Theta L^1$ within $\Theta L^0$ by running
through the construction of $\Theta L^0$, omitting every pentagon that either represents an
inequality involving members of $L^0-\varphi^* L^1$ or (recursively) is based on such a pentagon,
and omitting inequalities that are true in $L^0$ but not in $L^1$. Since $\varphi^*(x)=1\to x=1$,
recoloring of points is never identification of points.
\end{proof}

\begin{lem}\label{l:tableproperty}
$\Theta(L^1)$ embeds in $\Theta(L^0)$ with respect to $\varphi$.
\end{lem}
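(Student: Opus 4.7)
The plan is to take $\Theta(\varphi): \Theta L^1 \to \Theta L^0$ to be the embedding supplied by Lemma \ref{l:kiy}, i.e., the map realizing $\mf C(\varphi)\Theta L^1$ as a subgraph of $\Theta L^0$. Under it, a $\Theta L^1$-edge of $L^1$-color $\beta$ becomes a $\Theta L^0$-edge of $L^0$-color $\varphi^*\beta$. The single algebraic ingredient driving both directions is the Galois adjunction $\varphi a \le b \Iff a \le \varphi^* b$, which is immediate from the definition of $\varphi^*$.

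For the forward direction, if $x \sim_{\varphi\alpha} y$ in $\Theta L^1$, a witnessing path has edges of $L^1$-colors $\beta_i \ge \varphi\alpha$; the adjunction yields $\varphi^*\beta_i \ge \alpha$, so the image path in $\Theta L^0$ consists of edges of $L^0$-color $\ge \alpha$, giving $\Theta(\varphi)(x) \sim_\alpha \Theta(\varphi)(y)$.

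The reverse direction is the main obstacle. Given a $\Theta L^0$-path $P$ between $\Theta(\varphi)(x)$ and $\Theta(\varphi)(y)$ with edge colors $\ge \alpha$, one must contract $P$ into the embedded subgraph; only then can one pull it back to $\Theta L^1$ and convert (again by the adjunction, now in the form $\varphi^*\beta \ge \alpha \Iff \beta \ge \varphi\alpha$) to a $\Theta L^1$-path with colors $\ge \varphi\alpha$. The contraction argument is modeled on that inside the proof of Lemma \ref{l:basic}. Each non-initial edge of $\Theta L^0$ is a chain edge of a unique pentagon whose base lies at a strictly smaller construction stage, and an edge outside the subgraph is precisely a chain edge of a pentagon not preserved by the construction of $\mf C(\varphi)\Theta L^1$. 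A maximal sub-path of $P$ traversing the chain of such an omitted pentagon enters and exits at its base vertices $a, b$, and can be shortcut by the base edge $\{a,b\}$: if the pentagon's defining inequality is $\alpha_1 \wedge \alpha_2 \le h(\{a,b\})$ and the chain edges on the sub-path carry colors $\ge \alpha$, then $h(\{a,b\}) \ge \alpha_1 \wedge \alpha_2 \ge \alpha$, so the color bound is preserved.

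I iterate, prioritizing the removal of non-subgraph vertices of highest construction stage, so that each shortcut strictly lowers this maximum. The point requiring care is the cascade: the base edge of a shortcut pentagon may itself be a chain edge of a still-omitted pentagon at a lower stage, forcing another shortcut, and so on. Because stages are well-founded and each cascade strictly decreases the stage of the offending edge, finitely many steps produce a path lying entirely in $\Theta(\varphi)(\Theta L^1)$, which then pulls back to the required $\Theta L^1$-path as above, completing the proof.
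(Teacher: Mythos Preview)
Your proof is correct and follows the same overall plan as the paper: take $\Theta(\varphi)$ to be the embedding of Lemma~\ref{l:kiy}, argue that paths in $\Theta L^0$ between image points can be contracted into the embedded subgraph without spoiling the color bound, and then translate colors via the Galois adjunction. The paper compresses the contraction step into the single assertion ``using edges in $\Theta L^0 - \Theta(\varphi)\mf C(\varphi)\Theta L^1$ does not induce any new equivalences,'' and routes the color translation through the intermediate relation $\sim_{\varphi^*\varphi\alpha}$ together with Lemma~\ref{l:ka}(3),(4) rather than invoking the adjunction $\varphi a\le b\Iff a\le\varphi^* b$ directly; your version is more explicit on the contraction and more direct on the algebra, but the content is the same.
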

\begin{proof}
Let $\Theta(\varphi):\mf C(\varphi)\Theta L^1\rightarrow \Theta L^0$ be the embedding of Lemma
\ref{l:kiy}.

Using edges in $\Theta L^0 - \Theta(\varphi)\mf C(\varphi)\Theta L^1$ does not induce any new
equivalences in $\Theta(\varphi)\mf C(\varphi)\Theta L^1$, so we can confuse $\Theta(\varphi)\mf
C(\varphi)\Theta L^1$ and $\Theta L^0$ when discussing elements of the former. Elements of the
underlying set of $\Theta L^1$ will be understood to be colored as in $\Theta L^1$ and not as in
$\mf C(\varphi)\Theta L^1$.

Let $x,y\in\Theta L^1$ and $\alpha\in L^0$. By Lemma \ref{l:kiy} and by definition of $\mf
C(\varphi)\Theta L^1$, $x\sim_{\varphi\alpha}y\Iff\Theta(\varphi) x
\sim_{\varphi^*\varphi\alpha}\Theta(\varphi) y$. (Here we use the fact that $\varphi^*$ is
injective on $\varphi L^0$.)

Colors of edges in $\Theta(\varphi)\mf C(\varphi)\Theta L^1$ are all of the form $\varphi^*\alpha$
for some $\alpha$, and hence of the form $\varphi^* \varphi\gamma$ for some $\gamma$ (namely
$\gamma=\varphi\varphi^*\alpha$).

Now $\Theta(\varphi)x\sim_{\varphi^*\varphi\alpha}\Theta(\varphi)y$ iff there is a path from
$\Theta(\varphi)x$ to $\Theta(\varphi)y$, all edges of which are colored
$\ge\varphi^*\varphi\alpha$, or equivalently by Lemma \ref{l:ka}, colored $\ge\alpha$. Hence
equivalently $\Theta(\varphi)x\sim_\alpha\Theta(\varphi)y$.
\end{proof}

\subsection{$\Sigma^0_2$-presentable semilattices as direct limits}

\begin{df}
Suppose $\lesssim$ is a preorder (transitive and reflexive binary relation) on a countable set $L$
and $\vee^*$ is a binary operation on $L$. Define an equivalence relation $\approx$ by 
\[
	a\approx b\quad\Longleftrightarrow\quad a\lesssim b\quad\text{and}\quad b\lesssim a.
\] 
Let $L/\negthickspace\approx$ be the set of
$\approx$-equivalence classes.

Let $(\lesssim\negthickspace/\negthickspace\approx)$ and $(\vee^*/\negthickspace\approx)$ be the
relation on $L/\negthickspace\approx$ induced by $\lesssim$ and the operation on
$L/\negthickspace\approx$ induced by $\vee^*$, respectively.

Assume $L/\negthickspace\approx=\la
L/\negthickspace\approx,\lesssim\negthickspace/\negthickspace\approx,\vee^*/\negthickspace\approx\ra$
is an upper semilattice. Then we call $\vee^*$ a \emph{pre-join function}. Assume that $\lesssim$ is
$\Sigma^0_1(\mb a)$ (so $\approx$ is $\Sigma^0_1(\mb a)$ too) and $\vee^*$ is $\Delta^0_1(\mb a)$,
where $\mb a$ is a Turing degree.

If such $\lesssim$, $\vee^*$ exist then (the upper semilattice isomorphism type of)
$L/\negthickspace\approx$ is called $\Sigma^0_1(\mb a)$-\emph{presentable}.
\end{df}

\begin{lem}
$[\mb a,\mb b]$ is $\Sigma^0_3(\mb b)$-presentable, for any Turing degrees $\mb a\le\mb b$.
\end{lem}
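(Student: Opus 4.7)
The plan is to take $L=\omega$ and represent degrees in $[\mb a,\mb b]$ by their $B$-computable members. Fix $A\in\mb a$ and $B\in\mb b$ with $A\le_T B$, and for each $e\in\omega$ set $A_e = A\oplus\{e\}^B$ when $\{e\}^B$ is total, and $A_e = A$ otherwise. Then $A\le_T A_e\le_T B$ in either case, so $\deg A_e\in [\mb a,\mb b]$; and every $\mb c\in [\mb a,\mb b]$ is hit, since a representative $C\in\mb c$ with $A\le_T C\le_T B$ can be written $C=\{e\}^B$ for some total index, giving $A_e\equiv_T C$. Declaring $e\lesssim e'$ iff $A_e\le_T A_{e'}$, the quotient $\omega/\negthickspace\approx$ will then be upper-semilattice isomorphic to $[\mb a,\mb b]$ via $[e]\mapsto\deg A_e$.

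To put $\lesssim$ in $\Sigma^0_3(B)$, I split into cases by totality of $\{e\}^B$ and $\{e'\}^B$ (each a $\Pi^0_2(B)$ condition). The case ``$\{e\}^B$ non-total'' is $\Sigma^0_2(B)$ and forces $e\lesssim e'$ trivially. Otherwise $\{e\}^B$ is total and I must express ``$\{e\}^B\le_T D$'' where $D=A$ (when $\{e'\}^B$ is non-total) or $D=A\oplus\{e'\}^B$ (when $\{e'\}^B$ is also total). This unfolds to $\exists i\,\forall x\,\exists s\,[\{i\}^D_s(x){\downarrow}=\{e\}^B(x)]$, whose matrix is $\Sigma^0_1(B)$: ``$\{e\}^B(x)=v$'' is $\Sigma^0_1(B)$, and ``$\{i\}^D_s(x){\downarrow}=v$'' is captured by existentially guessing $\{i\}$'s finitely many oracle queries and checking consistency with $A$ (which is $\Delta^0_1(B)$ since $A\le_T B$) and with the partial $B$-computable $\{e'\}^B$ (which is $\Sigma^0_1(B)$ query-by-query). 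Each resulting case is therefore $\Sigma^0_3(B)$, and the finite disjunction stays in $\Sigma^0_3(B)$.

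For $\vee^*$, since totality of $\{e\}^B$ is $\Pi^0_2(B)$, it is decidable from $B''$. I case-split: if both $\{e\}^B,\{e'\}^B$ are total, output the natural join index $k(e,e')$ with $\{k(e,e')\}^B(2x)=\{e\}^B(x)$ and $\{k(e,e')\}^B(2x+1)=\{e'\}^B(x)$, so that $A_{k(e,e')}\equiv_T A_e\oplus A_{e'}$; if exactly one is total, output that one; and if neither is, output a fixed index $e_\bot$ with $\{e_\bot\}^B$ total and $A_{e_\bot}\equiv_T A$. Then $\vee^*\in\Delta^0_1(B'')=\Delta^0_3(B)$, and in every case $A_{\vee^*(e,e')}\equiv_T A_e\oplus A_{e'}$, so $\vee^*$ induces the upper-semilattice join on $\omega/\negthickspace\approx$.

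The main obstacle is matching the definition of $\lesssim$ to the convention $A_e=A$ for non-total $\{e\}^B$. A single uniform formula such as ``$\exists i$ such that $\{i\}^{A\oplus\{e'\}^B}$ eventually succeeds on the domain of $\{e\}^B$'' would not correctly reproduce $A_e\le_T A_{e'}$ when $\{e'\}^B$ is only partial, since it would erroneously allow reductions that exploit the defined portion of $\{e'\}^B$; this is what forces the explicit case-split, though each individual piece fits, without inflation, inside $\Sigma^0_3(B)$.
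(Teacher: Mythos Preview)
Your argument is correct. Both you and the paper represent degrees in $[\mb a,\mb b]$ by indices of $B$-computable functions, observe that Turing reducibility between such functions is $\Sigma^0_3(B)$, and build a $B''$-recursive join; the difference lies only in how non-total indices are handled. The paper first isolates the set $C=\{i:\{i\}^B\text{ total and }A\le_T\{i\}^B\}$, notes $C\in\Sigma^0_3(B)$, and fixes an injective $B''$-recursive enumeration $h$ of $C$; it then works on $\omega$ via $h$, setting $i\lesssim j\iff\{h(i)\}^B\le_T\{h(j)\}^B$ and $i\vee^* j=h^{-1}(h(i)\oplus h(j))$, so no case analysis is needed once $h$ is in hand. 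You instead keep all indices, send the bad ones to $A$, and pay for this with the totality case-split in both $\lesssim$ and $\vee^*$. The paper's route is a bit cleaner (the definitions of $\lesssim$ and $\vee^*$ are one-liners), while yours is more self-contained (no need to invoke a $B''$-recursive enumeration of a $\Sigma^0_3(B)$ set or to check that $h^{-1}$ is defined on $h(i)\oplus h(j)$); both fit comfortably inside the required complexity bounds.
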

\begin{proof}
Let $B\in\mb b$, $A\in\mb a$, choose $e$ such that $A=\{e\}^B$, and let 
\[
C=\{i\mid \{i\}^B\text{ is total and } \{e\}^B\le_T \{i\}^B\}.
\]
The set $C$ is $\Sigma^0_3(B)$ by a standard argument, so
$C=\{h(n)\mid n<\omega\}$ for some injective $h\le_T B''$.

Let $\lesssim$ be the binary relation on $\omega$ given by $i\lesssim j\iff \{h(i)\}^B\le_T
\{h(j)\}^B$. Recall the function $\oplus:2^\omega\times 2^\omega\rightarrow 2^\omega$ defined by
$A\oplus B=\{2x\mid x\in A\}\cup\{2x+1\mid x\in B\}$. Let $\oplus:\omega\times\omega\rightarrow\omega$ be
a total recursive function such that for each $X\subseteq\omega$ and $a,b<\omega$, if $\{a\}^X$,
$\{b\}^X$ are both total and $\{a\}^X, \{b\}^X\subseteq\omega$ then $\{a\oplus
b\}^X=\{a\}^X\oplus\{b\}^X$. Let $i\vee^* j=h^{-1}(h(i)\oplus h(j))$.

It is easily verified that $\lesssim$ is $\Sigma^0_3(B)$ and
$\vee^*:\omega\times\omega\rightarrow\omega$ is $B''$-recursive, and $\lesssim$ and $\vee^*$ have
the required properties.
\end{proof}

\begin{df}[Direct limit]\label{d:bo}
Let a sequence $i\mapsto\la L^i,\varphi_i\ra$ be given, where each $L^i$ is a finite lattice,
$\varphi_i:L^i\rightarrow L^{i+1}$ is a $\la 0,1,\vee\ra$-homomorphism, and $L^i\intersect
L^j=\emptyset$ for $i\ne j$.

Let $L=\bigcup_{i<\omega} L_i$ as a set. Let $\approx$ be the equivalence relation on $L$ generated by $a\approx\varphi_i(a)$ for $a\in
L^i$.

In each $L^i$, let the order be denoted by $\le$ and the join function by $\vee$. Define:
\[
a\lesssim b\quad\Longleftrightarrow\quad(\exists i)(\exists a_0,b_0\in L^i)( a_0\approx a\and b_0\approx b\and
L^i\models a_0\le b_0),
\]
and
\noindent $a\vee^* b=c$ if, for the $i$ with $c\in L^i$, we have
\[
(\exists a_0,b_0\in L^i)(a_0\approx a\and b_0\approx b\and L^i\models a_0\vee b_0=c),
\]
\[
\text{but }\neg(\exists j<i)(\exists a_0,b_0\in L^j)(a_0\approx a\and b_0\approx b).
\]
Let $L/\negthickspace\approx=\la L/\negthickspace\approx,\lesssim\negthickspace/\negthickspace\approx,\vee^*/\negthickspace\approx\ra$
and $L=\la L,\lesssim,\vee^*\ra$.

The \emph{direct limit} of the sequence $i\mapsto\la L^i,\varphi_i\ra$ is the usl $L/\negthickspace\approx$.

By abuse of language we may also speak of $L$ as an usl and as the direct limit of $i\mapsto\la
L^i,\varphi_i\ra$.
\end{df}

\begin{df}\label{permissible}
Let $L$ be an usl and $\mb a$ a Turing degree. We say that \emph{$L$ is $\mb a$-permissible} if the
following holds:

There exists an $\mb a$-recursive relation $R$, a recursive function $f$, and a sequence $L^i$ of
finite lattices with $\la 0,1,\vee\ra$-homomorphisms $\varphi_i:L^i\rightarrow L^{i+1}$, such that
$L$ is the direct limit of $i\mapsto \la L^i,\varphi_i\ra$, and such that for each $i$,
\[
	\forall x R(x,i)\iff f(L^i,i)\ne \la L^i,\text{id}\ra \iff f(L^i,i)=\la L^{i+1},\varphi_i\ra.
\]
That is, a $\Pi_1(\mb a)$ outcome corresponds to an increment in $i$, whereas a $\Sigma_1(\mb a)$
outcome does not.
\end{df}

\begin{pro}
Let $\mb a$ be a Turing degree, and let $L$ be a $\Sigma^0_2(\mb a)$-presentable usl. Then $L$ is
$\mb a$-permissible. Conversely, if $L$ is $\mb a$-permissible then $L$ is $\Sigma^0_2(\mb a)$-presentable.
\end{pro}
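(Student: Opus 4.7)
The plan is to establish both directions using the standard correspondence between $\Sigma^0_2(\mb a)$ assertions and $\mb a$-recursive $\Pi^0_1$-approximations, since the definition of $\Sigma^0_2(\mb a)$-presentability (extending the $\Sigma^0_1(\mb a)$ case defined above) should take $\lesssim$ to be $\Sigma^0_2(\mb a)$ and $\vee^*$ to be $\Delta^0_2(\mb a)$.

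For the direction ``$\Sigma^0_2(\mb a)$-presentable $\Rightarrow$ $\mb a$-permissible,'' I would fix a presentation $\la \omega,\lesssim,\vee^*\ra$ and write $i\lesssim j \iff \exists m\,\forall x\, T(i,j,m,x)$ for some $\mb a$-recursive $T$, together with an $\mb a$-recursive approximation $\vee^*_s$ to the $\Delta^0_2(\mb a)$-operation $\vee^*$. Then I would enumerate the positive atomic assertions about the presentation --- items of the form ``$i\lesssim j$'' with candidate witness $m$, and ``$\vee^*(i,j)=k$'' with threshold stage $s$ --- as a recursive list $A_0,A_1,\ldots$. Starting from $L^0 = \{0,1\}$, define $f(L^n,n)$ recursively to be the canonical finite extension $L^{n+1}$ of $L^n$ that incorporates the assertion $A_n$ (or, when $A_n$ is already forced by $L^n$, to be $\la L^n,\Id\ra$). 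Define $R(x,n)$ to be an $\mb a$-recursive predicate whose universal statement $\forall x\, R(x,n)$ is equivalent to the $\Pi^0_1(\mb a)$-witness for $A_n$ being genuinely true, e.g.\ $T(i,j,m,x)$ holding for every $x$ in the $\lesssim$ case, or $\vee^*_t$ stabilizing on the relevant inputs for $t\ge s$ in the $\vee^*$ case. The direct limit of the resulting chain then recovers $L$.

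For the converse, given data $(R,f,(L^i,\varphi_i))$ witnessing $\mb a$-permissibility, the set of indices at which a genuine extension occurs is $\Pi^0_1(\mb a)$, so the entire sequence $(L^i,\varphi_i)$ is uniformly $\Delta^0_2(\mb a)$-computable, using the recursiveness of $f$. I would present $L$ on the disjoint union $\bigcup_i L^i$, declaring $a\lesssim b$ iff, after pushing $a\in L^n$ and $b\in L^m$ forward along the $\varphi_k$'s to a common $L^N$, the resulting images are comparable there; both the push-forward and the comparison are $\Delta^0_2(\mb a)$ procedures. The operation $\vee^*$ is handled identically. Since $\Delta^0_2(\mb a) \subseteq \Sigma^0_2(\mb a)$, this produces the desired $\Sigma^0_2(\mb a)$-presentation.

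The main obstacle is in the forward direction: arranging $R$ and $f$ so that the recursiveness of $f$ and the $\Pi^0_1(\mb a)$-truth of $\forall x\, R(x,n)$ synchronize perfectly with the genuine extensions of the direct limit. This requires packaging each $\Sigma^0_2(\mb a)$-assertion as a single $\Pi^0_1(\mb a)$-verification of an already-guessed witness, so that the recursive ``extension candidate'' proposed by $f$ becomes a real extension exactly when the guess is confirmed. Once this is set up cleanly, verifying that the direct limit is isomorphic to the given presentation via $a\mapsto[a]_\approx$ is routine bookkeeping on the definition of direct limit from Definition \ref{d:bo}.
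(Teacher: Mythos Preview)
Your proposal is correct and follows essentially the same approach as the paper. Both encode each $\Sigma^0_2(\mb a)$ atomic fact together with a guessed existential witness as an index $i$, have the recursive $f$ propose the corresponding finite extension, and take $R(\cdot,i)$ to be the $\mb a$-recursive matrix whose universal truth confirms that guess; the paper merely packages this by splitting indices into 4-tuples $\la x,y,z,n\ra$ for join facts (extending the carrier once all needed joins are witnessed below $i$) and 3-tuples $\la x,y,n\ra$ for order facts (passing to the quotient forcing $x\le y$), whereas you describe a uniform enumeration $A_0,A_1,\ldots$, but the mechanism is identical. Your converse also matches the paper's: the sequence $i\mapsto\la L^i,\varphi_i\ra$ is $\mb a'$-recursive, and pushing elements forward to a common $L^N$ gives the $\Sigma^0_2(\mb a)$ preorder and $\Delta^0_2(\mb a)$ pre-join of Definition~\ref{d:bo} (with the paper's one caveat that $\vee^*$ must use the first $L^i$ \emph{discovered} to contain representatives, to keep it $\Delta^0_2(\mb a)$).

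One small wrinkle worth tightening: your clause ``when $A_n$ is already forced by $L^n$, output $\la L^n,\Id\ra$'' does not by itself make the biconditional $\forall x\,R(x,n)\iff f(L^n,n)\ne\la L^n,\text{id}\ra$ hold, since a false $A_n$ that is not forced would still get a nontrivial proposed extension. You clearly see this in your final paragraph (``the recursive `extension candidate' proposed by $f$ becomes a real extension exactly when the guess is confirmed''), and the paper's proof is equally informal on this point; just make sure your write-up states explicitly that the sequence $(L^i)$ is defined so that the proposal is adopted only when $\forall x\,R(x,i)$ holds.
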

\begin{proof}
Since $\vee^*$ is $\Delta^0_2(\mb a)\subseteq\Sigma^0_2(\mb a)$, there exists an $\mb a$-recursive
relation $U$ such that $x\vee^*y=z\iff(\exists n)(\forall w)U(x,y,z,w,n)$. (We can write $x\vee^*
y[n]\downarrow=z$ if $(\forall w)U(x,y,z,w,n)$.)

Since $\lesssim$ is $\Sigma^0_2(\mb a)$, there exists an $\mb a$-recursive relation $V$ such that
$x\lesssim y\iff(\exists n)(\forall z)V(x,y,z,n)$. (We can write $x\lesssim y[n]$ if $(\forall
z)S(x,y,z,n)$.)

Fix recursive functions $f_1:\omega^4\to\omega$, $f_2:\omega^3\to\omega$, written $f_1(a,b,c,d)=\la
a,b,c,d\ra$, $f_2(a,b,c)=\la a,b,c\ra$, such that each $n<\omega$ is in the range of exactly one of
$f_1$, $f_2$.

\begin{enumerate}
\item[Case 1.] $i=\la x,y,z,n\ra$ for some $x,y,z,n<\omega$.
If
\[
	(\forall x_0,y_0\in L\union\{x,y,z\})(\exists n_0,z_0)(\la x_0,y_0,z_0,n_0\ra<i\and\forall w(U(x_0,y_0,z_0,w,n_0))
\] 
then $f(L,i)=L\union\{x,y,z\}$, otherwise $f(L,i)=L$.

(Such a pair $\la n_0,z_0\ra$ will always eventually appear, since $(\exists z_0)(x_0\vee^*
y_0=z_0)$. So the above ensures that we always work with an $L$ for which we know $\vee^*\restrict
L$.)

\item[Case 2.] $i=\la x,y,n\ra$ for some $x,y,n<\omega$.

If $\forall zS(x,y,z,n)$ then $f(L,i)$ is the homomorphic image of $L$ obtained from $L$ and the
relation $x\le y$, otherwise $f(L,i)=L$.

\end{enumerate}

Now define $L^0=$the 2-element lattice, and by recursion $L^{i+1}=f(L^i,i)$.

It is clear how to obtain $R$ from $U$ and $V$.

The converse is easy using Definition \ref{d:bo} and the fact that the sequence $i\mapsto\la
L^i,\varphi_i\ra$ is $\mb a'$-recursive. The preorder $\lesssim$ is automatically $\Sigma^0_2(\mb
a)$ as defined there. The definition of $\vee^*$ should be modified so that $L^i$ is not the first
finite lattice containing representatives of the equivalence classes of $a,b$, but rather the first
finite lattice for which we discover that it contains such representatives using the fact that
$\approx$ is $\Sigma^0_2(\mb a)$. (This does not affect the isomorphism type of the limit
$L/\negthickspace\approx$.)
\end{proof}

\subsection{Representations of direct limits}

We now show how to combine embeddings to form a table for an usl given as a direct limit of finite
lattices.

\begin{df} Let $m_i(n)=\mu m(\Theta(\varphi_i)\Theta^{i+1}_n\subseteq\Theta^i_m)$ and let $h(i)=m_i(0)$, for $i<\omega$.
Let $\Theta^0=\Theta(L^0)$ and for $i\ge 1$,
\[
\Theta^i=\Theta(\varphi_0)\cdots\Theta(\varphi_{i-1})\Theta(L^i).
\]
Let a ramification of $\Theta^i$ be given by
\[
\Theta^i_k=
\begin{cases}
\Theta(\varphi_0)\cdots\Theta(\varphi_{i-1})\Theta_j(L^i) & \text{ if } k=m_0m_1\cdots m_{i-1}j\,(\exists j), \\
\Theta^i_{k-1} & \text{otherwise.} \\
\end{cases}
\]
\end{df}

\begin{cor}\label{c:c}
Let $\mb a\in\mc D$, and let $L$ be a $\Sigma^0_1(\mb a)$-presentable usl.
There is
\begin{enumerate}
\item an $\mb a$-recursive function $h$,
\item an $\mb a$-recursive array $\{\Theta^i_j\mid j\ge h(i)\}$, and
\item for each $i<\omega$ an increasing recursive function $m_i$,
\end{enumerate}
such that
\begin{enumerate}
\item $m_i(0)=h(i)$ and for each $k$, $m_i(j)\le k<m_i(j+1)\rightarrow
\Theta^i_k=\Theta^i_{m_i(j)}$,
\item for each $i<\omega$ $\la\Theta^i_{m_i(j)}\mid j\in\omega\ra$ is a recursive sequential Malcev
homogeneous lattice table for $L^i$, and
\item $\Theta^{i+1}$ embeds in $\Theta^i$ with respect to $\varphi_i$.
\end{enumerate}

Each $\Theta^i$ is recursive, $\Theta^0\supseteq\Theta^1\supseteq\cdots$ and $|\Theta^i_j|<\omega$.
There is a recursive function taking $L^0,\ldots,L^i$ to $\Theta^i$.
\end{cor}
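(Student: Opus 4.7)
The plan is to assemble ingredients developed in the preceding subsections. First, since $L$ is $\Sigma^0_1(\mb a)$-presentable, the direct limit machinery of Definition~\ref{d:bo}, specialized to the $\Sigma^0_1(\mb a)$ case (where $\lesssim$ and $\vee^*$ are already $\mb a$-decidable), yields an $\mb a$-recursive sequence $i\mapsto\la L^i,\varphi_i\ra$ of finite lattices with $\la 0,1,\vee\ra$-homomorphisms having $L$ as direct limit. Uniformly in the finite datum $L^i$, Theorem~\ref{t:pud} supplies the Pudl\'ak table $\Theta(L^i)$ together with its stratification $\la\Theta_n(L^i)\mid n<\omega\ra$, and Lemma~\ref{l:basic} (applied uniformly) yields a recursive subsequence which is a recursive Malcev homogeneous sequential lattice table for $L^i$. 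Lemma~\ref{l:tableproperty} supplies, uniformly in $L^i,L^{i+1},\varphi_i$, the embedding $\Theta(\varphi_i):\Theta(L^{i+1})\to\Theta(L^i)$ of Definition~\ref{d:key}.

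Second, I would carry out the construction specified in the definition just before the corollary. Set $\Theta^i=\Theta(\varphi_0)\circ\cdots\circ\Theta(\varphi_{i-1})(\Theta(L^i))\subseteq\Theta(L^0)$, so $\Theta^0\supseteq\Theta^1\supseteq\cdots$ is automatic. The ramification and the functions $m_i$ are defined by simultaneous recursion: $\Theta^0_k=\Theta_k(L^0)$; given $\Theta^i$ with its ramification, put $m_i(n)=\mu m\,(\Theta(\varphi_i)\Theta^{i+1}_n\subseteq\Theta^i_m)$, where $\Theta^{i+1}_k=\Theta(\varphi_0)\cdots\Theta(\varphi_i)\Theta_j(L^{i+1})$ when $k=m_0\circ\cdots\circ m_i(j)$ for some $j$, and $\Theta^{i+1}_k=\Theta^{i+1}_{k-1}$ otherwise. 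Finiteness of $\Theta^{i+1}_n$ and recursiveness of the presentation of $\Theta^i_m$ guarantee that the $\mu$-search terminates, so $m_i$ is a total recursive function (no oracle needed once $L^i,L^{i+1},\varphi_i$ are supplied), while $h(i)=m_i(0)$ is $\mb a$-recursive because the underlying sequence of lattices is. If needed, replacing $m_i(n{+}1)$ by $\max(m_i(n{+}1),m_i(n)+1)$ makes $m_i$ strictly increasing while preserving the containment required in its defining $\mu$-expression.

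Finally, I would verify the three listed properties. Property~(1), that $\Theta^i_k$ is constant on each interval $[m_i(j),m_i(j+1))$, is immediate from the stuttering clause of the ramification. For~(2), the subsequence $\la\Theta^i_{m_i(j)}\mid j<\omega\ra$ is, via the faithful embedding from Lemma~\ref{l:tableproperty}, a transported copy of the subsequence of $\Theta(L^i)$ extracted by Lemma~\ref{l:basic}; it is therefore a recursive sequential Malcev homogeneous lattice table for $L^i$. Property~(3) is exactly the conclusion of Lemma~\ref{l:tableproperty} applied to $\varphi_i$. The main obstacle is the bookkeeping for the simultaneous recursion and the check that the iterated embeddings transport Malcev homogeneity and meet-interpolant availability cleanly---in other words, that using edges of $\Theta(L^i)\setminus\Theta(\varphi_i)(\Theta(L^{i+1}))$ does not create spurious equivalences inside the embedded copy. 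This is essentially the observation made in the proof of Lemma~\ref{l:tableproperty}, adapted to the chain of embeddings.
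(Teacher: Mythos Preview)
Your proposal is correct and takes essentially the same approach as the paper: the paper's own proof is the single sentence ``These facts about $m_i$ and $\Theta^i_j$ as defined above are all either easy to see or proved above,'' and you have simply unpacked this by pointing to Lemma~\ref{l:basic}, Lemma~\ref{l:tableproperty}, and the definition immediately preceding the corollary. Your additional bookkeeping remarks (termination of the $\mu$-search, forcing $m_i$ strictly increasing, transporting homogeneity along the embeddings) are reasonable elaborations of what the paper leaves implicit.
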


\begin{proof}
These facts about $m_i$ and $\Theta^i_j$ as defined above are all either easy to see or proved
above.
\end{proof}

\chapter{Lerman constructions}
    \makeatletter
\renewcommand{\theenumi}{(\roman{enumi})}
\renewcommand{\labelenumi}{\theenumi}
\renewcommand{\p@enumi}{}
\renewcommand{\theenumii}{(\alph{enumii})}
\renewcommand{\labelenumii}{\theenumii}
\renewcommand{\p@enumii}{}
\makeatother

\subsection{Definition of Lerman constructions}

\begin{df}[Tree]
Suppose $\Theta=\Union_{i<\omega}\Theta_i$ where each $\Theta_i$ is a finite subset of $\omega$ and $\Theta_i\subseteq\Theta_{i+1}$ and similarly for $\Theta'$ in place of $\Theta$. Let
\[
	S(\Theta)=\{\sigma\in\omega^{<\omega}\mid (\forall j) \sigma(j)\in\Theta_j\}.
\]
A \emph{$(\Theta,\Theta')$-tree} is a partial function $T$ from $S(\Theta)$ to $S(\Theta')$ such that
\begin{enumerate}
\item the domain of $T$ is an initial segment of $\omega^{<\omega}$, 
\item $\la\dom T,\subseteq\ra\cong\la\ran T,\subseteq\ra$, and 
\item $|\sigma|=|\tau|\and \sigma,\tau\in\dom T \Rightarrow |T(\sigma)|=|T(\tau)|$. 
\end{enumerate}

In the rest of this definition, assume $T$ is a $(\Theta,\Theta')$-tree.

\emph{Level $i$} of $T$ is the interval $[|T(\xi)|,|T(\eta)|)$ where $|\xi|=i=|\eta|-1$. If $T(\emptyset)\ne\emptyset$, we also define \emph{level -1} to be the interval $[|\emptyset|,|T(\emptyset)|)$.

If $\sigma\in\dom T$ then $T(\sigma)$ is called a \emph{potential focal point} of $T$ if $(\forall \tau\in\dom T)(|\tau|=|\sigma|\rightarrow \tau=\sigma)$. If in addition $(\exists\tau\in\dom T)(\tau\supset\sigma)$ then $\sigma$ is called a \emph{focal point}.

A \emph{focal length} is a number $r=|T(\xi)|$ where $T(\xi)$ is a potential focal point. The smallest focal length is consequently $|T(\emptyset)|$.

If the focal lengths of $T$ are $r_0<r_1<\cdots$, then \emph{plateau -1} of $T$ is defined to be level -1 of $T$ (if defined), and \emph{plateau $i$} of $T$ is defined to be $[r_i,r_{i+1})$. The
\emph{height} of plateau $i$ is $r_{i+1}$ and the height of $T$, $\Ht(T)$, is the height of the last plateau of $T$, if it exists.

\emph{Plateau $i$ is full} if whenever $T(\sigma$, $T(\tau)$ are potential focal points with $r_{i}=|T(\sigma)|$, $r_{i+1}=|T(\tau)|$, then $\dom T\supseteq\{\sigma*\eta \in S(\Theta):
|\sigma*\eta|\le|\tau|\}$. Naturally, if plateau $i$ is full for each $i$ for which $r_{i+1}$ is defined, then we say that \emph{each plateau of $T$ is full}.

$T$ is called a \emph{tree} if $T$ is a $(\Theta,\Theta')$-tree (for some $\Theta,\Theta'$) and each plateau of $T$ is full.
\end{df}

\begin{df}\label{V12}
Let $T$ be a tree, and let $\sigma\in\omega^{<\omega}$ and $f:\omega\to\omega$ be given. We say that \emph{$\sigma$ is on $T$} (write $\sigma\subset T$) if $\sigma=T(\tau)$ for some
$\tau\in\dom(T)$. We say that \emph{$\sigma$ is compatible with $T$} if there is a $\tau\supseteq\sigma$ such that $\tau\subset T$. We say that \emph{$f$ is on $T$} (write $f\subset T$) if for all $\sigma\subset f$, $\sigma$ is compatible with $T$. If $f\subset T$, we refer to $f$ as a \emph{branch} of $T$.

Let $g:\omega\to\omega$ with $g\subset T$. We define $T^{-1}(g)=\Union\{\sigma\mid T(\sigma)\subset g\}$. The \emph{height function} $H_T$ for $T$ is defined by $H_T(n)=|T(\sigma)|$ for any
$\sigma\in\dom (T)$ with $|\sigma|=n$. 
\end{df}

\begin{df}\label{V16}
Let $T$ and $T^*$ be trees. We call $T^*$ a \emph{subtree} of $T$ (write $T^*\subseteq T$) if ran$(T^*)\subseteq\ran(T)$.
\end{df}

\begin{df}\label{V211}
Let $\sigma,\tau,\rho\in\omega^{<\omega}$ be given such that $\sigma\subset\rho$ and $|\sigma|=|\tau|$. We define the string tr$(\sigma\to\tau;\rho)$, the transfer of $\sigma$ into $\tau$ below $\rho$, by tr$(\sigma\to\tau;\rho)(x)=\tau(x)$ if $x<|\tau|$, $=\rho(x)$ if $|\tau|\le x<|\rho|$, and $\uparrow$ otherwise.
\end{df}

\subsubsection{Subtree constructions}

\begin{df}\label{21}
Suppose $T$ is a tree, that $T_s$ is a finite tree for each $s<\omega$, and $T=\Union_s T_s$. Let $\Theta,\Theta'$ be given such that $T$ and each $T_s$ are $(\Theta,\Theta')$-trees.

If $\alpha=T(\eta)\subset T_s$, let $\alpha^*=T(\eta^*)$ be the longest focal point of $T_s$ with $\alpha^*\subseteq\alpha$ (call $\alpha^*$ the \emph{shortening} of $\alpha$) and fix the greatest
$m$ with $T_s(\delta)\downarrow$ for some $\delta$ with $|\delta|=m$.

$T_{s+1}$ is a \emph{type 0} (grow plateau) (deep and tall) extension of $T_s$ for $\alpha$ if $T_{s+1}(\lambda)\downarrow$ is added for each $\lambda\supseteq\eta^*$ with $|\lambda|\le m+1$ and
$\lambda\in S(\Theta)$.

$T_{s+1}$ is a \emph{type 1} (create plateau) (tall) extension of $T_s$ if the same is true for $\alpha$ instead of the focal point.

$T_{s+1}$ is a \emph{type 2} (combine plateaus) (deep) extension of $T_s$ if the same is true for the focal point but for $m$ instead of $m+1$.

In suggestive terminology, we may also say that type 0,1,2 extensions are responses to \emph{grow, create, combine} (plateaus) requests.
\end{df}

\begin{df}[Dynamical tree]
A \emph{dynamical tree} is a tree $T$ together with
\begin{enumerate}
\item an integer $k=k(T)\ge -1$ and (inductively) a sequence of dynamical trees $T_0,\ldots,T_{k+1}$ with $T=T_{k+1}$,
\item a decomposition of $T_{k+1}$ into finite trees, $T_{k+1}=\Union_{s<\omega} T_{k+1,s}$ (also sometimes written $T=\Union_{s<\omega}T_s$ if no confusion is likely),
\item at each \emph{stage} $s<\omega$ a finite set $S_{k+1,s}$ of \emph{received} pairs $\la \alpha,i\ra\in\omega^{<\omega}\times\omega$, and
\item at each stage $s<\omega$ a finite set $S'_{k+1,s}$ of \emph{transmitted} pairs such that either $S'_{k+1,s}=\emptyset$ or $S'_{k+1,s}$ has exactly one element $\la\alpha,i\ra\in\omega^{<\omega}\times\omega$,
\end{enumerate}
such that there is a total recursive function $\{e_T\}$ such that
\[
	\{e_T\}(s, \la T_{i,j} \mid i\le k+1, j\le s\ra, \la S_{k+1,j}\mid j\le s+1\ra)=\la T_{k+1,s+1},S'_{k+1,s+1}\ra.
\]

To \emph{designate} $T$ means to define $e_T$. Hence designating $T$ suffices to define $T$, modulo the sequence $T_0,\ldots,T_k$ and the map $s\mapsto S_{k+1,s}$.

In the following, all trees considered (except Ext trees) will be dynamical trees, which will hence also be referred to simply as \emph{trees}. More generally we allow that $T=\Union_{s\ge s^*}T_s$ for
some $s^*<\omega$ and $T_s$ is undefined for $s<s^*$.
\end{df}

\begin{df}[Appropriate instructions]\label{23}
Suppose the tree $T$ receives pairs $\la\alpha,i\ra$ and $\la\beta,j\ra$ at stage $s+1$. We say that reception of pairs by $T$ is \emph{appropriate} if the following conditions obtain:
\begin{enumerate}
\item \label{23i} $\alpha\subset T_s$.
\item \label{23ii} $i=0\to|\alpha|<$ht$(T_s)$.
\item \label{23iii} $i=1\to\alpha$ is a potential focal point of $T_s$ which is not a focal point of
$T_s$.
\item \label{23iv} $(i\ge 2\and T$ does not receive $\la\alpha,i\ra$ at $s$) $\,\to\,\alpha$ is a
potential focal point of $T_s$.
\item \label{23v} $\alpha\subseteq\beta$ or $\beta\subseteq\alpha$.
\end{enumerate}
\end{df}

\begin{rem}
The reception of $\la\alpha,i\ra$ by the tree $T$ at stage $s+1$ will convey the instruction to carry out Objective $i$ whenever possible. The objectives are listed below.

\noindent\emph{Objective 0. Combine plateaus and create a new level.} This objective will be met when $T_{s+1}$ is a type 0 extension of $T_s$ for $\alpha$ of height $t$, where $t>$ht$(T_s)$ is specified. There is a type 2 extension $T^*$ of $T_s$ for $\alpha$ within $T_{s+1}$. $T_{s+1}$ adds one level to $T^*$.

\noindent\emph{Objective 1. Designate a new focal point.} This objective will be met when $T_{s+1}$ is a type 1 extension of $T_s$ for $\alpha$ of height $t$, where $t>$ht$(T_s)$ is specified. It is used to force $T=\Union_s T_s$ to be infinite, with infinitely many focal points.
\end{rem}

The remaining objectives ($i\ge 2$) depend on the requirements we want to satisfy.

\begin{lem}\label{24}
Suppose $T$, $T^*$ are trees with $T\subseteq T^*$ and $T$ finite. Let $\alpha\subset T$.

\begin{enumerate}
\item \label{24i} 
	If $|\alpha|<\Ht(T)<\Ht(T^*)$ and the shortening of $\alpha$ on $T$, $\alpha^*$, is in the last plateau of $T^*$ then there is a deep and tall extension of $T$ within $T^*$ for $\alpha$ of height $\Ht(T^*)$.
\item \label{24ii} 
	If $\alpha$ is not terminal on $T^*$ but in the last plateau of $T^*$ and $\alpha$ is a merely potential focal point of $T$ then there is a tall extension of $T$ within $T^*$ for $\alpha$ of height $\Ht(T^*)$.
\item \label{24iii} 
	If $T^*$ contains a deep and tall extension of $T$ for $\alpha$ then there is a deep extension of $T$ for $\alpha$ within $T^*$.
\end{enumerate}

\end{lem}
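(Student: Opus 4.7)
The plan is, for each of the three parts, to construct the required extension of $T$ by copying an appropriate portion of $T^*$ on top of $T$. Throughout, let $\eta^*,\eta\in\dom T$ satisfy $T(\eta^*)=\alpha^*$ and $T(\eta)=\alpha$, and let $m=\max\{|\lambda|:\lambda\in\dom T\}$. Since $T\subseteq T^*$, every string of $T$ is already a string of $T^*$, so the task in each part reduces to identifying which further strings of $T^*$ to adjoin so that the resulting object is a tree in the sense of Definition \ref{21} realizing the prescribed extension type.

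For part \ref{24i}, I would put
\[
	T' \;=\; T \,\cup\, \bigl\{T^*(\lambda)\,:\,\lambda\supseteq\eta^*\text{ and }|T^*(\lambda)|\le\Ht(T^*)\bigr\}.
\]
Since $\alpha^*$ is the shortening of $\alpha$ in $T$ and $|\alpha|<\Ht(T)$, the element $\eta^*$ is a genuine focal point of $T$, while the hypothesis $\Ht(T)<\Ht(T^*)$ supplies room to extend strictly. Because $\alpha^*$ lies in the last plateau of $T^*$, no focal point of $T^*$ of length strictly between $|\alpha^*|$ and $\Ht(T^*)$ sits above $\alpha^*$, so the branching structure of $T^*$ above $\eta^*$ is exactly the fully filled-in block required by a type 0 (deep and tall) extension of $T$ for $\alpha$ reaching height $\Ht(T^*)$. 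Part \ref{24ii} is treated almost identically, but with $\alpha$ itself in place of its shortening as the base:
\[
	T' \;=\; T \,\cup\, \bigl\{T^*(\lambda)\,:\,\lambda\supseteq\eta\text{ and }|T^*(\lambda)|\le\Ht(T^*)\bigr\}.
\]
Definition \ref{21} identifies such an extension as a type 1 (tall) extension, and the hypotheses that $\alpha$ is non-terminal on $T^*$ but in its last plateau guarantee both that the block above $\alpha$ in $T^*$ is non-empty and that it is fully branching.

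For part \ref{24iii}, I would let $T^{**}\subseteq T^*$ be the given type 0 extension of $T$ for $\alpha$ and truncate it at level $m$:
\[
	T' \;=\; T \,\cup\, \bigl\{T^{**}(\lambda)\,:\,\lambda\supseteq\eta^*\text{ and }|\lambda|\le m\bigr\}.
\]
By Definition \ref{21} this is precisely a type 2 (deep, combine plateaus) extension of $T$ for $\alpha$, and evidently $T'\subseteq T^{**}\subseteq T^*$. The main obstacle common to all three parts is verifying that the constructed $T'$ is actually a tree, which reduces to checking that each of its plateaus is full. Fullness is inherited in two stages: strings of $T'$ of length at most $|\alpha^*|$ (respectively $|\alpha|$) come from $T$ and inherit fullness from $T$'s own plateaus, while strings above the base point come from a fully branching block of $T^*$ (respectively $T^{**}$) by the combinatorial side-conditions of each part, so their plateaus inherit fullness from $T^*$. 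The role of the side-conditions in the hypotheses is precisely to ensure that such a complete block of $T^*$ is available at the required height, so that no pruning is needed to produce $T'$.
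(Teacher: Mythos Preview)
Your proposal has a genuine gap in parts \ref{24i} and \ref{24ii}, stemming from a conflation of the two distinct notions of ``subtree'' and ``extension'' in play. By Definition \ref{V16}, $T\subseteq T^*$ means only that $\ran(T)\subseteq\ran(T^*)$; the domains of $T$ and $T^*$ are in general unrelated. So when you write $T^*(\lambda)$ for $\lambda\supseteq\eta^*$, you are applying $T^*$ to extensions of $\eta^*\in\dom T$, but there is no reason $\eta^*\in\dom T^*$ or that $T^*(\eta^*)=\alpha^*$. The expression is simply undefined in general. Moreover, even in the special case $T^*=\Id$ where the domains happen to coincide, your $T'$ would adjoin arbitrarily many domain levels above $\eta^*$, whereas a type 0 extension (Definition \ref{21}) must add values $T'(\lambda)$ only for $\lambda\supseteq\eta^*$ with $|\lambda|\le m+1$: the domain grows by exactly one level, and it is the \emph{range} that must reach $\Ht(T^*)$.

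The paper's proof handles this by working in two stages that explicitly respect the domain structure of $T$. First it \emph{copies sideways}: for each $\sigma_0\supseteq\eta^*$ with $|\sigma_0|\le m$ and $T(\sigma_0)\uparrow$, it locates a sibling $\sigma$ with $T(\sigma)\downarrow$, finds the corresponding $\tau_0,\tau\in\dom T^*$ with $T(\sigma)=T^*(\tau)$ and $T(\sigma_0^-)=T^*(\tau_0^-)^-$ (via the focal point), and sets $T'(\sigma_0*n)=T^*(\tau_0*\gamma)$ where $T(\sigma*n)=T^*(\tau*\gamma)$; fullness of the last plateau of $T^*$ guarantees these values exist. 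Second it \emph{stretches}: each terminal $T(\eta)=T^*(\xi)$ is extended by one domain level via $T'(\eta*a)=T^*(\xi*a^n)$, with $n$ chosen so that $|T^*(\xi*a^n)|=\Ht(T^*)$. The key point you are missing is that one must translate between $\dom T$-coordinates and $\dom T^*$-coordinates at every step; this is what copy-sideways and stretch accomplish. Your treatment of part \ref{24iii} is fine and matches the paper's one-line argument, since there $T^{**}$ genuinely extends $T$ as a function and so shares its domain coordinates.
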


\begin{proof}
\begin{enumerate}
\item

The deep and tall extension of $T$ for $\alpha$ within $T^*$ is obtained in two steps.

First we \emph{copy sideways} above $\alpha^*$ if $\alpha$ is not in the last plateau of $T$. \emph{Copying sideways} means that if $T(\sigma)=T^*(\tau)$ and $T(\sigma_0)=T^*(\tau_0)$, $|\sigma_0|=|\sigma|$ and $T(\sigma_0), T(\sigma)\supseteq\alpha^*$ then if $T(\sigma_0*n)\uparrow$
and $T(\sigma*n)\downarrow$, then we copy by defining $T(\sigma_0*n)=T^*(\tau_0*\gamma)$ (which $\downarrow$ by fullness of the last plateau of $T^*$) where $T(\sigma*n)=T^*(\tau*\gamma)$.
Continue this process until $\alpha$ is in the last plateau of $T$.

Then we stretch terminal strings of $T$ to $\Ht(T^*)$.

This means that if $T(\eta)=T^*(\xi)$ is terminal on $T$ then we define $T(\eta*a)=T^*(\xi*a^n)$ (which $\downarrow$ by fullness of the last plateau of $T^*$) [where $a^{n+1}=a^n*a$] where $n$
satisfies $|T^*(\xi*a^n)|=\Ht(T^*)$. (This could be done in various ways, we specify one way for definiteness.)

\item

We have $|\alpha|=\Ht(T)$ so this is true since the last plateau of $T^*$ is full, and the branching rate of $T$ is less than that of $T^*$ there.

\item
Since deep and tall implies deep.
\end{enumerate}
\end{proof}

\begin{df}\label{25}
Let $T$ be a tree which receives the set of ordered pairs $S_{s+1}$ at stage $s+1$. We say that $T_s$ \emph{prefers} $\la\alpha,i\ra$ if $i\le 1$ and $\la\alpha,i\ra$ is the first element of $S_{s+1}$
under a recursive ordering of all such possible pairs, fixed throughout the construction of $T$.
\end{df}

It will turn out that $T$ receives at most one pair $\la\alpha,i\ra$ with $i\le 1$, so the above definition is of interest mostly in so far as it helps with the verification.

\begin{nota}\label{26}
Let $\xi\ne\emptyset$ be given. $\xi^-$ is the unique $\lambda$ such that $\xi=\lambda*i$ for some $i<\omega$. Then $s(\xi)=\xi^-*(i+1)$ and $p(\xi)=\xi^-*(i-1)$, if $i>0$.
\end{nota}

\subsubsection{Init and Ext trees}

\begin{df}\label{27}
Let Id be the full $(\Theta,\Theta)$-identity tree for some $\Theta$ (on which the following definition depends), i.e., the full identity tree on $S(\Theta)$, and let $\Id_t=\{\sigma\subset\Id\mid |\sigma|\le t\}$. 

The \emph{initial tree} Init is defined by: $\Init_{t+1}$ is an extension of $\Init_t$ within $\Id_{t+1}$ of height $t+1$ specified by the highest priority request received at stage $t+1$; if no request with
priority is received, then $\Init_{t+1}$=$\Init_t$.

Let $\{S_t\mid t>0\}$ be a recursive sequence of finite sets of pairs $\la\alpha,i\ra$. We construct a recursive sequence of finite trees, $\{$$\Init_t(\{S_u\mid 0<u\le t\})\mid t<\omega\}$, whose union is the
{initial tree} specified by $\{S_t\mid t>0\}$, $\Init(\{S_t\mid t>0\})$. For convenience, we use $T_t$ to denote $\Init_t(\{S_u\mid 0<u\le t\})$. The set of strings $S_t$ is received at stage $t$.

We begin by setting $T_0=\Id_0$. Given $T_t$, fix $\la\alpha,i\ra\in S_{t+1}$ such that $T_t$ prefers $\la\alpha,i\ra$, if such a pair $\la\alpha,i\ra$ exists. If no such pair $\la\alpha,i\ra$
exists or if $\la\alpha,i\ra$ does not satisfy \ref{23} for $T_t$, let $T_{t+1}=T_t$. Otherwise, let $T_{t+1}$ be a type $i$ extension of $T_t$ for $\alpha$ of height $t+1$ such that
$T_{t+1}\subseteq\Id_{t+1}$. Note that by Lemma \ref{24}, such an extension exists. No information is transmitted by $T_t$ for any $t$.
\end{df}

\begin{lem}\label{28}
Fix $\{S_t\mid t>0\}$, let $\Init_t$ denote $\Init_t(\{S_u\mid 0<u\le t\})$ and let Init denote $\Init(\{S_t\mid t>0\})$. Suppose that \ref{23} is satisfied at stage $t+1$ by $\Init_t$ for all $\la\alpha,i\ra\in S_{t+1}$. Then:
\begin{enumerate}
\item \label{28i} $\Init_{t+1}\ne$$\Init_t$ iff $\Init_t$ prefers some $\la\alpha,i\ra$, in which case $\Init_{t+1}$ is a type $i$ extension of $\Init_t$ for $\alpha$ of height $t+1$.
\item \label{28ii} If $\alpha$ is a potential focal point of $\Init_t$ and for all $\la\beta,j\ra$ received by $\Init_t$ with $j\le 1$, either $j=1$ and $\alpha\subseteq\beta$, or $j=0$ and
$\alpha\subseteq\beta$ and $\alpha$ is a focal point of $\Init_t$, then $\alpha$ is a potential focal point of $\Init_{t+1}$.
\end{enumerate}
\end{lem}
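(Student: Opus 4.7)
The plan is to read both parts directly off of Definition \ref{27} together with the definitions of type $i$ extensions in Definition \ref{21}. Part (i) is essentially a rewriting of the construction under the given hypothesis, while part (ii) reduces to a short bookkeeping check about which domain levels receive new strings when a type $0$ or type $1$ extension is performed.

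For (i), note that under the standing hypothesis every $\la\alpha,i\ra\in S_{t+1}$ satisfies \ref{23}, so the clause ``$\la\alpha,i\ra$ does not satisfy \ref{23} for $T_t$'' in Definition \ref{27} is never triggered. Hence $\Init_{t+1}=\Init_t$ occurs precisely when no pair is preferred, and otherwise $\Init_{t+1}$ is the type $i$ extension of $\Init_t$ for the preferred $\alpha$ of height $t+1$. The only thing to verify is that this extension is genuinely different from $\Init_t$; this follows because a type $0$ or type $1$ extension adjoins strings $\lambda$ up to length $m+1$, where $m$ is the maximum length of elements in $\dom \Init_t$, so at least one new domain string of length $m+1$ is added.

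For (ii), fix $\sigma_\alpha\in\dom\Init_t$ with $T_t(\sigma_\alpha)=\alpha$; by hypothesis $\sigma_\alpha$ is the unique element of $\dom\Init_t$ of its length. I will show that no new string of length $|\sigma_\alpha|$ is added when passing to $\Init_{t+1}$. If $\Init_t$ does not prefer any pair (in particular, if no pair with second coordinate $\le 1$ is received), then $\Init_{t+1}=\Init_t$ and the conclusion is trivial. Otherwise let $\la\beta,j\ra$ be the preferred pair, with $\beta=T_t(\sigma_\beta)$, and use the isomorphism $\la\dom\Init_t,\subseteq\ra\cong\la\ran\Init_t,\subseteq\ra$ to translate $\alpha\subseteq\beta$ into $\sigma_\alpha\subseteq\sigma_\beta$. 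In the $j=1$ case, Definition \ref{21} adjoins domain strings $\lambda\supsetneq\sigma_\beta$, all of length $>|\sigma_\beta|\ge|\sigma_\alpha|$. In the $j=0$ case, the extension is based on the shortening $\sigma_{\beta^*}$ of $\sigma_\beta$; since $\alpha$ is a focal point of $\Init_t$ with $\alpha\subseteq\beta$, it must lie below the longest focal point under $\beta$, so $\sigma_\alpha\subseteq\sigma_{\beta^*}$, and again all new domain strings have length $>|\sigma_{\beta^*}|\ge|\sigma_\alpha|$. Either way the domain level of $\sigma_\alpha$ is untouched, so $\sigma_\alpha$ remains the unique string of its length in $\dom\Init_{t+1}$, i.e., $\alpha$ is a potential focal point of $\Init_{t+1}$.

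The only delicate point is ensuring $\sigma_\alpha\subseteq\sigma_{\beta^*}$ in the $j=0$ case, which is exactly why the hypothesis includes the extra clause ``$\alpha$ is a focal point of $\Init_t$'' when $j=0$ but only requires $\alpha\subseteq\beta$ when $j=1$. Everything else is a direct unpacking of Definitions \ref{21}, \ref{25}, and \ref{27}.
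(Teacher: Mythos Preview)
Your proposal is correct and follows essentially the same approach as the paper: both read parts (i) and (ii) directly off Definitions \ref{21}, \ref{25}, and \ref{27}. The paper spends its effort on (i) justifying via Lemma \ref{24} that the required type-$i$ extension actually exists inside $\Id_{t+1}$ (which you reasonably take for granted since Definition \ref{27} already notes it), whereas you instead make explicit the nontriviality of the extension; for (ii) the paper phrases the check on the range side (``every extension will be above $\alpha$, and in the $j=0$ case $\alpha$ is its own shortening'') while you carry out the same bookkeeping on the domain side, but the content is identical.
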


\begin{proof}
\ref{23}\ref{23ii},\ref{23iii} correspond to Lemma \ref{24}\ref{24i},\ref{24ii}. So since requests only involve extensions from Lemma \ref{24}\ref{24i},\ref{24ii}, Init is well-defined.

Indeed if the highest priority is a deep and tall extension for $\alpha$ then by \ref{23}\ref{23ii}, $\alpha$ is not at the top of $\Init_t$. Since Id has only one plateau, $\alpha$ is in the last plateau of $\Id_{t+1}$. So by Lemma \ref{24}\ref{24i}, the extension exists within $\Id_{t+1}$.

And if the highest priority is a tall extension for $\alpha$ then by \ref{23}\ref{23iii} and Lemma \ref{24}\ref{24ii}, the extension exists within $\Id_{t+1}$.

If $\alpha$ satisfies the hypothesis of the lemma then every extension will be above $\alpha$; in particular a deep (and tall) extension will not go below $\alpha$ since if such is received then
$\alpha$ is its own shortening and hence below the shortening of any string above $\alpha$. Hence if no extension was made, then $\alpha$ remains a potential focal point, and if an extension is
made then $\alpha$ is a focal point after the extension.
\end{proof}

\begin{df}[Appropriate execution]\label{29}
Fix $s^*,t,k<\omega$. For each $i\le k$, let $\{T_{i,s}\mid s^*\le s\le t\}$ be a sequence of finite trees. The array $\{T_{i,s}\mid i\le k\and s^*\le s\le t\}$ is \emph{special} if it satisfies the
following conditions:
\begin{enumerate}
\item \label{29i} $\forall i<k\,\forall s\in[s^*,t](T_{i+1,s}\subseteq T_{i,s})\and\forall i\le k\,\forall s\in[s^*,t)(T_{i,s+1}$ extends $T_{i,s})$.
\item \label{29ii} $\forall i\le k\,\forall s\in[s^*,t]\forall\alpha($If $\alpha$ is a potential focal point of $T_{i,s}$ and for all $m\le i$ and $\la\beta,j\ra$ received by $T_{m,s}$, either 
\begin{enumerate}
\item[(1)] $j\ge 2$, or
\item[(2)] $j=1$ and $\alpha\subseteq\beta$, or 
\item[(3)] $j=0$ and $\alpha\subseteq\beta$ and $\alpha$ is a focal point of $T_{m,s}$, 
\end{enumerate}
then $\alpha$ is a potential focal point of $T_{i,s+1}$.$)$
\item \label{29iii} $\forall i\le k\,\forall s\in[s^*,t)(T_{i,s+1}\ne T_{i,s}\rightarrow\forall j<i($ht$(T_{j,s+1})=$ht$(T_{i,s+1}))\and(T_{i,s}\ne\emptyset\rightarrow(\exists m\le 1)(\exists\alpha)(T_{i,s}$ prefers $\la\alpha,m\ra\and T_{i,s+1}$ is a type $m$ extension of $T_{i,s}$ for $\alpha)))$.
\end{enumerate}
\end{df}

\begin{rem}\label{210}
For all $t<\omega$, $\{$$\Init_s\mid s\le t\}$ is special.
\end{rem}

\begin{df}\label{211}
Let $s,s^*<\omega$ be given such that $s^*\ge s$. Let $T$ be a tree defined by $T=\Union\{T_t\mid t\ge s\}$, where $\{T_t\mid t\ge s\}$ is a recursive sequence of increasing finite trees. Let $\xi$ be
given such that $T_{s^*}(\xi)\downarrow$. Define the tree $T^*=\Ext(T,\xi,s^*)=\Union\{T^*_t\mid t\ge s^*\}$, where $T^*_t=\Ext(T_t,\xi) = \{T_t(\eta)\mid \eta\supseteq\xi\and T_t(\eta)\downarrow\and
\eta\in S(\Theta)\}$ where $T_t$ is a $(\Theta,\Theta')$-tree. To remind ourselves that $T^*_t$ exists only for $t\ge s^*$ we may write $\Ext(T_t,\xi,s^*)$ for $\Ext(T_t,\xi)$. $T^*_t$ transmits
exactly those pairs which it receives; these pairs are received by $T_t$.
\end{df}

\begin{rem}\label{212}
Let $T,\xi,s$ and $s^*$ be given as in Definition \ref{211}. Then the following conditions hold:
\begin{enumerate}
\item \label{212i} For all $t\ge s^*$, $\Ext(T_t,\xi,s^*)\subseteq T_t$; and for all $t>s^*$, $\Ext(T_t,\xi,s^*)$ extends $\Ext(T_{t-1},\xi,s^*)$.
\item \label{212ii} For all $t\ge s^*$, $\Ext(T_t,\xi,s^*)$ transmits exactly those pairs which it receives.
\end{enumerate}
\end{rem}

Note that $\Ext(T)$ eliminates some branches and if $T$ is a $(\Theta,\Theta')$-tree then so is $\Ext(T)$. Also note that the Ext tree is computed only after its parent, \emph{at the same stage},
is computed.

\label{thatsubsubsection}

\subsubsection{Phase-1 and Phase-2 trees}

\begin{df}[Tree with states]
A dynamical tree $T$ together with two maps State=State$_T$, States=States$_T:\omega\to\omega$ is called a \emph{tree with states} if
\begin{enumerate}
\item State$(s)<$States$(s)$
\item if $T_{s+1}=T_s$ then States$(s+1)$=States$(s)$. 
\end{enumerate}

Intuitively, the state measures the progress made towards growing the tree $T$, i.e., making $T_{s+1}\ne T_s$.

$T$ is said to be in the \emph{state} State$_T(t)$ at stage $t$. We may say that \emph{the construction of $T$ ends in Step $n$ at the end of stage $t$}, or \emph{$T$ is put in state $n$ at
the end of stage $t$}, meaning that State$_T(t+1)=n$. We may also refer to numbered substeps of a step, subsubsteps and so on. These are associated with states by identifying them with elements of $\omega^{<\omega}$ and ordering lexicographically. For example, substep 2 of step 1 is labelled $\la 1,2\ra$ and precedes step 2. In general, $\alpha,\beta\in\omega^{<\omega}$ are ordered by $\alpha<\beta$ if either $\alpha\ne\beta$ and $\alpha\subseteq\beta$ as strings, or $\alpha(n)<\beta(n)$ for some $n$ with $\alpha\restrict n=\beta\restrict n$. States$(s)$ is
referred to as the number of states of $T$ at stage $s$. Any division into substeps and so on is assumed unchanged as long as $T_{s+1}=T_s$ as well.
\end{df}

\begin{df}\label{21435}
Let $k, s'<\omega$ and let $\{T_{m,t}\mid m\le k\and t\ge s'\}$ be an array of trees. Let $s^*\ge s'$ and $\beta\subset T_k$. Let $T_m=\Union\{T_{m,t}\mid t\ge s'\}$ for all $m\le k$, and let $T_{m}$
receive (see \ref{thatsubsubsection}) the set $S_{m,t+1}$ at $t+1$. Suppose $T_{k+1}=\Union \{T_{k+1,t}\mid t\ge s^*\}$ with $T_{k+1,s^*-1}=\emptyset$.

The strategy for $T_{k+1}$ defines $T_{k+1,t}$ at the end of stage $t$, and transmits pairs that will be received by $T_k$ at stage $t$.

A tree with states $T_{k+1}$ is called a \emph{Pre Phase-tree} if the following conditions hold.

\begin{enumerate}
\item \label{214i35i} For all $t\ge s^*$, $T_{k+1,t}\subseteq T_{k,t}$; and for all $t>s^*$, $T_{k+1,t}\supseteq T_{k+1,t-1}$.
\item \label{214ii35ii} $T_{k+1}$ is recursive and weakly uniform.\footnote{Lerman Def. XII.1.2 calls a tree \emph{weakly uniform} if it is length-invariant and congruence-preserving.}

\item \label{214x35xi} For all $t\ge s^*$, if $T_{k+1}$ prefers the same pair at $t$, $t+1$ (or no pair at either stage), and $t$, $t+1$ are in the same state on $T_{k+1}$, then either $T_{k+1}$ transmits the same pair at $t$, $t+1$, or transmits no pair at either stage.

\item \label{214xi35xii} For all $t\ge s^*$, if $T_{k+1}$ prefers the same pair at $t$, $t+1$ (or no pair at either stage) and $t$, $t+1$ are in different states on $T_{k+1}$, then either the state of $t$ on $T_{k+1}$ lexicographically precedes the state of $t+1$ on $T_{k+1}$ or $T_{k+1,t+1}\ne T_{k+1,t}\ne\emptyset$.

\item \label{214v35iv} For all $\alpha$ and $t\ge s^*$, if $T_{k+1}$ transmits $\la\alpha,0\ra$ at $t+1$ then $|\alpha|<$ht$(T_{k,t})$.

\item \label{214vi35v} For all $\alpha$, $i>0$ and $t\ge s^*$, if $T_{k+1}$ transmits $\la\alpha,i\ra$ at $t+1$ and $t$ and $t+1$ are in different states on $T_{k+1}$, then for all $m<k+1$, $\alpha$ is a potential focal point of $T_{m,t}$ which is not a focal point of $T_{m,t}$. 

\item \label{214ix35viii} For all $t\ge s^*$ and $\alpha$, if $T_{k+1}$ prefers the same pair at $t$ and $t+1$ (or no pair at either stage) and transmits $\la\alpha,i\ra$ with $i\le 1$ at $t$,
then $t$ and $t+1$ will be in different states on $T_{k+1}$ exactly when 
 \begin{enumerate}
 \item \label{214ixa35viiia}
 $T_{k,t}$ is a type $i$ extension of $T_{k,t-1}$ for $\alpha$ such that $\Ht(T_{k,t})=$ht$(T_{0,t})$.
 \end{enumerate}

\item \label{214iv35iii} For all $t\ge s^*$ there is at most one pair $\la\alpha,i\ra$ such that
$T_{k+1}$ transmits $\la\alpha,i\ra$ at $t+1$. For this pair, $\alpha\subset T_{k,t}$.

\item \label{214xii35ix}
For all $t\ge s^*$, if $T_{k+1,t+1}\ne T_{k+1,t}$ then
ht$(T_{k+1,t+1})$=ht$(T_{k,t})$=ht$(T_{0,t})$ and $T_{k+1}$ does not transmit any pairs at $t+1$.

\end{enumerate}
\end{df}

\begin{df}\label{214}
With notation as in Definition \ref{21435}, $T_{k+1}$ is called a \emph{Phase-1 tree} at stage $t+1$ if $T_{k+1,t}=\emptyset$, $T_{k+1}$ is a Pre Phase-tree and receives no strings, and moreover the following conditions hold.
\begin{enumerate}
\item \label{214xii} For all $t\ge s^*$, if $T_{k+1,t+1}\ne T_{k+1,t}=\emptyset$ then $|T_{k+1,t+1}(\emptyset)|=\Ht(T_{k+1,t+1})$.

\item \label{214iii} For all $t\ge s^*$, if $T_{k+1,t}(\emptyset)\downarrow=T_{k,t}(\xi)$, then $T_{k+1}$ has a fixed Ext or Phase-2 tree designation (see below) at each stage $u\ge t+1$ (unless $T_{k+1}$ is cancelled).

\item \label{214viii} For all $t\ge s^*$, if $T_{k+1,t}\ne\emptyset$ then $|T_{k+1,t}(\emptyset)|>|T_{k,t}(\emptyset)|$.

\item \label{214vii} For all $t\ge s^*$, if $T_{k+1,t}=\emptyset$ and $T_{k+1}$ transmits $\la\alpha,i\ra$ at $t$ with $i>1$ then $|\alpha|>|T_{k,t-1}(\emptyset)|$. 

\item \label{214xiii} For all $t\ge s^*$, if $T_{k+1,t}=\emptyset$ then $T_{k+1}$ transmits a pair at $t$ $\Iff$ $T_{k,s^*-1}(\emptyset)\downarrow$ and $\Ht(T_{k,s^*-1})=$ht$(T_{0,s^*-1})$. 

\end{enumerate}
\end{df}

\begin{df}\label{35}
With notation as in Definition \ref{21435}, for all $t+1\ge s^*$, define $\alpha^*(t+1)$ (if it exists) as follows. Suppose $T_{k+1}$ prefers a pair $\la\alpha,i^*\ra$. Let $\alpha^*=\alpha$ if
$i^*=1$, and let $\alpha^*$ be the longest focal point of $T_k$ contained in $\alpha$, if $i^*=0$ and such a focal point exists. Suppose also given a parameter string $\beta$, which $T_{k+1}$ is
said to be \emph{working above}.

$T_{k+1}$ is called a \emph{Phase-2 tree} if $T_{k+1}$ is a Pre Phase-tree and moreover the following conditions hold.
\begin{enumerate}
\item \label{35vii}

For all $t\ge s^*$, $T_{k+1,t}(\emptyset)\downarrow$ $\Iff$ $|\beta|=$ht$(T_{0,s^*-1})$ and $T_{k+1,t}(\emptyset)=T_{k+1,s^*}(\emptyset)\downarrow=\beta\subset T_{k,s^*-1}\subseteq
T_{k,t-1}$.

\item \label{35xiii} For all $t>s^*$, if $T_{k+1,t}=T_{k+1,t-1}$, reception of pairs by $T_{k+1}$ at $t$ satisfies \ref{23}, and $T_{k+1}$ prefers a pair at $t$ (so by \ref{23}\ref{23i} $T_{k+1,t}\ne\emptyset$) then $T_{k+1}$ transmits a pair at $t$.

\item \label{35x} $T_{k+1}$ transmits a pair at $t+1$ only if $T_{k+1}$ prefers a pair at $t+1$. 

\item \label{35ix} For all $t\ge s^*$, if $T_{k+1,t+1}\ne T_{k+1,t}\ne\emptyset$, then $T_{k+1}$ prefers some $\la\alpha,i\ra$ at $t+1$, $T_{k+1,t+1}$ is a type $i$ extension of $T_{k+1,t}$ for $\alpha$, and for all $\delta\subset T_{k+1,t+1}-T_{k+1,t}$, $\delta\supseteq\alpha^*(t+1)\downarrow$.

\item \label{35iii} For all $t\ge s^*$, if $T_{k+1}$ transmits $\la\alpha,i\ra$ at $t+1$ then $\alpha^*(t+1)\subseteq\alpha$, and if $i=0$ then $\alpha^*(t+1)=\alpha$.

\item \label{35iv} For all $\alpha$ and $t\ge s^*$, if $T_{k+1}$ transmits $\la\alpha,0\ra$ at $t+1$ and $T_{k+1,t}\ne\emptyset$ then $|\alpha|<$ht$(T_{k+1,t})\le$ht$(T_{k,t})$.
\end{enumerate}

A \emph{Phase tree} is a tree which is either Phase 1 or Phase 2.
\end{df}

\subsubsection{Construction template}

\begin{df}[Priority tree]\label{41} $\gamma<_L\delta\iff(\exists x)(\gamma\restrict x=\delta\restrict x\and\gamma(x)<\delta(x))$.

$\gamma<\delta\iff\gamma<_L\delta\text{ or }\gamma\subset\delta$ (where $\gamma\subset\delta\iff\gamma\subseteq\delta\and\gamma\ne\delta$). If $\gamma<\delta$ then we say that $\gamma$ has higher priority than $\delta$.

$\gamma<^*\delta\iff\gamma<_L\delta\text{ or }\gamma\supset\delta$.
\end{df}

\begin{df}\label{42}
Let $\beta,\delta$ be given such that $\beta\subseteq\delta$. Let $s<\omega$ be given such that $T_{\delta,s}=\emptyset$ and $T_{\delta,s}$ is designated as a Phase-1 tree. The sequence
$\{\la\sigma_\gamma,i_\gamma\ra\mid \beta\subseteq\gamma\subset\delta\}$ (which $=\emptyset$ if $\beta=\delta$, a case which is not ruled out in the following) is a \emph{transmission sequence at stage $s+1$} if the following conditions hold:

\begin{enumerate}
\item \label{42i} If $\beta\ne\delta$ then $T_{\delta,s}$ transmits $\la\sigma_{\delta^-},i_{\delta^-}\ra$ and $i_{\delta^-}\le 1$.
\item \label{42ii} For all $\gamma$ such that $\beta\subset\gamma\subset\delta$, if $T_{\gamma,s}$ is designated as a Phase-2 tree, then:
 \begin{enumerate}
 \item \label{42iia} $T_{\gamma,s}$ transmits $\la\sigma_{\gamma^-},i_{\gamma^-}\ra$ and
 $i_{\gamma^-}\le 1$.
 \item \label{42iib} $T_{\gamma,s}$ prefers $\la\sigma_\gamma,i_\gamma\ra$ and $\sigma_\gamma\subset
 T_{\gamma,s}$.
 \end{enumerate}
\item \label{42iii} For all $\gamma$ such that $\beta\subset\gamma\subset\delta$, if $T_{\gamma,s}$ is designated as an Ext tree, then $T_{\gamma,s}$ satisfies (iia) and (iib) and
$\la\sigma_{\gamma^-},i_{\gamma^-}\ra=\la\sigma_\gamma,i_\gamma\ra$.

The transmission sequence $\{\la\sigma_\gamma,i_\gamma\ra\mid \beta\subseteq\gamma\subset\delta\}$ at stage $s+1$ is a \emph{triggering sequence at stage $s+1$} if it satisfies:
\item \label{42iv} Either $\beta=\emptyset$ or for some $\xi$ such that $\beta\subseteq\xi\subseteq\delta$, $s$ and $s+1$ are in different states on $T_\xi$ and $T_{\beta,s}$ does not transmit any pair $\la\alpha,i\ra$ with $i\le 1$.
\end{enumerate}
We say that \emph{$T_\delta$ triggers $T_\beta$ at stage $s+1$} if there is a triggering sequence $\{\la\sigma_\gamma,i_\gamma\ra\mid \beta\subseteq\gamma\subset\delta\}$ at stage $s+1$. We call $T_\delta$ a \emph{trigger at stage $s+1$} if there is a $\beta\subseteq\delta$ such that $T_\delta$ triggers $T_\beta$ at stage $s+1$.
\end{df}

\begin{df}\label{43-}
\noindent {\bf The Construction.} \emph{Stage 0:} Designate $T_{\emptyset,0}$ as $\Init(\{S_{\emptyset,t}\mid t>0\})$. Let $\alpha_0=\gamma_0=\emptyset$.

To say that a node $\gamma$ on the priority tree acts means that before doing anything else we define $T_{\gamma,s+1}$ and define the transmission (if any) of the dynamical tree $T_\gamma$ at stage $s+1$, given the trees $T_{\delta,s}$ for $\delta\subseteq\gamma$ and given $S_{\gamma,t+1}$.

\noindent \emph{Stage $s+1$:}

Let the $<^*$-least designated non-cancelled node on the priority tree act. Inductively, after the $n$th-$<^*$-least node has acted, let the $n+1$th-$<^*$-least node act, if one exists.

This is done subject to the constraint that if a node $\gamma*a$ transmits a pair $\la\alpha,i\ra$ with $i\le 1$ at stage $s+1$ then all strategies $\gamma*b$ with $a<b$ and their descendants are
cancelled before another node acts.

For all $\beta$ such that $T_{\beta,s}$ is designated, the information received by $T_{\beta,s}$ at stage $s+1$ is the information transmitted by those $T_{\delta,s}$ such that $\beta=\delta^-$ at
stage $s+1$. $S_{\lambda,t+1}$ is defined to be what $\lambda$ receives from trees (equivalently, from non-cancelled trees) at stage $t+1$.

After all designated, noncancelled trees have acted, or a trigger is discovered, designate new trees as follows. 

\noindent Case 1. There is a trigger at stage $s+1$.

Fix the tree $T_\delta$ of highest priority such that $T_\delta$ is a trigger at stage $s+1$, fix $\beta\subseteq\delta$ such that $T_\delta$ triggers $T_\beta$ at stage $s+1$, and let
$\{\la\sigma_\gamma,i_\gamma\ra\mid \beta\subseteq\gamma\subset\delta\}$ be the corresponding triggering sequence. Note that there is only one possible choice for $T_\beta$.

If $T_{\beta,s}$ is designated as a Phase-2 tree or if $\beta=\delta$, let $\la\alpha^*,i^*\ra$ be the information transmitted by $T_{\beta,s}$ if any information is transmitted. Note that if $i^*$
is defined, then $i^*\ge 2$.

\begin{itemize}
\item \emph{Subcase 1. New Ext tree. $i^*\ne 3$ (so $i^*\in\{2,4,5,\ldots\}$).} Let $\alpha_{s+1}=\alpha^*=T_{\beta^-,s}(\xi^*)$. Let $\gamma_{s+1}=s(\beta)$ and designate $T_{\gamma_{s+1}}$ as the following extension tree:
\[ 
	T_{\gamma_{s+1}}=\Ext(T_{\beta^-},\xi^*,s+1). 
\]
Begin building $T_{\gamma_{s+1}}$.

\item \emph{Subcase 2. New Phase-2 tree. $i^*=3$.}
Let $\alpha_{s+1}=\alpha^*=T_{\beta^-,s}(\xi^*)$. Let $\gamma_{s+1}=s(\beta)$. Designate and begin building $T_{\gamma_{s+1}}$ as a Phase-2 tree working above $\alpha^*$.

\item \emph{Subcase 3. Phase-1 tree becomes nonempty.} $\beta=\delta$ and $T_{\delta,s}$ has no transmission.

Let $\alpha_{s+1}=T_{\delta,s+1}(\emptyset)$ and $\gamma_{s+1}=\delta$.

\item \emph{Subcase 4. Phase-2 or Init tree grows.} Otherwise. Then $T_{\beta,s}$ will be designated either as the initial tree or as a Phase-2 tree, and has no transmission. Let $\alpha_{s+1}=\sigma_\beta$ and $\gamma_{s+1}=\delta$.

\end{itemize}

\noindent \emph{Case 2.} There is no trigger at stage $s+1$.

Let $\gamma_{s+1}=\gamma_s*0$, $\alpha_{s+1}=T_{\gamma_s}(\emptyset)$. Designate a new tree $T_{\gamma_{s+1}}$ as a Phase tree.

\begin{itemize}
\item \emph{Subcase 1. $|\gamma_s|$ is even. New Phase-2 tree.} Let $\alpha^*=\alpha_{s+1}=T_{\gamma_s,s}(\emptyset)$ and $\gamma_{s+1}=\gamma_s*0$. Designate and begin building $T_{\gamma_{s+1}}$, working above $\alpha_{s+1}$.

\item \emph{Subcase 2. $|\gamma_s|$ is odd. New Phase-1 tree.}
Let $\alpha_{s+1}=T_{\gamma_s}(\emptyset)$, $\gamma_{s+1}=\gamma_s*0$ and designate and begin building $T_{\gamma_{s+1}}$.
\end{itemize}

In all cases, we say that $T_{\beta,s+1}$ is \emph{newly designated} if $T_{\beta,s+1}$ is designated and either $T_{\beta,s}$ is not designated or $T_{\beta,s}$ is cancelled at stage $s+1$.

\noindent {\bf End of construction.}
\end{df}

\begin{df}\label{lermanconstruction}
A recursive construction of the form of \ref{43-} with a finitely branching priority tree (i.e., at each level there is a number $n$ such that at each stage, the branching at that level is less than
$n$) is called a \emph{Lerman construction}. The requirement that the construction be recursive implies that there is an algorithm that decides at any point in the construction what should be the
index of the new Phase tree to be designated.
\end{df}

    \section{Properties of Lerman constructions}
    In this Subsection we establish properties of an arbitrary Lerman construction as defined in Subsection 3.1. The results of this Section are due to Lerman \cite{Lerman:83} and are included here for the sake of completeness.

\subsubsection{Tree growth and state changes}

\begin{rem}[Timing]
Note that if $T_{i,s+1}\ne T_{i,s}$ then for each $j<i$, $T_{j,s+1}= T_{j,s}$, since $T_i$ has no transmission at stage $s+1$ and hence none of the $T_j$ prefer a pair at stage $s+1$. So in particular $\Ht(T_{j,s+1})=\Ht(T_{0,s})$, although each strategy $T_i$ can only ensure that it gets $\Ht(T_{i-1,s})$. Also note that $T_{\lambda,t}$ is defined at the end of stage $t$, and then the triggering takes place at stage $t+1$. Note that ``$T$ is designated at (or during) stage $t+1$'' means, in the beginning of the stage, and ``newly designated at stage $t+1$'' means, was first
given a designation at the end of stage $t$. ``Cancelled at $t+1$'' means designated during $t+1$, but cancelled at the end of (i.e., during) $t+1$. So cancelled at $t+1$ implies not designated at
$t+2$ unless newly designated at $t+2$.
\end{rem}

\begin{lem}\label{44}
\begin{enumerate}
\item \label{44i} 
	If $T_\lambda$ receives $\la\alpha,i\ra$ with $i\le 1$ at $t+1$, then there is an $\eta\supset\lambda$ and a transmission sequence $\{\la\sigma_\beta,i_\beta\ra\mid\lambda\subset\beta\subseteq\eta\}$ at stage $t+1$.
\item \label{44ii} 
	If $\xi^-=\lambda^-$, $T_\lambda$ is not cancelled at stage $t+1$, $\xi$ has higher priority than $\lambda$, and $T_\xi$ transmits $\la\alpha,i\ra$ at $t+1$, then $i\ge 2$.
\item \label{44iii} 
	If $T_\lambda$ is not cancelled at stage $t+1$, $\la\alpha,i\ra\in S_{\lambda,t+1}$ and $i\le 1$, then $T_\lambda$ prefers $\la\alpha,i\ra$ at $t+1$.
\item \label{44iv} 
	Suppose that $T_\delta$ is designated but not cancelled at stage $t$, that
	\[
		S=\{\la\sigma_\beta,i_\beta\ra\mid\lambda\subseteq\beta\subset\delta\}
	\]
	is a transmission sequence at stage $t$, and that for all $\xi$ such that $\lambda\subset\xi\subseteq\delta$, $t$ and $t+1$ are in the same state on $T_\xi$. Then $S$ is a transmission sequence at stage $t+1$.
\item \label{44v} 
	Suppose that $T_\delta$ is designated but not cancelled at stage $t$, that
	\[
		S=\{\sigma_\beta,i_\beta\ra\mid\lambda\subseteq\beta\subset\delta\}
	\]
	is a transmission sequence at stage $t$, and that $T_\lambda$ does not transmit any $\la\alpha,i\ra$ with $i\le 1$ at $t+1$. Let $\xi$ be given such that $\lambda\subset\xi\subseteq\delta$, $T_{\xi}$ is designated as a Phase tree at $t+1$, and suppose that either $t$ and $t+1$ are in different states on $T_\xi$ or $T_{\xi,t+1}\ne T_{\xi,t}$. Then $T_{\lambda,t}$ is a type $i_\lambda$ extension of $T_{\lambda,t-1}$ for $\sigma_\lambda$ and $\Ht(T_{\lambda,t})=\Ht(T_{\emptyset,t-1})$.
\item \label{44vi} 
	If $T_\delta$ is not cancelled at stage $t+1$, $S=\{\la\sigma_\beta,i_\beta\ra\mid\lambda\subseteq\beta\subset\delta\}$ is a transmission sequence at stage $t$, and $T_{\lambda,t}$ is a type $i_\lambda$ extension of $T_{\lambda,t-1}$ for $\sigma_\lambda$ with $\Ht(T_{\lambda,t})=\Ht(T_{\emptyset,t-1})$, then either $\delta=\lambda$ and $T_\lambda$ has no transmission at $t+1$, or there is a $\xi$ such that $T_\delta$ triggers $T_\xi$ at stage $t+1$.
\item \label{44vii} 
	If $T_\lambda$ is not cancelled at stage $t+1$ and $T_{\lambda,t+1}\ne T_{\lambda,t}$, then there are $\eta\supseteq\lambda$ and $\xi\subseteq\lambda$ such that $T_\eta$ triggers $T_\xi$ at stage $t+1$ and $T_\eta$ is not cancelled at stage $t+1$; and for all $\nu$ of lower priority than $\eta$, $T_\nu$ is not designated at $t+2$.
\end{enumerate}
\end{lem}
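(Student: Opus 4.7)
The plan is to prove the seven items in order, treating them as tightly coupled bookkeeping properties of the construction; most are essentially a repackaging of the defining conditions of the various tree types (Pre Phase, Phase-1, Phase-2, Ext, Init) in Definitions \ref{21435}, \ref{214}, \ref{35}, the triggering/transmission setup in Definition \ref{42}, and the cancellation rule in the Construction \ref{43-}. Throughout, I will keep careful track of who receives from whom: by Construction \ref{43-}, $T_\lambda$ receives at stage $t+1$ exactly the pairs transmitted at stage $t+1$ by those $T_\delta$ with $\delta^-=\lambda$.

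For \ref{44i}, I start from the hypothesis that some $T_\eta$ with $\eta^-=\lambda$ transmits $\la\alpha,i\ra$ with $i\le 1$ at $t+1$. I then walk \emph{outward} along the chain of designated, non-cancelled strategies $\eta=\beta_0\subset\beta_1\subset\cdots$, at each step using clauses \ref{42ii} and \ref{42iii} (for Phase-2 and Ext trees respectively) to harvest a transmitted pair $\la\sigma_{\beta_k^-},i_{\beta_k^-}\ra$ with $i_{\beta_k^-}\le 1$ and a preferred pair $\la\sigma_{\beta_k},i_{\beta_k}\ra$; I stop when I reach a Phase-1 tree or an undefined designation, which furnishes the tip $\eta$ required for a transmission sequence. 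Item \ref{44ii} is immediate from the cancellation rule of the Construction: if $T_\xi$ transmits $\la\alpha,i\ra$ with $i\le 1$ and $\xi$ has higher priority than the sibling $\lambda$, then upon $\xi$ acting at stage $t+1$ all sibling strategies of lower priority — including $T_\lambda$ — are cancelled, contradicting the hypothesis. Item \ref{44iii} is then a consequence of \ref{44ii} together with clause \ref{214iv35iii} (at most one transmission per stage per tree): there is at most one $\la\alpha,i\ra\in S_{\lambda,t+1}$ with $i\le 1$, so $T_\lambda$ prefers it (Definition \ref{25}).

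For \ref{44iv}, I argue by downward induction along the chain $\lambda\subseteq\beta\subset\delta$ in the given transmission sequence. For each Phase-1/Phase-2/Ext node along the chain, clause \ref{214x35xi} says that if the preferred pair and the state are unchanged from $t$ to $t+1$ then transmission is unchanged; since by hypothesis the states on all $T_\xi$ in the chain are unchanged, and (by induction outward) the transmissions from the children are unchanged, the preferred pairs are unchanged too. For \ref{44v}, the key is clause \ref{214ix35viii}(a): the only way $T_\xi$ can move to a different state while continuing to prefer the same pair is if the parent $T_{k,t}$ (here playing the role of $T_\lambda$) is a type $i_\lambda$ extension of $T_{k,t-1}$ for $\sigma_\lambda$ reaching the global height $\Ht(T_{\emptyset,t-1})$; the alternative that $T_{\xi,t+1}\ne T_{\xi,t}$ propagates down by clauses \ref{35ix} and \ref{214xii35ix} to force the same extension at $T_\lambda$. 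For \ref{44vi}, given that $T_\lambda$ has performed such an extension, I show the state on some $T_\xi$ with $\lambda\subset\xi\subseteq\delta$ must advance (via \ref{214ix35viii}), so \ref{42iv} is verified and a triggering sequence exists; if no such advance occurs, necessarily $\delta=\lambda$ and we are in the no-transmission subcase.

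The main obstacle is \ref{44vii}, which is the converse direction: if $T_{\lambda,t+1}\ne T_{\lambda,t}$ then the extension must have been provoked by a trigger, and the cancellation clause must be verified. I would argue: since $T_\lambda$ grew, it acted on some preferred pair $\la\sigma,i\ra$ with $i\le 1$ at stage $t+1$, so by \ref{44i} there is a transmission sequence from some $\eta\supseteq\lambda$ down to $\lambda$ reaching $T_\lambda$; combining with \ref{44vi} applied stage-by-stage along that chain, some $T_\eta$ triggers some $T_\xi\subseteq\lambda$. The cancellation tail of \ref{44vii} (lower-priority $\nu$ not designated at $t+2$) follows because $T_\eta$'s transmission has $i\le 1$ along its chain, invoking the cancellation rule of Construction \ref{43-} and so killing every $\nu$ of lower priority. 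The one delicate point here is making sure the $\eta$ and $\xi$ I pick satisfy all clauses of Definition \ref{42} simultaneously; this is handled by choosing $\eta$ to be the maximal element of the transmission chain and $\xi$ to be the point where the state-change or Phase-1 activation happens, which is exactly where \ref{42iv} is witnessed.
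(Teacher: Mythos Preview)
Your proposal follows essentially the same route as the paper; the structure for \ref{44i}, \ref{44ii}, \ref{44iv}, \ref{44v}, \ref{44vi} matches. Two points need correction.

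For \ref{44iii}, the appeal to clause \ref{21435}\ref{214iv35iii} only covers Phase trees; an Ext tree transmits everything it receives and so may relay several pairs. The paper closes this by induction on $\lambda$ with lower-priority strings first: if the lowest-priority non-cancelled child $\eta$ with $\eta^-=\lambda$ is Ext, then $\la\alpha,i\ra\in S_{\eta,t+1}$ and the induction hypothesis applied to $\eta$ gives that $T_\eta$ prefers $\la\alpha,i\ra$, hence so does $T_\lambda$.

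For \ref{44vii}, your opening move ``since $T_\lambda$ grew, it acted on some preferred pair $\la\sigma,i\ra$ with $i\le 1$'' fails as stated: a Phase-1 tree receives no pairs, and an Ext tree does not grow by preferring anything. The paper first replaces $\lambda$ by the longest $\xi\subseteq\lambda$ with $T_\xi$ non-Ext; since Ext trees grow precisely with their non-Ext ancestor (via \ref{42}\ref{42iii}), $T_{\xi,t+1}\ne T_{\xi,t}$ as well. If $T_\xi$ is Init or Phase-2, then \ref{28}\ref{28i} and \ref{35}\ref{35ix} yield a preferred pair and \ref{44i} gives the transmission sequence with tip $\eta\supseteq\xi$; growth at $\xi$ forces no transmission and a state change by \ref{21435}\ref{214xii35ix}, so \ref{42}\ref{42iv} holds and $T_\eta$ triggers $T_\xi$. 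If $T_\xi$ is Phase-1, then $T_\xi$ is itself the trigger with $\eta=\xi$ and empty transmission sequence. Also, your invocation of \ref{44vi} is index-shifted: \ref{44vi} takes growth at stage $t$ as input and outputs a trigger at $t+1$, whereas here the growth is at $t+1$; the paper argues the triggering directly from \ref{21435}\ref{214ix35viii} rather than through \ref{44vi}.
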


\begin{proof}
We proceed by induction on $t$. The lemma follows easily for $t=-1$.

\ref{44i} We proceed by induction on those $\lambda$ such that $T_{\lambda,t}$ is designated, with lower priority strings coming first. Since only finitely many trees are designated at a given
stage, the ordering on which the induction is carried out is well-founded.

Everything in the following discussion takes place within a single stage $t+1$. Assume that $T_{\lambda}$ receives $\la\alpha,i\ra$ with $i\le 1$. Then $T_{\lambda}$ receives and prefers some
pair $\la\sigma_\lambda,i_\lambda\ra$, and there is a $\xi$ such that $\xi^-=\lambda$ and $T_{\xi}$ transmits $\la\sigma_\lambda,i_\lambda\ra$. If $T_{\xi}$ is designated (\emph{i.e., at least at the start of the stage}) as a Phase-1 tree, then $\{\la\sigma_\lambda\ra\}$ is a transmission sequence. If $T_{\xi}$ is designated as an Ext tree at stage $t+1$, then by \ref{212}\ref{212ii}, $T_{\xi}$ receives $\la\sigma_\lambda,i_\lambda\ra$. And if $T_{\xi}$ is designated as a Phase-2 tree at stage $t+1$, then by \ref{35}\ref{35x}, $T_{\xi}$ must receive and prefer some $\la\beta,j\ra$ with
$j\le 1$. Thus in all cases, it follows by induction that there is an $\eta\supseteq\xi$ and a transmission sequence $\{\la\sigma_\beta,i_\beta\ra\mid\xi\subseteq\beta\subseteq\eta\}$. Hence $\{\la\sigma_\beta,i_\beta\ra\mid\lambda\subseteq\beta\subset\eta\}$ will be a transmission sequence.

\ref{44ii} Immediate from the construction.

\ref{44iii} We proceed by induction on those $\lambda$ such that $T_{\lambda}$ is designated during $t+1$, with lower priority strings coming first. Everything in the following discussion takes place within a single stage $t+1$. Let $\la\alpha,i\ra\in S_{\lambda,t+1}$ be given, with $i\le 1$. Fix the lowest priority $\eta$ such that $T_{\eta}$ is designated and not cancelled, with
$\eta^-=\lambda$. By \ref{44ii}, $T_{\eta}$ transmits $\la\alpha,i\ra$ . If $T_{\eta}$ is designated as a Phase tree, then by \ref{21435}\ref{214iv35iii}, $\la\alpha,i\ra$ is the unique transmission of $T_{\eta}$, so $T_{\lambda}$ must prefer $\la\alpha,i\ra$ at $t+1$. And if $T_{\eta}$ is designated as an Ext tree, then by \ref{212}\ref{212ii}, $\la\alpha,i\ra\in S_{\eta,t+1}$, so by induction, $T_{\eta}$ prefers $\la\alpha,i\ra$. Hence $T_{\lambda}$ must prefer $\la\alpha,i\ra$. (Since $\la\alpha,i\ra$ was arbitrary, we have shown that there is at most one $\la\alpha,i\ra\in S_{\lambda,t+1}$ such that $i\le 1$.)

\ref{44iv} An induction argument on $\{\xi\mid\lambda\subset\xi\subseteq\delta\}$, longer strings first, using \ref{44iii}, \ref{212}\ref{212ii}, \ref{214}\ref{214iii} and \ref{21435}\ref{214x35xi}
shows that for all $\xi$ such that $\lambda\subset\xi\subseteq\delta$, $T_{\xi}$ transmits the same pair at stages $t$ and $t+1$. Hence $S$ is a transmission sequence at stage $t+1$.

\emph{Remark:} (ii), (iii), (iv) are quite clear once we observe that if two siblings $\gamma<_L\delta$ both try to transmit type $\le 1$ messages at $t+1$ and did not both do so at
$t$, then $\gamma$ just changed state and so there will be a triggering sequence whose triggered node $\eta$ satisfies $\eta\le\gamma$.

\ref{44v} Fix the longest $\eta$ such that $\lambda\subseteq\eta\subset\xi$ and $T_{\eta}$ is designated as an Init or Phase-2 tree at $t$. Note that by choice of $\xi$, $T_{\lambda}$ is
designated as an Init or Phase-2 tree at $t$, so $\eta$ exists. By \ref{21435}\ref{214xii35ix}, $T_{\xi,t}=T_{\xi,t-1}$. By \ref{21435}\ref{214ixa35viiia} and \ref{212}\ref{212ii}, $T_{\eta,t}$
is a type $i_\xi$ extension of $T_{\eta,t-1}$ for $\sigma_\xi$ and $\Ht(T_{\eta,t})=\Ht(T_{\emptyset,t-1})$. Furthermore, $\eta=\lambda$, else $\la\sigma_\eta,i_\eta\ra$ would be defined, and so by \ref{21435}\ref{214xii35ix}, we would have $T_{\eta,t}=T_{\eta,t-1}$.

Informally: Since $\lambda\ne\xi$, there is an immediate (except for Ext trees) ancestor of $\xi$ whose growth led to the change in $\xi$, and so this ancestor must have responded to a request and so does not transmit a pair and hence must in fact be $\lambda$.

(Note that for an Ext tree, growth does not imply state change.)

\ref{44vi} If $\lambda=\delta$, then by \ref{21435}\ref{214xii35ix}, $T_{\lambda}$ has no transmission at $t+1$. Assume that $\lambda\ne\delta$, and fix the shortest $\eta\supset\lambda$
for which $T_\eta$ is designated as a Phase tree. By \ref{42}\ref{42iii}, $T_{\eta}$ transmits $\la\sigma_\lambda,i_\lambda\ra$ at $t$, so by \ref{21435}\ref{214ixa35viiia}, \ref{212}\ref{212ii}
and since $T_{\eta}$ is not cancelled at stage $t+1$, $t$ and $t+1$ are in different states on $T_\eta$ as long as $T_{\eta}$ receives $\la\sigma_\eta,i_\eta\ra$ at $t+1$. Hence there is a
longest $\nu$ such that $\eta\subseteq\nu\subseteq\delta$ and $t$ and $t+1$ are in different states on $T_\nu$. Hence for some $\xi\subseteq\nu$, $T_\delta$ triggers $T_\xi$ at stage $t+1$.

Informally: If a tree grows in response to a request then the tree originating that request changes state, and hence is part of a triggering sequence.

Note that \ref{44vi} says that growth leads to a trigger at the next stage because a child of the growing tree will be able to change state (unless the growing tree has no child).

\ref{44vii} Fix the longest $\xi\subseteq\lambda$ such that $T_{\xi}$ is designated as a non-Ext tree at $t+1$. By \ref{42}\ref{42iii}, $T_{\xi,t+1}\ne T_{\xi,t}$.

We note that by \ref{35}\ref{35vii}, if $T_{\xi,t}\ne\emptyset$ and $r$ is the greatest stage $\le t$ such that $T_{\xi}$ is newly designated as a Phase-2 tree at $r+1$, then $T_{\xi,r}\ne\emptyset$ (else $T_{\xi,t}=T_{\xi,t+1}=\emptyset$). If $T_{\xi}$ is not designated as a Phase-1 tree at $t+1$, then by \ref{35}\ref{35ix} and \ref{28}\ref{28i}, $T_{\xi}$ must receive and prefer some $\la\alpha,i\ra\in S_{\xi,t+1}$ with $i\le 1$ at $t+1$. By \ref{44i}, there is a highest priority $\eta\supseteq\xi$ and a transmission sequence $\{\la\sigma_\beta,i_\beta\ra\mid\xi\subseteq\beta\subset\eta\}$ at $t+1$. Since $T_{\xi,t+1}\ne T_{\xi,t}$, it follows from \ref{21435}\ref{214xii35ix} that $T_{\xi}$ has no transmission at $t+1$, and $t$ and $t+1$ are in different states on $T_\xi$. Hence $T_\eta$ triggers $T_\xi$ at stage $t+1$. $T_{\eta}$ cannot have been cancelled at stage $t+1$, else $\la\alpha,i\ra\not\in S_{\xi,t+1}$. Hence $T_\eta$ is the unique trigger at stage $t+1$, and Case 1, Subcase 3 or Subcase 4 of \ref{43} is followed at stage $t+1$. But then if $\nu$ has lower priority than $\eta$, then $T_{\nu}$ is cancelled at stage $t+1$, and $T_{\nu}$ cannot be newly designated at stage $t+2$. Hence $T_{\nu}$ cannot be designated at $t+2$ if $\nu$ has lower priority than $\eta$.

Informally: This says that not only is a tree that grows in response to a request part of a triggering sequence, but in fact any tree that grows at $t+1$ if it was designated at $t$ (though newly designated Phase-2 trees may have nonempty domains without being triggered) grows in response to a request and hence was triggered. It is not hard to see that (non-Ext) trees only grow in
response to requests in these cases; for Phase-2 trees and Init trees it is immediate, and for Phase-1 trees the growth is in response to a request from itself.
\end{proof}

\subsubsection{Transmissions}

This lemma essentially follows from \ref{44}.

\begin{lem}\label{45}
Fix $\lambda$, $\sigma$, $t\in\omega$ and $i\le 4$ such that $\lambda\ne\emptyset$ and $T_\lambda$
is designated and not cancelled during $t+1$. Then:
\begin{enumerate}
\item \label{45i}
	If $T_\lambda$ transmits $\la\sigma,i\ra$ at $t+1$, then $\sigma\subset T_{\lambda^-,t}$. If, in addition, $T_{\lambda,t}(\emptyset)\downarrow$, then $\sigma\supseteq T_{\lambda,t}(\emptyset)$.
\item \label{45ii}
	If $\lambda\ne\lambda^-*0$ then $T_{p(\lambda)}$ transmits a unique pair $\la\sigma,i\ra$ at $t+1$. For this pair, $i\ge 2$ and $T_{\lambda,t}(\emptyset)\downarrow=\sigma$
\item \label{45iii}
	If $\lambda\ne\lambda^-*0$ and $T_{p(\lambda),t}(\emptyset)\downarrow$, then $T_{\lambda,t}(\emptyset)\supset T_{p(\lambda),t}(\emptyset)$.
\item \label{45iv}
	For all $\xi$, if $\xi^-=\lambda^-$, $\xi$ has higher priority than $\lambda$, and $T_\xi$ transmits $\la\sigma,i\ra$ at $t+1$, then $T_{\lambda,t}(\emptyset)\supseteq\sigma$ and $i\ge 2$.
\item \label{45v}
	If $\la\beta,j\ra$, $\la\gamma,k\ra\in S_{\lambda,t+1}$ and $\la\beta,j\ra\ne\la\gamma,k\ra$, then $\beta\subseteq\gamma$ or $\gamma\subseteq\beta$. If, in addition, $k\le 1$, then
$\beta\subseteq\gamma$ and $j\ge 2$.
\item \label{45vi}
	If $T_\lambda$ receives $\la\sigma,0\ra$ at $t+1$ then $|\sigma|<\Ht(T_{\lambda,t})$.
\item \label{45vii}
	If $T_\lambda$ receives $\la\sigma,0\ra$ at $t+1$, $\beta\subseteq\sigma$ and $\beta$ is a potential focal point of $T_{\lambda,t}$, then $\beta$ is a focal point of $T_{\lambda,t}$.
\item \label{45viii}
	If $T_\lambda$ is newly designated at $t+1$, then dom$(T_{\lambda,t})\subseteq\{\emptyset\}$; and if $T_{\lambda,t}=\emptyset$, then either $T_\lambda$ is designated as Phase-1 tree at $t+1$ or $T_{\lambda^-,t}=\emptyset$.
\end{enumerate}
\end{lem}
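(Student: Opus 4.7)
My plan is to prove the eight items essentially by unpacking the definitions of Phase-1 trees (\ref{214}), Phase-2 trees (\ref{35}), Ext trees (\ref{211}), and the construction (\ref{43-}), and by invoking Lemma~\ref{44} for any nontrivial claim about propagation of transmissions and transmission sequences. The items are almost independent, so I would treat them in the order stated, making repeated use of the uniqueness of type-$\le 1$ receptions from Lemma~\ref{44}\ref{44iii} and the characterization of triggers from Lemma~\ref{44}\ref{44vii}. The overall strategy is induction on $t$ (as in Lemma~\ref{44}), but for most items the induction is trivial or absorbed into an appeal to~\ref{44}.

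For (i), if $T_\lambda$ is an Ext tree, $\sigma\subset T_{\lambda^-,t}$ is immediate from \ref{212}, while for a Phase tree this is the content of \ref{21435}\ref{214iv35iii}; the statement $\sigma\supseteq T_{\lambda,t}(\emptyset)$ for a Phase tree comes from \ref{35}\ref{35iii} and \ref{214}\ref{214vii} (the relevant $\alpha^*(t+1)$ lies at or above $T_{\lambda,t}(\emptyset)$). For (ii) and (iii): the hypothesis $\lambda\ne\lambda^-*0$ means $\lambda=\lambda^-*a$ with $a>0$, so by the construction (Case 1 of~\ref{43-}) $T_\lambda$ was newly designated as a new Ext/Phase-2 tree because its left sibling $T_{p(\lambda)}$ had transmitted some pair $\la\sigma,i^*\ra$ with $i^*\ge 2$; this pair remains the unique transmission of $T_{p(\lambda)}$ at $t+1$ by an argument tracking the state on $T_{p(\lambda)}$ using~\ref{44}\ref{44iv} (if the state did not change) or by noting that a state change would have cancelled $T_\lambda$; moreover the construction sets $T_{\lambda,t}(\emptyset)=\sigma\supset T_{p(\lambda),t}(\emptyset)$, giving both (ii) and (iii). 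For (iv), Lemma~\ref{44}\ref{44ii} gives $i\ge 2$, and then the fact that $\xi<\lambda$ did not cause $\lambda$ to be cancelled at the moment $T_\xi$ triggered forces $T_{\lambda,t}(\emptyset)\supseteq\sigma$ by the same transmission-tracking argument as in (ii).

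Item (v) is the linearity of the set $S_{\lambda,t+1}$ of received pairs: two pairs $\la\beta,j\ra,\la\gamma,k\ra\in S_{\lambda,t+1}$ come from transmissions by immediate children $T_\xi$ with $\xi^-=\lambda$, and these children form a $<^*$-chain of siblings (earlier ones having been cancelled or not yet surpassed); hence the associated $\sigma$'s are comparable by (iv); the addendum for $k\le 1$ follows from Lemma~\ref{44}\ref{44ii}, which forces the lower-priority transmitter to carry $j\ge 2$ and hence $\beta\subseteq\gamma$. For (vi), any received $\la\sigma,0\ra$ was transmitted by some child $T_\eta$; if $T_\eta$ is Phase-type this is exactly \ref{21435}\ref{214v35iv}, while if $T_\eta$ is Ext this is inherited from its own reception by induction (\ref{212}\ref{212ii}). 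For (vii) I argue by tracing back through the transmission sequence of Lemma~\ref{44}\ref{44i}: a received $\la\sigma,0\ra$ originates from a type-$0$ request, so the corresponding $\alpha^*$-focal-point condition at \ref{35}\ref{35iii} (for Phase-2) or \ref{28}\ref{28ii} (for Init, and preserved through Ext trees by \ref{212}) forces any $\beta\subseteq\sigma$ that is a potential focal point to already be a focal point.

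Finally, (viii) is a direct consequence of the construction in~\ref{43-}: in Case~1 Subcases~1 and~2, $T_\lambda$ is designated as an Ext or Phase-2 tree with $T_{\lambda,t}\subseteq\{\sigma\}$ for the relevant $\sigma=T_{\lambda,t}(\emptyset)$; in Case~1 Subcase~3 and Case~2, $T_\lambda$ is designated as a Phase-1 tree with $T_{\lambda,t}=\emptyset$ by \ref{214}\ref{214xii}. The second clause, that $T_{\lambda,t}=\emptyset$ together with $T_\lambda$ not Phase-1 implies $T_{\lambda^-,t}=\emptyset$, reduces to noting that a Phase-2 tree with empty content can only have been newly designated when its predecessor Phase-2 also had empty content, by \ref{35}\ref{35vii} and the Case-1 designation mechanism. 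The main obstacle, I expect, is item (ii)/(iii): one must be careful that between the designation stage of $T_\lambda$ and the current stage $t+1$ no state change of $T_{p(\lambda)}$ altered its unique transmission, and this is the one place where the timing remarks preceding Lemma~\ref{44} and the interaction with cancellation must be handled with genuine care.
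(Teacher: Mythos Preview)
Your approach is essentially the paper's: both proceed by induction on $t$ and unpack the definitions of Phase trees, Ext trees, and the construction, appealing to Lemma~\ref{44} for propagation facts. Two points are worth flagging.

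First, the paper makes explicit a second layer of induction that you leave implicit: within each stage $t$, one inducts on $\lambda$ ordered by priority, lower-priority nodes first. This inner induction is what drives \ref{45i}, \ref{45v}, and \ref{45vi} in the Ext-tree case, where the claim for $T_\lambda$ reduces to the same claim for a child $T_\eta$ (which, having lower priority, has already been handled). Without naming this induction your Ext-tree reductions are circular.

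Second, your argument for \ref{45vii} is both more complicated than necessary and does not work as stated. Tracing the transmission sequence back and invoking \ref{35}\ref{35iii} or \ref{28}\ref{28ii} does not obviously yield that $\beta$ is a focal point of $T_{\lambda,t}$: \ref{35}\ref{35iii} only tells you $\alpha^*(t+1)=\alpha$ when a type-$0$ pair is transmitted, and \ref{28}\ref{28ii} concerns preservation of potential focal points, not promotion to focal points. The paper's argument is a one-liner from \ref{45vi}: a potential focal point $\beta$ of $T_{\lambda,t}$ is a focal point iff $|\beta|<\Ht(T_{\lambda,t})$; since $\beta\subseteq\sigma$ and $|\sigma|<\Ht(T_{\lambda,t})$ by \ref{45vi}, this is immediate.

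A minor stylistic difference: for \ref{45iv} the paper gives a clean chain argument writing $\lambda=s^k(\xi)$ and iterating \ref{45ii} and \ref{45iii} along $s(\xi),s^2(\xi),\ldots,s^k(\xi)$, which is tighter than a vague ``transmission-tracking argument as in (ii)''.
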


\begin{proof}
We proceed by induction on $t$ and then by induction on those $\lambda$ such that $T_{\lambda,t}$
is designated, with lower priority $\lambda$ considered first.

\ref{45i} If $T_{\lambda}$ is designated as a Phase-1 tree at $t+1$, then \ref{45i} follows from Lemma \ref{21435}\ref{214iv35iii}. If $T_{\lambda}$ is designated as an Ext tree during $t+1$, then
by \ref{212}\ref{212ii} there is an $\eta$ such that $\eta^-=\lambda$ and $T_{\eta}$ transmits $\la\sigma,i\ra$ at $t+1$. Applying \ref{45i} by induction to $\eta$, we see that $\sigma\subset
T_{\lambda,t}\subseteq T_{\lambda^-,t}$ so $\sigma\supseteq T_{\lambda,t}(\emptyset)$. Finally, if $T_{\lambda}$ is designated as a Phase-2 tree during $t+1$, then \ref{45i} follows from
\ref{21435}\ref{214iv35iii} and \ref{35}\ref{35iii} since $T_{\lambda,t}(\emptyset)\downarrow\subseteq\alpha^*(t)$.

\ref{45ii} First consider the case where $T_{\lambda}$ is newly designated at $t+1$. Then Case 1, Subcase 1 or Subcase 2 of the construction of \ref{43} is followed at stage $t$, and by \ref{45i},

$T_{p(\lambda)}$ transmits a unique pair $\la\sigma,i\ra$, $i\ge 2$ at $t$, $T_{\lambda,t}(\emptyset)\downarrow=\sigma$, and there is a $\delta\supseteq p(\lambda)$ such that $T_\delta$ triggers $T_{p(\lambda)}$ at $t$ and $T_\delta$ is not cancelled at $t$. Let $S$ be the transmission sequence from $T_\delta$ to $T_{p(\lambda)}$ at $t$. By \ref{44}\ref{44v}, \ref{44}\ref{44vii} and \ref{21435}\ref{214ix35viii}, $t$ and $t+1$ are in the same state on $T_\eta$ and $T_{\eta,t}=T_{\eta,t-1}$ for all $\eta$ such that $p(\lambda)\subseteq\eta\subseteq\delta$. Hence by \ref{21435}\ref{214x35xi}, $T_{p(\lambda)}$ also transmits $\la\sigma,i\ra$ at $t+1$. Hence \ref{45ii} holds.

Now assume that $T_{\lambda}$ is not newly designated at $t+1$. Since $T_{\lambda}$ is designated at $t+1$, $T_{\lambda}$ is designated and not cancelled at $t$. We assume by induction that $S$ is still a transmission sequence at $t-1$. By \ref{44}\ref{44v}, \ref{44}\ref{44vii} and \ref{21435}\ref{214ix35viii}, $t$ and $t-1$ are in the same state on $T_{p(\lambda)}$; and by
\ref{21435}\ref{214x35xi} and \ref{44}\ref{44iv}, $S$ is a transmission sequence at stage $t$, and $T_{p(\lambda),t}$ transmits $\la\sigma,i\ra$ at $t+1$.

\ref{45iii} By \ref{45ii}, $T_{p(\lambda}$ has a unique transmission $\la\sigma,2\ra$ or $\la\sigma,3\ra$ at $t+1$ and $T_{\lambda,t}(\emptyset)=\sigma$. By \ref{45i}, $\sigma\supset T_{p(\lambda),t}(\emptyset)$.

\ref{45iv} Define $s^1(\xi)=s(\xi)$ and $s^{k+1}(\xi)=s(s^k(\xi))$. There must be a $k$ such that $\lambda=s^k(\xi)$, and for all $j\le k$, $T_{s^j(\xi)}$ must be designated at $t+1$. By \ref{45ii}, $T_{s(\xi,t}(\emptyset)=\sigma$ and $i\ge 2$. Iterating \ref{45iii}, we see that $T_{\lambda,t}(\emptyset)=T_{s^k(\xi),t}(\emptyset)\supseteq\cdots\supseteq T_{s(\xi),t}(\emptyset)=\sigma$.

\ref{45v} Choose $\xi,\eta$ such that $\xi^-=\eta^-=\lambda$, $T_{\xi}$ transmits $\la\beta,j\ra$ at $t+1$, and $T_{\eta}$ transmits $\la\gamma,k\ra$ at $t+1$. If $k\le 1$, then by \ref{45iv},
either $\xi$ has higher priority than $\eta$ and $j\ge 2$, or $\xi=\eta$. Assume first that $\xi$ has higher priority than $\eta$. (A similar proof will work if $\eta$ has higher priority than $\xi$.) Then $\eta\ne\eta^-*0$. By \ref{45i} and \ref{45iv}, $\beta\subseteq T_{\eta,t}(\emptyset)\subseteq\gamma$. Finally, assume that $\xi=\eta$. Then by \ref{21435}\ref{214iv35iii}, $T_{\xi}$ cannot be designated as a Phase-2 or Phase-1 tree at $t+1$. Hence $T_{\xi}$ is designated as an Ext tree at $t+1$. By \ref{212}\ref{212ii}, $T_{\xi}$ receives both $\la\beta,j\ra$ and $\la\gamma,k\ra$ at $t+1$. Hence \ref{45v} follows by induction.

\ref{45vi} There is a $\xi$ such that $\xi^-=\lambda$ and $T_{\xi}$ transmits $\la\sigma,0\ra$ at $t+1$. If $T_{\xi}$ is designated as an Ext tree at $t+1$, then by \ref{212}\ref{212ii} and
induction on $\lambda$, $|\sigma|<\Ht(T_{\xi,t})\le\Ht(T_{\lambda,t})$. If $T_{\xi}$ is designated as a Phase tree at $t+1$, then by \ref{21435}\ref{214v35iv}, $|\sigma|<\Ht(T_{\lambda,t})$.

\ref{45vii} A potential focal point $\beta$ of will be a focal point of $T_{\lambda,t}$ iff $|\beta|<\Ht(T_{\lambda,t})$. Let $T_{\lambda}$ receive $\la\sigma,0\ra$ at $t+1$, and let $\beta\subseteq\sigma$ be a potential focal point of $T_{\lambda,t}$. By \ref{45vi}, $|\sigma|<\Ht(T_{\lambda,t})$. Since $\beta\subseteq\sigma$, $|\beta|<\Ht(T_{\lambda,t})$. Hence
$\beta$ is a focal point of $T_{\lambda,t}$.

\ref{45viii} Let $T_{\lambda}$ be newly designated at $t+1$.

First assume that Case 1, Subcase 1 or Subcase 2 of the construction of \ref{43} is followed at stage $t$. Then $T_{p(\lambda)}$ is designated either as a Phase-1 tree or as a Phase-2 tree at $t$ and is not cancelled at $t$; and there is a $\delta\supseteq p(\lambda)$ such that $T_\delta$ triggers $T_{p(\lambda)}$ at $t$ and $T_\delta$ is not cancelled at $t$. By \ref{42}\ref{42iv},
\ref{44}\ref{44v} and \ref{21435}\ref{214xii35ix}, $t$ and $t-1$ must be in different states on $T_{p(\lambda)}$. Hence by \ref{45i}, there is a pair $\la\sigma,i\ra$ transmitted by $T_{p(\lambda)}$ at $t$ such that $\sigma\subset T_{\lambda^-,t-1}$, so $T_{\lambda,t}(\emptyset)\downarrow=\sigma$ (indeed, if $\sigma$ was not $\subset T_{\lambda^-,t}$ then by definition of Ext tree we would have $T_{\lambda,t}(\emptyset\uparrow$). By \ref{21435}\ref{214xii35ix} (first equality) and \ref{44}\ref{44vii} (second equality), $|\sigma|=\Ht(T_{\lambda^-,t-1})=\Ht T_{\lambda^-,t})$. Hence (again by definition of Ext tree) dom$(T_{\lambda,t})=\{\emptyset\}$, and so in this case $T_{\lambda,t}\ne\emptyset$.

The other possibility is that Case 2 of \ref{43} is followed at $t$.

\begin{itemize}
\item Case 2, Subcase 1: $T_{\lambda}$ is a Phase-2 tree at $t+1$, $\gamma_{t-1}=\lambda^-$ is originally a Phase-1 tree, and $\gamma_t=\lambda$.

For this subcase 2.1 there are two cases:

\begin{itemize}
\item Case A: $T_{\lambda^-}$ is newly designated at $t$. Then by induction on $t$,
\[
	\dom(T_{\lambda^-,t-1})\subseteq\{\emptyset\},
\]
and so dom$(T_{\lambda,t})\subseteq\{\emptyset\}$, since $T_{\lambda,t}\subseteq T_{\lambda^-,t-1}$ (since $T_\lambda$ is not an Ext tree). And if $T_{\lambda^-}\ne\emptyset$ then $T_{\lambda}\ne\emptyset$ since $|T_{\lambda^-}(\emptyset)|=\Ht(T_{\emptyset})$ by \ref{21435}\ref{214xii35ix}.

\item Case B: $T_{\lambda^-}$ was triggered at $t-1$. Then $T_{\lambda^-}$ had no children ($T_\lambda$ is newly designated) so it must have been a Phase-1 tree triggered by itself, so by
\ref{21435}\ref{214xii35ix}, dom$(T_{\lambda^-,t-1}\subseteq\{\emptyset\}$, and so dom$(T_{\lambda,t})\subseteq\{\emptyset\}$, since $T_{\lambda,t}\subseteq T_{\lambda^-,t-1}$ (since
$T_\lambda$ is not an Ext tree). We now need to show that if $T_{\lambda^-,t}\ne\emptyset$ then $T_{\lambda,t}\ne\emptyset$. By \ref{44}\ref{44vii}, $|\beta|=\Ht(T_{\emptyset,s^*-2})=\Ht(T_{\emptyset,s^*-1})=\Ht(T_{\emptyset,s^*})$. Now apply \ref{35}\ref{35vii}.
\end{itemize}

\item Case 2, Subcase 2: $T_{\lambda}$ is a newly designated Phase-1 tree at $t+1$. Then we only need to show the first part of \ref{45viii}.

\begin{itemize}
\item Case A: $T_{\lambda^-}$ newly designated at $t$: By induction on $t$ as above.

\item Case B: $T_{\lambda^-}$ was triggered at $t-1$: This is impossible since $T_{\lambda^-}$ is not a Phase-1 tree and has no Phase-1 descendants at $t$ (since $T_{\lambda}$ is newly designated at $t+1$).
\end{itemize}

\end{itemize}

\end{proof}

\subsubsection{Focal points and monotonicity}

\begin{lem}\label{46} Let $\lambda$ and $t\in\omega$ be given such that $T_\lambda$ is designated at $t+1$. Then:
\begin{enumerate}
\item \label{46i} If $\delta$ has lower priority than $\lambda$, $T_\delta$ is not cancelled at stage $t+1$, $\{\la\sigma_\beta,i_\beta\ra\mid\xi\subseteq\beta\subset\delta\}$ is a transmission sequence
at stage $t+1$, $T_\xi$ transmits $\la\sigma,i\ra$ at $t+1$, and $T_{\lambda,t+1}(\emptyset)\downarrow$, then $T_{\lambda,t+1}(\emptyset)\subseteq\sigma$. 
\item \label{46ii} If $T_{\xi,t+1}(\emptyset)\downarrow$, $\xi$ has lower priority than $\lambda$, and $T_{\lambda,t+1}(\emptyset)\downarrow$, then $T_{\lambda,t+1}(\emptyset)\subseteq
T_{\xi,t+1}(\emptyset)$.
\item \label{46iii} Reception of pairs by $T_\lambda$ at $t+1$ is appropriate, i.e., satisfies \ref{23}.
\item \label{46iv} If for all $\xi\subseteq\lambda$, $\beta$ is a potential focal point of $T_{\xi,t}$, and for all $\la\alpha,i\ra\in S_{\lambda,t+1}$, either $i\ge 2$ or $\beta\subseteq\alpha$, then $\beta$ is a potential focal point of $T_{\lambda,t+1}$.
\item \label{46v} $\{T_{\xi,t+1}\mid\xi\subseteq\lambda\and t\le s\le t+1\}$ is special.
\item \label{46vi} If $T_{\lambda,t+1}\ne\emptyset$, then for all $\xi\subseteq\lambda$, $T_{\lambda,t+1}(\emptyset)$ is a potential focal point of $T_{\xi,t+1}$.
\end{enumerate}
\end{lem}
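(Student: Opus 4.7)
The plan is to prove all six parts simultaneously by induction on $t$, following the template of Lemma \ref{45}. The base case (essentially $t=-1$) is vacuous: all trees except $T_\emptyset$ are undefined and no transmissions have occurred. For the induction step at stage $t+1$, I would do a secondary induction on $\lambda$ among designated nodes, taking lower-priority $\lambda$ first, and establish the six parts in the order (iii), (i), (ii), (iv), (vi), (v), with heavy appeal to Lemma \ref{45}, Lemma \ref{44}, and the Phase-tree specifications \ref{21435}, \ref{214}, \ref{35}.

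First I would verify (iii). Each clause of Definition \ref{23} corresponds to a built-in clause of the Phase-tree or Ext-tree definition that governs the transmitter. Clause \ref{23i} is immediate from \ref{45}\ref{45i}; clause \ref{23ii} from \ref{45}\ref{45vi} (or from \ref{21435}\ref{214v35iv}); clauses \ref{23iii} and \ref{23iv} follow from \ref{21435}\ref{214vi35v} together with \ref{35}\ref{35iii} (which force $\alpha$ to be a potential focal point of all ancestors when $i\ge 1$ and state-change happens), and from \ref{44}\ref{44iii}; and \ref{23v} is exactly \ref{45}\ref{45v}. Then (i) follows by chasing the transmission: by \ref{45}\ref{45i}, $\sigma\subset T_{\xi^-,t}$, and since $T_{\xi^-,t}(\emptyset)\downarrow\subseteq\sigma$ (by \ref{45}\ref{45i} second clause when $T_{\xi^-,t}(\emptyset)\downarrow$, else triviality), the inductive hypothesis of (ii) at stage $t$ applied to $\xi^-$ and $\lambda$—noting that $\xi^-$ has lower priority than $\lambda$ since $\delta$ does and the transmission sequence relates them through $<^*$—gives $T_{\lambda,t+1}(\emptyset)=T_{\lambda,t}(\emptyset)\subseteq T_{\xi^-,t}(\emptyset)\subseteq\sigma$ (using that $T_{\lambda,t+1}(\emptyset)$ is fixed once defined, by \ref{45}\ref{45viii}). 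For (ii), trace how $T_\xi$ acquired its root: by \ref{45}\ref{45viii}, $T_{\xi,t+1}(\emptyset)$ was set at a stage $s\le t+1$ as $T_{p(\xi),s-1}(\sigma)$ or by Case~2 of the construction, and in each case $T_{\xi,t+1}(\emptyset)$ is either a transmitted string covered by (i), or a root of an ancestor; either way (i) (applied inductively) delivers the containment.

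Next I would address (iv) and (vi). For (iv), the hypothesis ensures that every received pair is either of type $\ge 2$ (which triggers Ext/Phase-2 designation of a child but does not alter $T_\lambda$'s own extension behavior) or of type $\le 1$ with $\alpha\supseteq\beta$. If $T_{\lambda,t+1}=T_{\lambda,t}$, there is nothing to prove. Otherwise, by \ref{44}\ref{44vii}, $T_\lambda$ grew in response to a triggered request $\la\alpha,i\ra$ with $\alpha\supseteq\beta$ (type $\le 1$); the type-$i$ extension of $T_{\lambda,t}$ for $\alpha$ adds strings only above the shortening of $\alpha$, which contains $\beta$, so $\beta$ remains a potential focal point by the same argument as in Lemma \ref{28}\ref{28ii}. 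Statement (vi) is then immediate by taking $\beta=T_{\lambda,t+1}(\emptyset)$: at the stage $T_\lambda$ became nonempty, its root was installed as a focal point (by \ref{214}\ref{214xii} and \ref{35}\ref{35vii}), and all subsequent receptions of type $\le 1$ are for extensions of that root by (i) together with \ref{45}\ref{45i}, so the hypothesis of (iv) is met for $\beta=T_{\lambda,t+1}(\emptyset)$ at every intervening stage.

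Finally, (v) is a verification of the three clauses of Definition \ref{29}: clause \ref{29i} is subtree nesting and extension, which follow from the Phase-tree definitions \ref{21435}\ref{214i35i}; clause \ref{29ii} is exactly (iv) iterated over $i\le|\lambda|$; clause \ref{29iii} is the simultaneous height synchronization established by \ref{21435}\ref{214xii35ix} together with the observation in the Timing remark that at a stage of growth of $T_i$ no $T_j$ with $j<i$ can grow and hence all heights coincide. The main obstacle is the delicate interplay between parts (i) and (ii)—they are almost the same statement but applied at different nodes—so the simultaneous induction must invoke (ii) at stage $t$ to prove (i) at $t+1$ and vice versa; keeping the priority bookkeeping straight (which node has lower priority than which, and how this interacts with the $\subset$-ordering appearing in transmission sequences) is the main source of friction, but once (iii) is in place the rest is essentially a careful unpacking of \ref{45} and the Phase-tree specifications.
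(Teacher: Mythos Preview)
Your overall framework---simultaneous induction on $t$ with a secondary induction over designated nodes---matches the paper, and your treatment of (iv), (v), (vi) is close enough in spirit. But your argument for (i) has a real gap.

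You write that ``$\xi^-$ has lower priority than $\lambda$ since $\delta$ does and the transmission sequence relates them through $<^*$'', and then apply the inductive hypothesis (ii) to the pair $(\lambda,\xi^-)$. This is not justified. The transmission sequence runs through nodes $\xi\subseteq\beta\subset\delta$, all of which are \emph{ancestors} of $\delta$. Since $\delta$ is a Phase-1 tree we have $\delta=\delta^-*0$, so $\lambda$ having higher priority than $\delta$ only tells you that either $\lambda=\delta^-$ or $\lambda$ has higher priority than $\delta^-$. In either case $\xi$ (and hence $\xi^-$) can be a proper ancestor of $\lambda$, and an ancestor has \emph{higher} priority, not lower. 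So (ii) does not apply to $(\lambda,\xi^-)$, and the containment $T_{\lambda,t}(\emptyset)\subseteq T_{\xi^-,t}(\emptyset)$ is simply unavailable by that route.

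The paper's proof of (i) avoids this by anchoring at the \emph{top} of the chain rather than the bottom: it applies (ii) inductively to $(\lambda,\delta^-)$, which is legitimate by the dichotomy above, obtaining $T_{\lambda,t+1}(\emptyset)\subseteq T_{\delta^-,t}(\emptyset)$. It then proves, by a separate downward induction along the transmission chain (longer $\beta$ first), that $T_{\delta^-,t}(\emptyset)\subseteq\sigma_{\beta^-}$ for every $\beta$ in the chain. The key step there---when $T_\beta$ is a Phase-2 tree---uses \ref{35}\ref{35iii} to get $\alpha^*(t+1)\subseteq\sigma_{\beta^-}$, and then invokes (vi) and \ref{45}\ref{45vii} at stage $t$ to show $T_{\delta^-,t}(\emptyset)\subseteq\alpha^*(t+1)$. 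This is exactly the ``delicate interplay'' you flag at the end, but your sketch of (i) does not actually carry it out.

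A smaller point: your proof of (iii) for clause \ref{23}\ref{23iii} handles only the stage at which $\langle\alpha,1\rangle$ is first transmitted (via \ref{21435}\ref{214vi35v}). At later stages with the same transmission you must rule out that $T_{\lambda}$ has grown so as to make $\alpha$ a focal point; the paper does this by invoking (v) at the previous stage together with \ref{21435}\ref{214ix35viiia} to force a state change on the transmitter, reducing to the already-handled case. Your ordering (iii) before (v) obscures this dependence, though it is fine once you remember that (v) at stage $t$ is available by the outer induction.
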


\begin{proof}
We proceed by induction on $t$. The lemma follows easily for $t=-1$.

To unravel the logical structure of the proof the reader should read first \ref{46iv}, then \ref{46v}, then \ref{46iii}. Then \ref{46i}, \ref{46ii}, \ref{46vi} are proved in the following way: Assuming \ref{46ii}, \ref{46vi} at stage $t$, we prove \ref{46i} at stage $t+1$, and assuming \ref{46i} at stage $t+1$ we prove \ref{46ii}, \ref{46vi} at stage $t+1$.

\ref{46i} Since $S$ is a transmission sequence, $T_{\delta}$ is designated as a Phase-1 tree at $t+1$, hence $\delta=\delta^-*0$. Thus is $\lambda$ has higher priority than $\delta$, then either
$\lambda=\delta^-$ or $\lambda$ has higher priority than $\delta$. By \ref{21435}\ref{214iv35iii}, $T_{\delta^-,t}(\emptyset)\downarrow$, else we have nothing to show. We note that
$T_{\lambda,t}(\emptyset)\downarrow$ and $T_{\lambda}$ is not cancelled at stage $t+1$, else $T_{\delta}$ would be cancelled at stage $t+1$. It thus follows inductively from \ref{46ii} that
$T_{\lambda,t+1}(\emptyset)=T_{\lambda,t}(\emptyset)\subseteq T_{\delta^-,t}(\emptyset)$. Let $\la\sigma_{\xi^-},i_{\xi^-}\ra$ be the transmission of $T_{\xi}$ at $t+1$ if such a transmission
exists. It suffices to show that for all $\beta$ such that $\xi^-\subset\beta\subseteq\delta$, $T_{\delta^-}(\emptyset)\subseteq\sigma_{\beta^-}$. We proceed by induction on
$\{\beta\mid\xi^-\subset\beta\subseteq\delta\}$, longer strings first. If $T_\beta$ is designated as a Phase-1 tree, then $\beta=\delta$, so by \ref{21435}\ref{214iv35iii},
$T_{\delta^-}(\emptyset)\subseteq\sigma_{\delta^-}$. If $T_\beta$ is designated as an Ext tree, then by induction, $T_{\delta^-,t}(\emptyset)\subseteq\sigma_\beta=\sigma_{\beta^-}$. And if
$T_\beta$ is designated as a Phase-2 tree, then by \ref{35}\ref{35iii}, $\alpha^*(t+1)\subseteq\sigma_{\beta^-}$. By \ref{46vi} and \ref{45}\ref{45vii}, $T_{\delta^-,t}(\emptyset)$ is a potential focal point of $T_{\beta,t}$ and if $i_\beta=0$, then $T_{\delta^-,t}(\emptyset)$ is a focal point of $T_{\beta,t}$. Hence $T_{\delta^-,t}(\emptyset)\subseteq\alpha^*(t+1)\subseteq\sigma_{\beta^-}$.

\ref{46ii} If $\xi\ne\gamma_{t+1}$, then $T_{\xi}$ is designated at $t+2$ iff $T_{\xi}$ is designated at $t+1$ and not cancelled at stage $t+1$. Hence if $\xi$ has higher priority than $\lambda$, then applying \ref{46ii} by induction, we see that $T_{\lambda,t+1}(\emptyset)=T_{\lambda,t}(\emptyset)\subseteq T_{\xi,t}(\emptyset)=T_{\xi,t+1}(\emptyset)$. Assume that $\xi=\gamma_{t+1}$. If Case 2 of the construction of \ref{43} is followed at stage $t+1$, then $\xi=\xi^-*0$, so either $\xi^-=\lambda$ or $\lambda$ has higher priority than $\xi^-$. By induction and since $T_{\xi,t+1}\subseteq T_{\xi^-,t}$, $T_{\lambda,t+1}(\emptyset)=T_{\lambda,t}(\emptyset)\subseteq T_{\xi^-,t}(\emptyset)\subseteq T_{\xi,t+1}(\emptyset)$. Otherwise, Case 1 of the construction of
\ref{43} is followed at stage $t+1$, and there is a $\delta\supset\xi^-$ and a transmission sequence $\{\la\sigma_\beta,i_\beta\ra\mid p(\xi)\subseteq\beta\subset\delta\}$ such that $T_{\xi,t+1}(\emptyset)=\sigma_\xi$. By the cancellation procedure and since $T_{\delta,t}=\emptyset$, $\lambda$ has higher priority than $\delta$. Hence by \ref{46i}, $T_{\lambda,t+1}(\emptyset)\subseteq\sigma_\xi=T_{\xi,t+1}(\emptyset)$.

\ref{46iii} \ref{23}\ref{23i}, \ref{23ii}, and \ref{23v} for $T_{\lambda}$ at $t+1$ follow from \ref{45}\ref{45i}, \ref{45}\ref{45vi} and \ref{45}\ref{45v} respectively. Let $T_{\lambda}$ receive $\la\alpha,i\ra$ with $i\ge 1$ at $t+1$. Fix the shortest $\xi\supset\lambda$ such that $T_{\xi}$ transmits $\la\alpha,i\ra$ at $t+1$ and $T_{\xi,t}$ is designated as a Phase-2 or Phase-1 tree at
$t+1$. If either $T_{\xi}$ is newly designated at $t+1$ or if $t$ and $t+1$ are in different states on $T_\xi$, then by \ref{21435}\ref{214vi35v}, for all $\eta\subseteq\lambda$, $\alpha$ is a
potential focal point of $T_{\eta,t}$ which is not a focal point of $T_{\eta,t}$, so \ref{23}\ref{23iii} and \ref{23}\ref{23iv} hold for $T_{\lambda,t}$. We must now verify \ref{23}\ref{23iii} for subsequent stages. \ref{23}\ref{23iii} will follow for $T_{\lambda,t}$ by induction unless $T_{\lambda,t}\ne T_{\lambda,t-1}$. Thus assume that $T_{\lambda,t}\ne T_{\lambda,t-1}$. Since $T_{\xi}$ transmits $\la\alpha,i\ra$ at $t$ and is not cancelled at $t$, it follows from \ref{44}\ref{44iii} that $T_{\lambda}$ prefers $\la\alpha,i\ra$ at $t$. Applying \ref{46v} inductively, we see that $T_{\lambda,t}$ is a type $i$ extension of $T_{\lambda,t-1}$ for $\alpha$ such that $\Ht(T_{\lambda,t})=\Ht(T_{\emptyset,t})$. Since $T_{\lambda,t}\subseteq T_{\emptyset,t-1}$, $\Ht(T_{\lambda,t})=\Ht(T_{\emptyset,t-1})$. Hence by \ref{21435}\ref{214ixa35viiia}, $t$ and $t+1$ are in different states on $T_\xi$, a case which has already been considered.

[\emph{Note:} Id grows at every stage, but Init does not. Hence it is perfectly possible that all the trees from $T_\emptyset$ to the tree that has just grown have the same height.]

\ref{46iv} We proceed by induction on $|\lambda|$, shorter strings first. Assume that for all $\xi\subseteq\lambda$, $\beta$ is a potential focal point of $T_{\xi,t}$, and that for all
$\la\alpha,i\ra\in S_{\lambda,t+1}$, either $i\ge 2$ or $\beta\subseteq\alpha$. If $T_{\lambda,t+1}=T_{\lambda,t}$ then $\beta$ is a potential focal point of $T_{\lambda,t+1}$. Assume that $T_{\lambda,t+1}\ne T_{\lambda,t}$. If $T_{\lambda}$ is designated as an Init or Phase-2 tree at $t+1$, then by \ref{28}\ref{28i} and \ref{35}\ref{35ix}, $T_{\lambda}$ receives and prefers some $\la\alpha,i\ra$ at $t+1$ and $T_{\lambda,t+1}$ is a type $i$ extension of $T_{\lambda,t}$ for $\alpha$. By \ref{45}\ref{45vii}, for all $\rho\subset T_{\lambda,t+1}-T_{\lambda,t}$, $\beta\subseteq\rho$. Hence $\beta$ is a potential focal point of $T_{\lambda,t+1}$. $T_{\lambda,t}\ne\emptyset$, else $\beta$ would not be defined. Suppose that $T_{\lambda}$ is designated as an Ext tree at $t+1$. Then $T_{\lambda^-,t}$ is designated at $t+1$ and receives at $t+1$ all pairs which $T_{\lambda}$ receives at $t+1$. Let $T_{\lambda^-}$ receive $\la\alpha,i\ra$ with $i\le 1$ at $t+1$. Then there is a $\xi$ such that $\xi^-=\lambda^-$ and $T_{\xi}$ transmits $\la\alpha,i\ra$ at $t+1$. By the construction of \ref{43-}, $\lambda$ cannot have higher priority than $\xi$. It now follows from \ref{45}\ref{45iv} that for all $\la\alpha,i\ra$ received by $T_{\lambda^-}$ at $t+1$, either $\beta\subseteq\alpha$ or $i\ge 2$. Hence \ref{46iv} follows by induction.

\ref{46v} If $\lambda=\emptyset$, then it follows from Remark \ref{210} that $\mc T=\{T_{\eta,s}\mid\eta\subseteq\lambda\and t\le s\le t+1\}$ is special. Assume that $\lambda\ne\emptyset$. \ref{29}\ref{29i} for $\mc T$ follows from \ref{212}\ref{212i} and \ref{21435}\ref{214i35i}. \ref{29}\ref{29ii} for $\mc T$ follows from \ref{46iv}. We now verify \ref{29}\ref{29iii} for $\mc T$.

We proceed by induction on $|\lambda|$, shorter strings first. Thus we may assume that $\{T_{\eta,s}\mid\eta\subset\lambda\and t\le s\le t+1\}$ is special. We assume that $T_{\lambda,t+1}\ne T_{\lambda,t}$ or that $T_{\lambda}$ is newly designated at $t+1$. If $T_{\lambda}$ is designated as an Init or Phase tree at $t+2$, then it follows from \ref{28}\ref{28ii} and \ref{21435}\ref{214xii35ix} that $\Ht(T_{\lambda,t+1})=\Ht(T_{\emptyset,t})$, $T_{\lambda,t}$ has no transmission, and if $T_{\lambda,t}\ne\emptyset$ or if $T_{\lambda}$ is newly designated at $t+2$, then it follows from \ref{28}\ref{28i} and \ref{35}\ref{35ix} again that $T_{\lambda}$ receives and prefers some $\la\alpha,i\ra$ at $t+1$ and $T_{\lambda,t+1}$ is a type $i$ extension of $T_{\lambda,t}$ for $\alpha$. By \ref{44}\ref{44vii}, only one tree grows per stage, so we cannot have $T_{\eta,t+1}\ne T_{\eta,t}$ for any $\eta\subset\lambda$. Hence $\Ht(T_{\lambda,t+1})=\Ht(T_{\lambda,t})$ for all $\eta\subseteq\lambda$, and \ref{29}\ref{29iii} holds in this case.

Suppose that $T_{\lambda}$ is designated as an Ext tree at $t+2$.

First suppose that $T_{\lambda}$ is newly designated at $t+2$. Then Case 1, Subcase 1 or Subcase 2 of the construction of \ref{43} is followed at stage $t+1$.

By \ref{45}\ref{45ii}, $T_{p(\lambda)}$ must newly transmit some $\la\tau,j\ra$ with $j\ge 2$ at $t+1$, hence by \ref{21435}\ref{214vi35v}, $\tau$ is a potential focal point of $T_{\eta,t}$ which
is not a focal point of $T_{\eta,t}$ for all $\eta\subseteq\lambda^-$, and $T_{\lambda}$ has no transmission at $t+1$. Hence $\Ht(T_{\lambda,t+1})=\Ht(T_{\emptyset,t})$. Again it follows from
\ref{44}\ref{44vii} that $\Ht(T_{\lambda,t+1})=\Ht(T_{\eta,t+1})$ for all $\eta\subseteq\lambda$.

Next suppose that $T_{\lambda}$ is not newly designated at $t+2$. Fix the longest $\xi\subset\lambda$ such that $T_{\xi}$ is designated as an Init or Phase-2 tree at $t+1$. Then $T_{\xi,t+1}\ne T_{\xi,t}$ so by induction, $\Ht(T_{\xi,t+1})=\Ht(T_{\emptyset,t+1})$, $T_{\xi}$ receives and prefers some $\la\alpha,i\ra$ at $t+1$ and $T_{\xi,t+1}$ is a type $i$ extension of $T_{\xi,t}$ for $\alpha$. By \ref{212}\ref{212ii}, it suffices to show that $T_{\lambda}$ receives $\la\alpha,i\ra$ at $t+1$. But this follows from \ref{45}\ref{45iv} and induction, as if $\lambda^-=\xi$, then by the construction of \ref{43}, there can be no $\eta$ such that $T_{\eta}$ is designated at $t+1$, $\eta^-=\xi$, and $\lambda$ has higher priority than $\eta$.

\ref{46vi} Fix $\lambda$ such that $T_{\lambda,t+1}(\emptyset)\downarrow$. First suppose that $T_{\lambda}$ is newly designated at $t+2$ or that $T_{\lambda,t}=\emptyset$. By
\ref{45}\ref{45viii}, dom$(T_{\lambda,t+1})=\{\emptyset\}$. By \ref{46v} and \ref{29}\ref{29iii}, $T_{\lambda,t+1}(\emptyset)$ is a potential focal point of $T_{\xi,t+1}$ for all $\xi\subseteq\lambda$. Otherwise, $T_{\lambda,t+1}(\emptyset)=T_{\lambda,t}(\emptyset)$ which, by induction, is a potential focal point of $T_{\xi,t}$ for all $\xi\subseteq\lambda$. By \ref{46iv}, it suffices to show that for all $\xi\subseteq\lambda$ and $\la\alpha,i\ra\in S_{\xi,t+1}$, if $i\le 1$ then $T_{\lambda,t+1}(\emptyset)\subseteq\alpha$. Let $\la\alpha,i\ra\in S_{\xi,t+1}$ be given with $i\le 1$. By \ref{44}\ref{44i} and \ref{44}\ref{44iii}, there is a transmission sequence $\{\la\sigma_\beta,i_\beta\ra\mid\xi\subseteq\beta\subset\delta\}$ for some $\delta\supset\xi$ at stage $t+1$, with $\la\sigma_\xi,i_\xi\ra=\la\alpha,i\ra$ and $T_{\delta,t}=\emptyset$. If $\lambda$ has higher priority than $\delta$, then by \ref{46i}, $T_{\lambda,t+1}(\emptyset)\subseteq\sigma_\xi=\alpha$. We cannot have $\lambda=\delta$, as $T_{\delta,t}=\emptyset$, so by \ref{45}\ref{45i}, $S_{\delta,t+1}=\emptyset$. We complete the proof by assuming that $\delta$ has higher priority than $\lambda$ and obtaining a contradiction. Fix the longest $\eta$ such that $\eta\subseteq\delta$ and $\eta\subseteq\lambda$. Then $\xi\subseteq\eta$. Hence $i_\eta\le 1$, contradicting \ref{44}\ref{44ii}.
\end{proof}

\subsubsection{Non-hanging}

\begin{lem}\label{claim47i}
If $T_\delta$ is an empty Phase-1 tree and $\delta=\gamma_s$ and $T_\delta$ transmits a pair at $s+1$ then $T_\delta$ is a trigger at $s+1$.
\end{lem}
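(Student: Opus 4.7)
The plan is to exhibit some $\beta\subseteq\delta$ for which $T_\delta$ triggers $T_\beta$ at stage $s+1$, verifying Definition \ref{42}. Let $\la\alpha,i\ra$ be the unique (by \ref{21435}\ref{214iv35iii}) pair that $T_\delta$ transmits at $s+1$, and split on whether $i\le 1$ or $i\ge 2$.

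If $\delta=\emptyset$, I take $\beta=\delta$: the transmission sequence is empty, conditions (i)--(iii) of \ref{42} are vacuous, and (iv) holds because $\beta=\emptyset$. So assume $\delta\ne\emptyset$ throughout.

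\emph{Case $i\ge 2$.} I again take $\beta=\delta$, so conditions (i)--(iii) are vacuous. For (iv) with $\xi=\delta$, the side condition that $T_\delta$ does not transmit a pair of type $\le 1$ is automatic from uniqueness of the transmission. What remains is to establish that $s$ and $s+1$ lie in different states on $T_\delta$. Since $T_\delta$ is empty Phase-1, Definition \ref{214} tells me it receives no strings, hence $S_{\delta,s+1}=\emptyset$ and $T_\delta$ prefers no pair at $s$ or at $s+1$. The hypothesis $\delta=\gamma_s$, together with the construction of \ref{43-} (specifically Case 2, Subcase 2, which is how a non-triggered Phase-1 tree is first installed), tells me $T_\delta$ is freshly designated and sitting in its initial state at stage $s$. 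By \ref{21435}\ref{214x35xi}, identical preferences at $s$ and $s+1$ (both empty) combined with identical states would force identical transmission behavior. Since the transmission behavior changes between stages $s$ and $s+1$ (either no transmission at $s$, or a transmission differing from the one at $s+1$), the state of $T_\delta$ must advance, giving (iv).

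\emph{Case $i\le 1$.} I try $\beta=\delta^-$ with the singleton transmission sequence $\{\la\sigma_{\delta^-},i_{\delta^-}\ra\}=\{\la\alpha,i\ra\}$: condition (i) of \ref{42} holds by hypothesis, and (ii)--(iii) are vacuous. For (iv), the state change on $T_\delta$ established in the previous case still applies with $\xi=\delta$, and I must verify $T_{\delta^-}$ does not itself transmit a pair of type $\le 1$ at $s+1$. If it does, then by \ref{44}\ref{44i} I can extend the triggering chain one step further and repeat the argument; iterating along the (finitely many) priority-tree ancestors of $\delta$, the chain terminates either at $\beta=\emptyset$ (where (iv) holds trivially) or at some $\beta$ whose transmission at $s+1$ is of type $\ge 2$ or absent, at which point (iv) is completed by the same state change on $T_\delta$.

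The main obstacle will be the rigorous justification of the state change on $T_\delta$: one must align \ref{214xiii} (which fixes whether an empty Phase-1 tree transmits, in terms of parameters set at designation time) with \ref{21435}\ref{214x35xi} (which relates state to transmission behavior given fixed preferences), and then exploit the freshness encoded by $\delta=\gamma_s$ to pin down the initial state at $s$. A subsidiary obstacle in Case $i\le 1$ is ensuring that the iteratively extended triggering sequence indeed satisfies the Phase-2/Ext clauses of \ref{42}(ii)--(iii) at each step, which follows from \ref{44}\ref{44iv} applied inductively along the chain.
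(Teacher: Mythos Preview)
Your argument has a genuine gap: the claim that $\delta=\gamma_s$ forces $T_\delta$ to be freshly designated is false. By the construction (Definition~\ref{43-}), $\gamma_s$ is merely the lowest-priority tree designated at the end of stage $s$; it can equal $\delta$ at many consecutive stages, in particular via Case~1, Subcase~4, where the empty Phase-1 tree $T_\delta$ was already the trigger at stage $s$ and some ancestor Init or Phase-2 tree grew, leaving $\gamma_{s+1}=\delta$ again. (Indeed, the whole purpose of Lemma~\ref{47}\ref{47ii}, which \emph{uses} the present lemma, is to bound the number of such repetitions.) Once fresh designation fails, your appeal to \ref{21435}\ref{214x35xi} collapses: the transmission of $T_\delta$ at $s$ and at $s+1$ can be identical, so no state change on $T_\delta$ is forced, and condition \ref{42}\ref{42iv} remains unverified. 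The same flaw propagates into your Case $i\le 1$, which simply imports the state change from Case $i\ge 2$.

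The paper's proof locates the state change at the other end of the chain. It takes the \emph{maximal} transmission sequence upward to some $\beta$ and splits on why it terminates. When $T_\beta$ transmits type $\ge 2$, the key observation is that $\delta=\gamma_s$ implies $T_{s(\beta)}$ is not designated (it would have strictly lower priority than $\delta$); hence the construction's Case~1, Subcases~1--2 never fired for $T_\beta$, so this must be the first stage at which $T_\beta$ transmits that pair, giving the required state change on $T_\beta$. When $T_\beta$ has no transmission, \ref{35}\ref{35xiii} and \ref{46}\ref{46iii} force $T_\beta$ to have just grown, which again yields the needed state change. Your iterative extension in Case $i\le 1$ is heading toward the same maximal $\beta$, but you never supply the argument tying $\gamma_s=\delta$ to a state change at the top; that is the missing idea.
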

\begin{proof}
Consider the maximal transmission sequence $\{\la\sigma_\gamma,i_\gamma\ra\mid\beta\subseteq\gamma\subset\delta\}$ at $s+1$. There are 3 possible cases.
\begin{itemize}
\item[(1)] 
	$\beta=\emptyset$ (so $T_{\gamma_s}$ is a trigger),
\item[(2)] 
	$T_\beta$ has no transmission (which means some Phase-2 tree just grew, by \ref{35}\ref{35xiii} and \ref{46}\ref{46iii} and the assumption that $T_\delta$ does have a transmission, so $T_{\gamma_s}$ is a trigger)
\item[(3)] 
	$T_\beta$ transmits a type $i\ge 2$ message (in which case we redirect the true path, and since we hadn't already done so (which we hadn't, since the Phase-1 tree was $=\gamma_s$), this must be the first stage that $T_\beta$ transmitted that type $i\ge 2$ message, so $T_{\gamma_s}$ is a trigger, even if $T_\beta$ is $T_{\gamma_s}$ itself, by \ref{42}\ref{42iv} (in this case of course the transmission sequence is $=\emptyset$)).
\end{itemize}
\end{proof}

\begin{lem}\label{47}
Let $\delta$ and $t\in\omega$ be given such that $T_\delta$ is designated at (the end of) stage
$t$. Then
\begin{enumerate}
\item \label{47i} 
	If $T_{\delta,t}=\emptyset$ then $T_\delta$ is designated as a Phase-1 tree at $t$. If, in addition, $\eta$ is given such that $\eta^-=\delta$, then $T_\eta$ is not designated at $t$.
\item \label{47ii} 
	If $T_\delta$ is not cancelled at stage $t+1$ or later, then $\{s\mid\gamma_s=\delta\}$ is finite.
\end{enumerate}
\end{lem}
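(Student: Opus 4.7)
The plan is to prove Part (i) by induction on $t$ and Part (ii) by induction on the length of $\delta$, in each case extracting as much as possible from the already-established Lemmas \ref{44}--\ref{46}.

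For Part (i), the base case $t=0$ is immediate since only $T_\emptyset=\Init$ is designated and $\Init_0=\Id_0$ is nonempty. For the inductive step, suppose $T_{\delta,t+1}=\emptyset$. If $T_\delta$ is not newly designated at $t+1$ then the designation type is unchanged and tree monotonicity forces $T_{\delta,t}\subseteq T_{\delta,t+1}=\emptyset$, so the induction hypothesis applied at $t$ gives that $T_\delta$ is Phase-1 (and hence remains so at $t+1$). If $T_\delta$ is newly designated at $t+1$, I would apply Lemma \ref{45}\ref{45viii}: either $T_\delta$ is Phase-1 or $T_{\delta^-,t}=\emptyset$. To exclude the second alternative, I would walk through the subcases of Construction \ref{43-} and observe that every way to newly designate $T_\delta$ forces $T_{\delta^-,t}\ne\emptyset$: in Case 1 Subcases 1--2 we have $\alpha^*\subset T_{\beta^-,s}=T_{\delta^-,t}$ by Lemma \ref{45}\ref{45i}, while in Case 2 Subcases 1--2 the construction explicitly uses $T_{\gamma_s,s}(\emptyset)\downarrow$ with $\gamma_s=\delta^-$. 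Hence $T_\delta$ must be Phase-1. For the second clause of (i), having Part 1, I would observe that the subcase analysis just used \emph{also} shows that designating any child $T_\eta$ of $T_\delta$ (whether via Case 1 or Case 2) requires $T_{\delta,s}\ne\emptyset$ at the stage $s$ of that designation. Since an empty Phase-1 tree remains empty until the one-time Case 1 Subcase 3 transition, and monotonicity prevents a nonempty tree from becoming empty again without cancellation, $T_\delta$ has been empty for every stage since its current designation, so no child could have been legally designated.

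For Part (ii), I would induct on $|\delta|$. For $\delta=\emptyset$, only $\gamma_0=\emptyset$ occurs: at every subsequent stage, Case 2 sets $\gamma_{s+1}=\gamma_s*0\ne\emptyset$, and any trigger $T_\delta$ in Case 1 must be a non-$\emptyset$ tree (the triggering sequence condition \ref{42}\ref{42iv} combined with the fact that $T_\emptyset=\Init$ has no transmission rules out $\delta=\emptyset$ as the outermost trigger). For the inductive step, suppose $T_\delta$ is not cancelled at $t+1$ or later and assume for contradiction that $\{s:\gamma_s=\delta\}$ is infinite. I would enumerate the ways $\gamma_{s+1}$ can be set to $\delta$: (a) new designation of $T_\delta$ (Case 1 Subcases 1--2 giving $s(\beta)=\delta$) happens at most once absent cancellation; (b) Case 1 Subcase 3 (an empty Phase-1 $T_\delta$ becoming nonempty) happens at most once by Part (i) and monotonicity; (c) Case 2 extension from $\gamma_s=\delta^-$ happens only finitely often by the induction hypothesis applied to $\delta^-$; and (d) Case 1 Subcase 4, in which $T_\delta$ is a Phase-2 or Init tree that grows as the outermost trigger.

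The main obstacle is bounding option (d). The idea is that by \ref{42}\ref{42iv} each occurrence of (d) requires a state change on some $T_\xi$ with $\beta\subseteq\xi\subseteq\delta$, and by \ref{44}\ref{44vii} each growth of a tree is paired with a trigger lying at the end of a triggering sequence, so each repeat of (d) forces either a new state change on an ancestor chain terminating at $\delta$ or a new growth event of $T_\delta$ itself. Combining this with the induction hypothesis (descendants of $\delta^-$ can trigger up to $\delta$ only finitely often, since $\gamma_s=\delta^-$ finitely often by the inductive claim, and state changes on trees strictly above $\delta$ produce finitely many contributions between those visits) and with the observation that between any two instances of $\gamma_s=\delta$ in option (d) the construction must pass through a proper descendant of $\delta$ via Case 2 and then return via a trigger, I would obtain a finite bound on (d) as well, yielding the desired contradiction.
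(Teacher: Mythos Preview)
Your argument for Part~(i) is essentially along the paper's lines (both routes go through Lemma~\ref{45}\ref{45viii}), though note that in the general Construction~\ref{43-} Case~2 does not \emph{explicitly} require $T_{\gamma_s,s}(\emptyset)\downarrow$; the paper instead uses Lemma~\ref{claim47i} to show that an empty Phase-1 parent is already a trigger, so Case~1 preempts Case~2.

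For Part~(ii) there is a genuine error in your case~(d). In Case~1 Subcase~4 of Construction~\ref{43-}, the node assigned to $\gamma_{s+1}$ is the \emph{trigger} (the Phase-1 tree at the top of the triggering sequence, with $T_{\delta,s}=\emptyset$), \emph{not} the Phase-2 or Init tree $T_\beta$ that grows. So whenever $\gamma_{s}=\delta$ occurs past the initial designation, your $\delta$ is forced to be that empty Phase-1 trigger --- exactly the paper's opening observation ``If $s>t$ and $\gamma_s=\delta$, then $T_\delta=\emptyset$ must be a trigger at stage $s$.'' Your description of (d) as ``$T_\delta$ is a Phase-2 or Init tree that grows'' conflates the trigger with the triggered node, and the rest of the paragraph is built on that misreading.

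This also means your outer induction on $|\delta|$ is the wrong scaffold: knowing that $\{s:\gamma_s=\delta^-\}$ is finite says nothing about how often the Phase-1 tree $T_\delta$ triggers some $T_\beta$ with $\beta\subseteq\delta$. The paper's proof does \emph{not} induct on $|\delta|$. It fixes $\delta$ and runs a \emph{downward} induction on $\{\beta:\beta\subseteq\delta\}$, showing for each $\beta$ that $\{s:T_\delta\text{ triggers }T_\beta\}$ is finite. The mechanism is stabilization: by \ref{21435}\ref{214x35xi},\ref{214xi35xii} the states along the chain from $\delta$ down to $\beta$ eventually freeze, producing a fixed transmission sequence (the hypothesis~\ref{4(5)}); then either $T_{\beta^-}$ is Ext (pass through), or Phase-2 (its state also freezes since it now prefers a fixed pair), or Init (in which case it grows, forcing a trigger strictly above $\beta^-$ and contradicting the choice of $t(\beta)$). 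Your sketch gestures at ``state changes'' but never isolates this stabilization-of-transmission argument, which is the actual engine of the proof.
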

\begin{proof}
\ref{47i} The second statement follows from the first, since children of Phase-1 trees are not Phase-1 trees, by Definition \ref{43}, and since the child of an empty tree is empty.

Now observe that a tree is first newly designated at some stage and then keeps its designation until it is newly designated again. Hence to determine what a tree is designated as, it suffices to
determine what it is designated as when it is newly designated.

Hence it is more than sufficient to prove the following: If $T(t):=T_{\gamma_t,t}=\emptyset$ is newly designated then $T(t)$ is a Phase-1 tree.

By \ref{45}\ref{45viii}, it suffices to show that if an empty Phase-1 tree is $\gamma_s$ then there is a trigger at stage $s+1$, and hence $\gamma_s*0$ will not be newly designated at $s+2$.

Now such a Phase-1 tree will have a transmission (by \ref{45}\ref{45viii}, \ref{214}\ref{214xiii} and induction along the ctp, it will not be in state $\la 0,0\ra$ and will have a transmission).
Hence we are done by Lemma \ref{claim47i}.

\ref{47ii} Suppose that $T_{\delta}$ is designated at $t+1$ and $T_{\delta}$ is not cancelled at any stage $s+1>t$. If $s>t$ and $\gamma_s=\delta$, then $T_\delta=\emptyset$ must be a trigger at stage $s$. We proceed by induction on $\{\beta\mid\beta\subseteq\delta\}$, longer strings first. For each such $\beta$, we show that $\{s\mid T_\delta$ triggers $T_\beta$ at stage $s\}$ is finite, thus
proving \ref{47ii}. First suppose that $\beta=\delta$. If $r>s>t$ and $T_\delta$ triggers $T_\delta$ at stages $r$ and $s$, then by \ref{21435}\ref{214xi35xii} and \ref{214}\ref{214xiii},
the state of $s$ on $T_\delta$ lexicographically precedes the stage of $r$ on $T_\delta$. Since there are only finitely many possible states, by \ref{21435}\ref{214x35xi}, there must be a stage
$t(\delta)>t$ and a pair $\la\sigma_{\delta^-},i_{\delta^-}\ra$ such that for all $s\ge t(\delta)$, $s$ and $t(\delta)$ are in the same state on $T_\delta$ and $T_{\delta}$ transmits
$\la\sigma_{\delta^-},i_{\delta^-}\ra$ at $s+1$. Thus $T_\delta$ cannot trigger $T_\delta$ at any stage $s>t(\delta)$. If $i_{\delta^-}\ge 2$, then $T_\delta$ cannot be a trigger after stage
$t(\delta)$, so the proof of \ref{47ii} is complete.

Assume by induction that $\{s\mid T_\delta$ triggers $T_\beta$ at stage $s\}$ is finite. We will have
the following induction hypothesis:

\begin{equation} \label{4(5)} \text{There is a fixed transmission sequence
$\{\la\sigma_\lambda,i_\lambda\ra\mid\beta^-\subseteq\lambda\subset\delta\}$}
\end{equation}
\begin{equation*} \text{ at all $s\ge$ some
$t(\beta)>t$.} 
\end{equation*}

\ref{4(5)} has been verified in the previous paragraph for $\beta=\delta$. Note that $T_{\beta^-}$ is not cancelled at any stage $s+1>t$. Hence we may define $T_{\beta^-}=\Union\{T_{\beta^-,s}\mid s>t\}$ and designate it in the same way in which $T_{\beta^-}$ is designated at $t+1$. If $T_{\beta^-}$ is designated as an Ext tree, then by \ref{212}\ref{212ii}
and induction, $T_{\beta^-}$ transmits $\la\sigma_{\beta^-},i_{\beta^-}\ra$ at $s+1$ for all $s\ge t(\beta)$. Furthermore, $T_\delta$ cannot trigger $T_{\beta^-}$. Hence $t(\beta^-)=t(\beta)$ and
\ref{4(5)} holds. If $T_{\beta^-}$ is designated as an initial tree, then $\beta^-=\emptyset$ and $T_\delta$ triggers $T_\emptyset$ at stage $t(\beta)$. By \ref{4(5)} and \ref{28}\ref{28i},
$T_\delta$ triggers some $T_\eta$ at stage $t(\beta)+1$ with $\emptyset\subset\eta\subseteq\delta$, contrary to the choice of $t(\beta)$. Hence if $\beta^-=\emptyset$, we have completed the
verification of \ref{47ii}.

The remaining case is when $T_{\beta^-}$ is designated as a Phase-2 tree. By \ref{44}\ref{44iii}, $T_{\beta^-}$ prefers $\la\sigma_{\beta^-},i_{\beta^-}\ra$ at all stages $s\ge t(\beta)$. If
$r>s\ge t(\beta)$ and $T_{\beta^-}$ transmits different pairs at $r+1$ and $s+1$, then by \ref{21435}\ref{214x35xi}, $r$ and $s$ must be in different states on $T_{\beta^-}$; by
\ref{21435}\ref{214xi35xii}, the state of $s$ on $T_{\beta^-}$ lexicographically precedes the state of $r$ on $T_{\beta^-}$, unless there is a stage $u$ such that $s<u\le r$ and $T_{\beta^-,u}\ne
T_{\beta^-,u-1}$. If no such stage $u$ exists, then all sufficiently large states must be in the same state on $T_{\beta^-}$, so $t(\beta^-)$ must exist as specified in \ref{4(5)}. But if $u$
exists, then by \ref{47i}, $T_\delta$ would trigger some $T_\eta$ at stage $u+1$ with $\beta^-\subset\eta\subseteq\delta$ (by \ref{47i}, at each stage there is at most one empty Phase-1
tree along any path in the priority tree), contrary to the choice of $t(\beta)$. Hence \ref{47ii} must hold.

Less formal proof of \ref{47ii}: If a Phase-2 tree eventually always prefers the same pair then it will eventually always transmit the same pair and remain in a fixed state, and so is eventually
never triggered.
\end{proof}

\subsubsection{True path}

\begin{lem}\label{48}
$|\Gamma|=\infty$. Furthermore, there are $\gamma(m)$ and $t(m)<\omega$ such that:
\begin{enumerate}
\item \label{48i} 
	$|\gamma(m)|=m$.
\item \label{48ii} 
	For all $t\ge t(m)$, either $\gamma_t\supseteq\gamma(m)$ or $\gamma_t$ has lower priority than $\gamma(m)$.
\item \label{48iii} 
	$\{r\mid\gamma_r\supset\gamma(m)\}$ is infinite.
\item \label{48moreover} 
	For each $\beta$ of higher priority than $\Gamma$ such that $T_{\beta}$ is designated at infinitely many stages $t$, there is a least stage $t(\beta)$ such that $T_{\beta}$ is not
cancelled at any stage $t>t(\beta)$, $T_{\beta}$ is designated at $t(\beta)+1$, and if there is a $t\ge t(\beta)$ such that $T_{\beta,t}(\emptyset)\downarrow$ then $T_{\beta,t(\beta)}(\emptyset)\downarrow$.
\end{enumerate}
\end{lem}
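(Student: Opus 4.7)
The plan is to construct $\gamma(m)$ and $t(m)$ by induction on $m$, maintaining the invariant that $T_{\gamma(m)}$ is never cancelled at any stage $t > t(m)$; then setting $\Gamma := \bigcup_m \gamma(m)$ gives $|\Gamma| = \infty$ directly from (i). The base case is $\gamma(0) = \emptyset$, $t(0) = 0$: the tree $T_\emptyset$ is the Init tree of Definition \ref{27}, which never transmits, so $\emptyset$ can never be a trigger $\delta$ in Case 1 of the construction; combined with Case 2 always extending $\gamma_s$ strictly, this forces $\gamma_s \ne \emptyset$ for $s \ge 1$, yielding (iii), and (ii) is vacuous since every node extends $\emptyset$.

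For the inductive step, (iii) at level $m$ says $\{r : \gamma_r \supset \gamma(m)\}$ is infinite, and since the priority tree of a Lerman construction is finitely branching (Definition \ref{lermanconstruction}), the pigeonhole principle produces a least $a^*$ such that $B := \{r : \gamma_r \supseteq \gamma(m)\ast a^*\}$ is infinite; set $\gamma(m+1) := \gamma(m)\ast a^*$. Choose $t(m+1)$ past $t(m)$, past the last $s$ with $\gamma_s \supseteq \gamma(m)\ast b$ for any $b < a^*$ (finite by the minimality of $a^*$), past the last $s$ with $\gamma_s = \gamma(m)$ (finite by Lemma \ref{47}\ref{47ii} applied to $\gamma(m)$, using the invariant at level $m$), and past the last cancellation of $T_{\gamma(m+1)}$ (once the invariant for $\gamma(m+1)$ is established, as below). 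Then (i) is immediate; (ii) follows by case analysis on $\gamma_t$'s relation to $\gamma(m+1)$, since for $t \ge t(m+1)$ $\gamma_t$ cannot be lex-left of $\gamma(m+1)$; and (iii) for $m+1$ follows because $B$ is infinite while $\{r : \gamma_r = \gamma(m+1)\}$ is finite by Lemma \ref{47}\ref{47ii} applied to $\gamma(m+1)$.

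The main obstacle is establishing the invariant for $\gamma(m+1)$: that $T_{\gamma(m+1)}$ is eventually never cancelled. Cancellations of $T_{\gamma(m+1)}$ occur either via ancestor cancellations (finite by the invariant at level $m$) or via a left sibling $T_{\gamma(m)\ast b}$ with $b < a^*$ transmitting some $\la\alpha,i\ra$ with $i \le 1$. I argue by a secondary induction on $b \in \{0,\dots,a^*-1\}$ that only finitely many such $i\le 1$ transmissions occur: once siblings with index $b' < b$ have ceased their $i \le 1$ transmissions, $T_{\gamma(m)\ast b}$ itself is eventually uncancelled, and then the state-tracking used in the proof of Lemma \ref{47}\ref{47ii} (via conditions \ref{214}\ref{214xiii} and \ref{21435}\ref{214x35xi}, \ref{21435}\ref{214xi35xii}) shows that $T_{\gamma(m)\ast b}$ settles into a single state with a fixed transmission; any further $i \le 1$ transmission would force a state change or tree growth, and only finitely many of those are possible. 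For the moreover clause I split on whether $\beta \subset \Gamma$ or $\beta <_L \Gamma$: in the first case $\beta = \gamma(k)$ for some $k$, and $t(\beta) := t(k)$ works by the invariant, with the $T_{\beta,t}(\emptyset)\downarrow$-stability following from Lemma \ref{45}\ref{45viii}; in the second case $\beta$ lies below some left-sibling $\gamma(k)\ast b$ with $b < \gamma(k+1)(k)$, and the secondary induction above extends (down through the descendants of $\gamma(k)\ast b$ to $\beta$ by the same state-tracking reasoning) to supply eventual non-cancellation, the designation statement, and $(\emptyset)$-stability for $T_\beta$ whenever $T_\beta$ is designated at infinitely many stages.
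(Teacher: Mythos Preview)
Your overall inductive scheme---build $\gamma(m)$ level by level via pigeonhole on the finitely many children of $\gamma(m-1)$, invoke Lemma~\ref{47}\ref{47ii} for finiteness of $\{s:\gamma_s=\gamma(m-1)\}$, and carry the invariant that $T_{\gamma(m)}$ is eventually never cancelled---is exactly the paper's approach, and you are right that establishing the invariant at level $m+1$ is where the content lies; the paper's one-line ``hence by \ref{47}\ref{47ii}, $t(k)$ must exist'' leaves this just as implicit.

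Your secondary induction on $b<a^*$, however, has a gap. You claim that once $T_{\gamma(m)*b}$ is eventually uncancelled, the state-tracking of \ref{21435}\ref{214x35xi},\ref{214xi35xii} forces it to settle on a fixed transmission. Those conditions only pin down the transmission \emph{given a fixed preferred pair}, and for a Phase-2 or Ext tree the preferred pair is whatever it receives from its children. The proof of Lemma~\ref{47}\ref{47ii} that you cite works by induction \emph{upward from a fixed empty Phase-1 leaf}; it does not apply directly to an interior node whose descendant subtree has not first been shown to stabilize. To salvage your route you would need the extra observation that, since $\gamma_s\not\supseteq\gamma(m)*b$ for all large $s$ (by minimality of $a^*$), no new trees are ever designated below $\gamma(m)*b$, so that finite subtree can only shrink and eventually freezes; only then does a leaf-to-root settling argument go through.

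There is a much shorter route. Lemma~\ref{45}\ref{45ii} (whose proof only uses that $T_\lambda$ is designated at the \emph{start} of $t+1$) shows that whenever $T_{\gamma(m)*(b+1)}$ is designated, its left neighbour $T_{\gamma(m)*b}$ transmits $i\ge 2$. Chaining this from $b=a^*-1$ down to $0$ gives that while $T_{\gamma(m+1)}$ is designated, \emph{no} left sibling ever transmits $i\le 1$; hence $T_{\gamma(m+1)}$ can only be cancelled via cancellation of $T_{\gamma(m)}$, which is finite by the level-$m$ invariant. The same observation handles \ref{48moreover} for $\beta<_L\Gamma$ in one stroke, without any descent through the descendants of $\gamma(k)*b$.
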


\begin{proof}
We proceed by induction on $m$. \ref{48i}-\ref{48iii} are easily verified for $m=0$. Assume that \ref{48i}-(iii) hold for $m=k-1$. Since there are only finitely many $\gamma$ on the priority tree
with $|\gamma|=k$, it follows from (iii) that there is a $\gamma(k)\supset\gamma(k-1)$ of highest priority such that $|\gamma(k)|=k$ and $\gamma_s\supseteq\gamma(k)$ for infinitely many $s$. Fix a stage $t(k)$ such that $T_{\gamma(k)}$ is not cancelled at any stage $s\ge t(k)$. By (ii) for $m=k-1$, it follows that $T_{\gamma(k-1)}$ is not cancelled at any stage $s\ge t(k-1)$, hence by
\ref{47}\ref{47ii}, $t(k)$ must exist. (i) and (ii) are now easily verified for $m=k$. And (iii) is immediate from the choice of $\gamma(k)$ and \ref{47}\ref{47ii}. We now note that
$\Gamma=\Union\{\gamma(m)\mid m<\omega\}$, so by (i), $|\Gamma|=\infty$. For \ref{48moreover}, since $\beta$ has higher priority than $\Gamma$, $\beta$ is cancelled finitely often. Let $t(\beta)$ be the least stage at which $T_\beta$ is newly nonempty and never again cancelled.
\end{proof}

\begin{df}
For $\beta$ as in \ref{48}\ref{48moreover}, let $T_\beta=\Union\{T_{\beta,t}\mid t>t(\beta)\}$, and let $T_\beta$ have the same designation as it had at $t(\beta)+1$. Let $g(x)=\lim_{s\to\infty}\alpha_s(x)$ for all $x$ for which this limit is defined.
\end{df}

\begin{lem}\label{49}
$|g|=\infty$ and $\mb g\le\mb 0'$.
\end{lem}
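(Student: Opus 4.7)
The plan is to prove the two assertions separately, both leaning on the true-path structure from Lemma \ref{48}.

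For $|g|=\infty$, the strategy is to produce, for each $m$, a string $\sigma_m$ of length at least $m$ and a stage $T(m)$ such that $\alpha_s\supseteq\sigma_m$ for all $s\ge T(m)$; that forces $\lim_s\alpha_s(x)$ to exist for every $x$. I would take $\sigma_n$ to be the eventual value of $T_{\gamma(n),t}(\emptyset)$ along the true path. First, by Lemma \ref{48}\ref{48moreover} together with the fact that once $T_{\gamma(n)}$ is never cancelled again it only grows, the value $T_{\gamma(n),t}(\emptyset)$ (once defined) is constant in $t$; call this stable value $\sigma_n$. Second, by Lemma \ref{46}\ref{46ii} applied to $\gamma(n)\subseteq\gamma(n+1)$, the stable values form a nested chain $\sigma_0\subseteq\sigma_1\subseteq\cdots$. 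Third, $|\sigma_n|\to\infty$: by Lemma \ref{45}\ref{45iii}, whenever $\gamma(n+1)$ is a strictly right sibling of some node above $\gamma(n)$ in the construction, passage from one to the next is a strict extension of $(\emptyset)$-values; combined with the fact that the priority tree is finitely branching, the true path must traverse infinitely many such strict extensions.

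The second ingredient is that $\alpha_s$ extends $\sigma_n$ for all sufficiently large $s$. This comes from case analysis of Definition \ref{43-}. In Case~2, $\alpha_{s+1}=T_{\gamma_s,s}(\emptyset)$, and by Lemma \ref{48}\ref{48ii} together with \ref{46}\ref{46ii}, whether $\gamma_s$ extends $\gamma(n)$ or lies to its right, we have $T_{\gamma(n),s}(\emptyset)\subseteq T_{\gamma_s,s}(\emptyset)$, so $\alpha_{s+1}\supseteq\sigma_n$. In Case~1, $\alpha_{s+1}$ is either $T_{\delta,s+1}(\emptyset)$ with $\delta=\gamma_{s+1}$, or a value of the form $T_{\beta^-,s}(\xi^*)$; Lemma \ref{45}\ref{45i}, together with \ref{45}\ref{45iv}, ensures that $\alpha_{s+1}$ extends $T_{\gamma_{s+1},s+1}(\emptyset)$, and we conclude again by \ref{46}\ref{46ii}. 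Combining the three ingredients, I pick $n$ so large that $|\sigma_n|>m$ and $T(m)$ so large that $\gamma_s$ has settled past $\gamma(n)$ (using Lemma \ref{48}\ref{48ii} and the stabilization stage $t(\gamma(n))$).

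For $\mb g\le\mb 0'$, I would invoke Shoenfield's limit lemma. Since the construction is a Lerman construction (Definition \ref{lermanconstruction}), $s\mapsto \alpha_s$ is uniformly recursive, and the preceding argument established that $g(x)=\lim_s\alpha_s(x)$ exists for every $x$. Hence $g$ is $\Delta^0_2$ and $\mb g\le\mb 0'$.

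The main obstacle is showing $|\sigma_n|\to\infty$: this requires confirming that the true path genuinely exploits Lemma \ref{45}\ref{45iii} infinitely often, rather than stagnating at some node where $T_{\gamma(n)}(\emptyset)$ is never defined or never extended. Handling the case of a Phase-1 tree on the true path that is empty for a long time, and pinning down the first stage $\ge t(\gamma(n))$ at which $T_{\gamma(n),t}(\emptyset)\downarrow$ (which must exist because $\{r\mid \gamma_r\supset\gamma(n)\}$ is infinite by \ref{48}\ref{48iii} and a new child cannot be designated while its parent is an empty Phase-1 tree by \ref{47}\ref{47i}), is the bookkeeping that carries the most of the weight of the argument.
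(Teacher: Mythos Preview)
Your overall architecture matches the paper's: define $\sigma_n=T_{\gamma(n)}(\emptyset)$ along the true path, show these roots form an increasing chain of unbounded length, and show $\alpha_s$ eventually extends each $\sigma_n$; then invoke the limit lemma for $\mb g\le\mb 0'$. The second half of your argument (that $\alpha_s\supseteq\sigma_n$ for large $s$) is essentially correct, though the paper packages it more cleanly by observing that $\alpha_t$ is always either some $T_{\delta,t}(\emptyset)$ with $\delta$ of lower priority than $\gamma(n)$, or a string appearing in a transmission sequence, and then applying \ref{46}\ref{46i} and \ref{46}\ref{46ii} directly. Your case analysis misses Subcase~4 of Case~1, where $\alpha_{s+1}=\sigma_\beta$ is a transmission-sequence value and is not literally of the form $T_{\cdot}(\emptyset)$; this is exactly what \ref{46}\ref{46i} is for.

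There is, however, a real gap in your argument that $|\sigma_n|\to\infty$. Lemma \ref{45}\ref{45iii} gives strict growth only when passing from a node to its \emph{right sibling}: $T_{\lambda}(\emptyset)\supset T_{p(\lambda)}(\emptyset)$ when $\lambda\ne\lambda^-*0$. It says nothing about the step from $\gamma(n)$ to its child $\gamma(n)*0$. Finite branching does not force the true path to move right infinitely often; $\Gamma=0^\omega$ is entirely possible, and in that case your argument yields no strict extensions at all. The paper closes this gap by restricting attention to those $\gamma\subset\Gamma$ initially designated as \emph{Phase-1} trees (of which there are infinitely many by the alternating pattern in the construction) and invoking the Phase-1 axioms: \ref{214}\ref{214viii} gives $|T_\gamma(\emptyset)|>|T_{\gamma^-}(\emptyset)|$ when $\gamma=\gamma^-*0$, and \ref{214}\ref{214vii} (via the eventual $i\ge 2$ transmission of $T_{\gamma^-*0}$) handles $\gamma=\gamma^-*1$. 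You correctly flagged this step as the main obstacle, but \ref{45}\ref{45iii} is not the lemma that resolves it.
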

\begin{proof}
Since $s\mapsto\alpha_s$ is recursive, if $|g|=\infty$ then $\mb g\le\mb 0'$. We show that $|g|=\infty$ in a two-part proof. Let $g^*=\Union\{T_\gamma(\emptyset)\mid\gamma\subset\Gamma\}$. We
first show that $|g^*|=\infty$ and then show that $g=g^*$.

Let $\gamma\subset\Gamma$ be given such that $\gamma\ne\emptyset$ and $\gamma$ is (initially, i.e., at the stage where it is newly designated and never again cancelled) designated as a Phase-1 tree.
If $\gamma=\gamma^-*0$, then by \ref{214}\ref{214viii}, $|T_\gamma(\emptyset)|>|T_{\gamma^-}(\emptyset)|$. And if $\gamma=\gamma^-*1$ then there is a pair $\la\alpha,i\ra$ such that $T_{\gamma^-*0}$ transmits $\la\alpha,i\ra$ at all sufficiently large stages, $i\ge 2$, and $T_{\gamma,t}(\emptyset)=\alpha$. By \ref{214}\ref{214vii}, $|T_\gamma(\emptyset)|>|T_{\gamma^-}(\emptyset)|$. Hence the unique accumulation point of the set of $|T_\gamma(\emptyset)|$ for $\gamma\subset\Gamma$ is $\infty$. Since $T_\gamma\subseteq T_{\gamma^-}$ for all $\gamma\subset\Gamma$ such that $\gamma\ne\emptyset$, $|g^*|=\infty$.

We will show that $g=g^*$ by proving that for all $\gamma\subset\Gamma$ initially designated as a Phase-1 tree and all $t\ge t(\gamma)$, $T_\gamma(\emptyset)\subseteq\alpha_t$. Fix such a $\gamma$.
By \ref{48}\ref{48ii}, for all $t\ge t(\gamma)$, either $\alpha_t=T_{\delta,t}(\emptyset)$ for some $\delta$ of lower priority than $\gamma$ such that $T_{\delta}$ is designated but not cancelled at
stage $t$, $\{\la\sigma_\beta,i_\beta\ra\mid\xi\subseteq\beta\subset\delta\}$ is a transmission sequence at stage $t$, $T_{\xi}$ transmits $\la\sigma,i\ra$ at $t$ and $\alpha_t=\sigma$. Hence by
\ref{46}\ref{46i} and \ref{46}\ref{46ii}, $T_\gamma(\emptyset)\subseteq\alpha_t$.
\end{proof}

\chapter{$\Sigma^0_3$-presentable initial segments of $\mc D(\le\mb 0')$}
    \subsection{Trees for initial segments}

\subsubsection{Definition of trees and projections}
\begin{df}\label{V210}
Given a finite lattice $L$, with $\Theta=\Theta(L)$, and $\sigma,\tau\in S(\Theta)$, and $a\in L$, let $\sigma^{\la i\ra}$ be defined as follows: $|\sigma^{\la i\ra}|=|\sigma|$, and for each
$x<|\sigma|$, let $\sigma^{\la i\ra}(x)=(\sigma(x))^{[i]}$. The notation $f^{\la i\ra}$ for functions is defined similarly; $f^{\la i\ra}(x)=(f(x))^{[i]}$.
\end{df}

\begin{df}
Let $i\mapsto\la\varphi_i,L^i\ra$ have direct limit $L$. If $i\le j$, $b\in L^i$, $b_0\in L^j$ and $(b\approx b_0)[j]$ then we write $b_0=b[j]$. Note that $b\mapsto b[j]$ defines a $\la 0,1,\vee\ra$-homomorphism from $L^i$ to $L^j$ and is equal to $\varphi_{j-1}\cdots\varphi_i$. So $1[i]$ denotes the greatest element of $L^i$.
\end{df}

\begin{df}[Trees for lattices]
Let $i\mapsto\la\varphi_i,L^i\ra$ have direct limit $L$. Let $\Theta=\Theta(L)$.

Suppose $T$ is a $(\Theta^i,\Theta^0)$-tree. Suppose we are given a sequence of trees $\la T_n\mid n<\omega\ra$ with $T_{n+1}\subseteq T_n$ such that for some $f:\omega\rightarrow\omega$ and each $i$, $n$ with $f(i)\le n<f(i+1)$,
$T_n:S(\Theta^i)\to S(\Theta^0)$, $f(i)<f(i+1)$, $f(0)=0$.

If $g:\omega\to\omega$, $g\subset\inter_n T_n$, $i<\omega$ and $a\in L^i$, then let $g^{\la a\ra}$ be $T_{f(i)}^{-1}(g)$.
\end{df}

\subsubsection{Computation Lemma}

\begin{df}\label{V24}
Let $\sigma,\tau,\rho\in\omega^{<\omega}$ be given. We call $\la\sigma,\tau\ra$ an \emph{$e$-splitting} of $\rho$ if $\rho\subset\sigma$, $\rho\subset\tau$, and for some $x<\omega$, $\{e\}^\sigma(x)\downarrow\ne\{e\}^\tau(x)\downarrow$. For such an $x$, we call $\la\sigma,\tau\ra$ an \emph{$e$-splitting of $\rho$ for $x$}. If $T$ is a tree and $\sigma,\tau\subset T$, then we call $\la\sigma,\tau\ra$ an $e$-splitting of $\rho$ on $T$.
\end{df}

\begin{df}\label{11}
Given a finite lattice $L$, $\sigma,\tau\in S(\Theta(L))$ and $a\in L$, define $\sigma\sim_a\tau$ if $\sigma^{[a]}(x)=\tau^{[a]}(x)$ for all $x<\min(\{|\sigma|,|\tau|\})$.

If $a\in L$, we say that $\la\sigma,\tau\ra$ is an \emph{$e$-splitting mod $a$} if $\la\sigma,\tau\ra$ is an $e$-splitting and $\sigma\sim_a\tau$.
\end{df}

\begin{df}\label{112}
Level $i$ is an \emph{$e$-splitting level} of $T$ for $k$ if for all $\xi,\eta$, if $|\xi|=|\eta|=i+1$, $T(\xi)\downarrow$, $T(\eta)\downarrow$ and $\xi\not\sim_k\eta$ then $\la T(\xi),T(\eta)\ra$ is an $e$-splitting.
\end{df}

\begin{df}\label{113}
If $\{T_s\mid s\ge s^*\}$ is an increasing recursive sequence of finite trees for $L$ with $T=\union\{T_s\mid s\ge s^*\}$ then $T$ is a \emph{weak $e$-splitting tree for $k$} generated by $\{T_s\mid s\ge s^*\}$ if
\begin{enumerate}
\item there are no $e$-splittings mod $k$ on $T$, and
\item the last level of every plateau of each $T_s$ is an $e$-splitting level of $T_s$ for $k$.
\end{enumerate}
\end{df}

In the following we assume that all trees considered are weakly uniform.

\begin{lem}[Computation Lemma]\label{114}
Let $e<\omega$, a finite lattice $L$ and $k\in L$ be given and let $T$ be a partial recursive $(\Theta(L),\Theta')$-tree (for some $\Theta'$) which is weak $e$-splitting for $k$. If $g$ is an infinite branch of $T$ such that $\{e\}^g$ is total then $\{e\}^g\sim_T g^{\la k\ra}$.
\end{lem}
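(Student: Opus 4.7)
The plan is to show that $\{e\}^g$ is recursive in $g^{\langle k\rangle}$ by a standard splitting-tree computation-search argument, adapted to the weak, mod-$k$ version of splitting.

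Given oracle $g^{\langle k\rangle}$ and input $x$, I would run the following algorithm. Effectively search through strings $\xi \subset T$ (possible since $T$ is partial recursive) for a $\xi$ satisfying (a) $\xi \sim_k g$, meaning $\xi^{[k]}(y) = g^{[k]}(y)$ for all $y < |\xi|$, and (b) $\{e\}^\xi(x) \downarrow$. The first condition is checkable using the oracle $g^{\langle k\rangle}$, since (as the trees $T_n$ are recursive and weakly uniform) $g^{\langle k\rangle}$ recovers the $k$-equivalence information of $g$ along $T$; the second is $\Sigma^0_1$ and checkable outright. Output $\{e\}^\xi(x)$ for the first witness found.

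For termination: since $\{e\}^g(x) \downarrow$ and $g$ is a branch of $T$, some initial segment $g \restriction n$ lies on $T$ with $\{e\}^{g\restriction n}(x) \downarrow$ equal to $\{e\}^g(x)$. Trivially $g \restriction n \sim_k g$, so $\xi = g \restriction n$ satisfies (a) and (b) and the search halts. For correctness, suppose the search returns $\xi$ with $\{e\}^\xi(x) = v$, and pick $n \ge |\xi|$ with $\{e\}^{g\restriction n}(x) \downarrow = \{e\}^g(x)$. If $v \ne \{e\}^g(x)$, then $\langle \xi, g\restriction n\rangle$ is an $e$-splitting with both strings on $T$. But $\xi^{[k]}$ and $(g \restriction n)^{[k]}$ both agree with $g^{[k]}$ on their common domain, so $\xi \sim_k g\restriction n$. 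This is an $e$-splitting mod $k$ on $T$, contradicting the first clause of Definition \ref{113}. Hence $v = \{e\}^g(x)$, and we have shown $\{e\}^g \le_T g^{\langle k\rangle}$.

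The main subtlety is verifying that the oracle $g^{\langle k\rangle}$ genuinely lets us effectively decide the $\sim_k$ condition used in the search. Here I would invoke weak uniformity of $T$ (so that projection via $T^{-1}_{f(i)}$ is compatible with the $[k]$-operation levelwise) together with the fact that the sequence $\{T_s\}$ is recursive, so that from a sufficiently long initial segment of $g^{\langle k\rangle}$ we can decode the finitely many values of $g^{[k]}(y)$ needed to test $\xi \sim_k g$. Once that bookkeeping is done, the splitting-tree argument proceeds exactly as sketched above; all the genuine content sits in the single two-line contradiction invoking the absence of $e$-splittings mod $k$ on $T$.
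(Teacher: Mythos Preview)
Your argument correctly establishes $\{e\}^g \le_T g^{\langle k\rangle}$, and that half is essentially the paper's first paragraph. But the conclusion of the lemma is a Turing \emph{equivalence} ($\sim_T$ here means $\equiv_T$), and you have not addressed $g^{\langle k\rangle} \le_T \{e\}^g$ at all. That direction is the more substantial half of the proof, and it is where the second clause of Definition~\ref{113} is used: that the last level of every plateau of each $T_s$ is an $e$-splitting level of $T_s$ for $k$. Your proposal invokes only the first clause (no $e$-splittings mod $k$ on $T$) and never touches the splitting-level condition, so as written there is no mechanism by which $\{e\}^g$ could recover $g^{\langle k\rangle}$.

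The missing idea is an inductive elimination procedure: from $\{e\}^g$ one builds $\sigma_0 \subset \sigma_1 \subset \cdots$ on $T$ with each $\sigma_j \sim_k g$. At step $j+1$ one locates an $e$-splitting level $[u,v)$ above $\sigma_j$ in some $T_s$; since that level is full and is $e$-splitting for $k$, any two candidate extensions $\mu,\nu$ of length $v$ with $\mu \not\sim_k \nu$ disagree on some $\{e\}$-value, so $\{e\}^g$ can eliminate one of them. After exhausting the eliminations one is left with a $\mu \sim_k g$ (the argument that the surviving $\mu$ really is $\sim_k g$ again uses the first clause, together with fullness to transfer between branches). You should supply this half; without it the lemma is only half proved.
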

\begin{proof}
We first show how to compute $\{e\}^g(x)$ recursively from $g^{\la k\ra}$. Search for $\sigma\subset T$ such that $\sigma\sim_k g$ and $\{e\}^\sigma(x)\downarrow$. Let $\tau\subset g$ be given such that $\tau\subset T$ and $\{e\}^\tau(x)\downarrow$. Such $\sigma$ and $\tau$ must exist since $\{e\}^g$ is total. Since $\sigma\sim_k\tau$ and there are no $e$-splittings mod $k$ on $T$, $\{e\}^\sigma(x)=\{e\}^\tau(x)=\{e\}^g(x)$. Since $\{e\}^\sigma(x)$ was computed following a procedure which is uniformly recursive in $g^{\la k\ra}$, $\{e\}^g\le_T g^{\la k\ra}$.

We now show how to recover $g^{\la k\ra}$ recursively from $\{e\}^g$. We proceed by induction on $j$, finding, at step $j$, $\sigma_j\subset T$ such that $\sigma_j=T(\xi_j)$, $|\xi_j|=j$, and
$\sigma_j\sim_k g$. When $j=0$, we choose $\sigma_0=T(\emptyset)$.

At step $j+1$, expressing $T$ as $\union\{T_s\mid s\ge s^*\}$, find the least $s\ge s^*$ and the smallest level $r$ of $T_s$ such that:
\begin{equation}\label{1(1)}
	\text{Level $r$, $[u,v)$, is an $e$-splitting level of $T_s$ for $k$ and $v>|\sigma_j|$.}
\end{equation}
\begin{equation}\label{1(2)}
	\exists\tau\subset T_s(|\tau|=v\and\tau\sim_k\sigma_j).
\end{equation}
Note that since each $T_s$ is weakly uniform, each plateau of each $T_s$ is full, so the interval $[|\sigma_j|,v)$ is full on $T_s$. Fix $T(\eta)=\rho\subset\tau$ such that $|\rho|=|\sigma_j|$. If
$\mu,\nu\subset T_s$ are such that $\rho\subseteq\mu$, $\rho\subseteq\nu$, $|\mu|=|\nu|=v$ and $\mu\not\sim_k\nu$, then by \ref{1(1)}, $\la\mu,\nu\ra$ $e$-splits on some $x$. Hence $\{e\}^g$ can be used to eliminate at least one of $\mu$ and $\nu$ as a potential candidate for a string $\sigma$ such that $|\sigma|=v$ and $\sigma\sim_k g$. Complete this elimination process, ending with $\mu$.
(If no string remains at the end, choose $\mu$ arbitrarily.) Let $\mu=T(\eta*\delta)$. Choose $\sigma_{j+1}=T(\xi_{j+1})$ such that $\sigma_{j+1}\subseteq\mu$ and $|\xi_{j+1}|=j+1$. Since
$\sigma_{j+1}\sim_k\mu$, it suffices to show that $\mu\sim_k g$. Fix $\alpha,\beta$ such that $|\alpha|=|\eta|$, $|\beta|=|\eta*\delta|$ and $T(\alpha)\subset T(\beta)\subset g$. Let $\beta=\alpha*\gamma$. Since $[|\sigma_j|,v)$ is full on $T_s$, $T_s(\eta*\gamma)\downarrow$. By the choice of $s$, $T(\eta*\gamma)$ cannot be eliminated during the above process since
$\eta\sim_k\sigma_j$ and there are no $e$-splittings mod $k$ on $T$, hence on $T_s$. Thus $\mu\sim_k T(\eta*\gamma)\sim_k T(\beta)\sim_k g$.
\end{proof}

\subsubsection{Homomorphism Lemma}

\begin{df}[$L$-tree]\label{116}
If $T_k:S(\Theta^i)\to S(\Theta^0)$ and $T_{k+1}\subseteq T_k$ then $T_{k+1}$ is an $(L,i,i+1)$-\emph{subtree of} $T_k$, or for short $T_{k+1}$ is an $L$-\emph{tree}, if the following holds:

$T_{k+1}(\emptyset)=T_k(\xi)$ for some $\xi$ with $|\xi|\ge m_i(0)$, and $T_{k+1}(\eta*a)=T_k(\xi*a^n)$ if $T_{k+1}(\eta)=T_k(\xi)$ where $n\ge m_i(j)-m_i(j-1)$, $j=|\eta|$, $a\in\Theta^{i+1}_{j-1}$ and $T_{k+1}:S(\Theta^{i+1})\to S(\Theta^0)$. Note that $n$ can be found from the height functions of $T_k$, $T_{k+1}$.
\end{df}

\begin{lem}
In Definition \ref{116} with $i-1$ in place of $i$, let $T=T_{k+2}$ and $U=T_{k+1}$ ($T_k$ is the $k$th tree in the construction, $T_{k+2}\subseteq T_{k+1}\subseteq T_k$). I.e., $U$ is an $L$-tree used to go from $L^{i-1}$ to $L^i$. Let $\varphi=\varphi_{i-1}$. So $g^{\la a\ra}$ for $g\subset T$ and $a\in L^i$ is defined in terms of the signature of $g$ on $U$. Then for each $g\subset T$, $g^{\la b\ra}\equiv_T g^{\la\varphi b\ra}$ whenever $b\in L^{i-1}$, and there is a recursive function $\la e,b,\varphi\ra\mapsto \la c,d\ra$ such that for all $g\subset T$, $g^{\la b\ra}=\{c\}(g^{\la \varphi b\ra})$ and $g^{\la \varphi b\ra}=\{d\}(g^{\la b\ra})$.
\end{lem}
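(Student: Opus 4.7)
The plan is to deduce the lemma from Lemma \ref{l:tableproperty} together with the padding relation that defines an $L$-tree subtree. By Definition \ref{V210}, the signature $g^{\la b\ra}$ for $b\in L^{i-1}$ is obtained from $T_k^{-1}(g)\in S(\Theta^{i-1})$ by applying the pointwise $[b]$-class-representative operation in $\Theta(L^{i-1})$, while $g^{\la \varphi b\ra}$ is obtained from $U^{-1}(g)\in S(\Theta^i)$ by applying the pointwise $[\varphi b]$-class-representative operation in $\Theta(L^i)$.

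First I will use the $L$-tree padding relation $U(\eta*a)=T_k(\xi*a^n)$, with $a\in\Theta^i\subseteq\Theta^{i-1}$ and $n$ recursive in the height functions. This yields a recursive padding-and-sampling bijection between $U^{-1}(g)$ and a tail of $T_k^{-1}(g)$, and in particular $T_k^{-1}(g)\equiv_T U^{-1}(g)$ uniformly in the finite data defining $U$, $T_k$.

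Next I will apply Lemma \ref{l:tableproperty}: the canonical inclusion $\Theta^i\subseteq\Theta^{i-1}$ realises the embedding of $\Theta(L^i)$ into $\Theta(L^{i-1})$ with respect to $\varphi$, so that for $a,a'\in\Theta^i$ one has $a\sim_{\varphi b}a'$ in $\Theta(L^i)$ iff $a\sim_b a'$ in $\Theta(L^{i-1})$. Consequently the $\sim_{\varphi b}$-classes of $\Theta^i$ are exactly the intersections with $\Theta^i$ of the $\sim_b$-classes, and from $a^{[b]}$ one recovers $a^{[\varphi b]}$ as $\mu c\,(c\in\Theta^i\wedge c\sim_b a^{[b]})$, and conversely $a^{[b]}$ from $a^{[\varphi b]}$ as $\mu c\,(c\sim_b a^{[\varphi b]})$. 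Both translations are effective from the finite data $b,\varphi,L^{i-1},L^i$.

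Combining these two ingredients, $g^{\la b\ra}$ is computed from $g^{\la \varphi b\ra}$ by translating each coordinate via $[\varphi b]\mapsto[b]$ and then padding according to the height functions, and conversely $g^{\la \varphi b\ra}$ is computed from $g^{\la b\ra}$ by sampling at the padded positions and translating via $[b]\mapsto[\varphi b]$. Uniform recursiveness in $\la e,b,\varphi\ra$ is immediate since all the translations, the embedding, and the height functions are recursive in the finite input data. The main obstacle is purely bookkeeping: matching the padded blocks of $T_k^{-1}(g)$ to the coordinates of $U^{-1}(g)$ correctly, and handling the finite initial segment of $T_k^{-1}(g)$ coming from the choice $T_{k+1}(\emptyset)=T_k(\xi)$ with $\xi\in S(\Theta^{i-1})$ possibly not in $S(\Theta^i)$, which contributes only a finite amount of extra information and hence does not affect Turing equivalence; all the nontrivial algebraic content is packaged by Lemma \ref{l:tableproperty}.
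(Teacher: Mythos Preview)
Your proposal is correct and follows essentially the same route as the paper's own proof. The paper's argument is terser: it simply notes that $H_T$, $H_U$ are recursive, writes down the identities $g^{\la b\ra}=U^{-1}(g)^{\la b\ra}$ and $T^{-1}(g)^{\la\varphi b\ra}=T^{-1}(g)^{\la b\ra}$ (the latter justified by ``Chapter~1'', i.e.\ Lemma~\ref{l:tableproperty}), and observes that $U^{-1}(g)$ and $T^{-1}(g)$ are intertranslatable via the height functions by block repetition. Your version makes explicit exactly the two points the paper leaves implicit: the pointwise translation between the class representatives $a^{[b]}$ and $a^{[\varphi b]}$, and the handling of the finite initial segment $\xi$ with $T_{k+1}(\emptyset)=T_k(\xi)$. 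Both are genuine bookkeeping issues, and your treatment of them is correct; neither adds a new idea beyond what the paper intends.
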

\begin{proof}
Since $T$, $U$ are partial recursive, $H_T$, $H_U$ are recursive. Now $g^{\la a\ra}=U^{-1}(g)^{\la a\ra}$ and $g^{\la\varphi_i a\ra}=T^{-1}(g)^{\la\varphi a\ra}=T^{-1}(g)^{\la a\ra}$ (the last
equation by Chapter 1), and $H_T^{-1} H_U U^{-1}(g)=T^{-1}(g)$ and $U^{-1}(g)$ is obtained from $T^{-1}(g)$ by repeating the $n$th element not once (as in $T^{-1}(g)$ itself) but $H_T^{-1}
H_U(n)-H_T^{-1} H_U(n-1)$ times.
\end{proof}

\begin{lem}[Homomorphism Lemma]\label{l:homomorphismlemma}
Suppose that 
\[
g^{\la a\ra}\equiv_T g^{\la\varphi_i a\ra} \tag{*}
\]
whenever $a\in L^i$.

Define $\psi:L\to 2^\omega$ by $\psi(a)=g^{\la a\ra}$. Define $\pi:L\to L/\negthickspace\approx$ by $\pi(a)=\pi(b)\iff a\approx b$. Define deg$:2^\omega\to\mc D$ by deg$(g)=\mb g$, the Turing degree of $g$. Then there exists $\Psi:L/\negthickspace\approx\to\mc D$ such that deg$\psi=\Psi\pi$ and $\Psi$ is a $\la 0,\vee\ra$-homomorphism.
\end{lem}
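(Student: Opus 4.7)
The plan is to reduce the statement to two routine computations using the hypothesis (*) and the structure of lattice tables. First I will show that $\deg\psi$ is constant on $\approx$-classes, so that $\Psi$ is well-defined; then I will verify that $\Psi$ preserves $0$ and $\vee$.

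Step 1 (well-definedness). Suppose $a\approx b$, with $a\in L^i$ and $b\in L^j$. By definition of $\approx$ as the equivalence relation generated by $a\approx\varphi_i(a)$, we can pick $k\ge\max(i,j)$ large enough that $a[k]=b[k]$ in $L^k$. Iterating (*) along the chain $a,\varphi_i(a),\varphi_{i+1}\varphi_i(a),\ldots,a[k]$ yields $g^{\la a\ra}\equiv_T g^{\la a[k]\ra}$, and similarly $g^{\la b\ra}\equiv_T g^{\la b[k]\ra}$. Since $a[k]=b[k]$, the two ends agree, so $g^{\la a\ra}\equiv_T g^{\la b\ra}$. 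Hence $\deg\psi$ factors through $\pi$, defining $\Psi([a])=\deg(g^{\la a\ra})$.

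Step 2 ($\Psi$ sends $0$ to $\mathbf 0$). The least element $0\in L$ is represented by $0\in L^0$, whose equivalence relation $\sim_0$ in the lattice table is all of $X\times X$. Then for any $g\subset T$, $g^{\la 0\ra}(x)=\mu b(g(x)\sim_0 b)$ is the constant minimum element of $X$, so $g^{\la 0\ra}$ is recursive and $\Psi([0])=\mathbf 0$.

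Step 3 ($\Psi$ preserves $\vee$). Given $[a],[b]\in L/\negthickspace\approx$, choose representatives in a common $L^k$ (using Step 1 to move $a,b$ there via $\varphi$'s without changing their $\Psi$-value), and call them $a,b$ again. In $L^k$, join corresponds to intersection of the associated equivalences, $\sim_{a\vee b}\,=\,\sim_a\cap\sim_b$. Hence on the one hand
\[
g^{\la a\vee b\ra}(x)=\mu c\bigl(c^{[a]}=g^{\la a\ra}(x)\text{ and }c^{[b]}=g^{\la b\ra}(x)\bigr),
\]
which is computed uniformly recursively in $g^{\la a\ra}\oplus g^{\la b\ra}$. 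On the other hand, since $\sim_a,\sim_b\,\supseteq\,\sim_{a\vee b}$, we have
\[
g^{\la a\ra}(x)=(g^{\la a\vee b\ra}(x))^{[a]},\qquad g^{\la b\ra}(x)=(g^{\la a\vee b\ra}(x))^{[b]},
\]
so $g^{\la a\vee b\ra}$ uniformly computes both. Therefore $g^{\la a\vee b\ra}\equiv_T g^{\la a\ra}\oplus g^{\la b\ra}$, which gives $\Psi([a\vee b])=\Psi([a])\vee\Psi([b])$. Combined with Step 2 this makes $\Psi$ a $\la 0,\vee\ra$-homomorphism, and $\deg\psi=\Psi\pi$ by construction.

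The only substantive issue is the bookkeeping in Step 1: we must use (*) as a mesh to slide $a$ and $b$ into a common finite $L^k$ where the actual meet/join identifications of the direct limit occur, so that the table-theoretic fact $\sim_{a\vee b}\,=\,\sim_a\cap\sim_b$ is available in Step 3. Everything else is a direct computation from the definitions of $g^{\la\cdot\ra}$ and the ordering convention on lattice tables.
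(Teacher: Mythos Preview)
Your proof is correct and follows essentially the same approach as the paper: slide representatives into a common $L^k$ via repeated applications of (*), then invoke the basic lattice-table facts (in particular $\sim_{a\vee b}=\sim_a\cap\sim_b$) to get $g^{\la a\vee b\ra}\equiv_T g^{\la a\ra}\oplus g^{\la b\ra}$. The only minor differences are cosmetic: the paper packages well-definedness inside the stronger order-preservation statement $a\lesssim b\Rightarrow g^{\la a\ra}\le_T g^{\la b\ra}$, and it leaves the $0\mapsto\mathbf 0$ clause implicit, whereas you verify it explicitly.
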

\begin{proof}
We need to show that $a\lesssim b\Implies g^{\la a\ra}\le_T g^{\la b\ra}$. We have $a\lesssim b\Implies (\exists a_0,b_0)(a\approx a_0\le b_0\approx b) \Implies g^{\la a\ra} \equiv_T g^{\la
a_0\ra}\le_T g^{\la b_0\ra}\equiv_T g^{\la b\ra}$, the $\equiv_T$ by several applications of (*) and the $\le_T$ by a basic property of lattice tables (since our notation $a_0\le b_0$ implies that
$a_0,b_0$ are in the same $L^i$).

Moreover if $a\vee^* b\approx c$ then $(\exists a_0,b_0,c_0)(a\approx a_0\and b\approx b_0\and c\approx c_0\and a_0\vee b_0=c_0)$, and by a basic property of lattice tables, $g^{\la
a_0\ra}\oplus g^{\la b_0\ra}\equiv_T g^{\la a_0\vee b_0\ra}$ (where $\vee$ denotes the join in the $L^i$ containing $a_0,b_0,c_0$). Hence by repeated application of (*), $g^{\la a\ra}\oplus g^{\la
b\ra}\equiv_T g^{\la c\ra}$.
\end{proof}

We can now define numbers $(a\to i), (a\from i)<\omega$ by $\{a\to i\}(g^{\la a\ra})=g^{\la a[i]\ra}$ and $\{a\from i\}(g^{\la a[i]\ra})=g^{\la a\ra}$. These numbers exist by the Homomorphism
Lemma \ref{l:homomorphismlemma}.

\begin{lem}
Suppose $T$ is a tree such that if $T:S(\Theta^i)\rightarrow S(\Theta^0)$ then $T(\eta)^{\la a\ra}$ is defined for $a\in L^i$ and not for $a\in L^{i-1}$ (i.e., $T$ is not an $L$-tree).
Then $\sigma\sim_a\tau\iff T(\sigma)\sim_a T(\tau)$ for each $\sigma,\tau\in\dom T$.
\end{lem}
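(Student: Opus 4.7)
The plan is to prove the two directions separately. The forward direction follows from weak uniformity of $T$; the backward direction uses the non-$L$-tree hypothesis. Fix $a\in L^i$ and $\sigma,\tau\in\dom T$.

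For $\sigma\sim_a\tau\Rightarrow T(\sigma)\sim_a T(\tau)$: I would invoke weak uniformity (Definition \ref{21435}\ref{214ii35ii})---$T$ is length-invariant and congruence-preserving. The congruence-preserving property, combined with the fact that the iterated embedding $\Theta^i\hookrightarrow\Theta^0$ built from Lemma \ref{l:tableproperty} preserves $\sim_a$ for $a\in L^i$, yields the implication directly: a pointwise equality of $[a]$-projections on the $\Theta^i$ side is transported to a pointwise equality of $[a]$-projections on the $\Theta^0$ side.

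For $T(\sigma)\sim_a T(\tau)\Rightarrow\sigma\sim_a\tau$: argue by contrapositive. If $\sigma\not\sim_a\tau$, choose $x<\min(|\sigma|,|\tau|)$ with $\sigma(x)^{[a]}\neq\tau(x)^{[a]}$ in $\Theta^i$. By length-invariance, the $x$-th coordinate on the domain is mapped into a fixed block of positions of the codomain; pick a position $y$ in this block. The non-$L$-tree hypothesis ensures that $T$ has not been organized so as to make $T(\eta)^{\la a\ra}$ definable for $a\in L^{i-1}$ via the coarser block-copying of Definition \ref{116}. Consequently the $\Theta^i$-coloring $\sigma(x)$ is transmitted to $T(\sigma)(y)$ through the embedding of Lemma \ref{l:tableproperty} without collapsing distinct $L^i$-classes, giving $T(\sigma)(y)^{[a]}\neq T(\tau)(y)^{[a]}$, hence $T(\sigma)\not\sim_a T(\tau)$.

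The main obstacle is formalizing the step that non-$L$-tree-ness precludes any collapse of $L^i$-level distinctions in the image. The cleanest route is induction on the construction type of $T$ within the Lerman framework of Definition \ref{43-}: Ext trees (Definition \ref{211}) inherit the property from their parent via the restriction formula, while Init, Phase-1, and Phase-2 trees built via the type-$0,1,2$ extensions of Definition \ref{21} insert new output positions whose $\Theta^0$-colors are dictated by the Pudl\'ak pentagon structure, which by Lemma \ref{l:kiy} embeds the $\Theta^i$-coloring into $\Theta^0$ faithfully at the $L^i$ level. The $L$-tree condition in Definition \ref{116} is precisely what introduces additional block-length inflation needed to enable a coarser $L^{i-1}$-reading; its absence therefore guarantees that the projection at level $L^i$ is both defined and faithful, delivering the equivalence.
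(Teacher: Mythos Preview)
Your forward direction via congruence-preservation (part of weak uniformity) is fine. The backward direction has a real gap: you assert that ``the $\Theta^i$-coloring $\sigma(x)$ is transmitted to $T(\sigma)(y)$ through the embedding of Lemma \ref{l:tableproperty},'' but that lemma concerns the embedding of lattice tables $\Theta(L^1)\hookrightarrow\Theta(L^0)$ and says nothing about how the tree $T$ acts on a particular string $\sigma$. What actually pins down $T(\sigma)(y)$ in terms of $\sigma(x)$ is the \emph{non-twisting property} the paper invokes, namely $T_{k+1}(\xi)=T_k(\eta)\Rightarrow T_{k+1}(\xi*a)\supseteq T_k(\eta*a)$ (see \cite{Lerman:83}*{VI.2.7}). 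This holds for every tree-building step in the construction because each extends a node by repeating the same symbol $a$; iterated along the chain $T\subseteq\cdots\subseteq\Id$, it forces $T(\sigma)(y)=\sigma(x)$ (as an element of $\Theta^i\subseteq\Theta^0$) for $y$ in level $x$ of $T$. Together with homogeneity this gives both directions at once. Your proposed induction on construction types would, if carried out, amount to rederiving non-twisting case by case, so as written it is a plan rather than an argument.

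You also misread the non-$L$-tree hypothesis. It does not prevent collapse of $\sim_a$-classes---non-twisting does that for every tree, $L$-trees included. The hypothesis simply ensures that $\dom T\subseteq S(\Theta^i)$ rather than $S(\Theta^{i+1})$, so that $\sim_a$ for $a\in L^i$ is the native equivalence on domain symbols as well as on range symbols; for an $L$-tree the domain lives in $\Theta^{i+1}$ and the analogous relationship is handled instead through the Homomorphism Lemma \ref{l:homomorphismlemma}.
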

\begin{proof}
By homogeneity, and, by the non-twisting property ($T_{k+1}(\xi)=T_k(\eta)\Implies T_{k+1}(\xi*a)\supseteq T_k(\eta*a)$) as in \cite{Lerman:83}*{VI.2.7}.
\end{proof}

\subsubsection{The plan}

\begin{df}\label{18}
A tree $T$ is called $\la e,a,b\ra$-\emph{differentiating} if $T$ is an $(\Theta^i,\Theta^0)$-tree for $L^i\ni a,b$ and there is an $x<|T(\emptyset)|$ such that $\{e\}(T(\emptyset)^{\la b\ra})(x)\downarrow\ne T(\emptyset)^{\la j\ra}(x)\downarrow$. (There is an implicit parent tree on which $\la a\ra$, $\la b\ra$ are computed.)
\end{df}

\begin{rem}\label{19}
If $T$ is $\la e,a,b\ra$-differentiating and $g$ is an infinite branch of $T$, then $\{e\}(g^{\la b\ra})\ne g^{\la a\ra}$.
\end{rem}

\begin{df}\label{110}
A tree $T$ is called $\la e,b\ra$-\emph{divergent} if $T$ is an $L^i$-tree for $L^i\ni b$ and there is an $x<\omega$ such that for all $\sigma\subset T$, $\{e\}(\sigma^{\la b\ra})(x)\uparrow$. (There is an
implicit parent tree on which $\la b\ra$ is computed.)
\end{df}

\begin{rem}\label{111}
If $T$ is $\la e,b\ra$-divergent and $g$ is an infinite branch of $T$, then $\{e\}(g^{\la b\ra})$ is not total.
\end{rem}

\begin{pro}\label{115}
Let $g:\omega\rightarrow\omega$ be given. Assume \ref{115i} and \ref{115ii} below hold.
\begin{enumerate}
\item \label{115i} Suppose $e,i<\omega$ and $a,b\in L^i$, $a\not\lesssim b$. Then there is $j\ge i$ and a partial recursive tree $T$ such that $g\subset T$ and either $T$ is $\la e',a[j],b[j]\ra$-differentiating or $\la e'',b[j]\ra$-divergent, where $\{e'\}=\{a\to j\} \{e\} \{b\from j\}$ and $\{e''\}=\{e\}\{b\from j\}$.

\item \label{115ii} For all $e<\omega$, there is $i<\omega$ and a partial recursive tree $T$ such that $g\subset T$ and either $T$ is $\la e',1[i]\ra$-divergent or there is $a\in L^i$ such that $T$ is weak $e'$-splitting for $a$, where $\{e'\}=\{1\from i\}$.
\end{enumerate}

Then $L\cong[\mb 0,\mb g]$.
\end{pro}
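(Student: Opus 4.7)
The plan is to show that the map $\Psi\colon L/\negthickspace\approx\;\to\;[\mb 0,\mb g]$ sending $\pi(a)$ to $\on{deg}(g^{\la a\ra})$ is an upper semi-lattice isomorphism. The Homomorphism Lemma \ref{l:homomorphismlemma} (whose hypothesis (*) is built into the tree construction here) already delivers most of the structural content: $\Psi$ is a well-defined $\la 0,\vee\ra$-homomorphism, and it is order-preserving. Its image lies in $[\mb 0,\mb g]$ since each $g^{\la a\ra}=T_{f(i)}^{-1}(g)\le_T g$. So the two pieces remaining are that $\Psi$ is order-reflecting (hence injective) and that $\Psi$ is surjective, and the content of hypotheses \ref{115i} and \ref{115ii} is exactly what is required for each of these two pieces.

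For order-reflection, suppose $a,b\in L^i$ with $a\not\lesssim b$. Given any $e$, I would apply hypothesis \ref{115i} to obtain $j\ge i$ and a partial recursive tree $T$ with $g\subset T$ which is either $\la e',a[j],b[j]\ra$-differentiating or $\la e'',b[j]\ra$-divergent. By Remark \ref{19} in the first case, $\{e'\}(g^{\la b[j]\ra})\ne g^{\la a[j]\ra}$, and unwinding the definitions of $\{a\to j\}$ and $\{b\from j\}$ yields $\{e\}(g^{\la b\ra})\ne g^{\la a\ra}$. In the second case, Remark \ref{111} gives that $\{e''\}(g^{\la b[j]\ra})$ is not total, whence $\{e\}(g^{\la b\ra})$ is not total. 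Since $e$ was arbitrary, $g^{\la a\ra}\not\le_T g^{\la b\ra}$, so $\Psi(\pi(a))\le\Psi(\pi(b))\Longrightarrow a\lesssim b$. Injectivity follows at once because $\approx$ is precisely the kernel of $\lesssim$.

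For surjectivity, fix $\mb h\in[\mb 0,\mb g]$ and pick $e$ with the total function $h=\{e\}^g$. Apply hypothesis \ref{115ii} to obtain $i<\omega$ and a partial recursive tree $T$ with $g\subset T$ that is either $\la e',1[i]\ra$-divergent or weak $e'$-splitting for some $a\in L^i$, where $\{e'\}$ is the composition of $\{e\}$ with $\{1\from i\}$. The divergent alternative would, by Remark \ref{111}, force $\{e\}^g$ non-total, contradicting the totality of $h$; so the $e'$-splitting alternative holds. Then the Computation Lemma \ref{114} yields $\{e'\}^g\equiv_T g^{\la a\ra}$, and tracing through the $\{1\from i\}$ conversion (together with $g^{\la 1[i]\ra}\equiv_T g$) produces $h\equiv_T g^{\la a\ra}$, i.e.\ $\mb h=\Psi(\pi(a))$.

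The main obstacle is the bookkeeping that identifies $g^{\la a\ra}\equiv_T g^{\la a[j]\ra}$ and keeps the indices $\{a\to j\}$, $\{a\from j\}$ correct as one moves between representations in different $L^j$; but this is precisely the content of the Homomorphism Lemma together with its preceding indexing lemma, and condition (*) is guaranteed by the $L$-tree construction. Once that bookkeeping is in hand, the three ingredients above combine to show that $\Psi$ is a bijective $\la 0,\vee\ra$-homomorphism that both preserves and reflects $\lesssim$, hence an isomorphism of upper semi-lattices, giving $L\cong[\mb 0,\mb g]$.
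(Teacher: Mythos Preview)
Your proposal is correct and follows essentially the same route as the paper: invoke the Homomorphism Lemma for order preservation, use hypothesis \ref{115i} together with Remarks \ref{19} and \ref{111} to show $a\not\lesssim b\Rightarrow g^{\la a\ra}\not\le_T g^{\la b\ra}$, and use hypothesis \ref{115ii} with the Computation Lemma \ref{114} for surjectivity. The paper's proof is terser but structurally identical.
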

\begin{proof}
By the Homomorphism Lemma, if $a\lesssim b$ then $g^{\la a\ra}\le_T g^{\la b\ra}$.

Suppose $a\not\lesssim b$ and let $e<\omega$. If $g^{\la a\ra}=\{e\} g^{\la b\ra}$ then $g^{\la a[i]\ra}=\{a\to i\} g^{\la a\ra} =\{a\to i\} \{e\} g^{\la b\ra} =\{a\to i\} \{e\} \{b\from i\} g^{\la b[i]\ra}$, which contradicts \ref{115i}. Hence $g^{\la a\ra}\not\le_T g^{\la b\ra}$, and so $a\mapsto g^{\la a\ra}$ is a partial order embedding.

If $e<\omega$ then for the $i$ of \ref{115ii}, $\{e\}(g)=\{e'\}(g^{\la 1[i]\ra})$ and $\{e'\}(g^{\la 1[i]\ra})$ is either not total, or $\equiv_T g^{\la a\ra}$ for some $a\in L^i$, by \ref{115ii}. Hence
$a\mapsto g^{\la a\ra}$ is onto, and so it is a partial order isomorphism between $L$ and $[\mb 0,\mb g]$, and consequently also a $\la \vee,0,1\ra$-isomorphism.
\end{proof}

\subsection{The strategies}

\begin{rem}\label{22}
The reception of $\la\alpha,i\ra$ by the tree $T$ at stage $s+1$ will convey the instruction to carry out Objective $i$ whenever possible. The objectives for $i\ge 2$ are listed below.

\noindent\emph{Objective 2. Specify an $\la e,b\ra$-divergent Ext tree.} When $T_s$ receives $\la\alpha,2\ra$, $\alpha$ is a potential focal point of $T_s$, and a tree $T^*_s$ is specified
such that $T^*_s\subseteq T_s$ with $\alpha=T^*_s(\xi)$. $T_{s+1}$ is instructed to preserve $\alpha$ as a potential focal point, while a search for suitable strings in $\Ext(T^*,\xi)$
proceeds. If this search is unsuccessful, then $\Ext(T^*,\xi)$ will be $\la e,b\ra$-divergent for some $b\in L$.

\noindent\emph{Objective 3. Specify a tree with no $e$-splittings mod $k$.} The process is the same as in Objective 2, except that if the search is unsuccessful, then $\Ext(T^*,\xi)$ will have no
$e$-splittings mod $k$, for some $k\in L$.

\noindent\emph{Objective 4. Start a Right $L$-tree.} The process is the same as in Objective 2, except that if the Left $L$-tree is incorrect, then $\Ext(T^*,\xi)$ will be designated as a Right
$L$-tree cofinitely often. (See below.)
\end{rem}

\subsubsection{Diff and $L$ trees}

In the following Definitions \ref{213} and \ref{34}, ``transmit $\la\alpha,i\ra$'' means: put $T_{k+1}$ in a state indicating the current step, substep or subsubstep, let $T_{k+1}$ transmit
$\la\alpha,i\ra$ to $T_{k}$ at the current stage $t+1$, and go to the next stage. In a few important cases we explicitly name the state. If $T_{k+1}$ ``does not transmit any strings'', this
also should be taken to mean that we put $T_{k+1}$ in a state and go to the next stage. Also, ``least'' is used freely to mean the least element of some set under a fixed recursive
$\omega$-ordering of that set, if that set has no obvious ordering. Unless otherwise specified, $T_{k+1,t+1}=T_{k+1,t}$.

\begin{df}[Diff and $L$ tree constructions]\label{213}
\noindent{\bf Diff tree construction.}

Let $k,s<\omega$ be given, and let $\{T_{m,t}\mid m\le k\and t\ge s\}$ be an array of trees such that for all $m\le k$ and $t\ge s$, $T_{m,t+1}$ extends $T_{m,t}$ and if $m\ne 0$ then
$T_{m-1,t}\supseteq T_{m,t}$. For each $m\le k$, let $T_m=\Union\{T_{m,t}\mid t\ge s\}$ and let $T_{m,t-1}$ receive $S_{m,t}$ at stage $t$. Fix $e,s^*<\omega$ such that $s^*>s$ and $a,b\in L^i$
such that $a\not\le b$. We construct an $\la e,a,b\ra$-differentiating tree
\[ 
	T_{k+1}=\Diff(\{T_{m,t}\mid m\le k\and t\ge s\}, e,a,b,s^*) 
\]
as the union of the increasing sequence of trees
\[ 
	\{T_{k+1,t}=\Diff_t(\{T_{m,r}\mid m\le k\and t\ge r\ge s\}, e,a,b,s^*) \mid t\ge s^*\}. 
\]
Let $T_{k+1,0}=\emptyset$. We proceed by induction on $\{t\mid t\ge s^*\}$. Fix $p,q\in\Theta^i_0$ such that $p\sim_j q$ but $p\sim_i q$, and the least $x$ such that $T(p)(x)\not\sim_i T(q)(x)$.

Stage $t+1$ of the construction proceeds through the following sequence of steps. At the end of stage $t+1$, $t+1$ is placed in some \emph{state}. If $t+1>s^*$, proceed directly to the beginning
of the step or substep of the construction in which a state was assigned at stage $t$.

\noindent\emph{Step 0.} If $T_{k,t}(\emptyset)\uparrow$ or if $\Ht(T_{k,t})\ne$ht$(T_{0,t})$, transmit nothing. Otherwise, proceed to Step 1 if $T_{k,t}(p)\uparrow$, and to Step 2 if
$T_{k,t}(p)\downarrow=\delta$.

\noindent\emph{Step 1.} If $T_{k,t}$ is a type 1 extension of $T_{k,t-1}$ and $\Ht(T_{k,t})=$ht$(T_{0,t})$, proceed to Step 3 letting $\delta=T_{k,t}(p)$. Otherwise, transmit
$\la T_{k,t}(\emptyset),1\ra$.

\noindent\emph{Step 2.} Run the special module (see \ref{34}) with $\alpha^*=T_{k,t}(\emptyset)$,
then go to Step 3.

\noindent\emph{Step 3. Force $\{e\}(g^{\la b\ra};x)$.} Let $\tau=T_{k,t}(\beta)$ be the least string (under some $\omega$-order) such that $\delta\subseteq\tau$ and $\tau$ is a potential focal
point of $T_{k,v}$, where $v$ is the first stage at which Step 1 or Step 2 is completed. Search for $\sigma\subset\Ext(T_{k,t},\beta)$ such that $\tau\subseteq\sigma$ and $\{e\}(\sigma^{\la
b\ra};x)\downarrow$. If no such $\sigma$ is found, transmit $\la\tau,2\ra$. Otherwise, fix the least such $\sigma=T_{k,t}(\eta)$. Proceed to Step 4.

\noindent\emph{Step 4.}

\emph{Substep 0.} Run the special module with $\alpha^*=T_{k,t}(\emptyset)$, then go to Substep 1.

\emph{Substep 1.} Let $z=\{e\}(\sigma^{\la b\ra};x)$. Let $r$ be the first of $\{p,q\}$ such that
\[
	T_{k,t}(r)^{\la a\ra}(x)\ne z.
\]
If $r=p$ let $\lambda=\eta$ and if $r=q$ let $\lambda=\text{tr}(p\to q;\eta)$. Let $\rho=T_{k,t}(\xi)$ be the first string such that $\lambda\subseteq\xi$ and $|\rho|=$ht$(T_{k,t})$. Set $T_{k+1,t+1}(\emptyset)=\rho$ and transmit nothing. For all stages $u\ge t+1$, let $T_{k+1,u}=$$\Ext(T_{k,u},\xi,t+1)$ and let $T_{k+1}$ be designated as an Ext tree at stage $u+1$.

\noindent {\bf $L$-tree construction.}

We construct a tree to take us from $L^i$ to $L^{i+1}$, given a $\la \vee,0,1\ra$-homomorphism $\varphi_i:L^i\to L^{i+1}$. We may denote the tree by $T_{k+1}=\text{Lat}(\{T_{m,t}\mid m\le k\and
t\ge s\},\varphi_i,L^i,L^{i+1},s^*,\{S_{m,r}\mid m\le k+1,s^*<r\})$.

\noindent\emph{Stage $t+1\ge s^*$.}

If $t+1=s^*$, place $T_{k+1}$ considered as a Phase-2 tree in state $\la 0\ra$.

If $T_{k+1}$ considered as a Phase-2 tree is in state $\la 0\ra$, proceed to Phase 1, otherwise proceed to Phase 2.

\noindent {\bf Phase 1.}

\noindent\emph{Step 0. Begin $T_{k+1}$.} If $T_{k,t}(\emptyset)\uparrow$ or if $\Ht(T_{k,t})\ne$ht$(T_{0,t})$, set $T_{k+1,t+1}=\emptyset$. $T_{k+1}$ has no transmission at $t+1$.
Place $t+1$ in state $\la 0\ra$ and proceed to the next stage. Otherwise, proceed to Step 1.

\noindent\emph{Step 1. Designate a new focal point.} If $T_{k,t-1}$ is a type 1 extension of $T_{k,t-2}$ and $\Ht(T_{k,t-1})=$ht$(T_{0,t-1})$, proceed to Step 2, Substep 0. Otherwise, set
$T_{k+1,t}=\emptyset$ and transmit $\la T_{k,t-1}(\emptyset),1\ra$. Place $t$ in \emph{state} $\la 1\ra$ and proceed to the next stage.

\noindent\emph{Step 2, Substep $j\le m_i(0)$.} Let $d=\max\{|\xi|\,\mid \,T_{k,t}(\xi)\downarrow\}$.

If it is not the case that $T_k(\emptyset)$ is in the last plateau of $T_{k,t}$ with $\Ht(T_{k,t})=$ht$(T_{0,t-1})$ then transmit $\la T_k(\emptyset),0\ra$ and go to the next stage.

(If it is the case:) If $d<m_i(0)$ then go to Substep $j+1$. If $d\ge m_i(0)$ then go to Step 0, Substep 0 of Phase 2.

\noindent {\bf Phase 2.}

\noindent\emph{Step 0.}
\noindent\emph{Substep 0.}

Let $T_{k+1}(\emptyset)=T_k(\xi)$ for some $\xi$ of length $d$, set $j=1$, and go to Substep 1.

\noindent\emph{Substep 1. Express preference.}

If $S_{k+1,t+1}$ does not satisfy \ref{23}\ref{23i}-\ref{23v} or if $T_{k+1,t}$ does not prefer any element of $S_{k+1,t+1}$, place $t$ in state $\la 0,1\ra$. $T_{k+1,t}$ does not transmit any strings to $T_{k,t}$. Proceed to the next stage. Otherwise, go to Step 1.

\noindent\emph{Step 1.} Fix $\la\alpha,i^*\ra\in S_{k+1,t+1}$ such that $T_{k+1}$ prefers $\la\alpha,i^*\ra$ at $t+1$.

If $T_{k+1}$ also preferred $\la\alpha,i^*\ra\in S_{k+1,t}$ at $t$, proceed directly to the point in the construction at which stage $t$ ended. (Thus if stage $t$ ended within a last step, substep,
or subsubstep, we proceed directly to the beginning of that step, substep or subsubstep, with everything in the construction which has been defined at stage $t$ unchanged at stage $t+1$.)

Otherwise $T_{k+1}$ prefers a new pair $\la\alpha,i^*\ra$.

Let $\alpha^*=\alpha$ if $i^*=1$, and let $\alpha^*$ be the longest focal point of $T_k$ contained in $\alpha$, if $i^*=0$.(In the latter case, \ref{23}\ref{23ii} implies the existence of such a
focal point.) $\alpha^*$ is tentatively designated as the next focal point of $T_{k+1}$. [If we did not replace $\alpha$ by $\alpha^*$ then it might happen that in the end $T_k(\emptyset)$ is not a
focal point of $T_{k-1}(\emptyset)$.]

We now begin another level of $T_{k+1}$. We thus require that $T_{k,t}$ be a type $i^*$ extension of $T_{k,t-1}$ for $\alpha^*$ with $\Ht(T_{k,t})=$ht$(T_{0,t-1})$. If this is not the case, transmit
$\la\alpha^*,i^*\ra$.

Otherwise let $T'$ be a type 2 extension of $T_{k+1,t}$ for $\alpha^*$ contained in $T_{k,t}$, choose $\alpha'\supseteq\alpha$ (could use $\alpha'\supseteq\alpha^*$ but for later permitting
arguments it is better to use $\alpha$), $\alpha'\subset T_{k,t}$ such that $\alpha'$ is a potential focal point of $T_{k,t}$ which is not a focal point of $T_{k,t}$, and go to Step 2.

\noindent\emph{Step 2. $\Sigma^0_2$-outcome transmission.}
We say that \emph{the $\Pi^0_2$-outcome looks correct at a stage $s$} if $\forall x\le s\exists y\le s S(x,y,i)$, where $S$ is a recursive relation such that $\forall x R(x,i)\iff\forall x\exists y S(x,y,i)$ where $R$ is obtained from Definition \ref{permissible} with $\mb a=\mb 0'$. [Here $i$ is the defining parameter to the $L$-tree as in $L^i$, not any other use of the letter $i$.]
 
If the $\Pi^0_2$-outcome looks correct at stage $t+1$, go to Step 3, substep 0. Otherwise let $T_{k+1,t+1}=T_{k+1,t}$ (this will aid in the verification), transmit $\la\alpha',4\ra$ (the $\alpha'$ of Step 1, if you're newly in Step 2, otherwise same $\alpha'$ as $T_{k+1}$ transmitted at last stage), and go to the next stage.

\noindent\emph{Step 3, Substep $i\ge 0$. Make progress toward the next plateau. $\Pi^0_2$-outcome.}
 
Run the special module. Then let $d=\max\{|\xi|\,\mid \,T_k(\xi)\downarrow\}-|T_k^{-1}(\alpha^*)|$, the domain height difference between $\alpha^*$ and the top of the plateau of $\alpha^*$.

If $d<m_i(j)-m_i(j-1)$, go to Substep $i+1$. If $d\ge m_i(j)-m_i(j-1)$, extend $T_{k+1}$ to $T'$ and then define $T_{k+1}(\eta*a)=T_k(\xi*a^n)$ if $T_{k+1}(\eta)=T_k(\xi)\supseteq\alpha^*$ is at the top of $T_{k+1}$ and $T_k(\xi*a^n)$ is at the top of $T_k$ and $a\in\Theta^{i+1}_{j-1}$, and let $T_{k+1}$ transmit nothing, increase $j$, and place $t+1$ in state $\la 0,1\ra$.
\end{df}

Note that we check the $\Pi^0_2$ approximation once for each level we build of the tree. This suffices since then if $\Pi^0_2$ holds then the tree gets infinitely many levels and so satisfies
its requirement, and if $\Sigma^0_2$ holds then we will eventually be stuck in Step 2 forever.

\subsubsection{Extendibility Interpolation Lemma}
\begin{df}[Extendibility]\label{32}
Given a sequential lattice table $\Theta$ and $m,n<\omega$ we define $\Theta^{m+n}_m=\,\Theta_m\times\cdots\times\Theta_{m+n}$ and note that a finite cartesian product
of usl tables is an usl table, using the definition $\sigma\sim_k\tau\iff(\forall x<|\sigma|)(\sigma(x)\sim_k\tau(x))$. This gives a notion of homomorphism between such tables. A
partial homomorphism is a partial function that has the properties of a homomorphism on its domain. A partial homomorphism is called extendible if it is the restriction of some homomorphism. We write $\la \alpha,\beta\ra\mapsto\la \gamma,\delta\ra$ for the partial function $\{\la\alpha,\gamma\ra,\la\beta,\delta\ra\}$.
\end{df}

\begin{lem}[Extendibility Interpolation Lemma]\label{33}
Let $L$ be a finite lattice, $\Theta=\Theta(L)$, $m<\omega$, $u,v\in\Theta_m$, and $\lambda,\lambda'\in S(\Theta)$ be given such that $\la u,v\ra\mapsto \la\lambda,\lambda'\ra$ is a
partial homomorphism. Then there exists $t<\omega$ and $\lambda=\lambda_0,\lambda_1,\ldots,\lambda_t=\lambda'\in \Theta_{m+1}^{m+|\mu_0|}$ such that for each $0\le r<t$, $\la u,v\ra\mapsto \la \lambda_r,\lambda_{r+1}\ra$ or $\la v,u\ra\mapsto \la \lambda_r,\lambda_{r+1}\ra$ extends to a homomorphism $f_r:\Theta_m\rightarrow\Theta_{m+1}^{m+|\lambda|}$. Moreover $f_0(u)=\lambda$ and $f_{t-1}(u)=\lambda'$. Hence if $\eta,\eta'\in S(\Theta)$ are such that $\la\eta,\eta'\ra\mapsto\la\lambda,\lambda'\ra$ is a partial homomorphism and $\la u,v\ra=\la\eta(m),\eta'(m)\ra$ where $m=\mu m(\eta(m)\ne\eta'(m))$, then by using $f_r\circ\pi_m$ where $\pi_m=(\eta \mapsto \eta(m))$ we get the same result with $\eta,\eta'$ in place of $u,v$.
\end{lem}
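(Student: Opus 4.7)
The plan is to reduce the lemma to a coordinate-by-coordinate application of Malcev homogeneity of $\Theta(L)$. First I would unpack the hypothesis: since $\langle u,v\rangle\mapsto\langle\lambda,\lambda'\rangle$ is a partial homomorphism on strings and $\sim_k$ is interpreted componentwise, for every $k\in L$ and every coordinate $j<|\lambda|$, $u\sim_k v$ implies $\lambda(j)\sim_k\lambda'(j)$. Equivalently, $(\lambda(j),\lambda'(j))\in C_\Theta(u,v)$, the principal congruence generated by $(u,v)$ in the table $\Theta=\Theta(L)$.

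By Theorem \ref{t:pud} together with Proposition \ref{p:hoho}, $\Theta(L)=\Con\End\Theta(L)$ is Malcev homogeneous, so $C_\Theta(u,v)\subseteq\End_\Theta(u,v)$; and by the sequential Malcev homogeneity built into the subsequence of Lemma \ref{l:basic}, the homogeneity interpolants for any pair in $\Theta_m$ can be taken in $\Theta_{m+1}$. Hence for each coordinate $j$ there exist $n_j<\omega$, interpolants $z_{j,0}=\lambda(j),z_{j,1},\ldots,z_{j,n_j}=\lambda'(j)$ and endomorphisms $h_{j,1},\ldots,h_{j,n_j}$ of $\Theta$, restricting to maps of $\Theta_m$ into the appropriate level, with $\{h_{j,i}(u),h_{j,i}(v)\}=\{z_{j,i-1},z_{j,i}\}$.

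To weld these per-coordinate chains into a single chain of strings, I would vary one coordinate at a time. Starting from $\lambda_0=\lambda$, step first through the $n_0$ changes at coordinate $0$, then through the $n_1$ changes at coordinate $1$, and so on, producing $\lambda_0,\lambda_1,\ldots,\lambda_t=\lambda'$. At the step in which coordinate $j$ transitions from $z_{j,i-1}$ to $z_{j,i}$, I define $f_r$ as the tuple whose $j$-th component is $h_{j,i}$ and whose $j'$-th component for each $j'\ne j$ is the constant endomorphism sending every element of $\Theta_m$ to the current value at coordinate $j'$; constant maps preserve all equivalence relations and so are endomorphisms, and a product of endomorphisms is a homomorphism into the product table. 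Orientations $\langle u,v\rangle$ versus $\langle v,u\rangle$ are chosen at each $r$ according to whether $h_{j,i}(u)=z_{j,i-1}$ or $h_{j,i}(u)=z_{j,i}$, and the endpoint conditions $f_0(u)=\lambda$, $f_{t-1}(u)=\lambda'$ are secured by flipping the orientations of the first and last steps (inserting a trivial step induced by a constant map on either end if necessary). The final ``hence'' clause is immediate: replacing $f_r$ by $f_r\circ\pi_m$ lifts the conclusion from $u,v$ to arbitrary $\eta,\eta'$ agreeing on all coordinates below $m$.

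The main technical point is to guarantee that the interpolants and the ranges of the $h_{j,i}$ lie in $\Theta_{m+1}$ rather than at some uncontrolled later level; this is exactly the content of sequential Malcev homogeneity packaged into Lemma \ref{l:basic}, so there is no substantive obstacle beyond the combinatorial bookkeeping of coordinates and orientations.
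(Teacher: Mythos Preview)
Your proposal is correct and takes a genuinely different route from the paper. Both arguments begin by applying Malcev homogeneity coordinatewise to obtain, for each $j<|\lambda|$, a chain of interpolants $z_{j,0}=\lambda(j),\ldots,z_{j,n_j}=\lambda'(j)$ together with endomorphisms realizing each step up to orientation. The divergence is in how these per-coordinate chains are assembled into a chain of strings. You vary one coordinate at a time, holding all other coordinates fixed via constant endomorphisms; this avoids any orientation conflict, since at each step only one coordinate's orientation matters. The paper instead advances all coordinates in parallel, stepping from $\mu_s$ to $\mu_{s+1}$ simultaneously; since the orientations of the Malcev homomorphisms at different coordinates need not agree, this requires a four-step detour $\mu_s\to f(v)\to f(u)\to\mu_{s+1}$ per Malcev step, with the case split in the definitions of $f^x_{4s}$ and $f^x_{4s+2}$ absorbing the orientation mismatch via constant maps on the ``wrong'' coordinates. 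Your construction is conceptually simpler and yields a chain of length roughly $\sum_j n_j$, whereas the paper's yields length $4(\max_j n_j+1)$; neither bound matters for the application, so your approach is a perfectly good and arguably cleaner substitute.
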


\begin{proof}
For each $x<|\lambda|$, $u,v\in\Theta_m$ and $\lambda(x),\lambda'(x)\in\Theta_{x}$, so $\{u,v,\lambda(x),\lambda'(x)\}\subseteq\Theta_m\cup\Theta_{x}=\Theta_{\max\{m,x\}}$.

Let $x<|\lambda|$. By Malcev homogeneity there exists $w(x)=w<\omega$ and $\mu_s(x)$ for $0\le s\le w+1$ with $\lambda(x)=\mu_0(x)$, $\lambda'(x)=\mu_{w+1}(x)$, all in
$\Theta_{\max\{m,x\}(+1)}\subseteq\Theta_{m+x+1}$, and homomorphisms $f^x_{4s+1}:\Theta_m\rightarrow\Theta_{m+x+1}$ extending $\la
u,v\ra\mapsto\la\mu_s(x),\mu_{s+1}(x)\ra$ or $\la v,u\ra\mapsto\la\mu_s(x),\mu_{s+1}(x)\ra$ for $0\le s\le w$.

By taking $w$ as the max of all such $w(x)$ for $x<|\lambda|$ we may assume that the same $w$ works for each $x<|\lambda|$.

Let $t=4(w+1)$ and define $\lambda_0,\ldots,\lambda_t$ as follows. For $0\le s\le w$, let $\lambda_{4s}=\mu_s$, $\lambda_{4s+1} (x) = f^x_{4s+1}(v)$, $\lambda_{4s+2}(x)=f^x_{4s+1}(u)$ and
for $-1\le s\le w$ let $\lambda_{4s+3}=\lambda_{4(s+1)}=\mu_{s+1}$ and let $f^x_{4s+3}$ be the constant $\mu_{s+1}(x)$ map.

Define for $0\le s\le w$,
\begin{equation*} 
	f^x_{4s}=
	\begin{cases}
	 f^x_{4s-1} & \text{if } \lambda_{4s}(x)=\lambda_{4s+1}(x) 
	\\ f^x_{4s+1} & \text{otherwise}
	\end{cases}
\end{equation*}
\begin{equation*}
	f^x_{4s+2}=
	\begin{cases}
	 f^x_{4s+3} & \text{if } \lambda_{4s}(x)=\lambda_{4s+1}(x)
	\\ f^x_{4s+1} & \text{otherwise}
	\end{cases}
\end{equation*}
Let $f_r=\la f^0_r,\ldots,f^{|\lambda_0|-1}_r\ra$, i.e., $(\forall i\in\Theta_m)(f_r(i)=\la f^0_r(i),\ldots,f^{|\lambda_0|-1}_r(i)\ra)$, for $0\le r\le t$.

Then $\la u,v\ra\mapsto\la\lambda_{2k},\lambda_{2k+1}\ra$ and $\la v,u\ra\mapsto\la\lambda_{2k+1},\lambda_{2k+2}\ra$ are extendible for $0\le k<2(w+1)$, via $f_{2k}$ and $f_{2k+1}$, respectively. See Table \ref{t:1}.
\end{proof}

\begin{table}\label{t:1}
\begin{center}
{ $
\begin{array}{|c|c|}
\hline & u \rightarrow \mu_0=\lambda_0
\\ f_0\,\,\,\,\la &
\\ & v \rightarrow\lambda_1
\\ \hline f_1\,\,\,\,\la &
\\ \hline & u \rightarrow \lambda_2
\\ f_2\,\,\,\,\la &
\\ & v \rightarrow\mu_1=\lambda_3
\\ f_3\,\,\,\,\la &
\\ & u \rightarrow \mu_1=\lambda_4
\\ f_4\,\,\,\,\la &
\\ & v \rightarrow\lambda_5
\\ \hline f_5\,\,\,\,\la & \ldots
\\ \hline\ldots & u \rightarrow\lambda_{4w+2}
\\ f_{4w+2}\,\,\,\,\la &
\\ & v \rightarrow\mu_{w+1}=\lambda_{4w+3}
\\ f_{4w+3}\,\,\,\,\la &
\\ & u \rightarrow \mu_{w+1}=\lambda_{4(w+1)}
\\ \hline 
\end{array}
$ } \caption{The construction for the Extendibility interpolation Lemma.}
\end{center}
\end{table}

\subsubsection{Splitting trees}

\begin{lem}[GLB Interpolation Lemma]\label{31}
Let $L$ be a finite lattice and let $a,b,c\in L$ and $\sigma,\tau,\rho\in S(\Theta(L))$ be given such that $a\wedge b=c$, $|\sigma|>0$, $|\tau|=|\rho|$, and $\tau\sim_c\rho$. Then there is a
sequence $\tau=\tau_0,\tau_1\ldots,\tau_m=\rho$ such that for all $p\le m$, $|\tau_p|=|\tau_0|$ and $\sigma*\tau\in S(\Theta(L))$ and $\tau_0\sim_a \tau_1\sim_b\tau_2\sim_a\cdots\sim_b\tau_m$.
\end{lem}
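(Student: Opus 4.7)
The plan is to apply meet interpolation coordinate-wise and then synchronize across coordinates. Fix $x<|\tau|$. Since $\tau\sim_c\rho$ holds coordinate-wise and $a\wedge b=c$ in $L$, the meet interpolation property built into the definition of a sequential (Malcev homogeneous) lattice table supplies, at each coordinate $x$, a finite chain
\[
\tau(x)=w^x_0,\,w^x_1,\,\ldots,\,w^x_{n_x}=\rho(x)
\]
in $\Theta(L)$ alternating between $\sim_a$ and $\sim_b$, with the interpolants living at the next level of the sequential table. The assumption $|\sigma|>0$ is used precisely to absorb the resulting one-step shift in the level, so that $\sigma\ast\tau_p$ remains in $S(\Theta(L))$.

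Next I combine the per-coordinate chains into strings of uniform length. For each coordinate $x$, by prepending the trivial reflexive step $\tau(x)\sim_a\tau(x)$ if needed, I arrange that every chain begins with a $\sim_a$-step; this is possible because $w\sim_a w$ holds for all $w$. Letting $m$ denote the common alternation length I wish to reach (say $m=\max_x n_x$, adjusted to have the right parity), I pad shorter chains by appending trivial reflexive steps $w^x_{n_x}\sim_{?}w^x_{n_x}$ so that the alternation pattern continues coherently and every chain has length exactly $m$. Setting $\tau_p(x):=w^x_p$ then yields strings $\tau_p$ with $|\tau_p|=|\tau_0|$, with $\tau_0=\tau$ and $\tau_m=\rho$, and such that consecutive $\tau_p$'s are related coordinate-wise by $\sim_a$ or $\sim_b$ according to the (now uniform) alternation pattern. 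Because the coordinate-wise relations $\sim_a$ and $\sim_b$ on strings are defined coordinate-wise (Definition of usl tables on cartesian products), uniform coordinate-wise equivalence upgrades to $\tau_p\sim_a\tau_{p+1}$ or $\tau_p\sim_b\tau_{p+1}$ for the whole string.

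The main obstacle is the bookkeeping of the synchronization: different coordinates may require different numbers of interpolation steps, and the natural chains may start with different relations. The trick is that reflexivity gives free ``no-op'' transitions $w\sim_a w$ and $w\sim_b w$ which can be inserted anywhere to align the alternation pattern across coordinates without disturbing the endpoints. A minor secondary issue is the level of the interpolants in the sequential table, which is resolved by the hypothesis $|\sigma|>0$: the interpolants at coordinate $x$ are required to lie in $\Theta_{|\sigma|+x}$ as part of the condition $\sigma\ast\tau_p\in S(\Theta(L))$, and the meet-interpolation property of the sequential lattice table places them in the next level up from $\Theta_{|\sigma|+x-1}$, which is exactly $\Theta_{|\sigma|+x}$.
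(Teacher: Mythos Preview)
Your proof is correct and rests on the same two ingredients the paper uses: per-element meet interpolation in the sequential lattice table, and the hypothesis $|\sigma|>0$ to absorb the one-level shift of the interpolants so that $\sigma*\tau_p\in S(\Theta(L))$. The organization, however, is different. The paper argues by induction on $|\tau|$: given $\tau=\tau^-*q$ and $\rho=\rho^-*r$, it first applies the inductive hypothesis to get interpolants $\tau^-=\rho_0,\ldots,\rho_v=\rho^-$, then separately interpolates $q=q_0,\ldots,q_w=r$ in the table, and concatenates along the ``L-shaped'' path
\[
\rho_0*q_0,\;\rho_1*q_0,\;\ldots,\;\rho_v*q_0,\;\rho_v*q_1,\;\ldots,\;\rho_v*q_w.
\]
This avoids your synchronization step entirely: only one coordinate (or the whole prefix) moves at a time, so no padding across coordinates is needed. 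Your parallel coordinate-wise approach is more symmetric but pays for it with the bookkeeping of aligning alternation patterns and chain lengths via reflexive insertions. One small wording issue: you say the interpolants at coordinate $x$ live ``in the next level up from $\Theta_{|\sigma|+x-1}$''; strictly, $\tau(x),\rho(x)\in\Theta_x$ and their interpolants lie in $\Theta_{x+1}$, which is contained in $\Theta_{|\sigma|+x}$ because $|\sigma|\ge 1$. The conclusion is the same, but the level jump happens at $x$, not at $|\sigma|+x-1$.
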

\begin{proof}
We proceed by induction on $|\tau|$. The lemma is trivial for $\tau=\emptyset$. Assume that the lemma holds whenever $|\tau|=s$. Let $a,b,c\in L$ and $\sigma,\tau,\rho\in S(\Theta(L))$ be given satisfying the hypothesis of the lemma with $|\tau|=s+1$. Fix $q,r\in\Theta_s(L)$ such that $\tau=\tau^-*q$ and $\rho=\rho^-*r$. By induction, there are interpolants $\tau^-=\rho_0,\rho_1,\ldots\rho_v=\rho^-$, all of length $s$, such that $\rho\sim_a\rho_1\sim_b\rho_2\sim_a\cdots\sim_b\rho_v$ and $\sigma*\rho_p\in S(\Theta(L))$ for all $p\le v$. Since $\Theta(L)$ is a lattice table, there are $q=q_0,\ldots,q_w=r$ such that for all $p\le w$, $q_p\in\Theta_{s+1}(L)$ and $q_0\sim_a q_1\sim_b q_2\sim_a\cdots \sim_b q_w$. Since $|\sigma|>0$, the sequence $\rho_0*q_0,\rho_1*q_0,\ldots,\rho_v*q_0,\rho_v*q_1,\rho_v*q_2\ldots,\rho_v*q_w$ has the required properties.
\end{proof}

\begin{df}[Sp tree construction]\label{34}

Let $n, s'<\omega$ be given, and let $\{T_{n,t}\mid m<n\and t\ge s'\}$ be an array of trees. Fix $i<\omega$ and $k\in L^i$ and $e,s^*<\omega$ such that $s^*\ge s'$ and fix $\beta\subset T_{n-1}$.
Let $S=S(\Theta^i)$. Let $T_{m}=\Union\{T_{m,t}\mid t\ge s'\}$ for all $m<n$, and let $T_m$ receive the set $S_{m,t+1}$ at stage $t+1$. We construct a tree
\[ 
	T_{n}=\Sp(\{T_{m,t}\mid m<n\and t\ge s'\},e,k,\beta,s^*,\{S_{m,t}\mid m\le n\and t>s^*\})
\]
as the union of an increasing sequence of trees $\{T_{n,t}\mid t\ge s^*\}$ where
\[ 
	T_{n,t}=\Sp_t(\{T_{m,r}\mid m<n\and s'\le r\le t\},e,k,\beta,s^*,\{S_{m,r}\mid m<n\and s^*<r\le t\}).
\]
$T_{n}$ will be a weak $e$-splitting subtree of $T_{n-1}$ for $k$ above $\beta$ whose construction begins at stage $s^*$ and which receives $S_{n,t}$ at stage $t>s^*$.

We will proceed by induction on the set of stages $\{t\mid t\ge s^*-1\}$. Unless either $t$ is in state $\la 0\ra$ or all steps in the construction are completed at the end of stage $t$, we
will set $T_{n,t}=T_{n,t-1}$.

We begin by placing $s^*-1$ in state $\la 0\ra$ and setting $T_{n,s^*-1}=\emptyset$. Fix $t+1\ge s^*$, and assume by induction that $T_{n,t}$ has been defined. We indicate how to define
$T_{n,t+1}$. We begin stage $t+1$ with Step 0, Substep 0 if $t$ is in state $\la 0\ra$. Otherwise, we begin stage $t+1$ with Step 0, Substep 1.

\noindent Special module: If $\alpha^*$ is not in the last plateau of $T_{n-1,t}$ with $\Ht(T_{n-1,t})=$ht$(T_{0,t})$, then transmit $\la\alpha^*,0\ra$ (in the sense of the remarks in the beginning of Definition \ref{213}).

\noindent\emph{Step 0.}

\noindent\emph{Substep 0. Define $T_{n}(\emptyset)$.} Define $T_{n,t+1}(\delta)=\beta$ if $\beta\subset T_{n-1,t}\and\delta=\emptyset\and|\beta|=\Ht T_{0,t}\and t+1=s^*$, and $T_{n,t+1}(\emptyset)\uparrow$ otherwise.

If $T_{n,t+1}=\emptyset$, place $t$ in state $\la 0\ra$ and transmit nothing. Otherwise, proceed to Substep 1.

\noindent\emph{Substep 1. Express preference.} Proceed as in Step 0, Substep 1 of the Phase-2 $L$-tree construction with $k+1=n$.

\noindent\emph{Step 1.} Proceed as in Step 1 of the Phase-2 $L$-tree construction with $k+1=n$.

Then if $t$ is not placed in state $\la 1\ra$, let $\{\xi'_i\mid i\le p\}$ be the set of all strings $\xi$ such that $\alpha^*\subseteq T'(\xi)\downarrow$ and is terminal on $T'$. For each $i\le p$, fix $\xi^+_i$ such that $T_{n-1,t}(\xi^+_i)=T'(\xi'_i)$. Let $\eta^0_i\mid i\le v\}$ be a list of all $\eta\in S$ such that $\eta=\xi^+_i*j$ for some $i\le p$ and $j<f(|\xi'_j|)$. For each $i\le v$, let $\beta^0_i=T_{n-1,t}(\eta^0_i)$. $\{\beta^0_i\mid i\le v\}$ starts the splitting level for $T_{n}$, but must be extended to insure that we have the desired $e$-splittings. We will do this as follows. Following a fixed recursive procedure, form a list of all pairs $\{\la\beta^0_{i_u},\beta^0_{j_u}\ra\mid \beta^0_{i_u}\not\sim_k\beta^0_{j_u}\and u\le q\}$. We will proceed inductively through steps $\{u+2\mid u\le q\}$, building $\{\beta^{u+1}_i=T_{n-1,t}(\eta^{u+1}_i)\mid i\le v\}$ at step $u+2$ such that:
\begin{equation} \label{3(1)} 
	\forall i\le v(\beta^{u+1}_i\supseteq\beta^u_i). 
\end{equation}
\begin{equation} \label{3(2)} 
	\forall i,j\le v(|\beta^{u+1}_i|=|\beta^{u+1}_j|). 
\end{equation}
\begin{equation} \label{3(3)} 
	\la\beta^{u+1}_{i_u},\beta^{u+1}_{j_u}\ra\text{ form an $e$-splitting.} 
\end{equation}
\begin{equation} \label{3(4)} 
	\forall i,j\le v\forall m\le n(\eta^0_i\sim_m\eta^0_j\rightarrow\beta^{u+1}_i\sim_m\beta^{u+1}_j). 
\end{equation} 
Note that conditions \ref{3(2)} and \ref{3(4)} will hold with 0 in place of $u+1$ once we show that $T_{n-1}$ is weakly uniform. We now proceed to the next step.

\noindent\emph{Step $u+2$. Define $\{\beta^{u+1}_i\mid i\le v\}$ satisfying \ref{3(1)}-\ref{3(4)}.}
This step has several substeps. Two interpolations may be needed, so we must always work above level 1 of $T_{n-1}$. So far, we have only guaranteed that level 0 of $T_{n-1}$ has been defined. Thus we begin with Substeps 0 and 1.

\noindent\emph{Substep 0.} Run the special module. Then choose $\hat\beta^u_{i_u}=T_{n-1,t}(\hat\eta^u_{i_u})\supseteq\beta^u_{i_u}$ such that $\hat\beta^u_{i_u}\subset T_{n-1,t}$ and $|\hat\beta^u_{i_u}|=$ht$(T_{0,t})$ and proceed to the next substep.

\noindent\emph{substep 1. Define level 1 of $\Ext(T_{n-1,t},\hat\eta^u_{i_u})$.} If $T_{n-1,t}$ is a type 1 extension of $T_{n-1,t-1}$ for $\hat\beta^u_{i_u}$ and $\Ht(T_{n-1,t})=$ht$(T_{0,t})$, go
to the next substep. In this case, $T_{n-1,t}(\hat\eta^u_{i_u}*0)\downarrow$ and is a potential focal point of $T_{n-1,t}$ which is not a focal point of $T_{n-1,t}$. Otherwise, transmit
$\la\hat\beta^u_{i_u},1\ra$.

\noindent\emph{Substep 2. Find an $e$-splitting of $\beta^u_{i_u}$.} Fix $\hat\eta\in S$ such that $\hat\eta^u_{i_u}=\eta^u_{i_u}*\hat\eta$, and for all $j\le v$, define
$\hat\eta^u_j=\eta^u_j*\hat\eta$. Since $\beta^u_{i_u}\not\sim_k\beta^u_{j_u}$, there is a least $y<|\hat\eta^u_{i_u}|$ such that $\eta^u_{i_u}(y)\not\sim_k\beta^u_{j_u}(y)$. Fix this $y$ and fix
the greatest element $m\in L^i$ for which $\eta^u_{i_u}(y)\sim_k\beta^u_{j_u}(y)$. Let $b=m\wedge k$. Search for an $e$-splitting mod $b$ on $\Ext(T_{n-1,t},\hat\eta^u_{i_u}*0)$.
(We will then interpolate to get an $e$-splitting mod $m$ or an $e$-splitting mod $k$.) If no such $e$-splitting exists, transmit $\la T_{n-1,t}(\hat\eta^u_{i_u}*0),3\ra$.

Suppose that $e$-splittings mod $b$ exist. Let $\la\gamma_0'',\gamma_1''\ra$ be the least $e$-splitting mod $b$ found at stage $t$. Fix $x$ such that
$\{e\}(\gamma_0'';x)\downarrow\ne\{e\}(\gamma_1'';x)\downarrow$ and go to the next substep.

\noindent\emph{Substep 3.} We wish to use the GLB Interpolation Lemma \ref{31} to transform $\la\gamma_0''\gamma_1''\ra$ into an $e$-splitting mod $k$ or an $e$-splitting mod $m$. The
procedure for obtaining such an $e$-splitting involves searching through an Ext tree $T^*$ of $T_{n-1,t}$ for $e$-splittings. In order for the sequence of trees to remain special, we must
define $T^*(\emptyset)$ to be a potential focal point of $T_{n-1,t}$.

Run the special module. Then fix $\gamma_i'=T_{n-1,t}(\zeta_1')\supseteq\gamma_i''$ for $i\le 1$ such that $|\gamma_0'|=|\gamma_1'|=\Ht(T_{0,t})$, $\gamma_0'\sim_b\gamma_1'$, and
$\gamma_1'\subset\Ext(T_{n-1.t},\hat\eta^u_{i_u}*0)$ and proceed to the next substep.

\noindent\emph{Substep 4. Interpolate to get an $e$-splitting mod $m$.} By the GLB Interpolation Lemma \ref{31}, there are arrays $\{\nu^j_i\mid i\le w\and j\le w-1\}$ and $\{\delta^j_i\mid i\le w\and j\le
w-1\}$ satisfying \ref{3(5)}-\ref{3(9)} below for $j=0$.
\begin{equation} \label{3(5)} 
	\forall i,i'<w(|\nu^j_i|=|\nu^j_{i'}|). 
\end{equation}
\begin{equation} \label{3(6)} 
	\zeta_0'=\hat\eta^u_{i_u}*0*\nu^0_0, \zeta_1'=\hat\eta^u_{i_u}*0*\nu^0_w, \forall i\le w(\nu^j_i\supseteq \nu^{j-1}_i), \delta^j_i=T_{n-1,t}(\hat\eta^u_{i_u}*0*\nu^j_i).
\end{equation}
\begin{equation} \label{3(7)} 
	\forall i\le w(0*\nu^j_i\in S).
\end{equation}
\begin{equation} \label{3(8)} 
	\{e\}(\delta^j_j;x)\downarrow. 
\end{equation}
\begin{equation} \label{3(9)} 
	\nu^j_0\sim_m \nu^j_1\sim_k\cdots \nu^j_w. 
\end{equation}
Set $j=1$.
\noindent\emph{Subsubstep $j$.} We construct $\{\nu^j_i\mid i\le w\}$ and $\{\delta^j_i\mid i\le w\}$ satisfying \ref{3(5)}-\ref{3(9)}.

\noindent\emph{Sub$^3$step 0. Define $\delta^j_j$ satisfying \ref{3(8)}.} Institute a search in $\Ext(T_{n-1,t},\hat\eta^u_{i_u}*0*\nu^{j-1}_j)$ for $\nu\in S$ such that $\{e\}(\hat\eta^u_{i_u}*0*\nu^{j-1}_j*\nu;x)\downarrow$.

If no such $\nu$ exists, transmit $\la\delta^{j-1}_j,2\ra$. Otherwise, fix the least such $\nu$. For all $i\le w$, let $\hat \nu^j_i=\nu^{j-1}_i*\nu$. Proceed to Sub$^3$step 1.

\noindent\emph{Sub$^3$step 1.} We will want to define $\delta^{j+1}_{j+1}$ satisfying \ref{3(8)} with $j+1$ in place of $j$. Thus we will search for such a $\delta=\delta^{j+1}_{j+1}$ on an
extension tree $T^*$ of $T_{n-1,t}$, and, failing to find $\delta$, we will use $T^*$ as the next tree in our sequence of trees.

Run the special module. Then fix the least $\nu^*$ such that $\nu^*\supseteq \nu$ and $|T_{n-1,t}(\hat\eta^u_{i_u}*0*\hat \nu^j_j*\nu^*)|=$ht$(T_{0,t})$. For all $i\le w$, let
$\nu^j_i=\hat \nu^j_i*\nu^*$ and $\delta^j_i=T_{n-1,t}(\hat\eta^u_{i_u}*0*\nu^j_i)$. Proceed to the next subsubstep if $j<w-1$.

(Note that it is not necessary to follow this procedure for $j=w$ since $\{e\}(\delta^0_w;x)\downarrow$. At the end of subsubstep $w-1$, we will either have an $e$-splitting mod $k$ or an $e$-splitting mod $m$ on $\Ext(T_{n-1,t},\hat\eta^u_{i_u})$.)

Suppose that $j=w-1$. Then there is a least $i\le w$ such that $\la\delta^{w-1},i,\delta^{w-1}_{i+1}\ra$ $e$-splits on $x$, since $\la\delta^{w-1}_0,\delta^{w-1}_w\ra$ $e$-splits on $x$ and $\{e\}(\delta^{w-1}_j;x)\downarrow$ for all $j\le w$. Fix this $i$. If $i$ is odd, then we have found an $e$-splitting mod $k$ on $\Ext(T_{n-1,t},\hat\eta^u_{i_u})$ and the construction of $T_{n}$ is terminated at this point. If $i$ is even, let $\hat\gamma_0=\delta^{w-1}_i$ and $\hat\gamma_1=\delta^{w-1}_{i+1}$. Note that $\la\hat\gamma_0,\hat\gamma_1\ra$ $e$-splits $T_{n-1,t}(\hat\eta^u_{i_u})$. By the last subsubstep, $|\hat\gamma_0|=|\hat\gamma_1|=\Ht(T_{0,t})$. Proceed to the next substep, defining $\hat\zeta_j\in S$ by $T_{n-1,t}(\hat\zeta_j)=\hat\gamma_j$ for $j\le 1$.

\noindent\emph{Substep 5. Interpolate to get an extendible $e$-splitting mod $m$.} The $e$-splitting mod $m$, $\la\hat\gamma_0\hat\gamma_1\ra$, of $T_{n-1,t}(\hat\eta^u_{i_u})$ will
only be useful if it is appropriately extendible above $\beta^u_{j_u}$. Let $\lambda,\lambda'\in S$ be defined by $\hat\zeta_0=\hat\eta^u_{i_u}*\lambda$ and $\hat\zeta_1=\hat\eta^u_{i_u}*\lambda'$. Since $|\eta^u_{i_u}|>0$, it follows from the Extendibility Interpolation Lemma that there exists $w$ and $\lambda=\lambda_0,\ldots,\lambda_{w+1}=\lambda'$ all of equal length such that $\hat\eta^u_{i_u}*\lambda_n\in S$ and each $\la\lambda_n,\lambda_{n+1}\ra$ is extendible for $\la\hat\eta^u_{i_u},\hat\eta^u_{j_u}\ra$ or for $\la\hat\eta^u_{j_u},\hat\eta^u_{i_u}\ra$. (Note
that $T_{n-1,t}(\hat\eta^u_{i_u}*\lambda_n)\downarrow$.) Define the strings $\sigma_m$ for $0\le m\le w+1$ as follows: alternating $\hat\eta^u_{i_u}$, $\hat\eta^u_{j_u}$ followed by $\lambda_m$. We now force $\{e\}(T_{n,t}(\sigma^{(m)}_m);x)\downarrow$ for each $m$, for some $\sigma^{(m)}_m\supseteq\sigma_m$. Let for $0\le j\le w+1$, $\sigma_j^{(0)}=\sigma_j$.

\noindent\emph{Subsubstep $0<m<w+2$. Get convergent computations.}

\noindent\emph{Sub$^3$step $0$. Extend an interpolant to get a convergent computation on $x$.}
Search for $\sigma\subset\Ext(T_{n-1,t},\sigma_m^{(m-1)})$, $\sigma=T_{n-1,t}(\sigma_m')$, such that $\{e\}(\sigma;x)\downarrow$. If no such $\sigma$ is found, transmit $\la T_{n,t}(\sigma_m^{(m-1)}),2\ra$. Otherwise, fix such a $\sigma$.

\noindent\emph{Sub$^3$step 1.} Run the special module. Then there is $\sigma_m^{(m)}\supseteq\sigma_m'$ at the top of the domain of $T_{n-1,t}$ and there are $\sigma_j^{(m)}$ for $0\le j\le w$ all of the same height, extending $\sigma_j^{(m-1)}$ as $\sigma_m^{(m)}$ extends $\sigma_m^{(m-1)}$. Go to the next subsubstep.

\noindent\emph{Subsubstep $w+2$. Pick the extendible $e$-splitting and define $\{\beta^{u+1}_c\mid c\le v\}$.} By Subsubstep 0, and the Extendibility Interpolation Lemma, we have a sequence of strings, the first and last of which form an $e$-splitting, and each $\sigma$ of which makes $\{e\}^\sigma(x)\downarrow$; hence there now exists $m$ such that $\la T_{n-1,t}(\sigma_m^{(w+1)}),T_{n-1,t}(\sigma_{m+1}^{(w+1)})\ra$ is an extendible $e$-splitting and at the top of $T_{n-1,t}$. Let $f$ be a map witnessing extendibility.

Define $\beta^{u+1}_c=T_{n-1,t}(\eta^{u+1}_c)$, where $\eta^{u+1}_c=\hat\eta^u_c*f(\hat\eta^u_c)$ and go to the next step, i.e., the next pair that needs to be extended to an $e$-splitting.

If $u=q$, place $t$ in state $\la 0,1\ra$, with $T_{n,t}$ having no transmission. Extend $T'$ to $T_{n,t+1}$ as follows: $T_{n,t+1}(\xi'_{a}*b)=\beta^{v+1}_c$ where $\eta^0_c=\xi^+_{a}*b$.
\end{df}

\subsection{Properties of the strategies}

\begin{lem}\label{214*}\label{35*} 
Suppose $T_{k+1}$ is an $L$-tree as constructed in \ref{213}. Then in Phase 1, $T_{k+1}$ is a Phase-1 tree, and in Phase 2, $T_{k+1}$ is a Phase-2 tree as defined in \ref{214}, \ref{35}.
Moreover,
\begin{enumerate}
\item \label{214*ii}\label{214*iii} If $T_{k+1}\ne\emptyset$ then $T_{k+1}$ is a Phase-2 tree and an $L$-tree in the sense of Definition \ref{116}.
\item \label{35*viii} For all $t\ge s^*$, if $T_{k+1,t}=\emptyset$ then $T_{k+1}$ does not transmit $\la\alpha,i\ra$ with $i\ge 2$ at $t$.
For all $t\ge s^*$, $\alpha$ and $i\ge 2$, if $T_{n}$ prefers the same pair at $t$ and $t+1$ and transmits $\la\alpha,i\ra$ at $t$, then $t$ and $t+1$ will be in different states on $T_{n}$
exactly when the following condition holds:
 \begin{enumerate}
 \item \label{35*viiid} $i=4$ and the relevant $\Sigma^0_2$ outcome holds at stage $t$ but not at stage $t+1$. 
 \end{enumerate}
\end{enumerate}
\end{lem}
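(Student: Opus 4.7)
The proof is a verification lemma: we must check that the construction in Definition \ref{213} satisfies every clause of Definitions \ref{21435}, \ref{214}, \ref{35}, and (in the nonempty case) \ref{116}. I would organize the argument into three blocks --- the common Pre-Phase conditions, the Phase-1 specific conditions, and the Phase-2 specific conditions together with the $L$-tree property --- and treat (i) and (ii) as corollaries of these verifications. The overall strategy is to trace, for each condition, which Step or Substep of \ref{213} is responsible for delivering it, and to appeal to already-proved properties of the preceding trees $T_0,\ldots,T_k$ (chiefly Lemma \ref{46}) whenever a statement reaches below $T_{k+1}$.

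For the Pre-Phase block, the structural clauses (214i35i), (214ii35ii), (214iv35iii), and (214xii35ix) are read directly off the construction: $T_{k+1,t+1}$ is modified only at the end of Step 3 of Phase 2 (where, by design, $\Ht(T_{k+1,t+1})=\text{ht}(T_{k,t})=\text{ht}(T_{0,t})$, the last equality by Lemma \ref{46}(v)), and transmissions happen at a unique specified point in each Step. The preference/state bookkeeping (214x35xi) and (214xi35xii) comes from the ``proceed directly to...'' clause in Step 1 of Phase 2: if $T_{k+1}$ prefers the same pair at $t$ and $t+1$, the construction resumes exactly where it was, so the transmission is repeated and the state is only advanced if we move forward to a new (lexicographically larger) step or substep; the only exception is a completion of a Step 3 level, which is precisely a growth of $T_{k+1}$. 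Clauses (214v35iv) and (214vi35v) are read off each individual transmission, using the fact that Step 1 explicitly selects $\alpha'$ to be a potential but not a focal point of $T_{k,t}$.

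In Phase 1 the tree is empty, and the only transmissions are $\la T_{k,t-1}(\emptyset),1\ra$ in Step 1 and $\la T_{k,t}(\emptyset),0\ra$ in the substeps of Step 2; this immediately gives the first half of (ii) (no $i\ge 2$ is transmitted while $T_{k+1,t}=\emptyset$) and (214xiii). Clause (214vii) is immediate since $|T_k(\emptyset)|>|T_{k,t}(\emptyset)|$ by construction of Init/Phase trees, and (214viii), (214xii) follow because the transition Phase-1 $\to$ Phase-2 only occurs when we reach Step 2 Substep $m_i(0)$, at which point $T_{k+1}(\emptyset)$ is assigned to a string of length $d = \Ht(T_{k,t})$. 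The Phase-2 clauses (35vii), (35iii), (35iv), (35ix), (35x), (35xiii) are then local checks: (35iii) uses the replacement of $\alpha$ by the longest focal-point $\alpha^*$ inside $\alpha$ in Step 1, and (35ix) follows from the fact that Step 3 only alters $T_{k+1}$ after Step 1 has verified that $T_{k,t}$ is a type-$i^*$ extension for $\alpha^*$. The $L$-tree property of Definition \ref{116} is the explicit prescription $T_{k+1}(\eta*a)=T_k(\xi*a^n)$ at the end of Step 3, using $a\in\Theta^{i+1}_{j-1}$ and $n\ge m_i(j)-m_i(j-1)$; this yields (i).

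The hard part is clause (ii)(a): that, when $T_{k+1}$ prefers the same pair at $t,t+1$ and transmits some $\la\alpha,i\ra$ with $i\ge 2$ at $t$, the state changes between $t$ and $t+1$ \emph{exactly} when $i=4$ and the $\Sigma^0_2$ indicator flips between $t$ and $t+1$. The plan here is a case analysis on where the construction can be suspended while it keeps preferring and transmitting: the only places that repeat a transmission with $i\ge 2$ without growing the tree are Step 2 (transmitting $\la\alpha',4\ra$) and the sub${}^3$step 0 searches inside Step 3 of the Sp and Diff constructions (transmitting type-$2$ or type-$3$ instructions). For the Sp/Diff-style transmissions the state is held fixed by construction (the ``go to the next stage'' at the end of the search), so no state change occurs; the only location at which the construction deliberately toggles the state on a repeated transmission without growing $T_{k+1}$ is the $\Sigma^0_2$ outcome test at the top of Step 2, which is precisely condition (a). The obstacle is making sure there is no hidden alternative mechanism --- in particular, that a completion of a Step 3 level cannot occur with the preference unchanged and still satisfy the ``same pair'' hypothesis --- and this is handled by observing that Step 3 ends by placing $t+1$ in state $\la 0,1\ra$ and clearing all transmission, which forces the next preference at $t+1$ to be freshly chosen, breaking the hypothesis of (ii).
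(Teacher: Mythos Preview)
Your approach is the paper's own: the published proof is the single sentence ``Immediate from inspection of the $L$-tree construction,'' and your outline is just a spelled-out version of that inspection. Two small corrections to your detailed case analysis: first, the $L$-tree construction in \ref{213} contains no sub${}^3$step searches and never transmits type-$2$ or type-$3$ pairs --- those belong to the Sp and Diff constructions, which are handled by the separate Lemmas \ref{214**} and \ref{35**} --- so the \emph{only} $i\ge 2$ transmission here is the $\la\alpha',4\ra$ from Phase 2, Step 2, which makes part (ii) genuinely a one-line check (and also makes \ref{214}\ref{214vii} vacuous, not ``immediate'' for the reason you give); second, the equality $\Ht(T_{k,t})=\Ht(T_{0,t})$ needed for \ref{21435}\ref{214xii35ix} is enforced directly by the special module at the top of Step 3, so there is no need to invoke Lemma \ref{46}, and doing so would risk circularity since \ref{46} is proved under the standing hypothesis that every tree in the array already satisfies the Phase-tree axioms you are in the process of verifying.
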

\begin{proof} 
Immediate from inspection of the $L$-tree construction. 
\end{proof}

\begin{lem}\label{214**}
Suppose $T_{k+1}$ is a Diff tree. Then if $T_{k+1,t}=\emptyset$ then $T_{k+1}$ is a Phase-1 tree at
$t$. Moreover,
\begin{enumerate}
\item \label{214**ii}\label{214**iii} 
	If $T_{k+1}\ne\emptyset$ then $T_{k+1}$ is $\la e,i,j\ra$-differentiating.
\item \label{214**ix} 
	For all $t\ge s^*$, if $T_{k+1,t}=\emptyset$ and $T_{k+1}$ transmits $\la\alpha,i\ra$ with $i\ge 2$ at $t$, then $t$ and $t+1$ are in different states on $T_{k+1}$ exactly when the following condition holds:
	\begin{enumerate}
	\item \label{214**ixb} 
		$i=2$ and $\alpha=T_{k,t-1}(\xi)$ for a specified $\xi$, and for a specified $x\in\omega$, there is a $\sigma\subset\Ext(T_{k,t},\xi)$ such that $\{e\}^{\sigma^{\la j\ra}}(x)\downarrow$.
	\end{enumerate}
\end{enumerate}
\end{lem}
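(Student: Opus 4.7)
The plan is to establish each of the three claims by direct inspection of the Diff tree construction in Definition \ref{213}, treating the Pre/Phase-1 axioms as a long but routine case-check and the differentiation property as the substantive point.

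First, I would verify the Phase-1 claim when $T_{k+1,t}=\emptyset$. The Diff construction only ever sets $T_{k+1,t+1}(\emptyset)\downarrow$ at the successful completion of Step 4, at which moment the tree is redesignated as an Ext tree and thereafter is not empty; so the window in which the Diff tree is empty coincides with the stages in Steps 0--3. In those stages the only transmissions are $\la T_{k,t}(\emptyset),1\ra$ from Step 1, $\la T_{k,t}(\emptyset),0\ra$ (or none) from the special module invoked in Step 2, and $\la\tau,2\ra$ from Step 3, all of which sit above the common gate that $T_{k,t}(\emptyset)\downarrow$ and $\Ht(T_{k,t})=$ht$(T_{0,t})$ laid down in Step 0. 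With this observation, the Pre Phase-tree clauses of Definition \ref{21435} and the Phase-1 clauses of Definition \ref{214} follow by matching each clause against the construction: recursiveness and weak uniformity are built in; the state/preference conditions \ref{214x35xi}, \ref{214xi35xii} follow because states are only advanced when the current step visibly makes progress (a new convergent $\sigma$ or a new level of $T_{k,t}$) and a fresh preference resets the construction to the earliest waiting substep; the height bounds \ref{214v35iv}, \ref{214ix35viii}, \ref{214xii35ix}, \ref{214vii}, \ref{214viii}, \ref{214xiii} follow from the Step 0 gate; uniqueness of transmitted pairs (\ref{214iv35iii}) holds since each step transmits at most one pair; and the base-case equality $|T_{k+1,t+1}(\emptyset)|=\Ht(T_{k+1,t+1})$ in \ref{214xii} is immediate since $T_{k+1,t+1}$ consists of the single point $\rho$.

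Second, I would verify the differentiation claim. The only route to $T_{k+1}\ne\emptyset$ is the completion of Step 4, where one has found $\sigma=T_{k,t}(\eta)$ with $\{e\}(\sigma^{\la b\ra};x)\downarrow=z$, and then selected $r\in\{p,q\}$ with $T_{k,t}(r)^{\la a\ra}(x)\ne z$. The transfer $\lambda=\eta$ if $r=p$, $\lambda=\mathrm{tr}(p\to q;\eta)$ if $r=q$, and the extension to $\rho=T_{k,t}(\xi)$ at the top of the level have the effect that $\rho\sim_b\sigma$ (so $\{e\}(\rho^{\la b\ra};x)\downarrow=z$) while $\rho^{\la a\ra}(x)=T_{k,t}(r)^{\la a\ra}(x)\ne z$. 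Since $T_{k+1,t+1}(\emptyset)=\rho$ and $x<|\rho|$, Definition \ref{18} is satisfied, so $T_{k+1}$ is $\la e,a,b\ra$-differentiating.

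Third, the state-change condition is the easiest part. By the inspection above, the unique type $i\ge 2$ transmission from an empty Diff tree is $\la\tau,2\ra$ in Step 3, with $\tau=T_{k,t}(\beta)$ for the $\beta$ named there. The construction stays in Step 3 (and hence in the same state, continuing to transmit the same $\la\tau,2\ra$) as long as no $\sigma\subset\Ext(T_{k,t},\beta)$ is found with $\{e\}^{\sigma^{\la b\ra}}(x)\downarrow$; once such a $\sigma$ is found, Step 3 completes and the construction moves to Step 4, which is in a different state. This is precisely clause (a).

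The main obstacle is the first part: the bookkeeping to match each of the several Pre Phase-tree and Phase-1 clauses against the explicit Steps 0--4 is tedious, and the delicate point is confirming that the moment of becoming nonempty is simultaneously the moment of redesignation as an Ext tree, so that the Phase-1 hypothesis is never violated. Everything else reduces either to an inspection of the construction or to a single short calculation (for the differentiation property) using the transfer $\mathrm{tr}(p\to q;\eta)$ and the fact that $p\sim_b q$.
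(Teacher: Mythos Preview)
Your proposal is correct and takes essentially the same approach as the paper, which simply records that the lemma is ``immediate from inspection of the Diff tree construction.'' You have spelled out what that inspection amounts to---matching the Pre Phase-tree and Phase-1 clauses against Steps 0--4, checking the differentiation property via the transfer $\mathrm{tr}(p\to q;\eta)$ and the relation $p\sim_b q$, and reading off the state-change criterion from Step 3---which is exactly the intended content of the paper's one-line proof.
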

\begin{proof} 
Immediate from inspection of the Diff tree construction. 
\end{proof}

\begin{lem}\label{215}
Fix $e,a,b,k,s,s^*<\omega$ and let
\[ 
	T_{k+1}=\Diff(\{T_{m,t}\mid m\le k\and t\ge s\}, e,a,b,s^*) 
\]
be defined as in 2.13 through the recursive approximation $\{T_{k+1,t}\mid t\ge s^*\}$. For each $t\ge s^*$, assume that $\{T_{m,r}\mid m\le k\and t\ge r\ge s^*\}$ is special. Then if $T_k\ne\emptyset$,
$T_{k+1}=\emptyset$, and no sufficiently large $t$ is in state $\la 0\ra$ on $T_{k+1}$, then there are $\xi$ and $i\le 2$ such that $T_{k+1,r}$ transmits $\la T_k(\xi),i\ra$ for all
sufficiently large $r$ and $\Ext(T_k,\xi)$ is either finite or $\la e,b\ra$-divergent.
\end{lem}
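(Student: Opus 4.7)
The plan is to trace the Diff tree construction of Definition~\ref{213} and show that under the hypotheses, $T_{k+1}$ must settle into one of finitely many ``stuck'' configurations, each of which yields the conclusion directly.

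First, I would observe that since no sufficiently large $t$ is in state $\la 0\ra$, at all large enough stages Step~0 passes to Step~1 or Step~2 (the failure condition of Step~0 is precisely what keeps the stage in state $\la 0\ra$). Because $T_{k+1}$ remains empty, Step~4 never completes, so at every large stage $T_{k+1}$ halts at a transmitting position. Inspection of the construction shows there are only three such positions: (A)~Step~1, transmitting $\la T_k(\emptyset),1\ra$; (B)~Step~2 inside the special module, transmitting $\la T_k(\emptyset),0\ra$; (C)~Step~3, transmitting $\la\tau,2\ra$ where $\tau=T_{k,v}(\beta)$ for the first stage $v$ at which Step~1 or Step~2 completes.

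Next, I would argue that $T_{k+1}$ stabilizes at a single one of these positions with a fixed transmitted pair. The state on $T_{k+1}$ changes only lexicographically upward (compare Lemma~\ref{214**}), and by \ref{214**}\ref{214**ix} an advance out of position~(C) requires finding a $\sigma\subset\Ext(T_{k,t},\beta)$ with $\{e\}(\sigma^{\la b\ra};x)\downarrow$, after which the construction passes to Step~4 and $T_{k+1}$ becomes nonempty. If $T_{k+1}$ were to leave position~(C) we would contradict the hypothesis, so once (C) is reached we remain there forever. Likewise, positions~(A) and (B) only give way when $T_k$ grows appropriately, after which the above argument applies. Hence $T_{k+1}$ stabilizes, yielding $\xi\in\{\emptyset,\beta\}$ and $i\le 2$ as required.

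Finally, I would identify $\Ext(T_k,\xi)$ as finite or $\la e,b\ra$-divergent. In position~(C) the stable state means the Step~3 search for $\sigma\subset\Ext(T_{k,t},\beta)$ with $\{e\}(\sigma^{\la b\ra};x)\downarrow$ fails for every $t$, so $\Ext(T_k,\xi)=\bigcup_t\Ext(T_{k,t},\xi)$ is either finite (if $T_k$ stops growing above $\xi$) or $\la e,b\ra$-divergent. In positions~(A) and~(B) the transmitted type-$i\le 1$ request for $T_k(\emptyset)$ is never fulfilled, so by the specialness of the array (Lemma~\ref{46v}) the failure of $T_k$ to execute the required type-1 extension (resp.\ to place $T_k(\emptyset)$ in the last plateau at matching height) forces $T_k$ itself to stabilize, and then $\Ext(T_k,\emptyset)=T_k$ is finite.

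The hard part will be the bookkeeping in position~(C): I must verify that the string $\beta$ (and hence $\tau$) chosen in Step~3 does not change after Step~3 is first entered. This requires showing that once Step~1 or Step~2 completes at stage $v$, the string $\delta=T_{k,v}(p)$ and the least potential focal point $\tau\supseteq\delta$ of $T_{k,v}$ are frozen thereafter, which uses the persistence of potential focal points recorded in Lemma~\ref{46vi} together with the special-module invocation that guarantees the relevant potential-focal-point status of $\tau$.
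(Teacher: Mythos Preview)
Your approach is essentially the same as the paper's: both argue that the state of $T_{k+1}$ must stabilize (finitely many states, lexicographic progress via \ref{21435}\ref{214xi35xii}, and $T_{k+1}$ never grows), deduce that the transmitted pair stabilizes (via \ref{21435}\ref{214x35xi}), and then split on whether the stable transmission has $i\le 1$ or $i=2$. The paper does this abstractly by citing the Pre Phase-tree axioms; you do it concretely by enumerating the halting positions in the Diff construction of \ref{213}.

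One small gap in your enumeration: you list positions (A), (B), (C) but omit Step~4, Substep~0, where the special module can also transmit $\la T_{k,t}(\emptyset),0\ra$ before $T_{k+1}(\emptyset)$ is defined in Substep~1. This is handled identically to your position~(B), so the fix is trivial, but as written your claim that ``there are only three such positions'' is false.

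Your final paragraph overstates the difficulty. Once you have established that the state stabilizes, the transmitted pair is fixed by \ref{21435}\ref{214x35xi}; in particular $\tau$ is computed from $T_{k,v}$ for the \emph{fixed} stage $v$ at which Step~1 or Step~2 first completed, so $\tau$ (and hence $\beta$) cannot change thereafter. No appeal to \ref{46}\ref{46vi} is needed here---indeed that lemma concerns the global construction rather than the array hypothesis you have. The paper simply cites \ref{214**}\ref{214**ixb} to read off $\xi$ and $x$ from the stable state.
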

\begin{proof}
We note that there are only finitely many states which $t\ge s^*$ can occupy on $T_{k+1}$.
Furthermore, if $t\ge s^*$ and $T_{k+1,t}=\emptyset$, then $t$ is in some state on $T_{k+1}$, and by \ref{214}\ref{214xiii} and the construction \ref{43}, $T_{k+1,t-1}$ transmits some pair unless
$t$ is in state $\la 0\ra$ on $T_{k+1}$. Hence by \ref{21435}\ref{214x35xi} and \ref{214xi35xii}, there is a stage $r$ and a pair $\la\alpha,i\ra$ with $i\le 2$ such that every stage $t\ge r$ is in
a fixed state and for each such $t$, $T_{k+1,t-1}$ transmits $\la\alpha,i\ra$.

First assume that $i\le 1$. Fix $t\ge r$. If $T_{k,t}\ne T_{k,t-1}$, then by \ref{29}\ref{29iii}, $T_{k,t}$ is a type $i$ extension of $T_{k,t-1}$ for $\alpha$ and $\Ht(T_{k,t})=$ht$(T_{m,t})$ for
all $m\le k$. Hence by \ref{21435}\ref{214ix35viii}, $t$ and $t+1$ are in different states on $T_{k+1}$, contradicting the choice of $r$. Hence $T_{k,t}=T_{k,r}$ for all $t\ge r$. By
\ref{21435}\ref{214iv35iii}, there is a $\xi$ such that $T_k(\xi)=\alpha$. $\Ext(T_k,\xi)$ is now seen to be finite.

Assume that $i\ge 2$. Then $i=2$. By \ref{214**}\ref{214**ixb}, there are $\xi$ and $x<\omega$ such that $\alpha=T_{k,r}(\xi)$, and for all $\sigma\subset\Ext(T_k,\xi,s^*)$, $\{e\}(\sigma^{\la
b\ra};x)\uparrow$. Hence $\Ext(T_k,\xi)$ is $\la e,b\ra$-divergent.
\end{proof}

\begin{lem}\label{35**}
Let $n$, $s'$, $k$, $e$, $s^*\in\omega$, $\beta$, $\{T_{m,t}\mid m<n\and t\ge s'\}$ and $\{S_{m,t}\mid 
m\le n\and t>s^*\}$ be as in the hypothesis of \ref{34}. For all $t\le s^*$, let $\alpha^*(t)$ be
the $\alpha^*$ chosen at stage $t$, Step 1 of \ref{34}. For all $t\ge s^*$, let
\[
	T_{n,t}=\Sp(\{T_{m,r}\mid m<n\and s'\le r\le t\},e,k,\beta,s^*,\{S_{m,r}\mid m\le n\and s^*<r\le t\})
\]
be the tree constructed in \ref{34}. Then $T_{n}$ is a Phase-2 tree. Moreover,

\begin{enumerate}
\item \label{35**ii} If $T_{n}$ has no $e$-splittings mod $k$ then $T_{n}$ is weak $e$-splitting for $k$
\item \label{35**viii} For all $t\ge s^*$, $\alpha$ and $i\ge 2$, if $T_{n}$ prefers the same pair at $t$ and $t+1$ and transmits $\la\alpha,i\ra$ at $t$,
then $t$ and $t+1$ will be in different states on $T_{n}$ exactly when one of the following
conditions holds:
 \begin{enumerate}
 \item \label{35**viiib} $i=2$, $\alpha=T_{n-1,t-1}(\xi)$, and for a specified $x\in\omega$, there is a
 $\delta\subset\Ext(T_{n-1,t},\xi)$ such that $\{e\}^\delta(x)\downarrow$.
 \item \label{35**viiic} $i=3$, $\alpha=T_{n-1,t-1}(\xi)$, $b$ is as in Step $u+2$, Substep 2 where it is decided to transmit
 $\la\alpha,i\ra$, and there is an $e$-splitting mod $b$ on $\Ext(T_{n-1,t},\xi)$.
 \end{enumerate}
\end{enumerate}
\end{lem}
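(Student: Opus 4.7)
The plan is to verify each part by direct inspection of the Sp tree construction in Definition \ref{34}, following the template of Lemmas \ref{214*}, \ref{214**}, and \ref{215}.

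First I would verify that $T_n$ meets every clause of Definition \ref{35} to conclude it is a Phase-2 tree. For clause \ref{35}\ref{35vii}, inspection of Step 0, Substep 0 shows that $T_{n,t}(\emptyset)$ becomes defined precisely at $t+1 = s^*$ provided $\beta \subset T_{n-1,t}$ and $|\beta|=\Ht T_{0,t}$; after that point the value is never overwritten. The remaining clauses (\ref{35}\ref{35xiii}, \ref{35}\ref{35x}, \ref{35}\ref{35ix}, \ref{35}\ref{35iii}, \ref{35}\ref{35iv}) are immediate from the fact that the construction in every ``main'' step (Step 1 for external preference, Step $u+2$ for splitting, the interpolation subsubsteps) transmits only pairs that satisfy the specified relations to $\alpha^*(t+1)$, and only extends $T_{n,t}$ at the very end of the entire cascade (state $\la 0,1\ra$) in a manner that is precisely a type-$i$ extension for $\alpha^*$. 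The pre-Phase-tree axioms of \ref{21435} are also verified by direct inspection; weak uniformity follows because each ``Extend'' action uniformly copies the branching pattern of $T_{n-1,t}$, and the extensions are deep-and-tall, matching $\Ht(T_{0,t})$.

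For \ref{35**}\ref{35**ii}, assume $T_n$ has no $e$-splittings mod $k$. To show $T_n$ is weak $e$-splitting for $k$, I need to prove that the last level of each plateau of every $T_{n,s}$ is an $e$-splitting level for $k$. But this is exactly what is arranged at the conclusion of step $u+2$: the loop over $u \le q$ runs through every pair $\la \beta^0_{i_u},\beta^0_{j_u}\ra$ with $\beta^0_{i_u}\not\sim_k\beta^0_{j_u}$ extracted from $\{\beta^0_i \mid i\le v\}$, and in Subsubstep $w+2$ we define $\beta^{u+1}_i$ so that \ref{3(3)} holds, i.e., $\la \beta^{u+1}_{i_u}, \beta^{u+1}_{j_u}\ra$ is an $e$-splitting. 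Since \ref{3(1)} and \ref{3(2)} preserve the compatibility of previously established splittings (strings only get extended in length-preserving fashion), at $u=q$ every required pair has been $e$-split simultaneously. The hypothesis that no $e$-splitting mod $k$ exists on $T_n$ means the construction never terminates via the ``odd $i$'' exit in Substep 4, so every step $u+2$ eventually completes; Definition \ref{113} then holds.

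For \ref{35**}\ref{35**viii}, I proceed by case analysis over the possible points at which the Sp construction transmits a pair $\la\alpha,i\ra$ with $i\ge 2$. There are exactly two such transmission sites in the construction. The first is Step $u+2$, Substep 2, which transmits $\la T_{n-1,t}(\hat\eta^u_{i_u}*0),3\ra$ if and only if no $e$-splitting mod $b$ is found on $\Ext(T_{n-1,t},\hat\eta^u_{i_u}*0)$; this precisely yields clause (c). The second is Subsubstep $m$, Sub$^3$step 0 of Substep 5 (and similarly Subsubstep $j$, Sub$^3$step 0 of Substep 4), which transmits $\la T_{n,t}(\sigma_m^{(m-1)}),2\ra$ (resp. $\la\delta^{j-1}_j,2\ra$) if and only if a certain divergent computation fails to become convergent on an Ext tree; this yields clause (b). Conversely, when $T_n$ prefers the same pair and transmits the same $\la\alpha,i\ra$ at $t$ and $t+1$, remaining in the same step and substep, by the pre-Phase-tree property \ref{21435}\ref{214x35xi} the state remains the same; a state change at $t+1$ is possible only if the search criterion (nonexistence of $\sigma$ with $\{e\}^\sigma(x)\downarrow$, or nonexistence of $e$-splitting mod $b$) that caused the transmission at $t$ now fails, allowing the construction to advance past the current substep, which is exactly (b) or (c).

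The least routine part is confirming the ``only when'' direction of \ref{35**}\ref{35**viii}: one must carefully check that no other avenue of state change exists while the transmitted pair is fixed. The key observation is that every intermediate substep of the Sp construction either transmits a type-$\le 1$ pair (which changes the preferred pair, contradicting the hypothesis) or triggers the special module (which may transmit $\la\alpha^*,0\ra$, again of type $\le 1$). Thus, as long as the preferred pair stays the same and $T_n$ is transmitting a type $\ge 2$ pair, progress across a state boundary occurs exactly at a Subsubstep-level search-resolution, matching clauses (b) and (c).
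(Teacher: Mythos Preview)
Your proposal is correct and takes exactly the same approach as the paper, which simply records the lemma as ``Immediate from inspection of the Sp construction.'' Your detailed walk-through of the clauses of Definitions \ref{35} and \ref{113} and the case analysis of the type-$\ge 2$ transmission sites is precisely the inspection the paper leaves to the reader; one small imprecision is that the hypothesis ``no $e$-splittings mod $k$ on $T_n$'' does not literally rule out the odd-$i$ exit in Substep 4 (that splitting lives on $T_{n-1}$, not $T_n$), but this is harmless since if that exit is taken the partially built plateau is never added to $T_n$, and all previously completed plateaus already satisfy \ref{113}(ii).
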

\begin{proof} 
Immediate from inspection of the Sp construction. 
\end{proof}

\begin{lem}\label{36}
Let $n,s',k,e,s^*<\omega$, $\beta$, $\{T_{m,t}\mid m< n\and t\ge s'\}$ and $\{S_{m,t}\mid m\le n\and
t>s^*\}$ be as in the hypothesis of 3.4. For all $t\ge s^*$, let
\[ 
	T_{n,t}=\Sp_t(\{T_{m,r}\mid m<n\and s'\le r\le t\},e,k,\beta,s^*,\{S_{m,r}\mid m\le n\and t\ge r>s^*\})
\] 
and $T_{n}=\Union\{T_{n,t}\mid t\ge s^*\}$. Assume that for all $t\ge s^*$, $\{T_{m,r}\mid m<n\and s'\le r\le t\}$ is special. Also assume that for all sufficiently large $t$, reception of pairs by $T_{n,t}$ satisfies \ref{23}\ref{23i}-\ref{23v}, that $T_{n,t}$ prefers $\la\alpha,i\ra$, and that $T_{n}$ is finite and has no $e$-splittings mod $k$.

Then there are $\lambda$ and $j\le 3$ such that $T_{n,r}$ transmits $\la T_{n-1}(\lambda),j\ra$ for all sufficiently large $r$ and $\Ext(T_{n-1},\lambda)$ is either finite, or $\la e,1[i]\ra$-divergent, or has no $e$-splittings mod $b$ for some $b<k$.
\end{lem}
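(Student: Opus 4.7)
The plan is to follow the pattern of the proof of Lemma \ref{215} for Diff trees, adapting it to the richer state space of the Sp tree construction of Definition \ref{34}. By inspection of that construction, the states that $T_n$ can occupy at any stage form a finite set (indexed by step, substep, and sub$^3$step combinations, each bounded by parameters depending only on $k$, $e$, and the finite lattice $L^i$). By Lemma \ref{35**}, $T_n$ is a Phase-2 tree, so the clauses \ref{21435}(x) and \ref{21435}(xi) of the definition of Pre Phase-tree apply: if $T_n$ prefers the same pair at $t$ and $t+1$ and those stages are in the same state, the transmission is unchanged; and any state change is either a lex-increase or is accompanied by growth of $T_n$.

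Since $T_{n,t}$ prefers the fixed pair $\la\alpha,i\ra$ for all sufficiently large $t$ by hypothesis, and $T_n$ is finite (hence $T_{n,t+1}=T_{n,t}$ eventually), only lex-increases are available for state changes, and the finite state space forces the state to stabilize. Combining with \ref{21435}(x), there is a stage $r$ and a pair $\la T_{n-1}(\lambda),j\ra$ with $j\le 3$ (by inspection of every transmission that can occur in Definition \ref{34}) such that for all $t\ge r$, $T_{n,t-1}$ transmits $\la T_{n-1}(\lambda),j\ra$ and $t$ is in a fixed state on $T_n$.

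I now case-split on $j$. If $j\le 1$: by specialness clause \ref{29}(iii), any growth $T_{n-1,t}\ne T_{n-1,t-1}$ for $t>r$ must be a type $j'$ extension for some pair $T_{n-1,t-1}$ prefers with $j'\le 1$; by \ref{44}(iii) this preferred pair is $\la T_{n-1}(\lambda),j\ra$ (the only relevant reception in the lemma's setting), and the resulting extension is exactly the one that triggers a state change via \ref{21435}(ix)(a), contradicting state stability. So $T_{n-1,t}=T_{n-1,r}$ for all $t\ge r$, whence $\Ext(T_{n-1},\lambda)=\Ext(T_{n-1,r},\lambda)$ is finite. If $j=2$: state stability and \ref{35**}(viii)(b) together preclude the existence of any $\delta\subset\Ext(T_{n-1,t},\lambda)$ with $\{e\}^\delta(x)\downarrow$ for the $x$ specified at the stuck substep, so $\Ext(T_{n-1},\lambda)$ is $\la e,1[i]\ra$-divergent. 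If $j=3$: state stability and \ref{35**}(viii)(c) preclude the existence of an $e$-splitting mod $b$ on $\Ext(T_{n-1,t},\lambda)$, where $b=m\wedge k$ with $m$ the greatest element of $L^i$ for which $\eta^u_{i_u}(y)\sim_m\beta^u_{j_u}(y)$ at a $y$ chosen so $\eta^u_{i_u}(y)\not\sim_k\beta^u_{j_u}(y)$; hence $m\not\ge k$, giving $b<k$.

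The main obstacle is the $j\le 1$ case: one must verify that under the lemma's hypotheses (specialness below $T_n$, and preference stability on $T_n$), the growth preference of $T_{n-1}$ really does reduce to $\la T_{n-1}(\lambda),j\ra$, so that state stability on $T_n$ translates into genuine non-growth of $T_{n-1}$ above $\lambda$ and hence finiteness of $\Ext(T_{n-1},\lambda)$. This is the step that directly parallels the corresponding move in the proof of Lemma \ref{215}, and everything else is bookkeeping against Definition \ref{34} and the lemma catalogue for Pre Phase-trees.
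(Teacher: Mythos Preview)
Your proposal is correct and follows essentially the same approach as the paper's own proof: establish state stabilization via \ref{21435}\ref{214x35xi} and \ref{214xi35xii} using finiteness of $T_n$ and fixed preference, then case-split on the stabilized transmission index $j$ using \ref{29}\ref{29iii} and \ref{21435}\ref{214ix35viii} for $j\le 1$, and \ref{35**}\ref{35**viiib}, \ref{35**viiic} for $j\in\{2,3\}$. The paper's proof is terser, dispatching all three cases with the phrase ``in a similar way to the proof of Lemma \ref{215}'', whereas you unpack each case and in particular justify $b<k$ explicitly in the $j=3$ case; your citation of \ref{44}\ref{44iii} is slightly out of scope (that lemma is stated for the full construction rather than the abstract setup of Lemma \ref{36}), but you correctly flag this reduction as the delicate point, and the paper glosses over it in the same way.
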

\begin{proof}
We note that there are only finitely many states which $t$ can occupy on $T_{n}$ if $T_{n,t}$ prefers $\la\alpha,i\ra$ for all sufficiently large $t$. By the hypotheses, every sufficiently
large $t$ occupies some state on $T_{n}$, and transmits some pair. Hence by \ref{21435}\ref{214x35xi} and \ref{214xi35xii}, there is a stage $r$ and a pair $\la\beta,j\ra$
such that every stage $t\ge r$ is in a fixed state and for each such $t$, $T_{n,t-1}$ transmits $\la\beta,j\ra$. The Lemma now follows from \ref{21435}\ref{214ix35viii} and \ref{29}\ref{29iii} in
a similar way to the proof of Lemma 2.15.
\end{proof}

\subsection{The Theorem}
\subsubsection{Priority tree}

The priority tree is a subtree of $\omega^{<\omega}$ under inclusion. The requirements are associated with levels of the priority tree and indexed by $\la e',a[i],b[i]\ra$ (differentiating
tree requirements), $\la e',1[i]\ra$ (splitting tree requirements) and $i$ (as in $L^i$) for $e,i<\omega$ and $a,b\in L$ as in Lemma \ref{115}. The requirements are ordered in a recursive way
which is consistent (a requirement does not follow a requirement whose associated finite lattice it contradicts) and complete (every requirement occurs). The branching rate of the priority tree is
finite and determined by the construction \ref{43}.

With each $L^i$ is associated $\omega$ many requirements $R^i_n$, as in \cite{Lerman:83}*{Ch. XII}. At every 4th level of the priority tree we use an $L$-requirement to go from $L^i$ to $L^{i+1}$.

The pattern is $L$, Sp, Diff, Sp because in \ref{47} it is useful to have every other tree be a Sp tree and hence a tree that immediately defines its root. Hence $T_{4i}$ is the $L$-tree taking us
from $L^{i-1}$ to $L^i$.

The requirement to be satisfied by $T_k$, where $k\in\{4i+1, 4i+2, 4i+3\}$, is chosen from among the requirements $R^j_n$, $0\le j\le i$, $n<\omega$. Namely choose the smallest such $\la j,n\ra$ (in some $\omega$-ordering of $\omega\times\omega$) that is not assigned to any tree among $T_0,\ldots,T_{k-1}$, and that is consistent with $L^i$, i.e., do not choose a Diff requirement for
$a\not\le b$ (for $a,b$ in some $L^j$, $j<i$) if in $L^i$, the representatives of $a,b$ satisfy $a\le b$. Then use the functions Prime to decide how to try to satisfy the chosen requirement, as
in Lemma \ref{115}.

\subsubsection{Construction}
\begin{df}\label{43}
\noindent {\bf The Construction.} \emph{Stage 0:} Designate $T_{\emptyset,0}$ as Init$(\{S_{\emptyset,t}\mid t>0\})$. Let $\alpha_0=\gamma_0=\emptyset$.

\noindent \emph{Stage $s+1$:} For all $\beta$ such that $T_{\beta,s}$ is designated, the information received by $T_{\beta,s}$ at stage $s+1$ is the information transmitted by those
$T_{\delta,s}$ such that $\beta=\delta^-$ at stage $s+1$.

$<^*$ gives the order in which the strategies $\gamma$ on the priority tree act; i.e., the leftmost leaf, the $<^*$-least strategy, acts first.

If a strategy $\gamma*a$ transmits a pair $\la\alpha,i\ra$ with $i\le 1$ then all strategies $\gamma*b$ with $a<b$ and their descendants are immediately cancelled.

$S_{\lambda,t+1}$ is defined to be what $\lambda$ receives from trees (equivalently, from non-cancelled trees) at stage $t+1$.

After all designated, noncancelled trees have acted, designate new trees as follows. (In defining Case $n$ for $1<n\le 3$, we include the assumption that Cases $1,\ldots,n-1$ do not apply.)

\noindent Case 1. There is a trigger at stage $s+1$.

Fix the tree $T_\delta$ of highest priority such that $T_\delta$ is a trigger at stage $s+1$, fix $\beta\subseteq\delta$ such that $T_\delta$ triggers $T_\beta$ at stage $s+1$, and let
$\{\sigma_\gamma,i_\gamma\ra\mid \beta\subseteq\gamma\subset\delta\}$ be the corresponding triggering sequence. Note that there is only one possible choice for $T_\beta$.

If $T_{\beta,s}$ is designated as a Phase-2 tree or if $\beta=\delta$, let $\la\alpha^*,i^*\ra$ be the information transmitted by $T_{\beta,s}$ if any information is transmitted. Note that if $i^*$
is defined, then $i^*\ge 2$.

\begin{itemize}
\item \emph{Subcase 1. New Ext tree. $i^*\in\{2,4\}$.} Let $\alpha_{s+1}=\alpha^*=T_{\beta^-,s}(\xi^*)$.
Let $\gamma_{s+1}=s(\beta)$ and designate $T_{\gamma_{s+1}}$ as the following extension tree:
\[ 
	T_{\gamma_{s+1}}=\Ext(T_{\beta^-},\xi^*,s+1). 
\]
Begin building $T_{\gamma_{s+1}}$.

\item \emph{Subcase 2. New Phase-2 tree. $i^*=3$.} In this case, $T_{\beta,s+1}$ is designated as an $e$-splitting tree for $k$ for some $e<\omega$, $k\in L$, and $s+1$ is in state $\la u+2,0,0\ra$ on $T_\beta$ searching for an $e$-splitting mod $b$ for some $b<k$. Let $\alpha_{s+1}=\alpha^*=T_{\beta^-,s}(\xi^*)$. Let $\gamma_{s+1}=s(\beta)$. If $b=0$, designate
$T_{\gamma_{s+1}}$ as in Subcase 1; and if $b\ne 0$, designate $T_{\gamma_{s+1}}$ as the following $e$-splitting tree for $b$:
\[ 
	T_{\gamma_{s+1}}=\Sp(\{T_{\xi,t}\mid \xi\subset\gamma_{s+1}\and t\ge s+1\}, e,b,\alpha^*,s+1,\{S_{\xi,t}\mid \xi\subseteq\gamma_{s+1}\and t>s+1\}). 
\]
Begin building $T_{\gamma_{s+1}}$.

\item \emph{Subcase 3. Phase-1 tree becomes nonempty.} $\beta=\delta$ and $T_{\delta,s}$ has no transmission.

Let $\alpha_{s+1}=T_{\delta,s+1}(\emptyset)$ and $\gamma_{s+1}=\delta$.

\item \emph{Subcase 4. Phase-2 or Init tree grows.} Otherwise. Then $T_{\beta,s}$ will be designated either as the initial tree or as a splitting or Phase-2 tree, and has no transmission. Let $\alpha_{s+1}=\sigma_\beta$ and $\gamma_{s+1}=\delta$.

\end{itemize}

\noindent \emph{Case 2.} $T_{\gamma_s}$ is nonempty and $T_{\gamma_s*0}$ is not designated.

Let $\gamma_{s+1}=\gamma_s*0$, $\alpha_{s+1}=T_{\gamma_s}(\emptyset)$. Designate a new tree $T_{\gamma_{s+1}}$ as either a Left $L$-tree, a Sp tree or a Diff tree, to satisfy a requirement given by the distribution of strategies on the priority tree (see the subsection on eligible requirements).

\begin{itemize}
\item \emph{Subcase 1. New Phase-2 tree.} Let $\alpha^*=\alpha_{s+1}=T_{\gamma_s,s}(\emptyset)$ and $\gamma_{s+1}=\gamma_s*0$. Set $b=1$ and designate $T_{\gamma_{s+1}}$ as in Case 1, Subcase 2, i.e., as a splitting tree for 1. Begin building $T_{\gamma_{s+1}}$.

\item \emph{Subcase 2. New Phase-1 tree.}

\emph{Subsubcase 1. New Diff tree for Diff requirement $\la e,a,b\ra$ for some $e,a,b$.} Set $\gamma_{s+1}=\gamma_s*0$ and designate $T_{\gamma_{s+1}}$ as the following $\la
e,a,b\ra$-differentiating tree:
\[ 
	T_{\gamma_{s+1}}=\Diff(\{T_{\xi,t}\mid \xi\subseteq\gamma_s\and t\ge s+1\},e,a,b,s+1)
\]
\emph{Subsubcase 2. New Left $L$-tree.} Let $\alpha_{s+1}=T_{\gamma_s}(\emptyset)$, $\gamma_{s+1}=\gamma_s*0$ and designate $T_{\gamma_{s+1}}$ as the appropriate Phase-1 $L$-tree.
\end{itemize}

In all cases, we say that $T_{\beta,s+1}$ is \emph{newly designated} if $T_{\beta,s+1}$ is designated and either $T_{\beta,s}$ is not designated or $T_{\beta,s}$ is cancelled at stage $s+1$.

\noindent {\bf End of construction.}
\end{df}

\subsubsection{Verification}

\begin{lem}\label{l:as}
\begin{enumerate}
\item \label{l:as(1)} For each $s\ge 0$, $\gamma_s$ is the lowest priority tree that is designated at the end of stage $s$.
\item \label{l:as(2)} If $T_{\delta,s+1}\ne T_{\delta,s}$ then $\delta\subseteq\gamma_{s+1}$.
\item \label{l:as(3)} If $\delta<\Gamma$ then $T_\delta$ is finite.
\end{enumerate}

\end{lem}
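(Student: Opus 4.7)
The plan is to prove (1) and (2) by simultaneous induction on $s$, then deduce (3) as a corollary via Lemma \ref{48}.

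For (1), the base case $s=0$ is immediate since only $T_{\gamma_0}=T_\emptyset$ is designated. For the inductive step I would analyze Construction \ref{43}. In Case 1 the highest-priority trigger $T_{\delta'}$ and its triggered partner $T_{\beta'}$ determine $\gamma_{s+1}$ as either $s(\beta')$ (Subcases 1 and 2, a new sibling-tree) or $\delta'$ itself (Subcases 3 and 4). The cancellation rule---when a node $\gamma*a$ transmits a type-$\le 1$ pair, every $\gamma*b$ with $b>a$ and all its descendants are cancelled---together with the inductive hypothesis for $s$, forces every previously designated tree of priority strictly lower than $\gamma_{s+1}$ to be cancelled before the end of stage $s+1$. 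In Case 2 we simply append $\gamma_{s+1}=\gamma_s*0$, which is manifestly the new lowest-priority designated tree.

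For (2), the key invariant to carry along is that at the end of every stage $s+1$ the set of designated, non-cancelled trees forms a $\subseteq$-linear chain $\emptyset=\delta_0\subsetneq\delta_1\subsetneq\cdots\subsetneq\delta_k=\gamma_{s+1}$. This is preserved inductively: in Case 2 we append a child; in Case 1 the cancellation procedure lops off one or more sibling subtrees of chain members and the new designation $s(\beta')$ (or unchanged $\delta'$) remains $\subseteq$-comparable with the surviving nodes. Now if $T_{\delta,s+1}\ne T_{\delta,s}$ then $T_\delta$ must have acted non-trivially at stage $s+1$, so $T_\delta$ is designated and not cancelled at the end of that stage, placing $\delta$ on the chain and giving $\delta\subseteq\gamma_{s+1}$. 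Lemma \ref{44}\ref{44vii} handles the residual situation where $\delta$ is an Ext tree whose growth is caused by growth of an ancestor: there it is shown that whenever a non-cancelled tree grows, all lower-priority nodes cease to be designated, which in conjunction with (1) forces $\gamma_{s+1}$ to dominate $\delta$ in the $\subseteq$ order.

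For (3), read $\delta<\Gamma$ as $\delta<_L\gamma(|\delta|)$ (i.e., $\delta$ is strictly to the left of the true path at its own level). By Lemma \ref{48}\ref{48ii}, for every $t\ge t(|\delta|)$ we have either $\gamma_t\supseteq\gamma(|\delta|)$ or $\gamma_t$ of strictly lower priority than $\gamma(|\delta|)$, and in both alternatives $\delta\not\subseteq\gamma_t$. Part (2) then gives $T_{\delta,t+1}=T_{\delta,t}$ for all such $t$, so $T_\delta=T_{\delta,t(|\delta|)}$ is finite.

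The main obstacle is verifying the linear-chain invariant in (2). Each of the four subcases of Case 1 and the two subcases of Case 2 must be checked to confirm that the combination of the highest-priority trigger selection, the cancellation of sibling branches, and the fresh designation of a new Ext, Phase-1, or Phase-2 tree at $s(\beta')$ or $\gamma_s*0$ preserves the chain; the ``growth without trigger'' possibility has to be ruled out by appeal to Lemma \ref{44}\ref{44vii} and to the Phase-tree clauses \ref{214}\ref{214xiii}, \ref{35}\ref{35ix}.
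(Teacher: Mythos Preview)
Your treatment of (1) and (3) is fine and matches the paper's (the paper just says ``by inspection'' for (1) and ``from (2) and the definition of $\Gamma$'' for (3), which is your Lemma~\ref{48} argument).

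The problem is your main vehicle for (2). The linear-chain invariant is false. In Case~1, Subcases~1 and~2 of the construction, the triggered node $\beta$ transmits a pair $\la\alpha^*,i^*\ra$ with $i^*\ge 2$; since only type-$\le 1$ transmissions cause cancellation, neither $\beta$ nor the intermediate Phase/Ext nodes $\beta\subsetneq\cdots\subsetneq\delta$ are cancelled. Then $\gamma_{s+1}=s(\beta)=\beta^-*(j{+}1)$ is newly designated, and at the end of stage $s+1$ the designated non-cancelled set contains both the old branch $\beta\subset\cdots\subset\delta$ and its new sibling $s(\beta)$, which are $\subseteq$-incomparable. So your claim that ``the new designation $s(\beta')$ remains $\subseteq$-comparable with the surviving nodes'' fails exactly here. (Part (1) still holds in this situation because $\delta<_L s(\beta)$, so $s(\beta)$ is indeed lowest priority; but the set is not a chain.)

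The good news is that the fix is already in your write-up, misfiled as a ``residual'' case. What you say about Lemma~\ref{44}\ref{44vii}---that whenever a non-cancelled tree $T_\delta$ grows there is $\eta\supseteq\delta$ with nothing of lower priority than $\eta$ designated at the end of the stage, so by (1) $\gamma_{s+1}=\eta\supseteq\delta$---is the entire proof of (2), and it is exactly what the paper does. You do not need the chain invariant at all; drop it and promote the \ref{44}\ref{44vii} argument to the main line. (Incidentally, in Subcases~1 and~2 no tree grows---every node on the triggering path transmits, hence by \ref{21435}\ref{214xii35ix} does not grow---so (2) is vacuous there; it is only Subcases~3 and~4, and Case~2 vacuously, that need the \ref{44}\ref{44vii} argument.)
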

\begin{proof}
By inspection of Case 1 of the construction \ref{43}, (1) holds. Hence (2) follows from \ref{44}\ref{44vii}. And (3) follows from (2) and the definition of $\Gamma$.
\end{proof}

\begin{thm}\label{410}
Let $L$ be a $\Sigma^0_3$-presentable usl. Then there is a function $g$ of degree $\le\mb 0'$ such that $\mc D(\le\mb g)\cong L$. 
\end{thm}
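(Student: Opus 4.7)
The plan is to combine the lattice-table machinery of Section 2, the Lerman construction of Section 3, and the strategies of Section 4 into a single $\mathbf{0}'$-recursive construction whose generic branch $g$ satisfies the criteria of Proposition \ref{115}.

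First, since $L$ is $\Sigma^0_3$-presentable, it is $\Sigma^0_2(\mathbf{0}')$-presentable, so by the proposition characterizing $\mathbf{a}$-permissibility, $L$ is $\mathbf{0}'$-permissible: we obtain a sequence $i\mapsto\la L^i,\varphi_i\ra$ of finite lattices with $\la 0,1,\vee\ra$-homomorphisms having direct limit $L$, together with the $\mathbf{0}'$-recursive relation $R$ and recursive function $f$ of Definition \ref{permissible}. Apply Corollary \ref{c:c} to produce the associated $\mathbf{0}'$-recursive array $\{\Theta^i_j\mid j\ge h(i)\}$ of Pudl\'ak tables, where $\Theta^{i+1}$ embeds in $\Theta^i$ with respect to $\varphi_i$, each level is Malcev homogeneous, and a recursive function $m_i$ governs the ramification. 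This array is what every strategy on the priority tree will reference.

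Next, I set up the priority tree as described right before Definition \ref{43}: levels cycle through the pattern L-tree, Sp, Diff, Sp, with each $L$-tree at level $4i$ translating from $L^{i-1}$ to $L^i$, and the Diff / Sp levels carrying the requirements from Proposition \ref{115} (differentiating requirements $\la e', a[i], b[i]\ra$ for $a\not\lesssim b$, and splitting requirements $\la e', 1[i]\ra$), assigned in the standard consistent, complete recursive fashion. Then run the Lerman construction of Definition \ref{43}: at each stage the $<^*$-least noncancelled designated node acts, following its Diff, Sp, or $L$-tree strategy from Definitions \ref{213} and \ref{34}; a trigger causes the designation of a new Ext, new Phase-2, or newly grown Phase-1/Init tree, cancelling lower-priority siblings. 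The decision whether an $L$-tree enters the $\Pi^0_2$ subroutine (Step 3) or stays in Step 2 is driven by the $\mathbf{0}'$-approximation to $R$, which is why $\mathbf{0}'$-permissibility of $L$ is exactly what is needed to carry out the construction recursively with $\mathbf{0}'$-recursive tree approximations.

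The verification step is the main obstacle. One identifies the true path $\Gamma$ via Lemma \ref{48} and sets $g=\lim_s\alpha_s$. Lemma \ref{49} already gives $|g|=\infty$ and $\mathbf{g}\le\mathbf{0}'$, so the remaining task is to check hypotheses \ref{115i}, \ref{115ii} of Proposition \ref{115}. For this, one inspects each $\gamma\subset\Gamma$ according to its designation: Lemma \ref{215} shows that a Diff tree either becomes infinite (and by Lemma \ref{214**}\ref{214**ii} is then $\la e,a[j],b[j]\ra$-differentiating) or its top Ext extension is $\la e'',b[j]\ra$-divergent; Lemma \ref{36} gives the analogous trichotomy for Sp trees, so that following the chain of failures in $k,k',k''\in L^i$ along the priority tree down from $1[i]$ one either hits an $\la e',1[i]\ra$-divergent Ext tree, an Ext tree with no $e$-splittings mod some $b\in L^i$ (promoted to a weak splitting tree for the minimal such $b$ by the Sp construction yielding the needed $a$ in \ref{115ii}), or a successful weak $e'$-splitting tree; and for each $L$-tree one checks via Lemmas \ref{214*}, \ref{35*} that the $\Sigma^0_2$/$\Pi^0_2$ branching ensures the true path visits the $\Pi^0_2$ branch exactly when the corresponding instance of $R$ is true, thereby recording the transition $L^i\to L^{i+1}$ as a genuine $(L,i,i+1)$-subtree of Definition \ref{116}, so that the Homomorphism Lemma \ref{l:homomorphismlemma} applies and $g^{\la a\ra}\equiv_T g^{\la\varphi_i a\ra}$ holds along $g$. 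With these three verifications in hand, the hypotheses of Proposition \ref{115} are met and $L\cong[\mathbf{0},\mathbf{g}]$, completing the proof.
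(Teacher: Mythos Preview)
Your proposal is correct and follows essentially the same route as the paper: assemble the construction of Definition \ref{43} on the priority tree described in Section 4.4.1, invoke Lemma \ref{49} for $\mathbf g\le\mathbf 0'$, and then verify \ref{115}\ref{115i} and \ref{115}\ref{115ii} via Lemmas \ref{215} and \ref{36} respectively, with the descent $1[i]>n_0>\cdots>n_k$ in $L^i$ terminating by finiteness. One small correction: the construction and the tree approximations $T_{\gamma,s}$ are genuinely \emph{recursive}, not $\mathbf 0'$-recursive; the $\mathbf 0'$-permissibility of $L$ enters only through the recursive $\Pi^0_2$-approximation $\forall x\le s\,\exists y\le s\,S(x,y,i)$ in Step~2 of the $L$-tree (Definition \ref{213}), and this recursiveness of $s\mapsto\alpha_s$ is exactly what Lemma \ref{49} uses to conclude $\mathbf g\le\mathbf 0'$. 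Relatedly, Corollary \ref{c:c} as stated is for $\Sigma^0_1(\mathbf a)$-presentable usls and is not applied directly here; rather, each $\Theta(L^i)$ is built recursively from the finite lattice $L^i$, and the $\Pi^0_2$-guessing on the priority tree handles the nonrecursive sequence $i\mapsto L^i$.
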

\begin{proof}
We show that for the function $g=\lim_s\alpha_s$ constructed in \ref{43}, the map $a\mapsto g^{\la a\ra}$ is an isomorphism between $L$ and $[\mb 0,\mb g]$. By \ref{49}, it suffices to verify \ref{115}\ref{115i} and \ref{115}\ref{115ii}. Fix $e<\omega$ and $a,b\in L$ such that $a\not\lesssim b$. Fix $\gamma\subset\Gamma$ such that $T_\gamma$ is designated to handle Diff requirement number $\la e,a,b\ra$, and let $\gamma^*=\gamma^-*0$. Then $T_{\gamma^*}$ is designated as an $\la e,a,b\ra$-Diff tree. If $\gamma=\gamma^*$, then since $|g^*|=\infty$, $T_\gamma(\emptyset)\downarrow$, so by \ref{214**}\ref{214**ii}, $T_\gamma$ is $\la e,a,b\ra$-differentiating. Otherwise, $\gamma=\gamma^-*1$ and $\gamma^-\subset\Gamma$, so since $|g^*|=\infty$, $T_{\gamma^-}(\emptyset)\downarrow$. By the construction, $T_{\gamma^*}(\emptyset)$ will transmit $\la T_\gamma(\emptyset),2\ra$ at all sufficiently large stages (by \ref{45}\ref{45ii} and $T_{\gamma^-*0}=\emptyset$. Since $g\subset T_{\gamma^-}$, $T_{\gamma^-}$ is infinite, and if $T_{\gamma^-}(\eta)=T_\gamma(\emptyset)$, then $\Ext(T_{\gamma^-},\eta)$ is infinite since $T_\gamma(\emptyset)\subset g$. Hence by \ref{214}\ref{214xiii} and \ref{215}, $T_\gamma$ is $\la e,b\ra$-divergent. Thus \ref{115}\ref{115i} holds.

Fix $e<\omega$ and $\gamma\subset\Gamma$ such that $T_\gamma$ is designated to handle Sp requirement number $e$. Let $\gamma=\gamma^-*m$. We verify \ref{115}\ref{115ii} by induction on $k$. We also assume the following induction hypothesis:

\label{4(6)} There is a sequence $1[i]>n_0>n_1>\cdots>n_k$ of elements of $L^i$ such that $T_{\gamma^-*k}$ has no $e$-splittings mod $n_k$ and $T_{\gamma^-*k}$ is designated as an
$e$-splitting tree for $n_k$.

The induction hypothesis is easily verified for $k=0$, as no tree has $e$-splittings mod $1[i]$. By \ref{35**}\ref{35**ii}, $T_{\gamma^-*l}$ is an $e$-splitting tree for $n_k$.

Hence if $T_{\gamma^-*k}$ is infinite, then $k=m$ by Lemma \ref{l:as}\ref{l:as(3)}, and \ref{115}\ref{115ii} will hold. Otherwise, $T_{\gamma^-*k}$ is finite, so $T_{\gamma^-*(k+1)}$ must
be designated (since $g\subset T_\gamma$ implies that $T_\gamma$ is infinite).

When $T_{\gamma^-*(k+1)}$ is first designated after the last time it is cancelled, $T_{\gamma^-*k}$ transmits some $\la\alpha,i\ra$ at with $i\ge 2$ by the construction \ref{43}. Now if $T_{\gamma^-*k}$ ever stopped transmitting $\la\alpha,i\ra$ then $T_{\gamma^-*k}$ would have a state change and so be part of a triggering sequence and so $T_{\gamma^-*(k+1)}$ would be
cancelled, contradiction. So $T_{\gamma^-*k}$ permanently transmit $\la\alpha,i\ra$.

So by \ref{35}\ref{35x}, $T_{\gamma^-*k}$ must prefer some pair. If $T_{\gamma^-}(\eta)=\alpha$, then since $\alpha\subset g\subset T_{\gamma^-}$ (using \ref{45}\ref{45iv}),
$\Ext(T_{\gamma^-},\eta)$ is infinite. Hence by \ref{46}\ref{46iii}, \ref{46}\ref{46v}, \ref{36} and the construction, either $\gamma=\gamma^-*(k+1)$ and $T_\gamma$ is $\la e,1[i]\ra$-divergent, or
there are no $e$-splittings mod $n_{k+1}$ on $T_{\gamma^-*(k+1)}$ for some $n_{k+1}<n_k$. If $n_{k+1}>0$ then \ref{4(6)} holds for $k+1$ since $T_{\gamma^-*(k+1)}$ is designated as an
$e$-splitting tree for $n_{k+1}$. And if $n_{k+1}=0$, then $T_{\gamma^-*(k+1)}$ has no $e$-splittings mod 0, so $T_{\gamma^-*(k+1)}$ is an $e$-splitting tree for 0. Since $L^i$ is
finite, the induction must terminate with $T_\gamma$ satisfying \ref{115}\ref{115ii}.
\end{proof}

\bibliographystyle{amsplain}
\bibliography{computability}
\end{document}